\documentclass[11pt]{amsart}

\usepackage[OT2, T1]{fontenc}
\usepackage{url}
\usepackage{amsmath}
\usepackage{stackengine}
\usepackage{array}
\usepackage{graphicx}
\usepackage{amsfonts}
\usepackage{amssymb}
\usepackage{amstext}
\usepackage{amsthm}
\usepackage{enumitem}
\usepackage{bm}
\usepackage{hyperref}
\usepackage{colonequals}
\usepackage{enumitem}
\usepackage[alphabetic,lite]{amsrefs}
\usepackage{cleveref}
\usepackage[all,cmtip]{xy}
\usepackage{fullpage}
\usepackage{appendix}
\usepackage{scalerel}
\usepackage{mathrsfs}
\usepackage{comment}
\usepackage{tikz-cd}
\usepackage{tikz}
\usepackage{amssymb} 
\def\acts{\curvearrowright}
\numberwithin{equation}{subsection}

\newtheorem{theorem}{Theorem}
\newtheorem{lemma}[theorem]{Lemma}
\newtheorem{proposition}[theorem]{Proposition}
\newtheorem{corollary}[theorem]{Corollary}
\newtheorem*{thm}{Theorem}
\newtheorem{conj}[theorem]{Conjecture}
\newtheorem{definition}{Definition}
\theoremstyle{definition}

\newtheorem{defn}[theorem]{Definition}
\newtheorem{notation}[theorem]{Notation}
\newtheorem{setup}[theorem]{Setup}
\newtheorem{ass}[theorem]{Assumption}

\theoremstyle{remark}
\newtheorem{remark}[theorem]{Remark}

\newtheorem*{claim}{Claim}
\newtheorem{example}[theorem]{Example}

\numberwithin{theorem}{section}

\newcommand{\bA}{\mathbb{A}}

\newcommand{\bC}{\mathbb{C}}
\newcommand{\bD}{\mathbb{D}}

\newcommand{\bF}{\mathbb{F}}
\newcommand{\bG}{\mathbb{G}}
\newcommand{\bH}{\mathbb{H}}

\newcommand{\bL}{\mathbb{L}}

\newcommand{\bN}{\mathbb{N}}

\newcommand{\bQ}{\mathbb{Q}}
\newcommand{\bR}{\mathbb{R}}

\newcommand{\bZ}{\mathbb{Z}}
\newcommand{\Qpun}{\bQ_p^{\textrm{un}}}

\newcommand{\bbJ}{\mathbf{J}}

\newcommand{\cA}{\mathscr{A}}
\newcommand{\cB}{\mathcal{B}}
\newcommand{\cC}{\mathcal{C}}
\newcommand{\cD}{\mathcal{D}}

\newcommand{\cL}{\mathcal{L}}
\newcommand{\cM}{\mathcal{M}}

\newcommand{\cO}{\mathcal{O}}
\newcommand{\cP}{\mathcal{P}}
\newcommand{\cQ}{\mathcal{Q}}

\newcommand{\cS}{\mathcal{S}}
\newcommand{\cT}{\mathscr{T}}

\newcommand{\cZ}{\mathcal{Z}}
\newcommand{\into}{\hookrightarrow}

\newcommand{\ShimK}{S_K(G,\cD)}
\newcommand{\integralShimK}{\mathcal{S}_K(G,\cD)}
\newcommand{\ShimKprime}{S_{K_H}(H,\cD')}
\newcommand{\integralShimKprime}{\mathcal{S}_{K_H}(H,\cD')}

\newcommand{\fp}{\mathfrak{p}}

\newcommand{\fG}{\mathfrak{G}}

\newcommand{\et}{\text{\'et}}
\newcommand{\dR}{{\mathrm{dR}}}
\newcommand{\cris}{{\mathrm{cris}}}

\newcommand{\pdiv}{\mathscr{G}}
\newcommand{\Ag}{\cA_g}

\newcommand{\la}{\langle}
\newcommand{\ra}{\rangle}

\newcommand{\Fpbar}{\overline{\mathbb{F}}_p}
\DeclareMathOperator{\GL}{GL}

\DeclareMathOperator{\GSp}{GSp}

\DeclareMathOperator{\GSpin}{GSpin}

\DeclareMathOperator{\Spf}{Spf}

\DeclareMathOperator{\Gal}{Gal}
\DeclareMathOperator{\End}{End}
\DeclareMathOperator{\Hom}{Hom}

\DeclareMathOperator{\Lie}{Lie}
\DeclareMathOperator{\Res}{Res}
\DeclareMathOperator{\Spec}{Spec}
\DeclareMathOperator{\Ext}{Ext}
\DeclareMathOperator{\ad}{ad}
\DeclareMathOperator{\Span}{Span}

\DeclareMathOperator{\Fil}{Fil}

\DeclareMathOperator{\diag}{diag}

\DeclareMathOperator{\bfx}{\mathbf{x}}
\DeclareMathOperator{\bfy}{\mathbf{y}}

\DeclareMathOperator{\coker}{coker}

\DeclareMathOperator{\rk}{rk}
\DeclareMathOperator{\tor}{tor}

\DeclareMathOperator{\Frob}{Frob}
\DeclareMathOperator{\gr}{gr}
\DeclareMathOperator{\im}{im}

\DeclareMathOperator{\loc}{loc}

\DeclareMathOperator{\Cl}{Cl}

\DeclareMathOperator{\mult}{mult}

\DeclareMathOperator{\MTT}{MT}

\DeclareMathOperator{\sep}{sep}

\begin{document}
\author{Ruofan Jiang and Ananth N. Shankar}

\title{$S$-integrality for families of ordinary K3 surfaces and algebraicity theorems}
\begin{abstract}
    We prove an $S$-integrality theorem for special divisors on GSpin Shimura varieties in positive characteristic. Let $C$ be a generically ordinary curve in such a Shimura variety not contained in any special divisor. Then, for any increasing sequence of prime-to-$p$ positive integers $m_i$ and any finite set of closed points $S\subset C$, we prove that $C\setminus S$ meets the special divisor $Z(m_i)$ for all but finitely many $i$. 

The key new input is an algebraicity theorem for formal special endomorphisms which allows us to use techniques from Diophantine approximation. We prove this algebraicity theorem using a punctual monodromy theorem and a positive-characteristic analogue of the Mumford--Tate conjecture.
\end{abstract}
\maketitle
\tableofcontents

\section{Introduction}
\subsection{Main results}
In this paper, we prove the following Diophantine theorem about families of K3 surfaces in characteristic $p > 2$. The geometric setup is that of a generically ordinary non-isotrivial family of K3 surfaces $\mathscr{X}\to C$ where $C/\Fpbar$ is a smooth curve. Let $\eta$ be the generic point of $C$. Given any point $P$ of $C$, let $\textrm{Pic}_P$ denote the Picard lattice of $\mathscr{X}_P$. Our standing assumption will be that the discriminant of $\textrm{Pic}_\eta$ is prime to $p$, where $\eta$ is the generic point of $C$. Given any point $x\in C$, there is a natural injective specialization map $\textrm{Pic}_\eta \to \textrm{Pic}_x$. Given this setup, our main theorem is as follows. 

\begin{theorem}\label{thm:main1}
Let $m_i>0$ be any increasing sequence of prime-to-$p$ integers, and let $S\subset C$ denote any finite set of closed points. For all but finitely many $i$, there are points $x_i \in C\setminus S$ such that there is a class $\alpha_i\in \textrm{Pic}_{x_i}$ that is orthogonal to $\textrm{Pic}_\eta \subset \textrm{Pic}_{x_i}$ such that $\alpha_i^2 = -m_i$.
\end{theorem}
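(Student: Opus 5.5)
Starting from Theorem~\ref{thm:main1}, the plan is to use the Kuga--Satake construction to move to GSpin Shimura varieties and then follow the strategy of Chai--Oort, Charles and Shankar--Tang (Arakelov intersection counting). Write $L=\textrm{Pic}_\eta^\perp$ in the primitive cohomology lattice. Because $\textrm{disc}(\textrm{Pic}_\eta)$ is prime to $p$, the period map sends $C$ (after a finite cover, which is harmless) into the mod $p$ fiber of the integral canonical model of a GSpin Shimura variety $\mathcal{M}$ attached to $L$. It does not lie in any special divisor, and it is generically ordinary. Under this dictionary, a class $\alpha_i$ as in the statement is exactly a special endomorphism of the Kuga--Satake abelian scheme at $x_i$ with $Q(\alpha_i)=m_i$, i.e.\ a point of $C\cap Z(m_i)$. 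The theorem therefore becomes: $C\setminus S$ meets $Z(m_i)$ for all $i\gg 0$. This is the form stated in the abstract, and I will prove it in that form.

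The global side is an intersection count. Fix a compactification $\overline{C}$ and use the Borcherds/Bruinier--Kudla theory of special divisors, assumed from earlier in the paper. The degree $(\overline{C}\cdot Z(m))$ equals the $m$-th Fourier coefficient of a vector-valued modular form of weight $1+\tfrac{b}{2}$. The Eisenstein part dominates and grows like $m^{b/2}$, up to $p$-adic local density factors that are bounded below because $p\nmid m$. The boundary contributions can be controlled, as in Shankar--Tang. The proof then proceeds by contradiction: suppose that for infinitely many $i$ the entire intersection $C\cap Z(m_i)$ is supported on $S$. It then suffices to show that for each $P\in S$ the local intersection multiplicity $(C\cdot Z(m))_P$ is $o(m^{b/2})$ along these $m$, since the sum over $P\in S$ would then be strictly less than the global degree.

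The local step carries the real work. At $P$, the multiplicity equals $\sum_{n}\#\{\lambda \in L_n : Q(\lambda)=m\}$ (counted up to sign), summed over $n\ge1$. Here $L_n$ is the lattice of special endomorphisms that lift to $\Spf k[[t]]/t^n$, and the decreasing filtration $L_n$ is controlled through the formal neighbourhood. When $P$ is ordinary, Serre--Tate coordinates linearise the deformation problem, and $L_n$ is described by congruence conditions on a $\bZ_p$-lattice of formal special endomorphisms. When $P$ is supersingular or non-ordinary, Grothendieck--Messing or Dieudonn\'e theory gives a decay rate for $\textrm{rk}$ and covolume of $L_n$. The key input, which the abstract points to, is the \emph{algebraicity theorem for formal special endomorphisms}. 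Its content is that any special endomorphism which extends over the whole formal disc $\Spf k[[t]]$ already extends to the generic point, and is therefore zero by the assumption that $C$ avoids all special divisors. Granted this, $\bigcap_n L_n\otimes\bZ_p$ contains no nonzero element. A Diophantine-approximation argument in the spirit of Charles then shows that $\textrm{covol}(L_n)$ grows fast enough that $\sum_n \#\{\lambda\in L_n: Q(\lambda)=m\}$ is bounded by $O(m^{b/2-\epsilon})$ for most $n$, up to a $\log m$ range. Lattice-point counting handles the small $n$, where $L_n$ has full rank, using a geometry-of-numbers bound with error term. It also handles the large $n$, where the rank drops, yielding the $o(m^{b/2})$ bound. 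The prime-to-$p$ hypothesis on $m$ is needed to avoid $p$-power divisibility inflating the counts.

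The main obstacle is the algebraicity theorem. I would prove it in the following order. First, a formal special endomorphism over the disc at $P$ determines a class in crystalline or $\ell$-adic cohomology that is stable under the local (punctual) monodromy. Second, a punctual monodromy theorem shows that this local monodromy is large, i.e.\ open in the global monodromy of $C$. Third, a positive-characteristic Mumford--Tate statement (Tate-type: the global monodromy is the full $\textrm{SO}(L)$-type group, given non-isotriviality) forces any invariant to be global. The class is then algebraic by the Tate conjecture for K3s/GSpin varieties, which contradicts $C\not\subset Z(m)$.
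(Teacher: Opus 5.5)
There are two genuine gaps in the local step; your reduction to Theorem~\ref{mainShimura} and the global Borcherds count are fine. First, the sufficient condition you set yourself, $(C\cdot Z(m))_P=o(m^{b/2})$ for every $P\in S$, is false when $P$ is supersingular. There $L_{P,1}$ is a positive definite lattice of rank $b+2$, so the $n=1$ term $\#\{v\in L_{P,1}:Q(v)=m\}$ alone is of exact order $m^{b/2}$ for typical prime-to-$p$ $m$. The small-$n$ terms you planned to handle by geometry of numbers are therefore main terms, not error terms, and no local estimate can make them $o(m^{b/2})$. What is needed is a \emph{proportional} bound. The paper splits $l_P(m)$ into a head ($n<m^\delta$) and a tail. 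It bounds the head by $\alpha\, g_P(m)+o(m^{b/2})$, with $\alpha<1$ and $g_P(m)=\frac{h_P}{p-1}|q_L(m)|$, using the decay lemmas and Eisenstein-series computation of \cite{MST22} (Lemma~\ref{lm:head}). It then uses that the non-ordinary locus is the zero locus of the Hasse invariant, a section of $\omega^{p-1}$, so that $\sum_P g_P(m)=(C\cdot\omega)|q_L(m)|$. Only the tails, the non-supersingular interior points and the boundary points are shown to be $o(m^{b/2})$.

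Second, you have misidentified the algebraicity theorem, and with it the real obstacle. What vanishes is $\bigcap_n L_n$, not $\Lambda=\bigcap_n (L_n\otimes\bZ_p)$. By Theorem~\ref{theorem: algebraic}, $\rk\Lambda$ equals the codimension of the smallest Shimura subvariety containing $C$. That subvariety can be unitary, or orthogonal over a totally real $F\neq\bQ$; in the latter case the unit-root part $\bigoplus_{v\neq v_0}\bL_{\cris,v}$ yields formal special endomorphisms over $\Spf k[\![t]\!]$ that are not algebraic. Neither kind of subvariety is a special divisor. Likewise the monodromy of $C$ is the Mumford--Tate group of that subvariety, not a full $\mathrm{SO}(L)$-type group, and no Tate-conjecture argument forces $\Lambda=0$. (If $\Lambda$ were $0$, the eventual rapid decay of Corollary~\ref{cor:control_subspace} would control the tail by itself, and no Diophantine input would be needed.) The actual danger is that a nonzero $\Lambda$ is $p$-adically approximated extremely well by integral vectors, as in the example $f+\sum_i p^{i!}e$ from the introduction.

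The missing statement is Theorem~\ref{thm: Qbar algebraicity of formal special endomorphisms}: $\Lambda\otimes\bQ_p$ is the base change of a $\overline{\bQ}$-subspace $\mathfrak V\subset L_{P,1}\otimes\overline{\bQ}$, and $\mathfrak V$ contains no rational ray because $C\not\subset Z(m)$. At supersingular points this is proved from punctual monodromy together with the Basic Mumford--Tate theorem, which gives a $\bQ$-form of the monodromy inside Kisin's $I_x$. At non-supersingular points it is proved via a lift preserving the $F$-structure and Betti--de Rham comparison. Only with this in hand does Schlickewei's $p$-adic subspace theorem (Theorem~\ref{thm:SubspaceEndo}) apply. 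It shows that a vector with $Q(v)=m$ lies in $\mathfrak V+p^nL_{\overline{\bQ}}$ only for $n\le(\frac{r+1}{2}+\epsilon)\log_p m$, and combined with eventual rapid decay this controls the tail. At boundary points, the same punctual-monodromy argument shows $\Lambda$ is totally isotropic, which is one place where the hypothesis $p\nmid m$ genuinely enters.
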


The above result follows from the more general result stated in the setting of Shimura varieties associated to Spinor groups. To that end, we set $\cM/\Fpbar$ to denote the special fiber of a GSpin Shimura variety with good reduction at $p$. For every integer $m$, there is a divisor $Z(m) \subset \cM$, which is the special fiber of a divisor which is also a GSpin Shimura variety in its own right. Divisors of this sort are called \emph{special divisors}. We will define all these notions precisely in Section \ref{sec: prelim}. 

This more general setup specializes to the one above as polarized K3 surfaces are parameterized by such Shimura varieties, and the locus of points where the parameterized K3 surfaces admit extra Picard classes is parameterized by special divisors of the form $Z(m)$. We now state our theorem:

\begin{theorem}\label{mainShimura}
  Let $C$ be a smooth connected (affine) curve over $\Fpbar$ with 
a non-constant morphism $C\rightarrow \cM$ whose image intersects the ordinary locus, and such that $C$ does not map to any special divisor $Z(m)$. Let $m_i$ be an increasing sequence of prime-to-$p$ integers and let $S \subset C$ be any finite set of closed points. For all but finitely many $i$, the intersection $ (C\setminus S) \cap Z(m_i)$ is non-empty.
\end{theorem}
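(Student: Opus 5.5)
We follow the global-to-local intersection-counting strategy of Chai--Oort-type and Shankar--Tang arguments, adapted to characteristic $p$. First compactify $C$ to $\overline{C}$ and compute the global intersection number $(\overline{C}\cdot Z(m))$. Via the Borcherds modularity theorem (reduced mod $p$), the generating series $\sum_m (\overline{C}\cdot Z(m))q^m$ is a weight $n/2+1$ modular form. Its Eisenstein part has coefficients of size $\asymp m^{n/2}$ (times $\deg \omega|_C>0$, using that $C$ is non-constant and meets the ordinary locus), and the cusp part is $O(m^{n/4+1/2})$. So for prime-to-$p$ $m$ with bounded local obstructions, the global intersection is $\gg m^{n/2}$. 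We would like to show that the contribution of each point $P\in S$ (and of the points at infinity of $\overline{C}$) to $(\overline{C}\cdot Z(m_i))$ is $o(m_i^{n/2})$. Then $C\setminus S$ must meet $Z(m_i)$ for $i$ large.

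For a point $P\in S$, the local contribution is $\sum_{\lambda}\operatorname{ord}_t(\text{locus where }\lambda\text{ deforms})$, where $\lambda$ runs over special endomorphisms of $\mathcal{A}_P$ of norm $m$. Write this as $\sum_{r\ge1}\#\{\lambda\in L_r : Q(\lambda)=m\}$, where $L_r$ is the lattice of special endomorphisms of $P$ that lift to $\Spf k[[t]]/t^r$. Then $L_r$ decreases in $r$. The intersection $\bigcap_r L_r$ is the lattice of \emph{formal} special endomorphisms over $\widehat{C}_P$. Here we invoke the algebraicity theorem announced in the abstract: any formal special endomorphism over the completed local ring at $P$ is algebraic, i.e.\ extends over $C$. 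The hypothesis that $C$ lies in no $Z(m)$ then forces $\bigcap_r L_r$ to be the generic lattice $\textrm{Pic}_\eta$-analogue, which is orthogonal to the new classes. After passing to the orthogonal complement, this gives $\bigcap_r L_r^{\perp}=0$.

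Next we need a quantitative decay. Using the Grothendieck--Messing / Dieudonné theory of the ordinary or supersingular $p$-divisible group at $P$, we would show that the covolume of $L_r$ grows at least like $r^{c}$ for some $c>0$ (in the ordinary case, via Serre--Tate coordinates; otherwise, via decay of the Frobenius-semilinear lifting condition). Combined with $\bigcap L_r$ being trivial in the relevant directions, successive minima grow. The geometry-of-numbers count then gives
\[
\sum_{r\ge1}\#\{\lambda\in L_r: Q(\lambda)=m\}\ \ll\ \sum_{r} \Big(\frac{m^{n/2}}{\operatorname{covol}(L_r)}+\text{lower-order terms}\Big)=o(m^{n/2}).
\]
The same analysis applies at the boundary points of $\overline{C}$ (via toroidal compactification, where the cusp is of bounded contribution), or we would simply include those points in $S$ after noting that $C$ is affine.

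The main obstacle is the algebraicity step, which is exactly what makes the decreasing filtration $L_r$ shrink to the generic lattice rather than stabilizing at a larger formal lattice. Without it, a formal non-algebraic special endomorphism would give an unbounded contribution at $P$. We would prove it by first proving a punctual monodromy theorem, showing that the $\ell$-adic monodromy of $C$ is as large as allowed by the generic special endomorphisms. We would then use a characteristic-$p$ Mumford--Tate-type statement, with Tate's theorem for ordinary generic fiber via canonical lifts, to deduce that a formal endomorphism invariant under local monodromy is global.
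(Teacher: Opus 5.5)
\textbf{Gap 1: supersingular points are not $o(m^{b/2})$.} Your global set-up matches the paper: the Eisenstein main term $|q_L(m)|(C\cdot\omega)\asymp m^{b/2}$ (your $n$ is the paper's $b$), and $l_P(m)=\sum_r\#\{\lambda\in L_{P,r}:Q(\lambda)=m\}$. But your contradiction needs every $P\in S$ to contribute $o(m^{b/2})$, and this is false at supersingular points, which $S$ may contain. At such $P$ the lattice $L_{P,1}$ has full rank $b+2$. So the $r=1$ term $\#\{\lambda\in L_{P,1}:Q(\lambda)=m\}$ alone is of order $m^{b/2}$, the same order as $\overline{C}\cdot\overline{Z(m)}$. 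Your own bound $\sum_r m^{b/2}/\mathrm{covol}(L_r)$ is at best a constant times $m^{b/2}$, not $o(m^{b/2})$.

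What is missing is a comparison of \emph{constants}. The paper splits $l_P(m)$ into a head ($r<m^{\delta}$) and a tail. For the head it uses the decay lemmas and Eisenstein-coefficient computations of \cite{MST22} to get $l_P^{<\delta}(m)<\alpha\frac{h_P}{p-1}|q_L(m)|+o(m^{b/2})$ with a uniform $\alpha<1$. Here $h_P$ is the local intersection of $C$ with the non-ordinary locus. Since that locus is cut out by the Hasse invariant, a section of $\omega^{p-1}$, one has $\sum_P\frac{h_P}{p-1}=C\cdot\omega$. Hence all supersingular points together absorb at most an $\alpha$-fraction of the global number. Only non-supersingular interior points and boundary points contribute $o(m^{b/2})$. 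The boundary needs its own argument: a uniform bound on how far special endomorphisms $v$ of the degenerating log $1$-motive with $p\nmid\la v,v\ra$ lift, plus $\rk L(\cQ_o)\le b-1$. Putting those points into $S$ does not remove the need to bound them.

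\textbf{Gap 2: the algebraicity step is misidentified.} That $\bigcap_r L_r$ is the generic lattice (here $0$) is ordinary rigidity and gives no rate. The real obstruction is $\Lambda=\bigcap_r(L_r\otimes\bZ_p)$, which can be nonzero even though $C$ lies in no $Z(m)$, e.g.\ when $C$ lies in a unitary Shimura subvariety. Formal special endomorphisms do \emph{not} extend to special endomorphisms of $\cA_C$. The punctual monodromy theorem only spreads them to endomorphisms of $\cA[p^\infty]$ over $C$ and puts $C$ in a Shimura subvariety of codimension $\rk\Lambda$. Integral vectors that approximate $\Lambda$ $p$-adically very well survive in $L_r$ for $r$ enormous relative to their norm. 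So covolume growth and ``successive minima grow'' cannot bound $\sum_r\#\{\lambda\in L_r:Q(\lambda)=m\}$.

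The paper instead combines three ingredients:
\begin{enumerate}
\item[(a)] Eventually rapid decay: $L_{(1+p+\cdots+p^n)D+1}\subseteq\Lambda+p^{n+1}\cL$.
\item[(b)] $\Lambda\otimes\bQ_p$ is defined over a number field. At non-supersingular points this comes from lifting plus Betti comparison. At supersingular points it comes from a Mumford--Tate statement that gives the crystalline monodromy a $\bQ$-form via Kisin's group $I_x$ and \cite{Ruofan}.
\item[(c)] Schlickewei's $p$-adic subspace theorem. It forbids a vector with $Q(\lambda)=m$ from being $p^{-n}$-close to $\Lambda\otimes\bQ_p$ once $n>(\frac{\rk\Lambda+1}{2}+\epsilon)\log_p m$.
\end{enumerate}
Only (a)--(c) together make the tail $o(m^{b/2})$.
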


We note that we may choose the sequence $m_i$ and the finite set $S\subset C$ independently.  Our theorem is motivated by the following Diophantine question in characteristic 0. Let $\cA_1$ denote the modular curve, and let $x\in \cA_1(\overline{\bQ})$ be some point. Then, it is conjectured that given any finite set of primes $S$, there are only finitely many CM points $x_i$ that are $S$-integral in $\cA_1 \setminus \{ x\}$. Equivalently, there are only finitely many CM points $x_i$ such that $x$ is $S$-integral in $\cA_1 \setminus \{x_i\}$. This conjecture was motivated in turn by work of Baker-Ih-Rumely, who in \cite{BIR} prove the analogue of this with $\cA_1$ replaced by $\bG_m$ or an elliptic curve $E$, with some non-torsion point taking the role of $x$ and torsion points taking the role of CM points. Indeed, our theorem has the obvious reformulation. 

\begin{theorem}\label{thm: Sintegralformulation}
    Let $K$ be a global function field in characetistic $p$ and let $x \in \cM(K) \setminus \cM(\Fpbar)$ be an ordinary point. Suppose that $x$ is not contained in any $Z(m)$. Let $S$ be any finite set of places of $K$. Then there are only finitely many prime-to-$p$ integers $m$ such that $x$ extends to an $S$-integral point of $\cM_K \setminus Z(m)_K$. 
\end{theorem}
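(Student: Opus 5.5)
Theorem \ref{thm: Sintegralformulation} is a reformulation of Theorem \ref{mainShimura}, and we deduce it from that theorem by spreading out. Since $K$ is a global function field over $\Fpbar$, there is a smooth projective curve $\overline{C}/\Fpbar$ with function field $K$, and places of $K$ correspond to closed points of $\overline{C}$. By properness (or at least the valuative criterion after shrinking), the point $x\in\cM(K)$ extends to a morphism $C_0\to \cM$ from a nonempty open $C_0\subset \overline{C}$. Let $S_0 = \overline{C}\setminus C_0$, a finite set of points. The hypotheses transfer directly. The map $C_0\to\cM$ is non-constant because $x\notin \cM(\Fpbar)$. Its image meets the ordinary locus because $x$ is ordinary; the ordinary locus is open, so the generic point lands in it. It does not factor through any $Z(m)$ because $x\notin Z(m)$ for all $m$. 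Shrinking $C_0$ to be affine only enlarges $S_0$, which is harmless.

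Next we unwind $S$-integrality. Say $x$ extends to an $S$-integral point of $\cM_K\setminus Z(m)_K$. This means that for every place $v\notin S$, the reduction of $x$ at $v$ lies in $\cM\setminus Z(m)$. Concretely, enlarging $S$ to $S' = S\cup S_0$, it means $(C_0\setminus S')\cap Z(m)=\emptyset$ as a set-theoretic pullback. The points at which integrality of the model itself is in question are absorbed into $S_0$. Some care is needed: the notion "extends to an $S$-integral point" should be read with respect to the fixed integral model $\cM$ (the special fiber setting), so that at places $v\in C_0$ the extension is the given morphism. Enlarging $S$ only makes integrality easier, so it suffices to prove finiteness for $S'$.

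Now suppose, for contradiction, that there are infinitely many prime-to-$p$ integers $m$ for which $x$ is $S'$-integral in $\cM_K\setminus Z(m)_K$. Arrange them as an increasing sequence $m_i$. Apply Theorem \ref{mainShimura} to $C = C_0$, the finite set of closed points $S'\cap C_0$, and this sequence. For all but finitely many $i$ we get $(C_0\setminus S')\cap Z(m_i)\neq\emptyset$, which contradicts $S'$-integrality for those $m_i$. This contradiction proves the theorem.

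The only real subtlety is the bookkeeping in the second step. We must make precise that "$S$-integral point of $\cM_K\setminus Z(m)_K$" is equivalent to the reduction avoiding $Z(m)$ at all places outside $S$, including the places where $x$ fails to extend, which is handled by adding $S_0$. All the substantive content lies in Theorem \ref{mainShimura}.
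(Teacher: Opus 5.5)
Your deduction is correct and matches the paper's approach: the paper gives no separate proof and calls this an obvious reformulation of Theorem~\ref{mainShimura}, and your argument (spread $x$ out to $C_0\to\cM$, add $\overline{C}\setminus C_0$ to $S$, then apply the theorem to an increasing enumeration of the bad $m$) is exactly that reformulation made explicit. One small correction: $x$ extends over a dense open $C_0\subset\overline{C}$ because $\cM$ is of finite type (standard spreading out), not by properness, since $\cM$ is not proper.
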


\begin{remark}
    We note that the a product of two modular curves, and the moduli space of principally polarized abelian surfaces also falls under this regime. In the first case, special divisors are the loci $Y_0(m)$ which parameterize pairs of elliptic curves with a cyclic $m$-isogeny, and in the second case the special divisors $Z(m)$ are the loci of points with endomorphisms by the order $\bZ[\sqrt{m}]$. We note that the work \cite{JJ23} proves this theorem in the setting of $\cA_2$, but with a non-ordinariness hypothesis. 
\end{remark}

\begin{remark}
    We note that the notation $\cM$ for our Spin Shimura varieties is only for the introduction. We will use the notation $S$ for these varieties, starting from Section \ref{sec: prelim} onwards. 
\end{remark}
\begin{remark}
    Upon finishing the paper, we have heard that Robin Huang has proved a special case of Theorem~\ref{thm:main1} where the collection $\{m_i\}$ are square classes (i.e., of the form $\{Dm^2\}$ for some fixed $D$) via a density estimation. See \cite{RH} for more details.  
\end{remark}

\subsubsection{Picard-rank jumps}
We now work with the setup of Theorem \ref{mainShimura}. In previous work, specifically \cite{MAT}, \cite{MST22} and \cite{JSY26}, the authors prove that intersection $ C \cap \bigcup_m Z(m)$ is infinite, but \emph{with the intersection being over all positive integers $m$}. This is strong enough to imply infinitely many Picard-rank-jumping specializations in families of K3 surfaces. However, the methods in those works are not able to address the stronger question of arbitrarily thin sequences of positive integers $m$. Theorem \ref{mainShimura} has the following geometric application, which requires that we work with a thin (density zero) sequence of $m$. 

\subsubsection{Application to cubic fourfolds}
We refer to work of Hassett \cite{hassett} for the following. A cubic fourfold is said to be \textbf{special}, if it contains a surface not homologous to a complete intersection. Special cubic fourfolds form a countable union of special divisors in the moduli space of cubic fourfolds, which admits an open immersion to a suitable GSpin Shimura variety of signature $(2,20)$. Among the countable union, there is an infinite but thin subfamily of divisors that are simultaneously moduli spaces of polarized K3 surfaces (with varying degrees of polarization). See \cite[Theorem 1.0.2]{hassett} for the exact definition of $Z(m)$ that are allowed. We call a special cubic fourfold lying on such a divisor \textbf{having an associated K3 surface}. When $p> 3$, the GSpin Shimura variety in question has good reduction at $p$, and all definitions made above make sense over char $p$ as well. Theorem \ref{mainShimura} applies to obtain the following consequence.
\begin{theorem}Let $\mathcal{Y}\rightarrow C$ be a non-isotrivial family of ordinary cubic fourfold in characteristic $p>3$ with coprime to $p$ generic discriminant. Then there are infinitely many points on $C$ over which the fiber of $\mathcal{Y}$ has an associated K3 surface.
\end{theorem}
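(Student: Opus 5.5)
Since Theorem~\ref{mainShimura} is available, this statement is a direct consequence, and the plan is to reduce to it. The family $\mathcal{Y}\to C$ is non-isotrivial, ordinary and has prime-to-$p$ generic discriminant. Via Hassett's period map (the open immersion of the moduli of cubic fourfolds into the GSpin Shimura variety of signature $(2,20)$, which has good reduction for $p>3$), it gives a non-constant morphism $C\to \cM$. Its image meets the ordinary locus, because the fibers are ordinary. First replace $C$ by a smooth connected affine open subset, and if necessary a finite étale cover after adding level structure; this does not affect the conclusion, since infinitely many points upstairs map to infinitely many points downstairs up to finite fibers.

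The next step is to arrange that $C$ is not contained in any special divisor. If $C$ lies in some $Z(m)$, replace $\cM$ by the smallest special sub-Shimura variety containing $C$. This is the GSpin Shimura variety attached to the orthogonal complement of the generic lattice of special endomorphisms, which is the transcendental part of the generic primitive cohomology. The hypothesis that the generic discriminant is prime to $p$ ensures that this smaller Shimura variety still has good reduction at $p$. Its special divisors are exactly the components of the original $Z(m)$ loci where the special lattice jumps beyond the generic one, so $C$ is not contained in any of them.

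Finally, choose the sequence. Hassett's condition \cite[Theorem 1.0.2]{hassett} for having an associated K3 surface is a condition on the discriminant $d$ of the special lattice. Enumerate the infinitely many admissible $d$ that are prime to $p$, and translate them into an increasing sequence $m_i$ of prime-to-$p$ integers. The translation has to be made so that a point of $C\cap Z(m_i)$ in the reduced Shimura variety corresponds to a cubic fourfold whose labelled lattice, namely the generic one together with the new class, has Hassett-admissible discriminant. This compatibility is the main obstacle. The discriminant of the lattice spanned by $h^2$, the generic classes and a new class $\alpha$ orthogonal to them with $\alpha^2=-m$ is $m$ times the generic discriminant. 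One then needs infinitely many such products to satisfy Hassett's congruence conditions; the plan is to use congruence classes of $m$ and check them against Hassett's criterion.

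With this sequence in hand, apply Theorem~\ref{mainShimura} with $S=\emptyset$. It gives points $x_i\in C\cap Z(m_i)$ for all but finitely many $i$. The $x_i$ are infinitely many distinct points: each point of $C$ has a fixed Picard (special) lattice, which represents only finitely many of the increasing $m_i$ by vectors orthogonal to the generic part, since that complement is definite. Each fiber $\mathcal{Y}_{x_i}$ has an associated K3 surface, which completes the proof.
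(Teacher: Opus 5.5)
\textbf{Gap in the final step.} Your reduction is the paper's. The paper simply invokes Theorem~\ref{mainShimura} for the thin family of Hassett-admissible special divisors, using the prime-to-$p$ generic discriminant (as you do) to pass to the GSpin Shimura variety of $N^\perp$. Here $N\ni h^2$ is the generic algebraic lattice, and the point of passing to $N^\perp$ is that $C$ then lies in no special divisor and the variety has good reduction.

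Your last step, however, does not work. You apply the theorem with $S=\emptyset$. You then justify that the $x_i$ are essentially distinct by claiming that the definite special lattice at a fixed point represents only finitely many of the $m_i$. That claim is false. Definiteness bounds the number of vectors of a given norm, not the set of norms, and that set is infinite for any nonzero lattice ($Q(kv)=k^2Q(v)$; in rank $\geq 2$ typically infinitely many squarefree values as well). At a supersingular point the special lattice has full rank $\rk N^\perp$ and typically represents all sufficiently large integers allowed by local conditions. So a single such point could lie on $Z(m_i)$ for every large $i$, and the $S=\emptyset$ statement would then give you only that one point.

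The fix is exactly what the finite set $S$ in Theorem~\ref{mainShimura} is for. Suppose only finitely many points of $C$ had an associated K3 surface, and take $S$ to be that set. For $i\gg 0$ the theorem gives a point of $(C\setminus S)\cap Z(m_i)$, a contradiction. This is also the form in which the paper actually proves Theorem~\ref{mainShimura}: $\bigcup_{m\in\Delta}C\cap\overline{Z(m)}$ is infinite for every infinite $\Delta$.

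\textbf{The choice of $m_i$.} This also needs repair.
\begin{itemize}
\item Hassett's admissible discriminants have density zero, so no congruence class of $m$ can work. You must constrain every prime factor of $m$.
\item Hassett's criterion concerns a \emph{rank-two} saturated lattice $K\ni h^2$, not $\langle N,\alpha\rangle$. So when $\rk N\geq 2$ the quantity $m\cdot\mathrm{disc}(N)$ is beside the point. The relevant lattice is $K=\mathrm{sat}\langle h^2,\alpha\rangle$ with $\alpha\in N^\perp$. Its discriminant is $3\alpha^2$, or $\alpha^2/3$ when the saturation has index $3$, which forces $3\mid\alpha^2$.
\item You must also account for the imprimitive vectors contained in $Z(m)$.
\end{itemize}
Taking $\alpha^2=2q$ with $q\equiv 1 \pmod 3$ a prime different from $p$ handles all of this. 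Only primitive vectors occur, the saturation index is $1$, and $d=6q$ satisfies Hassett's conditions. The corresponding $m_i$ are prime to $p$ because $p>3$.

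\textbf{Which Shimura variety.} You should not pass to ``the smallest special sub-Shimura variety containing $C$'', since it may be unitary or of restriction-of-scalars type. What you actually use, and describe correctly, is the GSpin Shimura variety of $N^\perp$.
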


\subsection{Method of proof and a group-theoretic Mumford-Tate theorem}
We will now explain the difficulties that previously obstructed the passage from all $m$ to infinitely many $m$, and how we resolve these obstructions. The proof has four main ingredients:
\begin{enumerate}
    \item a local-global comparison of intersections with special divisors;
    \item $\overline{\bQ}$-algebraicity of formal special endomorphisms and Diophantine approximation;
    \item a punctual monodromy theorem and algebraization of formal special endomorphisms;
    \item the Basic Mumford--Tate theorem.
\end{enumerate}
The first ingredient builds on methods developed in previous work, while the latter three are new to this paper. We now explain how these ingredients fit together. As in \cite{CO06}, \cite{Charles}, \cite{ST20}, \cite{SSTT}, \cite{MAT}, \cite{MST22} and \cite{JSY26}, we use a local-global method to attack this question. The broad idea is to start with a proper curve $C$ that maps to a compactification of $\cM$, estimate the global intersection number of $C$ with $Z(m)$ using Borcherds theory, and prove that the local intersection number $i_P(C.Z(m))$ at any fixed point $P$ is ``small'' relative to the global intersection number. Supposing that there was an infinite sequence of $Z(m)$ that intersects $C$ at only a fixed finite set of points of $C$, then the local intersection numbers would add up to something smaller than the global intersection numbers, a contradiction. 

\subsubsection{\textbf{Formal special endomorphisms}}
The difficulty, of course, is to effectively bound $i_P(C.Z(m))$ in terms of $m$. Here, the moduli theory of $Z(m)$ is strongly used. We pick a local parameter $t$ at $P$, and consider the complete local ring $\Fpbar[[t]]$ of $C$ at $P$. Then, there is a decreasing nested sequence of lattices $L_n$ of ``special endomorphisms'' equipped with a quadratic form $Q$ (compatible with inclusions) associated to $\Fpbar[[t]]/(t^n)$. The lattices $L_n$ encode the local intersection number $i_P(C. Z(m)) = \sum_{n=1}^{\infty}\#\{v\in L_n : Q(v) = m\}$. In the complex setting of this question, there exists an integer $n_0$ such that $L_n = \{0\}$ for $n\geq n_0$. In our setting this is never true, and indeed the ranks of the $L_n$ are independent of $n$. The main difficulty is the existence of so-called ``\textbf{formal special endomorphisms}'' over $\Fpbar[[t]]$. This term will be defined in Section \ref{sec: prelim}, but this phenomenon is the hardest obstruction to overcome, and arises as follows: despite $\bigcap_n L_n = \{0\}$, it is possible that $\bigcap_n (L_n\otimes \bZ_p) = \Lambda \neq \{0\}$. This phenomenon could have the a priori consequence that there could be a vector $v\in L_n$ with $Q(v) = m$, where $n$ is arbitrarily large relative to $m$. Here is an example that illustrates this. 

\begin{example}
    Suppose that $L_1=\bZ e\oplus \bZ f$ is of rank 2, and the sublattice $\Lambda\subseteq L_1\otimes\bZ_p$ is generated by $\lambda=f+\sum_{i=0}^\infty p^{i!} e$. We get a sequence $\lambda_r:=f+\sum_{i=0}^r p^{i!} e\in L$ converging to $\lambda$ quickly; indeed,  $\lambda_r-\lambda \in p^{(r+1)!}L_1\otimes\bZ_p$. Using local computations, it is easy to show that $\lambda_r \in L_{p^{(r+1)!}}$. Now $p^{(r+1)!}$ is a huge number compared to $\la \lambda_r,\lambda_r\ra$. This shows the existence of special endomorphism $v\in L_n$ where $n$ is extremely large relative to $Q(v)$.
\end{example}

We note that such an example can certainly arise in the setting of a formal curve mapping to $\cM$ (see \cite[Section 3]{MST22}), but shouldn't arise in the setting of an algebraic curve $C\to \cM$ because $i_P(C.Z(m))$ has the obvious and extremely coarse bound given by the global intersection number $(C.Z(m))$. In \cite{SSTT}, \cite{MST22} and \cite{JSY26}, the authors are able to use this coarse bound, which is of course tautologically insufficient to obtain effective control of $i_P(m)$ for a single value of $m$, to obtain efficient control of $i_P(m)$ on average over $m$, i.e. effective control of $\sum_m i_P(m)$. By its very nature, such an argument cannot work to bound $i_P(m)$ for any single value of $m$, and so we need new ideas in our work. 

\subsection{Algebraicity of formal special endomorphisms}
The obstruction described above arises precisely when $\Lambda \subset L_1$, the lattice of formal special endomorphisms, is $p$-adically well approximated by rational subspaces of $L_1$. One of our new ideas is to prove the following result. 

\begin{thm}[Theorem \ref{thm: Qbar algebraicity of formal special endomorphisms}]\label{thm:introQbar}
    Notation as above. The subspace $\Lambda \otimes \bQ_p \subset L_1\otimes \bQ_p$ is is defined over a number field $F\subset \bQ_p$.
\end{thm}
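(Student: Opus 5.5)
We would identify $\Lambda\otimes\bQ_p$ intrinsically in terms of the $F$-crystal of the family at $P$, and then show that this description is cut out by algebraic data over a number field. At the point $P$ the fiber is a point of $\cM(\Fpbar)$. We first reduce to the case where $P$ is ordinary, or more generally work with the slope filtration of the $F$-isocrystal $\mathbb{L}_{\cris,P}$ of special endomorphisms. The lattice $L_1\otimes\bZ_p$ sits inside the $\varphi$-invariants of the crystalline realization at $P$. By Serre--Tate/Dieudonné theory over $\Fpbar[[t]]$, the formal special endomorphisms $\Lambda$ are exactly those elements of $L_1\otimes\bZ_p$ whose horizontal extension, for the Gauss--Manin connection over $\Fpbar[[t]]$, remains in $\Fil^0$ and $\varphi$-invariant. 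Our first step is to make this precise: $\Lambda\otimes\bQ_p$ is the largest subspace $W\subset L_1\otimes\bQ_p$ such that $W\otimes K$ is stable under the local monodromy of the $F$-isocrystal on the formal disc at $P$, and consists of crystalline Tate-type classes.

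The second step is to use the global curve $C$. Over the generic point $\eta$, the $F$-isocrystal of $C$ has a monodromy group $G_\eta\subset \mathrm{SO}(L\otimes\bQ_p)$. By the positive-characteristic Mumford--Tate type result announced in the introduction, the ``Basic Mumford--Tate theorem'', $G_\eta$ should be identified with the $\bQ_p$-points of an algebraic group defined over $\bQ$, namely the Mumford--Tate group of a characteristic-zero lift orthogonal to $\textrm{Pic}_\eta$. We then invoke the punctual monodromy theorem at $P$. It says that the local (punctual) monodromy group $G_P$ of the isocrystal restricted to $\Spf\Fpbar[[t]]$ is a subgroup of $G_\eta$ determined by the Newton and Hodge data at $P$, for instance a unipotent radical of a parabolic attached to the slope filtration together with a torus. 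The key point is that this data is defined over a number field: the parabolic is the stabilizer of a filtration on $L_1\otimes\bQ$ coming from the Frobenius at $P$. Since $P$ is defined over a finite field, that Frobenius is an algebraic element $\Frob_P\in \mathrm{SO}(L_1\otimes\bQ)$ via Tate's theorem for special endomorphisms, whose eigenvalues are algebraic numbers.

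The third step identifies $\Lambda\otimes\bQ_p$ with the fixed subspace $(L_1\otimes\bQ_p)^{G_P}$, intersected with the slope-zero part. Both of these are defined by linear-algebraic conditions involving $\Frob_P$ and the algebraic group $G_P$, which is defined over the splitting field $F$ of the characteristic polynomial of $\Frob_P$. Fixed spaces of algebraic groups defined over $F$ are themselves defined over $F$, and $F$ embeds into $\bQ_p$ via the chosen slope decomposition. This gives the statement.

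We expect the main obstacle to be the second step, namely showing that the punctual monodromy group is the base change of an algebraic group over a number field, rather than merely a $p$-adic subgroup. Our first attempt would be to show that the Zariski closure of punctual monodromy equals the unipotent radical $U_P$ of the parabolic stabilizing the Frobenius slope filtration at $P$, intersected with $G_\eta$. Everything then reduces to the rationality of $G_\eta$, which is supplied by the Basic Mumford--Tate theorem, and to the rationality of the slope filtration, which is automatic from $\Frob_P$ being algebraic.
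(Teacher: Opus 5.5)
The proposal does not prove the statement. Steps 1 and 3 try to cut out $\Lambda\otimes\bQ_p$ using local data at $P$, but all of that data is trivial on $L_1\otimes\bQ_p$.

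On the open disc at $P$ the $F$-isocrystal is trivial as a connection: Dwork's trick gives every $v\in L_1$ a horizontal continuation $\tilde v=\lim_N\Frob^N v$ once $p$ is inverted. So every subspace is ``stable under local monodromy''. What singles out $\Lambda$ is whether $\tilde v$ is $p$-adically integral along the germ of $C$ (Lemma \ref{T:PDlemma}). That is an analytic condition with no visible algebraic structure, which is why the statement is non-trivial.

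Likewise $L_1\otimes\bZ_p\subseteq\bL_P^{\varphi=1}$, so Frobenius acts trivially on $L_1$ and the slope filtration restricted to $L_1\otimes\bQ_p$ is trivial. At a supersingular point the whole crystal is isoclinic, so your slope parabolic is the entire group and the splitting field of $\Frob_P$ carries no information. This is the case actually needed for Lemma \ref{tail}, and it cannot be reduced to ordinary $P$.

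Nor is there a local monodromy group acting on $L_1\otimes\bQ_p$ whose invariants you could take. An element of $L_1\otimes\bZ_p$ only becomes a section over $\Fpbar(\!(t)\!)$ if it extends over $\Fpbar[\![t]\!]$, i.e.\ if it already lies in $\Lambda$. So the fixed space in Step 3 presupposes $\Lambda$.

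The punctual monodromy theorem (Theorem \ref{theorem: algebraic}) is not a description of such a local group; it is a local-to-global statement. Using Chai's Kummer-theoretic pairing for the ordinary extended Brauer group over $\Fpbar(\!(t)\!)$, together with semisimplicity of the global $p$-adic monodromy, the étale factor split off over the formal disc descends to the function field of $C$. Hence $\rk\Lambda$ equals the codimension of the smallest Shimura subvariety $S'$ containing $C$. This lets the paper identify $\Lambda[1/p]$ with $(L_1\otimes\bQ_p)\cap\bL'_P$. Here $\bL'$ is the unit-root part on which the number field $F$ attached to $S'$ (totally real or CM) acts through the places $v\neq v_0$ above $p$. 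The number field in the statement is (the Galois closure of) this $F$, not a Frobenius splitting field. What remains is to show that the $F$-eigenspace decomposition is rational for the $\bQ$-structure $L_1\otimes\bQ$.

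Step 2 does not supply that rationality. Your instinct that a $\bQ$-form of the global monodromy should force algebraicity is right. But the $\bQ$-form coming from the Betti side (the generic Mumford--Tate group of the minimal special subvariety, via the monodromy theorem recalled in Section \ref{subsec: subshimura varieties}) is rational for the Betti $\bQ$-structure, not for $L_1\otimes\bQ$. The two can be compared only when $L_1$ and the $\cO_F$-action lift together to characteristic zero. The paper does exactly this at non-supersingular points: it takes a lift splitting the slope filtration, then uses crystalline--de Rham--Betti comparison, where the $F$-eigenspaces of $\bL_B$ are visibly defined over the Galois closure of $F$.

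At a supersingular point no such lift exists, since $\rk L_1=b+2$ while special endomorphisms of a characteristic-zero point have rank at most $b$. The missing ingredient there is Kisin's $\bQ$-group $I_x$ of self-quasi-isogenies. It acts on $\End^0(A_x)\supseteq L_1\otimes\bQ$, and at a basic point its base changes to $\bQ_\ell$ and $\Qpun$ are identified with those of $G$. Its subgroup $I_{H,x}$ commuting with the endomorphism algebra $E$ defining $S'$ is a $\bQ$-form of the crystalline monodromy group $H_p$, compatible with $L_1\otimes\bQ$ (Theorem \ref{thm: MT GSpin}). Since $\bL_P[1/p]$ is multiplicity-free as an $H_p$-representation, the sub-representation $\bL'_P$ comes from $I_{H,x}$ and descends to $\overline{\bQ}$.

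You need to separate the non-supersingular and supersingular cases and supply $I_x$ in the supersingular one. Without that, your argument never produces a group that is rational for the $\bQ$-structure in which the statement is posed.
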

This result is very surprising, as there is an identical statement that could be made in the number field setup, which we strongly believe to be false! The setup is as follows. Let $E/K$ be an elliptic curve over a number field, let $v$ be a place of $K$ of good supersingular reduction for $E$ and such that $K_v$ is the quadratic unramified extension of $\bQ_p$. It is well known that this forces $E_{K_v}$ to have ``formal CM'', i.e. the $p$-divisible group associated to $E_{K_v}$ admits extra endomorphisms, necessarily by $\cO_{K_v}$. Let $E_0$ denote the mod $v$ reduction of $E$. This yields a tautological injective map $\cO_{K_v} \to \End(E_0)\otimes \bZ_p$. The statement analogous to Theorem \ref{thm:introQbar} would be that the subspace of $\End(E_0)\otimes \bQ_p$ defined by the image of $\cO_{K_v}\otimes \bQ_p$ actually descends to a number field $F\subset \bQ_p$. We expect this to be false. 

Given Theorem \ref{thm:introQbar}, we are able to effectively bound $i_P(C.Z(m))$ by using a $p$-adic variant of Schmidt's subspace theorem, which basically says that an algebraic irrational subspace of a $\bQ$-vector space does not admit good $p$-adic approximations by rational subspaces. We prove our algebraicity theorem by proving an algebraization theorem for formal special endomorphisms, and by formulating and proving a ``Basic Mumford--Tate conjecture'' - a positive characteristic analogue of the Mumford--Tate conjecture. 

\subsection{Punctual monodromy and algebraization of formal special endomorphisms}
We note that our Shimura variety is equipped with a map to $\cA_g$, the moduli space of principally polarized abelian varieties. At any point $y\in C$, special endomorphisms are endomorphisms of the universal abelian scheme $\mathscr{A}$ pulled back to $x$, and formal special endomorphisms are endomorphisms of the universal $p$-divisible group $\mathscr{A}[p^{\infty}]$ pulled back to $y$. Applying this to our setup of the formal neighbourhood $\Fpbar[\![t]\!]$ of $C$ at $x$, an element $\lambda\in \Lambda$ is an endomorphism of $\mathscr{A}[p^\infty]|_{\Fpbar[\![t]\!]}$. Then the punctual monodromy theorem roughly says that the purely formal data $\Lambda$ encodes information of global motivic cycles on $\mathscr{A}_C$: 
\begin{thm}[Theorem \ref{theorem: algebraic}]
    A full rank sublattice of $\Lambda$ propagates globally to endomorphisms of $\mathscr{A}[p^\infty]$ over $C$, and $C$ is contained in the special fiber of a special subvariety of codimension $\rk \Lambda$. 
\end{thm}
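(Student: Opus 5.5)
We approach the statement through $p$-adic (crystalline) monodromy. The goal is to show that formal special endomorphisms at a point, up to isogeny, extend over the whole curve. The relevant global object is the $F$-isocrystal, or equivalently over the ordinary locus the unit-root/étale part, attached to the universal $p$-divisible group $\mathscr{A}[p^\infty]|_C$. On the ordinary locus of $C$, the $p$-divisible group has a slope filtration $0\to \mathscr{G}^{\mu}\to \mathscr{G}\to \mathscr{G}^{\et}\to 0$. Special endomorphisms of $\mathscr{G}$ correspond to $F$-invariant sections of the crystalline realization $\mathbf{L}_{\cris}$ of the quadratic lattice, which sit inside the slope-$0$ part. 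Over $\Fpbar[\![t]\!]$, an element $\lambda\in\Lambda$ is therefore a horizontal, Frobenius-invariant section of $\mathbf{L}_{\cris}$ restricted to the formal disc.

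Step one is to identify $\Lambda\otimes\bQ_p$ with the subspace of the fiber $\mathbf{L}_{\cris,P}[1/p]^{F=1}$ fixed by the \emph{local} monodromy of the overconvergent/unit-root $F$-isocrystal on the punctured disc at $P$. This is Tate/de Jong style full faithfulness for $p$-divisible groups over $\Fpbar[\![t]\!]$: endomorphisms of $\mathscr{G}$ over the disc are the same as endomorphisms of the Dieudonné crystal. Step two is the punctual monodromy theorem itself. We show that the local monodromy group at $P$ (the Tannakian group of the restriction to the formal disc) contains a finite-index subgroup of the global monodromy group of the $F$-isocrystal on $C$. For this we would use Crew/Kedlaya's results on monodromy of overconvergent $F$-isocrystals together with the analogue of Chai's result that the unipotent (Serre–Tate) part of the ordinary local monodromy is as large as possible. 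Concretely, on the ordinary locus, Serre–Tate coordinates show that the local monodromy of the extension class is determined by the $\bZ_p$-span of the Serre–Tate parameters. Because $C$ is algebraic, the Zariski closure of the global monodromy is generated by its local pieces, and Kedlaya's semisimplicity and generic-point arguments let one compare local and global invariants. Consequently, local invariants coincide with global invariants after passing to a finite-index sublattice. Crew's theorem also gives that global invariants of a unit-root $F$-isocrystal on $C$ are finite étale-locally constant, and we can pass to a finite cover of $C$ to kill that.

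Step three: the global invariants give endomorphisms of $\mathscr{A}[p^\infty]|_C$ via de Jong's full faithfulness theorem for $p$-divisible groups over smooth bases. These lie in the special part, since $\Lambda$ consists of special ones and specialness is a closed horizontal condition. This produces a full-rank sublattice of $\Lambda$ which propagates to $C$.

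Step four deduces that $C$ lies in a special subvariety of codimension $\rk\Lambda$. The global special endomorphisms of $\mathscr{A}[p^\infty]$ cut out a sub-quadratic space $\Lambda^\perp$ of $\mathbf{L}$. Formally at every point, $C$ then lies in the formal locus where these endomorphisms deform. Using Serre–Tate theory at an ordinary point, this locus is a formal subtorus of codimension $\rk\Lambda$, and it is a translate of a formal torus that is the completion of a GSpin Shimura subvariety attached to a lattice in $\Lambda^\perp$ (via Kisin/Moonen's canonical coordinates). To go from formal to algebraic, we would use that $C$ is algebraic together with a Tate-linearity/Chai rigidity result: an irreducible algebraic subvariety of an ordinary Shimura variety whose formal germ lies in a formal subtorus is contained in the Zariski closure of a Hecke translate, i.e. the special fibre of a smaller special subvariety. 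The main obstacle is step two, namely proving that local monodromy at a single point captures global monodromy up to finite index. I would first try using Chai's rigidity for the Serre–Tate torus combined with the fact that the global monodromy group is reductive modulo its unipotent radical, arguing that local invariants are stable under the global group via Zariski density of Frobenius-torsion points.
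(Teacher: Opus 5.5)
\textbf{There is a genuine gap, and it sits in your Step 2.}

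\emph{The claim in Step 2 is false.} You assert that the local monodromy group at $P$ contains a finite-index subgroup of the global monodromy group. This fails already when $P$ is ordinary. There $\Fpbar[\![t]\!]$ is strictly henselian, so the graded pieces of the slope filtration are constant on $\Spf \Fpbar[\![t]\!]$ and the local monodromy is unipotent. The global monodromy of the unit-root part on $C$, by contrast, is typically open in an orthogonal group.

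\emph{What you actually need is a statement about invariants, not groups.} You only need local and global invariants to agree. These invariants are not controlled by the unit-root/étale monodromy but by the extension class of $\pdiv^{\et}$ by $\mu_{p^\infty}$. For the same reason, your Step 1 identification of $\Lambda\otimes\bQ_p$ with local invariants of the unit-root (or overconvergent) isocrystal on the punctured disc gives too large a space: at an ordinary $P$ it returns the whole slope-$0$ part.

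\emph{Your proposed fix does not supply an argument.} ``Generation of the global monodromy by its local pieces'' and ``Zariski density of Frobenius-torsion points'' are not established mechanisms. Serre--Tate coordinates are also unavailable at a non-ordinary $P$, which is the case the paper needs (supersingular points).

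\textbf{The missing idea.} The paper avoids monodromy groups and compares \emph{fields}. Let $K$ be the function field of $C$ and $L=\mathrm{Frac}\,\widehat{\cO}_{C,P}$ (the ``ordinary formal loop''). Let $\pdiv$ be the one-dimensional ordinary $p$-divisible group attached to $\bL$, twisted so that $\pdiv^{\mult}\simeq\mu_{p^\infty}$.
\begin{itemize}
\item By Kummer theory (Chai \cite{Ch03}), $\pdiv$ is encoded by a Galois-equivariant map $q(\pdiv)\colon T_p(\pdiv^{\et}_{K^{\sep}})\to\varprojlim_n K^{\sep,*}/(K^{\sep,*})^{p^n}$. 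This is functorial in the field.
\item The extension splits over a saturated étale sub exactly when $q$ vanishes on its Tate module. Hence $\rk\Lambda$ is the rank of $\ker q(\pdiv_L)$.
\item Because $L/K$ is separable, the maps $K^{\sep,*}/(K^{\sep,*})^{p^n}\to L^{\sep,*}/(L^{\sep,*})^{p^n}$ are injective. Therefore $\ker q(\pdiv_L)=\ker q(\pdiv)$.
\item The kernel $\ker q(\pdiv)$ is Galois-stable, so the corresponding étale sub is defined over $K$. This comparison of $\ker q$ is the entire ``punctual monodromy'' step.
\item Semisimplicity of the $p$-adic monodromy of $\pdiv^{\et}$ (from D'Addezio's parabolicity theorem \cite{MD20}) then splits this sub off as a direct factor of $\pdiv$ over $K$, up to finite index. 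That is where ``full rank sublattice'' comes from.
\end{itemize}

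Your Step 4 agrees with the paper's final step: pass to a formal subtorus of codimension $\rk\Lambda$ at a generic ordinary point, then apply the Tate-linearity result of \cite{Ruofan}.
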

In particular, the larger the lattice $\Lambda$, the more special the curve $C$. To prove this, we make crucial use of \cite{Ruofan}, work of Chai \cite{Ch03}, and the parabolicity conjecture proved by \cite{MD20}. A special case of this, where $P$ is ordinary, essentially follows from the mod $p$ log Ax--Lindemann conjecture for GSpin Shimura varieties, which is solved in \cite{Ruofan}. When $P$ is not ordinary, the problem becomes more subtle, since we don't have a theory of canonical coordinates for $\mathscr{S}^{/P}$. However, in our situation, we are able to circumvent the difficulty by taking an ``ordinary formal loop around $P$'' (which amounts to $\Spec \mathbb{F}(\!(t)\!)$) and use the idea of ``relative Serre--Tate coordinates''; cf. \cite{Ch03}. The process of taking a ``formal loop'' is the reason for the adjective ``\textit{punctual}''. 

It is also worth noting that in the previous works, we have proved analogues of this theorem when $C$ is not generically ordinary (see \cite{JJ23,JSY26}) under the much stronger assumption that  $\rk\Lambda$ equals the rank of the slope 0 part of the generic K3 crystal over $C$. 

\subsection{Basic Mumford--Tate}
In this section, we formulate a purely group theoretic positive characteristic analogue of the Mumford--Tate conjecture. The setting will be of subvarieties of Shimura varieties that intersect the basic locus. We first fix a prime $p$. We will let $\Ag$ denote the mod $p$ special fiber of the moduli space of principally polarized abelian schemes. When we use the term ``mod $p$ Shimura variety'', we implicitly fix we implicitly fix a prime $\frak{p} \mid p$ of the reflex field of a Shimura variety $S$, an integral model of the associated Shimura variety at $\mathfrak{p}$, and we let $S_{\fp}$ denote the mod $\frak{p}$ special fiber of this integral model. We say that $S$ is a Shimura subvariety with \emph{good reduction} if the integral model at $\frak{p}$ is an integral canonical model as per \cite{KM09} or \cite{BST}. In the setting of Hodge type, this is equivalent (by \cite{KM09}) to the level structure at $p$ being hyperspecial. When the context is clear, we will drop the subscript of $\fp$ and will let $S$ also refer to the mod $\fp$ Shimura variety.

\subsubsection{Supersingular points} We work in the setting of $\GSp(V)$, i.e. in the setting of subvarieties of $\Ag$ which intersect the supersingular locus. We let $V_{\et,\ell}$ denote the $\ell$-adic local system on $\Ag$ induced by $V$. Note that this is just the relative etale cohomology of the universal abelian scheme. Similarly, let $_{\cris}V$ denote the $F$-isocrystal induced by $V$. 

Let $X$ be a smooth connected variety that maps to $\cA_g$ and suppose that $x\in X$ maps to a supersingular point. We consider $V_{\et,\ell}|_X$. Taking the fiber at $x$, we obtain a representation
\[
\pi_{1,\et}(X,x)\acts V_{\et,\ell,x},
\]
and the induced monodromy representation
\[
\pi_{1,\et}(X,x)\longrightarrow \End(V_{\et,\ell,x}).
\]
Let $H_\ell$ denote the connected component of the monodromy image. $\End^0(A_x)$ admits a natural $\bQ_\ell$ structure
\[
\End^0(A_x)\subset \End^0_{\bQ_\ell}(V_{\et,\ell}).
\]
A similar crystalline construction can be made, with the Tannakian monodromy group of the Isocrystal $V_{\cris}|_X$ taking the place of $V_{\et,\ell}$ and the $\Qpun$-group $H_{p}$ in place of $H_{\ell}$.

Let $H$ denote the smallest subgroup of $\GL(\End^0(A_x))$ such that
\[
H_{\bQ_\ell}\supset H_\ell \ \forall \ell\neq p, H_{\Qpun} \supset H_p 
\]

We make the following conjecture
\begin{conj}[Supersingular Mumford-Tate]\label{conj: ss MT}
    We have $H_{\bQ_\ell} = H_\ell$, $H_{\Qpun} = H_p$.
\end{conj}

We prove the following case of our conjecture.
\begin{theorem}\label{thm: MT GSpin}
Suppose that $X$ is generically ordinary and its image is contained in a Shimura subvariety $S\subset \Ag$, where the Shimura datum defining $S$ is a $\GSpin$ group. Then Conjecture \ref{conj: ss MT} is true for $X$.
\end{theorem}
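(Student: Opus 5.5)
The plan is to identify each $H_\ell$ and $H_p$ with the base change of one explicit $\bQ$-group, built from the generic special endomorphisms of $X$ inside $\End^0(A_x)$. Since $H$ is the smallest $\bQ$-subgroup whose base changes contain every $H_\ell$ and $H_p$, it will then follow that $H$ equals this explicit group, and hence that $H_{\bQ_\ell}=H_\ell$ and $H_{\Qpun}=H_p$.

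\textbf{Step 1 (special endomorphisms).} Let $V$ be the quadratic space defining the $\GSpin$ datum of $S$. At the supersingular point $x$, the special endomorphisms form a $\bQ$-quadratic space $V_x\subset \End^0(A_x)$, and $\dim V_x=\dim V$ by the usual supersingular computation. Let $\eta$ be the generic point of $X$ and $L_\eta$ the lattice of special endomorphisms of $A_\eta$. By specialization, $L_\eta\otimes\bQ\subset V_x$ is a \emph{rational} subspace, so its orthogonal complement $W:=(L_\eta\otimes \bQ)^\perp\subset V_x$ is also defined over $\bQ$. Consider the $\bQ$-group $G:=\mathrm{Spin}(W)$, viewed inside $\GSpin(V_x)$ and acting on $\End^0(A_x)$ by conjugation (through the even Clifford action defining the embedding into $\GSp$). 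We aim to show
\[
H_\ell=G_{\bQ_\ell}\quad (\ell\neq p),\qquad H_p=G_{\Qpun}.
\]

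\textbf{Step 2 (monodromy is at most $G$).} Each element of $L_\eta$ is a global section of the relevant local systems, so the monodromy fixes $L_\eta\otimes\bQ_\ell$ pointwise. Hence the connected monodromy lies in the pointwise stabilizer of $L_\eta$ in $\GSpin$. After removing the central/weight part (which is a finite-index issue for the connected component), it lies in $\mathrm{Spin}(W)_{\bQ_\ell}$. The same argument with the $F$-isocrystal gives $H_p\subset G_{\Qpun}$.

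\textbf{Step 3 (monodromy is at least $G$).} For the reverse inclusion I would reduce to the case $L_\eta=0$, replacing $S$ by the special subvariety cut out by $L_\eta$; this is again of $\GSpin$ type, associated to $W$. Then:
\begin{itemize}
\item By the function-field Tate conjecture (Zarhin), the $\ell$-adic monodromy invariants in $V_{\bQ_\ell}$ are exactly $L_\eta\otimes\bQ_\ell$. So $H_\ell$ is a connected reductive subgroup of $\mathrm{Spin}(W)$ acting on $W_{\bQ_\ell}$ with no invariant vectors.
\item To force $H_\ell$ to be all of $\mathrm{Spin}(W)$, I would use generic ordinarity. On the ordinary locus, the slope filtration and Chai's rigidity of Tate-linear subvarieties in Serre--Tate coordinates, together with the mod $p$ Ax--Lindemann theorem of \cite{Ruofan}, show that the $p$-adic monodromy is the full parabolic-type group unless $X$ lies in a smaller special subvariety. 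This is exactly the input packaged in Theorem~\ref{theorem: algebraic}: the formal special endomorphisms propagate to global ones, so they would already lie in $L_\eta$.
\item D'Addezio's parabolicity theorem \cite{MD20} then yields $H_p=G_{\Qpun}$ for the overconvergent isocrystal.
\item The independence of $\ell$ of monodromy groups for compatible systems (via companions) transfers this to $H_\ell=G_{\bQ_\ell}$ for every $\ell\neq p$.
\end{itemize}

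\textbf{Step 4 (conclusion).} Since $G$ is defined over $\bQ$ and $G_{\bQ_\ell}=H_\ell$, $G_{\Qpun}=H_p$, minimality gives $H\subset G$. Conversely $G_{\bQ_\ell}=H_\ell\subset H_{\bQ_\ell}$ forces $G\subset H$. Therefore $H=G$, which is Conjecture~\ref{conj: ss MT} for $X$.

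The main obstacle is the lower bound in Step~3: proving that the crystalline monodromy is the full $\mathrm{Spin}(W)$ rather than some proper subgroup without invariants, such as a unitary group acting irreducibly. I would first try to exclude such subgroups by looking at the unipotent radical of the ordinary parabolic. Its Lie algebra is $W_0\otimes W_2$ in the slope decomposition of $W$, and it is all of it by the Serre--Tate/Tate-linear argument. The only connected reductive subgroup of $\mathrm{SO}(W)$ containing a full such unipotent radical and acting without invariants should be $\mathrm{SO}(W)$ itself.
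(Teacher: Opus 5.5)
\textbf{There is a genuine gap in Step~3.} The equality you are aiming for, $H_\ell=\mathrm{Spin}(W)_{\bQ_\ell}$ with $W=(L_\eta\otimes\bQ)^\perp$, is false in general, so the ``main obstacle'' you flag cannot be overcome.

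By the classification recalled in \S\ref{subsec: subshimura varieties}, the smallest special subvariety $S'$ containing $X$ can be of unitary type, or a $\Res_{F/\bQ}$ of an orthogonal group, without lying in any special divisor. Take $X$ to be a $\mathrm{U}(n,1)$-subvariety, or a curve in it, attached to an imaginary quadratic field in which $p$ splits. Then $X$ is generically ordinary and $L_\eta=0$, so $W=V_x$. Yet its monodromy is the unitary group, a proper subgroup of $\mathrm{Spin}(W)$ with no invariants on $W$.

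Two specific claims in your Step~3 fail in this example.
\begin{enumerate}
\item The unipotent part of the monodromy is not the full unipotent radical of the ordinary parabolic. Chai's Tate-linearity together with \cite{Ruofan} shows that the Serre--Tate germ of $X$ lies in a formal subtorus of dimension $\dim S'=n$, not $2n$.
\item Theorem~\ref{theorem: algebraic} does not say that formal special endomorphisms lie in $L_\eta$. It propagates them to endomorphisms of the $p$-divisible group over $X$, and shows that $X$ lies in a special subvariety of codimension $\rk\Lambda$. In the unitary example $\Lambda$ has rank $n$ while $L_\eta=0$.
\end{enumerate}
These non-special-divisor cases are exactly the ones the paper needs: Theorem~\ref{thm: Qbar algebraicity of formal special endomorphisms} with $F$ totally real or CM, and the $\mathrm{U}(1,n)$ case in the boundary analysis. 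Your argument only covers the case where $S'$ is an intersection of special divisors.

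\textbf{How to repair it.} Replace $L_\eta$ by the full $\bQ$-algebra $E$ of endomorphisms cutting out $S'$; special subvarieties are relatively PEL. The Mumford--Tate group of $S'$ is then the centralizer of $E$ in $\GSpin$.

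The lower bound should not be reproved via parabolicity and independence of $\ell$. The main theorem of \cite{Ruofan} directly identifies $H_\ell$ and $H_p$ with the base changes of this centralizer.

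Since $E\subset\End^0(A_x)$ compatibly with its \'etale and crystalline realizations, and since $I_x$ is a $\bQ$-form of $\GSpin$ at a basic point \cite{MKmodp}, the centralizer $I_{H,x}$ of $E$ in $I_x$ is a $\bQ$-group whose base changes are $H_\ell$ and $H_p$. In your notation, $I_x$ is essentially $\GSpin(V_x)$. Your Step~4 minimality argument then goes through with $I_{H,x}$ in place of $\mathrm{Spin}(W)$; this is precisely the paper's proof.
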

There is also a formulation of this conjecture when the mod $p$ Shimura variety does not intersect the supersingular locus of $\cA_g$. We postpone the formulation of this setting to Section \ref{sec: Mumford Tate}. The main ingredient in our proof is the work in \cite{Ruofan}, which proves that the monodromy $H_\ell$ is determined by the smallest Shimura subvariety containing $X$. 

\subsection*{Organization of the paper}
In Section 2, we define the various Shimura varieties in play. In Section 3, we recall the intersection-theoretic setup and reduce our main theorem to establishing local bounds. In Section 4, we prove the Punctual Monodromy Theorem. In section 5, we prove the supersingular Mumford-Tate conjecture in the setting of orthogonal Shimura varieties and use this to prove that the subspace of formal special endomorphisms at a point descend to an algebraic extension of $\overline{\bQ}$. In Section 6, we recall the local structure of a GSpin Shimura variety at closed points. In Section 7 and Section 8 we prove the key decay results over interior and boundary, respectively. In Section 9 we feed the algebraicity result proved in section 5 and the decay results proved in Section 7 and 8 to the mechinery of Diophantine approximation and obtain the desired control on the local intersection. We assemble all of these and finish the proof of our main theorem in Section 10.


\subsection*{Acknowledgements}
We are very grateful to Arul Shankar and Yunqing Tang for several valuable conversations on the $S$-integrality conjecture. The second author thanks Jacob Tsimerman for informing him about the results of Baker-Ih-Rumely and for suggesting that a version of that result in the setting of the modular curve and CM points should hold. We are also grateful to Keerthi Madapusi, Davesh Maulik, Salim Tayou and Ziquan Yang for helpful conversations. 
AS was supported by the NSF grant DMS-2338942 and a Sloan research fellowship.

\section{Preliminaries}\label{sec: prelim}
\subsection{GSpin Shimura varieties and their integral canonical models} \label{subsec: set up SV}
We will set up basic definitions and terminology for our Shimura varieties, special endomorphisms, and special divisors. Our exposition and setup will follow \cite[Section 2]{MST22} and \cite[Section2]{SSTT}, and we will refer to \cite{AGHMP18} and \cite{MP16}. 

Let $L$ be a quadratic lattice with signature $(b,2)$ which is self-dual at $p$. Write $V$ for $L_\bQ$. We may also assume that $L \subset V$ is maximal among those lattices contained in $V$ on which the bilinear form has integer values. Let $\Cl(L)$ denote the Clifford algebra associated to $L$. Note that we have a natural embedding of free $\bZ_{(p)}$-modules $L\otimes \bZ_{(p)} \hookrightarrow \Cl(L)\otimes \bZ_{(p)}$. Let $G:= \mathrm{GSpin}(L\otimes \bZ_{(p)})$ be the group of spinor similitudes. Note that $G$ is a reductive group over $\bZ_{(p)}$ and is naturally a subgroup of $\Cl(L \otimes \bZ_{(p)})^{\times}$.  Let $\cD $ be the Hermitian symmetric domain $\{ z \in V_\bC | \langle z, \overline{z} \rangle < 0 \}$. Then $(G, \cD)$ defines a spinor Shimura datum with reflex field $\bQ$. For every neat compact open subgroup $K \subseteq G(\bA_f)$, we obtain a Shimura variety $\ShimK$ over $\bQ$. 

Set $H = \mathrm{Cl}(L)$, viewed as a $\mathrm{Cl}(L)$-bimodule. Note that $H$ has a natural $\bZ/2 \bZ$-grading. Left multiplication of $\Cl(L)$ on $H$  induces spin representation $G \to \mathrm{GL}(H)$ and an embedding $L \into \Cl(L) \into \End (H)$. Equip $\End (H)$ with the pairing $(\alpha, \beta) := 2^{-\mathrm{rank\,} L} \mathrm{tr}(\alpha \circ \beta)$. Then the composite embedding $L \into \End(H)$ is isometric and we can form the orthogonal projection $\bm{\pi} : \End(H) \to L$. The spinor group $G$ can be conversely viewed as the stabilizer of the $\bZ / 2 \bZ$-grading, the right $\Cl(L\otimes \bZ_{(p)})$-action and the idempotent projector $\bm{\pi}$.

Suppose now that $K$ is of the form $K_p K^p$ for $K^p \subseteq G(\bA^p_f)$ and $K_p = G(\bZ_p)$, i.e., is hyperspecial at $p$. 
By \cite{KM09} (cf. \cite{MP16}) there is an canonical integral model $\integralShimK$ over $\bZ_{(p)}$. 
We may endow $H$ with a suitable symplectic form such that the left multiplication by $G$ on $H$ respects the form up to scaling, and indeed induces an embedding of Shimura data $(G, \cD) \into (\mathrm{GSp}, \mathcal{H}^\pm)$, where $\mathcal{H}^\pm$ is the associated Siegel half spaces. This Siegel embedding equips $\integralShimK$ with a universal abelian scheme $\cA$, called the \textit{Kuga-Satake abelian scheme}.\footnote{Technically, in \cite{KM09} and \cite{MP16}, $\cA$ is only defined as a sheaf of abelian schemes up to prime-to-$p$ quasi-isogeny. 
However, for $K^p$ sufficiently small, we can take $\cA$ to be an actual abelian scheme (cf. \cite[(2.1.5)]{KM09}).}  We use $\cA[p^\infty]$ to denote the $p$-divisible group associated to $\cA$. Throughout, we fix a choice of such level structure, and for brevity let $\cS$ denote the integral model $\integralShimK$. We denote the special fiber $\cS_{\bF_p}$ by $S$.

Define the sheaves $\bH_\mathrm{B}/\cS_{\bC}$, $\bH_\ell/\cS $ ($\ell \neq p$), $\bH_p/\ShimK$, $\bH_\dR/\cS$ and $\bH_\cris /S$ denote the (first relative) Betti cohomology, $\ell$-adic etale cohomology, $p$-adic etale cohomology, de Rham cohomology, and the crystalline cohomology of the universal abelian scheme. 
The abelian scheme $\cA$ is equipped with a ``CSpin-structure'': a $\bZ / 2 \bZ$-grading, $\mathrm{Cl}(L)$-action and an idempotent projector $\bm{\pi}_? : \End(\bH_?) \to \End(\bH_?)$ for $?\in \{B, \ell, p, \dR, \cris\}$ on (various applicable fibers of) $\cS$. We use $\bL_?$ to denote the images of $\bm{\pi}_?$. 


\begin{definition}\label{def: special end} We now use the sheaves $\bL_?$ to define the notions of special endomorphisms of points of $\cS$ and special divisors in $\cS$.
    \begin{enumerate}
        \item Given any $\cS$-scheme $T$, an endomorphism $f \in \End(\cA_T)$ is called a \emph{special endomorphism} (\cite[Def.~5.2, see also Lem.~5.4, Cor.~5.22]{MP16}) if all cohomological realizations of $f$ lie in the image $\bL_? \rightarrow \End(\bH_?)$ where $? = \mathrm{B}, \dR, \cris, \ell \neq p, p$. For brevity, we simply drop those subscripts that do not make sense.  For example, if $p$ is invertible in $T$, then $? = \cris$ doesn't make sense and if $p$ is not invertible then $? = p,\mathrm{B}$ do not make sense. 
    
    We denote the submodule of $\End(\cA_T)$ of special endomorphisms by $L(\cA_T)$. By \cite[Lem. 5.2]{MP16}, for $v\in L(\cA_T)$, we have $v\circ v=[Q(v)]$ for
some $Q(v) \in \bZ_{\geq 0}$,  and $Q(v)$ is a positive definite quadratic form on the $\bZ$-lattice $L(\cA_T)$.
\item For $m\in \bZ_{>0}$, the \textit{special divisor} $\cZ(m)$ is the Deligne–Mumford stack over $\cS$ with
functor of points $\cZ(m)(T) = \{v\in L(\cA_T)\, |\,Q(v) = m\}$ for any $\cS$-scheme $T$. We use the
same notation for the image of $\cZ(m)$ in $\cS$. By  \cite[Prop. 4.5.8]{AGHMP18}, $\cZ(m)$ is an
effective Cartier divisor flat over $\bZ_{(p)}$ and hence $\cZ(m)_{\bF_p}$
is still an effective Cartier divisor of
$S$. We denote $\cZ(m)_{\bF_p}$ by $Z(m)$.

    \item Suppose that $p$ is not invertible in $T$. We say that $f\in \End(\cA_T[p^{\infty}])$ is a \emph{formal special endomorphism} if its crystalline realization lies in $\bL_{\cris}$. We denote the $\bZ_p$-submodule of $\End(\cA_T[p^{\infty}])$ of formal special endomorphisms by $\cL(\cA_T)$.
\end{enumerate}
     \end{definition}

\begin{remark}
    Consider a connected scheme $T$ and a $T$-valued point of $\cS$. An endomorphism $f\in \End(\cA_T)$ or $\End(\cA_T[p^{\infty}])$ being special can be checked at any geometric point of $T$ (\cite[Proposition 4.3.4]{AGHMP18}). Further if $x\in T_{\bF_p}$ is a geometric point, then $f$ is special if and only if $f_{t,\cris} \in \bL_{\cris}$. 
\end{remark}

\subsection{Shimura subvarieties of $\ShimK$.}\label{subsec: subshimura varieties}
Work of Fiori \cite[Theorem 3.6]{F18} has a clean and complete classification of sub-Shimura varieties of $\ShimK$. Roughly speaking, every Shimura subvariety of $\ShimK$ is induced by a morphism of Shimura daya $(H,\cD') \to (G,\cD)$, where $(H,\cD')$ (upto modifying the center of $H$) falls in one of the following two cases. 
\begin{enumerate}
    \item $H$ is the restriction of scalars from a totally real field of an orthogonal group that is compact at all but a single place, and has signature $(b',2)$ at the last place. We note that this includes every \emph{Special Divisor}, where the totally real field is $\bQ$, and we have $b' = b-1$. 
    \item $H$ is the restriction of scalars of a unitary group defined over a totally real field. The unitary group is defined by a Hermitian form having signature $(b',1)$ at a single real place, and is positive definite at every other real place. 
\end{enumerate}

For a level $K_H$ compatible with $K$, we denote the corresponding Shimura subvariety by $\ShimKprime$.
In both cases, there is a number field $F$ that acts on $L$ and therefore on $\bL_? \otimes \bQ$. In the first case, $F$ is the totally real field mentioned above. In the second case, $F$ is a totally imaginary (and therefore CM) extension of the totally real field mentioned above. In both cases, $F$ will be the reflex field of the Shimura datum $(H,\cD')$. Suppose that $\ShimKprime$ is not contained inside any special divisor as defined in the previous subsection. Then, the action of $F$ on $L$ allows us to view $L$ as an $F$-vector space. Further, given any complex point $x \in \ShimKprime$, the module of special endomorphisms $L_x$ at $x$ will be an $F$-subspace of $\bL_{B,x} \otimes \bQ$. Specifically, when restricted to $\ShimKprime$, there is an $F$-vector space $W$ and a canonical isomorphism $L \simeq \textrm{Res}_{F/\bQ} W$ and the $F$-action on $\bL_?$ is induced by this isomorphism. Further, $L_x \subset \bL_{B,x}\otimes \mathbb{Q}$ has the form $\textrm{Res}_{F/\bQ} W'$ where $W'\subset W$ is an $F$-subspace.

It is known (for example, by work of Mayinski \cite{mayinski}) that all these Shimura subvarieties are relatively PEL inside $\ShimK$, i.e. each Shimura subvariety of $\ShimK$ is defined by the locus in $\ShimK$ of points at which the Kuga-Satake abelian variety has specific extra endomorphisms.

Now, for such a Shimura subvariety $\ShimKprime \subset \ShimK$, its reflex field $F$, we define an integral model $\integralShimKprime$, denoted by $\cS'$, by taking the closure of $\ShimKprime$ in $\cS_{\mathcal{O}_{F}}$. 
Consider a prime $p>2$ as above, and a place $\mathfrak{p}$ of $F$ dividing $p$. We denote the special fiber $\cS'_{\mathcal{O}_F/\mathfrak{p}}$ by $S'$. By the main theorem of \cite{Noot}, we have that the formal completion of $\cS'$ at any closed ordinary point $x\in\cS'$ is a finite union of formal subtorii of the formal completion of $\cS$ at $x$. Here, we say that $x$ is ordinary if it is ordinary when considered as a point of $\cS$. 

The above discussion apply verbatim to special subvarieties of $\ShimK$; by definition, a special subvariety is a Hecke translation of a Shimura subvariety; cf. \cite{BMI98}.  One of the main results of \cite{Ruofan} is that any minimal special subvariety whose integral model contains an irreducible generically ordinary subvariety $X_0 \subseteq \cS_{\bF_q}$ is determined by the $\ell$-adic monodromy of $\bH_{\ell}|_{X_0}$ (or the overconvergent $p$-adic monodromy of $\bH_\cris|_{X_0}$): 
\begin{theorem}
    Suppose $X_0$ is a geometric connected smooth 
 variety over $\mathbb{F}_q$ with a morphism $f_0$ into $\cS_{\mathbb{F}_q}$, whose image lies generically in the ordinary locus. Define $G_\mathrm{B}(f_0)$ as the generic Mumford--Tate group of $\bH_{\mathrm{B}}$ over a minimal special subvariety whose integral model contains the image of $f_0$\footnote{Minimal special subvarieties having this property are Hecke translates of each other and share the same generic Mumford--Tate group.}. Then $G_\mathrm{B}(f_0)\otimes \bQ_l$ (resp. $G_\mathrm{B}(f_0)\otimes \bQ_p$) identifies with the neutral component of the $l$-adic monodromy group of $\bH_l|_{X_0}$ 
 (resp. $p$-adic monodromy group of $\bH_{\cris}|_{X_0}$). 
\end{theorem}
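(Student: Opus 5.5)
The plan is to prove the two inclusions $H_\ell^\circ \subseteq G_\mathrm{B}(f_0)\otimes\bQ_\ell$ and $H_\ell^\circ \supseteq G_\mathrm{B}(f_0)\otimes\bQ_\ell$ separately, where $H_\ell$ denotes the $\ell$-adic monodromy group of $\bH_\ell|_{X_0}$. Along the way we transfer between $\ell$ and $p$ using companions.

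\emph{Easy inclusion.} Let $S'\subset S_K(G,\cD)$ be a minimal special subvariety whose integral model contains $f_0(X_0)$. Its generic Mumford--Tate group $G_\mathrm{B}(f_0)$ is the stabilizer of a finite collection of Hodge tensors $s_\alpha$ on $\bH_\mathrm{B}$. By the description of Section~\ref{subsec: subshimura varieties}, $S'$ is relatively PEL, so we may take these tensors to be the extra endomorphisms cutting out $S'$, together with the CSpin-structure. By \cite{KM09} these tensors have \'etale and crystalline realizations over the integral model that are flat (horizontal, Frobenius-invariant) sections. Their pullbacks to $X_0$ are therefore fixed by monodromy, which gives $H_\ell^\circ\subseteq G_\mathrm{B}(f_0)_{\bQ_\ell}$, and likewise for the overconvergent $p$-adic monodromy.

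\emph{Reduction to $p$.} By Lafforgue, the geometric monodromy is reductive. Using companions (Abe, Kedlaya) together with the independence of $\ell$ of monodromy groups (Chin, D'Addezio, Pál), the groups $H_\ell^\circ$ and $H_p^\circ$ are forms of a single reductive group, inside $G_\mathrm{B}(f_0)$, with the same invariant tensors. It therefore suffices to treat the crystalline case. We will show that if $H_p^\circ$ were a proper subgroup, its extra invariant tensors would produce a smaller special subvariety containing $f_0(X_0)$, contradicting minimality.

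\emph{Hard inclusion.} Pick an ordinary point $x\in X_0$ and use Serre--Tate coordinates on $\cS^{/x}$, a formal torus. By D'Addezio's parabolicity theorem \cite{MD20}, the monodromy of the slope filtration is a parabolic subgroup of $H_p$, and its unipotent radical is read off from the formal germ $X_0^{/x}$. More precisely, Chai's rigidity \cite{Ch03} shows that the smallest formal subtorus $T\subseteq\cS^{/x}$ containing $X_0^{/x}$ has cocharacter lattice equal to $\Lie$ of that unipotent radical. This identifies $H_p$ with the group of the Tate-linear germ $T$.

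The main obstacle is the Tate-linear statement: showing that $T$ is the formal germ of a special subvariety, namely the mod-$p$ log Ax--Lindemann statement in the GSpin setting. For this I would use Fiori's classification \cite{F18} together with Noot's theorem \cite{Noot}. The $H_p$-invariants in $\End(\bL_\cris)$ determine a number field $F$ acting on $L$ (totally real or CM, as in the two cases of Section~\ref{subsec: subshimura varieties}). One then shows, via canonical lifts at ordinary points and the Tate conjecture for ordinary abelian varieties, that this $F$-action is realized by actual endomorphisms of the Kuga--Satake abelian scheme over $X_0$. Such endomorphisms place $f_0(X_0)$ inside a special subvariety whose Mumford--Tate group is the stabilizer of these tensors, namely $H_p^\circ$. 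By minimality of $S'$ this gives $G_\mathrm{B}(f_0)=H_p^\circ$, and hence $G_\mathrm{B}(f_0)_{\bQ_\ell}=H_\ell^\circ$.
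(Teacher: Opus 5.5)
The paper does not prove this statement. It imports it from \cite{Ruofan}, where it is the monodromy counterpart of the mod $p$ (log) Ax--Lindemann theorem for GSpin Shimura varieties. Your architecture is sensible:
the easy inclusion via the endomorphisms and CSpin tensors cutting out the special subvariety;
the transfer between $\ell$ and $p$ via companions (what is actually available is independence of neutral components and of dimensions of tensor invariants, which together with the easy inclusion suffices);
and the use of \cite{MD20} and \cite{Ch03} to match the smallest formal subtorus $T\supseteq X_0^{/x}$ with the unipotent radical of the convergent monodromy.
You also locate the crux correctly.

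\textbf{The crux is asserted, not proved.} The sentence \emph{such endomorphisms place $f_0(X_0)$ inside a special subvariety whose Mumford--Tate group is the stabilizer of these tensors, namely $H_p^\circ$} is the whole theorem. The stabilizer of the extra endomorphisms is the connected centralizer $Z_{\GSpin}(E)^\circ$, which contains $H_p^\circ$, and nothing in your argument forces equality. Two reductive groups with the same commutant in $\End(\bL_{\cris})$ (or in $\End(\bH_{\cris})$) need not coincide, since a reductive group is not determined by its invariants in tensor degree $(1,1)$; think of $G_2\subset \mathrm{SO}_7$. Your contradiction formulation (\emph{extra invariant tensors of a proper $H_p^\circ$ would produce a smaller special subvariety}) presupposes that every $H_p$-invariant tensor is motivic. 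That is a Tate conjecture in arbitrary tensor degree, which is not available.

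To close this you would need a Zarhin-type classification of $H_p^\circ$. One route is to use the abelian, K3-type unipotent radical supplied by parabolicity to show that the component of $H_p^\circ$ on $\bL_{\cris,v_0}$ is the full orthogonal or unitary group over its commutant. The unit-root summands $\bL_{\cris,v}$, $v\neq v_0$, need a separate argument. The germ $T$ records only the unipotent radical on the $v_0$-part and says nothing about the monodromy of these $p$-adic Galois representations. So \emph{this identifies $H_p$ with the group of the Tate-linear germ $T$} is also an overstatement.

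\textbf{The origin of the number field $F$ is not justified.} The $H_p$-invariants in $\End(\bL_{\cris})$ form a $\bQ_p$-algebra with no a priori $\bQ$-structure. They also live in $\bH^{\otimes(2,2)}$, whereas the Tate theorems for abelian varieties concern only tensors of degree $(1,1)$.

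Canonical lifts at ordinary points and Tate's theorem over finite fields cannot supply the missing global structure. The canonical lift of an ordinary point is CM, so its endomorphism algebra is far larger than the part that extends over $X_0$; pointwise data cannot single out $F$. The global input has to be a Tate-type theorem over the function field of $X_0$: Zarhin--Mori $\ell$-adically, de Jong crystalline. This identifies the monodromy invariants in $\End(\bH)$ with $\End^0(\cA_{X_0})\otimes\bQ_\ell$ (resp.\ $\otimes\bQ_p$). You would then still have to relate these, through the Clifford structure, to the commutant of $H_p^\circ$ on $\bL$, and match the result against the list in \cite{F18}.

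As written, the hard inclusion restates the theorem rather than proving it.
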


\subsection{Toroidal compactifications}\label{sub:toroidalSpin}
We very briefly review toroidal compactification and their integral models for GSpin Shimura varieties. This is just a special case of a more general theory (cf. \cite{MP19}), which we don't intend to recall.

Let $L$ be a quadratic $\bZ$-lattice of signature $(2,b)$ which is self-dual at $p$. As explained earlier, the GSpin Shimura datum $(\GSpin(L_\bQ),\cD)$
 admits a hyperspecial level structure at $p$, and admits a Kuga--Satake embedding into a Siegel Shimura $ (\mathrm{GSp}, \mathcal{H}^\pm)$ with hyperspecial level structure at $p$. 
 The corresponding GSpin Shimura variety admits an integral canonical model $\cS$ over $\bZ_p$ with a Kuga--Satake map $\iota:\cS\hookrightarrow \mathscr{A}_g$ into the integral canonical model of a Siegel modular variety. 

Let $\Sigma^{\ddagger}$ be a finite, smooth and complete admissible \textbf{rational polyhedral cone decomposition (rpcd)} for $(\mathrm{GSp}, \mathcal{H}^\pm)$. Then $\mathscr{A}_g$ admits a smooth projective integral toroidal compactification $\mathscr{A}_g^{\Sigma^\ddagger}$ over $\bZ_p$; cf. \cite{MP19}. The restriction of $\Sigma^{\ddagger}$ to $(\GSpin(L_\bQ),\cD)$ is again a finite, smooth complete admissible rpcd, and will be denoted by $\Sigma$. By \cite{MP19}, $\cS$ admits a smooth projective integral toroidal compactification $\cS^{\Sigma}$ over $\bZ_p$, which admits a map $\cS^{\Sigma}\rightarrow \mathscr{A}_g^{\Sigma^\ddagger}$ extending $\iota$, realizing $\cS^{\Sigma}$ as the normalization of the closure of $\im \iota$.  By abuse of notation, this map between compactifications is again denoted by $\iota$. We denote the special fiber $\cS^\Sigma_{\bF_p}$ by $S^\Sigma$. 

\subsubsection{Description of boundary strata} Our reference is \cite{Tay22}. Given $(\GSpin(L_\bQ),\cD)$ as above, a \textbf{cusp label representatives (lcr)} $\Phi$ is a triple $(P_\Phi,\cD^\circ,h)$, where $P_\Phi\subseteq \GSpin(L_\bQ)$ is an admissible parabolic. Let $U_\Phi$ be the unipotent radical of $P_\Phi$, and let $W_\Phi$ be its center. Inside $W_\Phi(\bR)$ there are convex cones $C_\Phi\subseteq C_\Phi^*$, see \cite[\S 2.2]{Tay22} for a description. An admissible rpcd $\Sigma$ as in the last section is an assignment $\Sigma:\Phi\rightarrow \Sigma_\Phi$ where $\Sigma_\Phi$ is a rpcd of the cone $C_\Phi^*$ consists of rational polyhedral cones $\sigma\in W_\Phi(\bR)$ satisfying certain compatibility conditions.

The boundary strata of $\cS^{\Sigma}$ are label by equivalent classes of pairs $(\Phi,\sigma)$, where $\Phi$ is an lcr and $\sigma\in \Sigma_\Phi$ is a rational polyhedral cone whose interior is contained in $C_\Phi$. We denote a boundary stratum by $\cB^{\Phi,\sigma}$. Here is a description of $\cB^{\Phi,\sigma}$: The lcr $\Phi$ determines an integral mixed Shimura variety $\cM_\Phi$, which sits in a tower of mixed Shimura varieties $\cM_\Phi\xrightarrow{p_1} \overline{\cM}_\Phi\xrightarrow{p_2} \cM_\Phi^h$, where $\cM_\Phi^h$ is pure, $p_1$ is a torsor under a torus with cocharacter group $\Gamma_\Phi$ (which is a $\bZ$-lattice in $W_\Phi(\bQ)$), and $p_2$ is a torsor under an abelian scheme. The datum $\sigma$ determines a partial compatification (twist toric embedding) $\cM_\Phi\hookrightarrow \cM_\Phi(\sigma)$ relative over $\overline{\cM}_{\Phi}$, whose boundary component $\cZ_\Phi(\sigma)$ corresponding to $\sigma$, after quotienting a finite group $\Delta_\Phi(\sigma)$, gives $\cB^{\Phi,\sigma}$. In fact, more is true: the formal completion $(\cS^\Sigma)^{/\cB^{\Phi,\sigma}}$, after quotienting the finite group $\Delta_\Phi(\sigma)$, is isomorphic to $\cM_\Phi(\sigma)^{/\cZ_\Phi(\sigma)}$. This is used in \textit{loc.cit} to describe the local structure of a boundary point.  

We can further classify boundary strata into two types, according to the property of $P_\Phi$: \begin{description}
    \item[Type II ]   If $P_\Phi$ is the stabilizer of
a primitive isotropic plane $J_\bQ\subseteq L_\bQ$, then $\Phi$ is said
to be of type II. Following the convention of \textit{loc.cit}, such $\Phi$ is denoted by $\varUpsilon$. There is a unique one dimensional ray $\sigma\in \Sigma_\varUpsilon$, and the corresponding boundary stratum $\cB^{\varUpsilon,\sigma}$ is called a boundary stratum of type II, which is a locally closed divisor in $\cS^\Sigma$. In this case, the tower of mixed Shimura varieties
$\cM_\varUpsilon\rightarrow \overline{\cM}_\varUpsilon\rightarrow \cM_\varUpsilon^h$ admits the following description: $\cM_\varUpsilon^h$ is a modular curve, $p_1$ is of relative dimension 1, and $p_2$ is a torsor under a Kuga--Sato abelian scheme of relative dimension $b-1$. The partial compactification $\cM_\varUpsilon\hookrightarrow\cM_\varUpsilon(\sigma)$ is fiberwise just $\bG_m\hookrightarrow \mathbb{A}^1$. 
\item[Type III] If $P_\Phi$ is the stabilizer of
a primitive isotropic line $I_\bQ\subseteq L_\bQ$, then $\Phi$ is said
to be of type III. Such $\Phi$ is denoted by $\Xi$. 
In the tower of mixed Shimura varieties $\cM_\Xi\rightarrow \overline{\cM}_\Xi\rightarrow \cM_\Xi^h$, $\overline{\cM}_\Xi= \cM_\Xi^h$ is of relative dimension zero over $\bZ_p$. The 
map $\cM_\Xi\rightarrow \overline{\cM}_\Xi$ is a torsor under a torus of rank $b$ cocharacter group $\Gamma_\Xi$. Let $\bar{x}$ be an $\Fpbar$-valued point of the special fiber of $\overline{\cM}_\Xi$. The fiber of the partial compactification $\cM_\Xi\hookrightarrow\cM_\Xi(\sigma)$ over $\bar{x}$ is $\Spec \Fpbar[q_\alpha]_{\alpha\in \Gamma_\Xi^\vee}\hookrightarrow \Spec \Fpbar[q_\alpha]_{\alpha\in \Gamma_\Xi^\vee,\,\la \alpha,\sigma\ra\geq 0}$. While there can be multiple different choices of $\sigma$, of particular importance is the case where  $\sigma$ is an one dimensional inner ray. In this case, the partial compactification $\cM_\Xi\hookrightarrow\cM_\Xi(\sigma)$ is fiberwise $\mathbb{G}_m^{b}\hookrightarrow \mathbb{G}_m^{b-1}\times \bA^1$, and $\cB^{\Xi,\sigma}$ is a locally closed divisor in $\cS^\Sigma$. 
\end{description}
\section{Arithmetic intersection theory}\label{sec:arithmetic}

In this section, we will set up the intersection-theoretic framework required to prove Theorem \ref{mainShimura}. We will also reduce the proof of Theorem \ref{mainShimura} to Theorem \ref{thm: main version 2}. 

\subsection{The global Eisenstein series and arithmetic intersection theory}\label{subsub:gEs}
Let $L$ be as in Section \ref{subsec: set up SV}. Let $q_L(m)$ denote the $m$-th Fourier coefficient of the zero component of the vector-valued Eisenstein series attached to $L$, of weight $1+\frac{b}{2}$ and with constant term $\mathfrak{e}_0$, as in \cite[\S 2.1]{Bru17}. See \cite[Theorem 11]{BK01} for an explicit expression of $q_L(m)$ in terms of local densities. From the expression one can deduce that $|q_L(m)|\asymp m^{\frac{b}{2}}$ for $m$ representable by $(L,Q)$. See also \cite[\S 7.3]{MST22}.

Let $\overline{Z(m)}$ be the Zariski closure of $Z(m)$ in $S^\Sigma$. Let $C$ be a proper smooth connected curve with a non-constant map $C\rightarrow S^\Sigma$ whose image lies generically in $S$. The following result relates the intersection number $C\cdot\overline{Z(m)}$ and the coefficients $q_L(m)$:
\begin{proposition}[\cite{Tay22}, Proposition 4.10]\label{prop: global bound}
 We have $ C\cdot \overline{Z(m)}=|q_L(m)|(C\cdot \omega) + o(m^{\frac{b}{2}})$, where $\omega$ is the tautological line bundle over $S^{\Sigma}$.
\end{proposition}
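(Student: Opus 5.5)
The plan is to use Borcherds' modularity theorem in its form for toroidal compactifications. This places the generating series of special divisor classes inside a space of vector-valued modular forms, and then we read off Fourier coefficients.

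\textbf{Step 1: modularity of the generating series.} Consider the formal series
\[
\Phi(\tau)=-\omega\,\mathfrak{e}_0+\sum_{m>0}\big[\overline{Z(m)}\big]^{\mathrm{tot}}q^m ,
\]
valued in $\mathrm{Pic}(S^\Sigma)_\bQ\otimes\bC[L^\vee/L]$. Here $[\overline{Z(m)}]^{\mathrm{tot}}$ denotes the Zariski closure corrected by a combination of boundary divisors, namely the type II and type III strata $\cB^{\Phi,\sigma}$. Following Bruinier--Zemel and Bruinier's integral-model version, as used in \cite{Tay22}, the boundary-corrected series is a vector-valued modular form of weight $1+\frac b2$ for the Weil representation. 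This holds on $\cS^\Sigma$ over $\bZ_p$, and hence on the special fiber $S^\Sigma$ after restriction. Pairing with the curve $C$, which is a linear functional on $\mathrm{Pic}$, gives a scalar-valued (or vector-valued) modular form
\[
f_C(\tau)=\sum_m \big(C\cdot[\overline{Z(m)}]^{\mathrm{tot}}\big)q^m .
\]

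\textbf{Step 2: Eisenstein plus cusp decomposition.} Write $f_C=(C\cdot\omega)E+g$. Here $E$ is the Eisenstein series with constant term $\mathfrak e_0$, whose coefficients are $-q_L(m)$ (hence $|q_L(m)|$), and $g$ is a cusp form. We match the constant terms because the constant term of $f_C$ is $-(C\cdot\omega)\mathfrak e_0$. The standard Hecke bound gives that the coefficients of $g$ are $O(m^{\frac{b}{4}+\frac12})$, which is $o(m^{b/2})$ once $b\ge 3$. For small $b$ we would use Deligne or Iwaniec-type bounds, or treat those cases separately.

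\textbf{Step 3: removing the boundary correction.} $[\overline{Z(m)}]^{\mathrm{tot}}$ differs from $\overline{Z(m)}$ by boundary divisors whose multiplicities are given by representation numbers of $m$ by the definite lattices attached to each cusp: $J^\perp/J$ of rank $b-2$ for type II, and theta-type counts on $I^\perp/I$ of rank $b$ for type III with explicit weights. Since $C$ meets the boundary in finitely many points, $C\cdot(\text{correction})$ is a fixed linear combination of these counts. We then bound them. For type II, representation numbers of a rank $b-2$ positive lattice are $O(m^{(b-2)/2+\epsilon})=o(m^{b/2})$. The type III corrections arise from lower-dimensional lattice sums and are similarly $O(m^{(b-1)/2+\epsilon})$.

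\textbf{Main obstacle.} I expect the main obstacle to be Step 3, namely precisely identifying the boundary multiplicities on the special fiber and showing they grow strictly slower than $m^{b/2}$. The type III cusps are the delicate case, because the corrections there involve the cone $\sigma$ and the local toric coordinates. I would first try to compute them in the local charts $\cM_\Xi(\sigma)$ described in Section~\ref{sub:toroidalSpin}. There, $\overline{Z(m)}$ is cut out locally by equations $q_\alpha=\cdots$, and counting its components through a boundary point reduces to counting lattice vectors of norm $m$ in a rank $\le b-1$ lattice.
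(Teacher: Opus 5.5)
The paper does not prove this statement; it quotes it from \cite{Tay22}. Your architecture is the natural route: boundary-corrected modularity on the toroidal compactification, an Eisenstein/cusp splitting, then showing the boundary corrections are $o(m^{b/2})$. Steps 1--2 are fine up to details. Over $\Fpbar$ you need the relation on the integral model $\mathcal{S}^\Sigma$ (Howard--Madapusi), and you need the special fibre of the flat closure of $\mathcal{Z}(m)$ to be $\overline{Z(m)}$, with no vertical boundary components. Also, the sign should be $f_C=-(C\cdot\omega)E+g$, with $q_L(m)<0$. The real problem is Step 3: it carries all the content, and as written it is wrong.

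\emph{Type II.} The boundary multiplicities are not representation numbers of $m$; they come from Weyl vectors of Borcherds products. For a type II divisor $B_J$, Borcherds' formula gives (up to normalization) $\mathrm{ord}_{B_J}\Psi(F)=\tfrac1{24}\,\mathrm{CT}\bigl(F\,\Theta_{J^\perp/J}\,E_2\bigr)$. So each $c(-m)$ contributes $\tfrac1{24}[q^m](\Theta_{J^\perp/J}E_2)$. Since $\Theta_{J^\perp/J}E_2$ is quasimodular of weight $1+\tfrac b2$, this is of exact order $m^{b/2}$, the same size as the main term.

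Smallness appears only after a cancellation. The $c(0)/24$ term is there because $\Psi(F)$ is a section of the canonical extension of $\omega^{c(0)/2}$. Rewriting it via $c(0)=-\sum_m c(-m)q_L(m)$ cancels the Eisenstein part. What remains is $\mathrm{mult}_J(m)=\tfrac1{12}\bigl([q^m](\Theta_{J^\perp/J}E_2)-q_L(m)\bigr)$. By holomorphic projection this equals $\tfrac{2m}{b-2}\,r_{J^\perp/J}(m)$ up to cusp-form coefficients.

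So the estimate is a cancellation tied to the specific extension of $\omega$ in the statement (the one carrying the Hasse invariant), and that is what you have to prove. Your bound is also off. For $b=3$ ($\overline{\mathcal{A}}_2$, where $\mathrm{div}\,\chi_{10}$ gives $10\lambda=2\overline{H_1}+\delta$), the correction is $\asymp m$ whenever $r_{J^\perp/J}(m)\neq 0$, not $O(m^{1/2+\epsilon})$. It is still $o(m^{3/2})$.

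\emph{Type III.} $I^\perp/I$ has signature $(b-1,1)$, so it has infinitely many vectors of norm $m$, and ``theta-type counts'' on it are meaningless without further restriction. The multiplicity along $B_{\Xi,\sigma}$ is the pairing of a Weyl vector with the ray $\sigma$. The bound $O(m^{(b-1)/2+\epsilon})$ must be extracted from Borcherds' formula, with the same care about the $c(0)$ term; you assert it without derivation.

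\emph{Your fallback.} Counting branches of $\overline{Z(m)}$ through a boundary point in a toric chart computes the local numbers $l_P(m)$. That is how the paper proves Theorem~\ref{thm: main version 2}(3). It does not give the multiplicities of boundary divisors in the modular generating series, which are global quantities. To get those geometrically, you would have to intersect the modular relation with complete curves in boundary strata on which $\omega$ is trivial. Examples are fibres of $B_J$ over the modular curve, or proper curves in $B_{\Xi,\sigma}$. There the branches are weighted by degrees growing with $m$, which is where the extra factor of $m$ above comes from.
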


\subsection{Bounds on local intersections}\label{subsub:maintech}Let $C$ be a proper smooth connected  curve with a non-constant map $C\rightarrow S^\Sigma$ whose image lies generically in the ordinary locus of $S$. 
\begin{defn}
\label{def: intersect at P}
We call a point $P\in C$ a \textbf{boundary point} if it does not map to $S$, and an \textbf{interior point} otherwise. We call an interior point $P\in C$ a \textbf{supersingular point} (resp. \textbf{non-supersingular point}, resp. \textbf{ordinary point}) if it maps to the supersingular locus (resp. {non-supersingular locus}, resp. ordinary locus). The set of supersingular points on $C$ is denoted by $C_{\mathrm{ss}}$. We write $i_{P}(C \cdot \overline{Z(m)})$ for the intersection multiplicity of $C$ and $\overline{Z(m)}$ at $P$. When $C$ is clear from the context, we also write $l_P(m):=i_{P}(C \cdot \overline{Z(m)})$ and call it the local intersection at $P$. 

Let $t$ be a local parameter at $P$, and $\cA_n := \cA_{\Spec k[\![t]\!]/(t^n)}$ is the pullback of $\cA$ to the $n$th infinitesimal neighbourhood of $C$ at $P$. Define $L_{P,n}=L(\mathscr{A}_n)$. It follows directly from the moduli interpretation of $Z(m)$ that \begin{equation}\label{eq:local intersection defn}
    l_P(m)= \sum_{n\geq 1} \#\{v\in L_{P,n}: Q(v) = m \}. 
\end{equation}
\end{defn}


The following result is the main ingredient for Theorem~\ref{mainShimura}:  
\begin{theorem}\label{thm: main version 2}
    Let the setup be as above. Suppose that $C$ does not lie on any special divisor. Let $m$ be coprime to $p$. Then the following are true: \begin{enumerate}
        \item There exists a constant $\alpha < 1$ independent of $m$ such that  $$\sum_{P\in C_{\mathrm{ss}}} l_P(m) < \alpha |q_L(m)| (C\cdot\omega) +o(m^{\frac{b}{2}}).$$
        \item If $P$ is a nonsupersingular interior point, then $l_P(m)=o(m^{\frac{b}{2}})$. 
        \item If $P$ is a boundary point that maps to a stratum labeled by $(\Phi,\sigma)$ with $\sigma$ an one dimensional inner ray, then $l_P(m)=o(m^{\frac{b}{2}})$.
    \end{enumerate} 
\end{theorem}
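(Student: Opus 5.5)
The plan is to analyze the local intersection $l_P(m)=\sum_{n\ge1}\#\{v\in L_{P,n}:Q(v)=m\}$ point by point. The quantity to control is how deep into the tower $L_{P,1}\supset L_{P,2}\supset\cdots$ a vector of norm $m$ can survive.

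\textbf{Splitting the count.} I would fix a cutoff $N(m)$, for example $N(m)=m^{\epsilon}$ or $N(m)=c\log m$, and write
\[
l_P(m)=\sum_{n\le N(m)}\#\{v\in L_{P,n}:Q(v)=m\}+\sum_{n>N(m)}\#\{v\in L_{P,n}:Q(v)=m\}.
\]
Two ingredients then handle the two pieces.

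\emph{Decay.} Vectors in $L_{P,n}$ satisfy $v\in L_{P,1}\cap\bigl(p^{\lfloor\log_p n\rfloor}L_{P,1}\otimes\bZ_p+\Lambda\bigr)$ up to bounded error, where $\Lambda$ is the lattice of formal special endomorphisms at $P$. This follows from the Grothendieck--Messing and Serre--Tate description of the deformation space at $P$ (Section 6). At boundary points one uses the Taylor $q$-expansion description of $\cM_\Phi(\sigma)$ (Section 8).

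\emph{Diophantine step.} Suppose $v\in L_{P,1}$ with $Q(v)=m$ is $p$-adically within $p^{-k}$ of $\Lambda\otimes\bQ_p$. By Theorem~\ref{thm:introQbar}, $\Lambda\otimes\bQ_p$ is defined over a number field. Since $C$ lies in no special divisor, Theorem~\ref{theorem: algebraic} shows $\Lambda\otimes\bQ_p$ contains no nonzero rational vector. The $p$-adic Schmidt subspace theorem then gives $p^{-k}\gg m^{-A}$ for some constant $A$, so $k\ll\log m$. Hence $v\in L_{P,n}$ forces $\log n\ll\log m$, and the tail $n>N(m)$ is empty once $N(m)$ is a suitable power of $m$.

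\textbf{Counting for each item.} With the tail removed, $l_P(m)$ is a sum over $n\le N(m)$ of representation numbers of $m$ by the sublattices $L_{P,n}$.

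\emph{Item (2), non-supersingular points.} The rank of $L_{P,1}$ is at most $b$. Its orthogonal complement in the relevant $\bZ_p$-crystal is nontrivial, and the decay shows $L_{P,n}$ has discriminant growing like a power of $n$. Summing the trivial bound $\#\{v\in L_{P,n}:Q(v)=m\}\ll m^{(\rk-1)/2+\epsilon}/\mathrm{covol}$ over $n$ gives $o(m^{b/2})$, because $\rk L_{P,1}\le b$ and the decay makes the series converge.

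\emph{Item (3), boundary points.} The same argument applies, using the boundary decay.

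\emph{Item (1), supersingular points.} Here $L_{P,1}$ has rank $b+2$, so its representation numbers are of genuine size $m^{b/2}$. The count over the first $n$ with $L_{P,n}=L_{P,1}$ must be compared to $|q_L(m)|$ via Siegel--Weil and local densities, and decay in $n$ then gives a geometric series. Following \cite{MST22} and \cite{JSY26}, I would get $\sum_{P}l_P(m)\le\sum_P\sum_n r_{L_{P,n}}(m)$. Using the discriminant growth $\mathrm{disc}\,L_{P,n}\ge p^{2\lfloor\log_p n\rfloor}$ and the ratio of local densities at $p$ relative to $q_L(m)$, this sum is bounded by $\alpha|q_L(m)|(C\cdot\omega)$, where the constant $\alpha<1$ comes from the comparison in \cite{MST22}. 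The new input is again that the large-$n$ tail is absent for a single $m$, not just on average.

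\textbf{Main obstacle.} The hardest step is the Diophantine one: converting algebraicity of $\Lambda\otimes\bQ_p$ into a uniform polynomial bound on $k$ in terms of $m$. This requires a height-uniform, subspace-theorem-type estimate for approximating an algebraic subspace by rational vectors, and it must be compatible with the nonlinear decay from the local structure. I would first try a Liouville-type bound via the norm form of the number field $F$. I would fall back on Schlickewei's $p$-adic subspace theorem if the rank of $\Lambda$ exceeds 1.
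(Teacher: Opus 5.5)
Your proposal has a genuine gap in how it handles $n$ below the cutoff. Your Diophantine step is used only to truncate: it shows that no norm-$m$ vector survives past $n\approx Dm^{(r+1)/2+\epsilon}$, with $r=\rk\Lambda$. Everything below that cutoff is then handled through representation numbers of the $L_{P,n}$. This fails because the cutoff is a fixed positive power of $m$, not $m^{\delta}$ with $\delta$ small.

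In item (1), $r_{L_{P,n}}(m)$ is an Eisenstein coefficient \emph{plus} a cuspidal coefficient. The local-density comparison of \cite{MST22} controls only the Eisenstein part. The available bounds on the cuspidal part grow polynomially in the discriminant, hence in $n$: the paper has $g_n(m)\le c_0D_n^{N_0}m^{0.3b}$ with $D_n\ll n^4$. These bounds can be summed against $m^{b/2}$ only for $n<m^{\delta}$ with $\delta$ tiny. Even a hypothetical level-uniform Deligne-type bound $m^{b/4+\epsilon}$ per term, summed over $n\le m^{(r+1)/2}$, is not $o(m^{b/2})$ once $r+1\ge b/2$. Your proposal says nothing about the cuspidal part on the range $m^{\delta}\le n\le m^{(r+1)/2+\epsilon}$. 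In item (2), the bound $m^{(\rk-1)/2+\epsilon}/\mathrm{covol}$ is not valid for a general lattice. If the covolume growth is concentrated in one or two directions, the number of norm-$m$ vectors need not drop at all, so covolume growth from decay alone gives nothing.

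The missing idea is to split at $n=m^{\delta}$, as in Lemmas~\ref{lm:head} and~\ref{tail}, and to use the subspace theorem \emph{quantitatively} on the whole range $n\ge m^{\delta}$, not just to truncate. In the sharp form of Theorem~\ref{thm:SubspaceEndo}(2), your Diophantine statement is a lower bound $Q(v)\ge p^{k/c}$, with $c=(r+1)/2$, for every nonzero $v\in L_P\la k\ra=L_{P,1}\cap(\mathfrak{V}+p^kL_{P,1}\otimes\overline{\bQ})$. This bounds \emph{all} successive minima from below. A Siegel-reduced count then bounds $\#\{v\in L_P\la k\ra:Q(v)=m\}$ by $m^{(d-2)/2+\epsilon}$ times a negative power of $p^{k}$ (Theorem~\ref{thm:SubspaceEndo}(3)). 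One weighs this against the roughly $Dp^{k}$ values of $n$ for which $L_{P,n}$ is only known to lie in $L_P\la k\ra$ (Corollary~\ref{cor:control_subspace}). That gives Theorem~\ref{thm:tailestimate}, hence $l_P^{\ge\delta}(m)=o(m^{b/2})$ at supersingular points, and item (2) by taking $\delta=0$.

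Two further steps are unsupported.
\begin{enumerate}
\item The uniform decay you quote is not a formal consequence of Grothendieck--Messing at non-ordinary points. The paper proves that eventual decay implies eventually rapid decay (Theorem~\ref{thm:eventually_rapid_decay}). It does this via the explicit Frobenius computation $d(pv)=h_0+p\,d(v)$ for $d(v)\ge h_0$, together with a compactness argument.
\item At boundary points there is no algebraicity theorem with which to run your Diophantine step. The paper instead uses punctual monodromy and the fact that, apart from special divisors, only unitary Shimura subvarieties over imaginary quadratic fields meet the boundary. Hence $\Lambda$ is isotropic, and every vector of $\Lambda+pL(\cQ_o)\otimes\bZ_p$ has norm divisible by $p$. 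Since $p\nmid m$, this gives a uniform lifting bound $N$ (Theorem~\ref{thm:globalanalysis}), and $\rk L(\cQ_o)\le b-1$ finishes item (3). Your proposal never uses $p\nmid m$, which shows this step is missing.
\end{enumerate}
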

\begin{proof}[Proof of Theorem \ref{mainShimura} assuming Theorem \ref{thm: main version 2}]  It suffices to show that for any infinite collection $\Delta$ of natural numbers coprime to $p$, the set $\bigcup_{m\in\Delta}(C\cap \overline{Z(m)})(\Fpbar)$ is infinite. We will obtain a contradiction by assuming that there are only finitely many points $P_1\hdots P_n$ of $C$ contained in $\bigcup_{m\in \Delta}\overline{Z(m)}$. By possibly switching to a finer cone decomposition, we can assume that all boundary points on $C$ lies in a stratum described in Theorem \ref{thm: main version 2}(3). Let $m\in \cT$ and let $m\rightarrow \infty$. Summing up the estimates in Theorem \ref{thm: main version 2} and applying Proposition \ref{prop: global bound}, we get 
$$\sum_{P\in \{P_1,...,P_n\}} i_{P}( C\cdot \overline{Z(m)})<\alpha \, C\cdot \overline{Z(m)} +o(m^{\frac{b}{2}}).$$
This yields the desired contradiction.  \end{proof}

The rest of the paper will be devoted to proving Theorem~\ref{thm: main version 2}. In the next subsection, we will reduce Theorem \ref{thm: main version 2}(1) (which is the hardest part) to an estimate of its ``tail''.


\subsection{The head and tail of local intersections}\label{sub: intersection numbers} Let $C\rightarrow S^\Sigma$ be as in the last subsection. For $P\in C$, let $h_P$ be the local intersection multiplicity of $C$ with the non-ordinary locus. Let $g_P(m)=\frac{h_P}{p-1}|q_L(m)|$
be the \textbf{global intersection number of $C$ and $Z(m)$ at $P$}; cf. \cite[Definition  7.6]{MST22}.  Since the non-ordinary locus is the vanishing locus of a section of $\omega^{p-1}$, we have 
\begin{equation}\label{eq:g_Pandglobal}
(C\cdot\omega)|q_L(m)| = \sum_{P\in C}g_P(m).
\end{equation} 

\begin{defn}\label{defn:head and tail}
   Let $\delta\geq 0$. We define the \textbf{$\delta$-head} of $l_P(m)$ to be the sum $$l^{<\delta}_P(m)=\sum_{n< m^\delta}\#\{v\in L_{P,n}:Q(v)=m\}.$$
    We define the
    \textbf{$\delta$-tail} of $i_P(m)$ to be the sum $$l^{\geq \delta}_P(m)=\sum_{n\geq  m^\delta}\#\{v\in L_{P,n}:Q(v)=m\}.$$
\end{defn}

We state two lemmas on the head and tail for a supersingular point. 
\begin{lemma}\label{lm:head} Let $P$ be a  
supersingular point on $C$. There exist a $\delta>0$ and an $\alpha<1$, both independent of $m$, such that $l_P^{<\delta}(m)<\alpha g_P(m)+ o(m^{\frac{b}{2}})$.
\end{lemma}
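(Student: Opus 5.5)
We follow the strategy of \cite[Section 7]{MST22} and \cite{SSTT}, where the head of the local intersection at a supersingular point is compared with the global contribution $g_P(m)$ using the explicit decay of the lattices $L_{P,n}$.

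\textbf{Reduction to lattice counting.} Fix a supersingular point $P$ with local parameter $t$, and let $L_{P,1}$ be the special endomorphism lattice of the supersingular fiber. It has rank $b+2$, is positive definite, and has discriminant a power of $p$ times a bounded factor. For each $n$, $L_{P,n}\subseteq L_{P,1}$. Write
\[
l^{<\delta}_P(m)=\sum_{n<m^\delta} r_{L_{P,n}}(m),
\]
where $r_\Lambda(m)$ is the number of vectors of norm $m$ in $\Lambda$. The first step is to invoke the decay results for supersingular points, proved by Grothendieck--Messing/Kisin deformation theory of the formal neighbourhood as in \cite[Section 6]{MST22}. These say there is an explicit function $a(n)$, with $a(n)\approx h_P n/(p-1)$ up to bounded error once $n$ is comparable to $p^{k}h_P$, such that $L_{P,n}\otimes\bZ_p$ is contained in a sublattice whose index in $L_{P,1}\otimes\bZ_p$ is at least $p^{\lfloor \text{something like } \log_p n\rfloor}$. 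The structure is the standard one: the lattice $L_{P,n}\otimes\bZ_p$ contains a rank-$(b+2-2)$ part that decays to the $\Lambda$ direction plus a rank-two part that decays at rate governed by $h_P$.

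\textbf{Counting via Siegel--Weil / local densities.} Since $r_\Lambda(m)$ for a positive definite lattice $\Lambda$ is the sum of a main term, namely the product of local densities, and a cusp-form error of size $O(m^{(b+2)/4})=o(m^{b/2})$ (with a constant depending on $\Lambda$), we proceed as follows for $n<m^\delta$. The lattices $L_{P,n}$ have discriminant at most $p^{O(\log m^\delta)}$, that is polynomial in $m^\delta$. The cusp-form error, summed over all $n<m^\delta$, is then $O(m^{\delta}\cdot m^{(b+2)/4+c\delta})$, which is $o(m^{b/2})$ once $b\ge 3$ and $\delta$ is small. Small $b$ would need the usual separate treatment, via Duke-type bounds, as in \cite{MST22}. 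The main terms are $q_L(m)$ times the ratio of the $p$-adic local densities of $L_{P,n}$ and of $L$, with the away-from-$p$ densities unchanged because $m$ is prime to $p$.

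\textbf{Main obstacle: comparing with $g_P(m)$.} The key computation, and the step I expect to be hardest, is to show that
\[
\sum_{n\ge1}\frac{\delta_p(L_{P,n},m)}{\delta_p(L,m)}\le \alpha\,\frac{h_P}{p-1}
\]
with $\alpha<1$ uniform in $m$ prime to $p$. Since $m$ is a $p$-adic unit, the local density $\delta_p(L_{P,n},m)$ is computed from the reduction of $L_{P,n}\otimes\bZ_p$. It decays like $p^{-k}$ when $n$ is in the range where $L_{P,n}$ has been cut down $k$ times. Summing the geometric series over the ranges $n\in[h_P p^{k-1}/(p-1)\cdots]$ reproduces $h_P/(p-1)$ times a factor. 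The fact that $P$ is supersingular (rather than merely non-ordinary) gives a strict saving, because $L_{P,1}\otimes\bZ_p$ is non-split with the supersingular Frobenius structure. This is exactly the computation of \cite[Theorem 7.? / Section 7.3]{MST22} giving $\alpha<1$. I would redo that computation, checking that only the first $m^\delta$ levels contribute, so no information about formal special endomorphisms (the tail) is needed.
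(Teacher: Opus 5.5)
Your strategy matches the paper's. You write the head as a sum of representation numbers, split each theta series into Eisenstein and cuspidal parts, and take $\sum_{n\ge1}q_n(m)/|q_L(m)|<\alpha h_P/(p-1)$ from \cite[Proposition 7.17]{MST22}. You then kill the cuspidal parts for $n<m^\delta$ with a coefficient bound polynomial in the discriminant.

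\textbf{The gap: the discriminant bound for $L_{P,n}$.} As written, your upper bound on $\mathrm{disc}(L_{P,n})$ is not justified. The decay lemmas (\cite[Theorem 5.2]{MST22}, and \cite[Theorem 6.2]{MST22} in the superspecial case, which does not have your ``rank-two'' shape) only say that $L_{P,n}\otimes\bZ_p$ is \emph{contained} in an explicit lattice. That bounds $\mathrm{disc}(L_{P,n})$ from below, not from above. It therefore gives no control of the level-dependent cusp-form constant, which is exactly what the head estimate needs.

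The paper avoids this by passing to an explicit overlattice $L_{P,n}\subseteq L'_{P,n}$. This overlattice is maximal away from $p$, with $L'_{P,n}\otimes\bZ_p=\Lambda_n\oplus p^{\nu_n}\Gamma$ in the non-superspecial case. The paper then bounds $\#\{v\in L_{P,n}:Q(v)=m\}\le r_n(m)$, the coefficient of the theta series of $L'_{P,n}$. This makes $D_n\le c_1p^{4\log_p n}$ explicit, and the Eisenstein coefficients are exactly those summed in \cite[Proposition 7.17]{MST22}.

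If you prefer to keep $L_{P,n}$ itself, you need two extra inputs:
\begin{enumerate}
\item A lower containment $p^{\lceil\log_2 n\rceil}L_{P,1}\subseteq L_{P,n}$. This follows from Grothendieck--Messing across the square-zero steps $k[\![t]\!]/(t^{2s})\to k[\![t]\!]/(t^{s})$: the obstruction to lifting is additive and lies in a $k$-vector space, so $p$ kills it. It makes $\mathrm{disc}(L_{P,n})$ polynomial in $n$.
\item Monotonicity of the Siegel main term under inclusion of lattices. This lets the Eisenstein part of $L_{P,n}$ be dominated by that of the overlattice supplied by the decay lemma.
\end{enumerate}

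\textbf{Smaller points.} The prime-to-$p$ local factors cancel because the lattices agree with $L$ away from $p$, which the paper enforces by taking $L'_{P,n}$ maximal away from $p$. It is not because $p\nmid m$.

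Hecke's bound $O(m^{(b+2)/4})$ gives $o(m^{b/2})$ only for $b\ge3$. The paper instead uses Deligne's bound for $b$ even and \cite{waibel18} for $b$ odd, with polynomial level dependence as in \cite[Proposition 9.1.5]{MAT}. This gives $g_n(m)\le c_0D_n^{N_0}m^{0.3b}$ uniformly, so any $\delta<\frac{b}{5(4N_0+1)}$ works.
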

\begin{lemma}\label{tail} Let $P$ be a  
supersingular point on $C$. Then for any $\delta>0$, $l_P^{\geq \delta}(m)=o(m^{\frac{b}{2}})$.
\end{lemma}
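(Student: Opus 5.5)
We must prove Lemma \ref{tail}: for a supersingular point $P$ and any $\delta>0$, the tail $l_P^{\geq\delta}(m)$ is $o(m^{b/2})$. The plan is to combine three inputs: the algebraicity of the formal special endomorphisms (Theorem \ref{thm: Qbar algebraicity of formal special endomorphisms}), a decay result describing how $L_{P,n}$ shrinks towards $\Lambda=\bigcap_n (L_{P,n}\otimes\bZ_p)$, and a $p$-adic subspace theorem.

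\textbf{Step 1 (decay).} Set $L=L_{P,1}$ and $\Lambda\subset L\otimes\bZ_p$ as in the introduction. We aim for a bound of the shape: there are constants $c>0$ and $N_0$ such that for all $n\ge N_0$,
\[
L_{P,n}\subseteq \{v\in L : v\in \Lambda + p^{\lfloor c\log_p n\rfloor}(L\otimes\bZ_p)\}.
\]
In words, a special endomorphism lifting to order $n$ is $p$-adically within $n^{-c}$ of $\Lambda$. This is the interior decay statement to be proved in Section 7 from the local structure in Section 6, using the Dieudonné/Grothendieck--Messing description of lifting endomorphisms over $\Fpbar[[t]]/(t^n)$.

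\textbf{Step 2 (Diophantine input).} By Theorem \ref{thm: Qbar algebraicity of formal special endomorphisms}, $\Lambda\otimes\bQ_p$ is defined over a number field. Since $C$ lies on no special divisor, Theorem \ref{theorem: algebraic} shows $\Lambda\otimes\bQ_p$ contains no nonzero rational vector; otherwise that vector would algebraize and force $C$ into some $Z(m')$. Now apply the $p$-adic Schmidt subspace theorem (Schlickewei) to the linear forms cutting out $\Lambda\otimes\bQ_p$, which have algebraic coefficients, together with the archimedean norm. This gives: for every $\epsilon>0$, all but finitely many $v\in L$ satisfy
\[
|\mathrm{dist}_p(v,\Lambda)|_p\ \ge\ \|v\|^{-\kappa-\epsilon},
\]
for some fixed exponent $\kappa$ depending only on $\rk L$ and $\rk\Lambda$. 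The exceptional vectors lie in finitely many proper rational subspaces, and these are handled by induction on dimension, restricting $\Lambda$ to each such subspace. The case $\Lambda=0$ is trivial, since then $L_{P,n}=0$ for large $n$ by Step 1.

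\textbf{Step 3 (conclusion).} Take $v\in L_{P,n}$ with $Q(v)=m$ and $n\ge m^\delta$. Then $\|v\|\asymp m^{1/2}$. Combining Steps 1 and 2 gives $n^{-c}\gtrsim m^{-(\kappa+\epsilon)/2}$, so $n\lesssim m^{(\kappa+\epsilon)/(2c)}$; in other words $n$ is at most polynomial in $m$. Hence
\[
l_P^{\ge\delta}(m)\le \sum_{m^\delta\le n\le m^{A}}\#\{v\in L_{P,n}:Q(v)=m\}.
\]
We then need the count of representations by $L_{P,n}$ to decay in $n$. For $n\ge m^\delta$, Step 1 forces $v$ into the sublattice $L\cap(\Lambda+p^{k}L_{\bZ_p})$ with $p^k\approx m^{c\delta}$. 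This sublattice has covolume growing like a positive power of $m$ in the directions transverse to $\Lambda$. Representation counts by a positive definite lattice of rank $r\le b+1$ then satisfy $\#\{v:Q(v)=m\}\ll m^{r/2-1+\epsilon}/\mathrm{covol}+m^{(r-1)/2+\epsilon}$. Summing over the $O(m^A)$ values of $n$, and dyadically grouping $n$ by the value of $k$ so that there are only $O(\log m)$ distinct lattices, yields $o(m^{b/2})$. This uses $\rk L\le b+1$, which holds because $P$ is supersingular and the special endomorphism lattice there has rank $b+2$; the head/tail split needs a strict power saving.

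\textbf{Main obstacle.} The hardest step is Step 1, the effective decay rate of $L_{P,n}$ towards $\Lambda$ at a supersingular point, where no Serre--Tate coordinates exist. It must be at least polynomial in $n$ with an explicit exponent, so that it meshes with the subspace theorem. I would first try to obtain it from the explicit deformation-theoretic equations for lifting endomorphisms, either along the formal loop $\Fpbar((t))$ (reducing to relative Serre--Tate coordinates over the ordinary generic fiber as in Section 4) or via the $t$-adic valuations of the Frobenius matrix in Section 6.
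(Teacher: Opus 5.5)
Your Steps 1 and 2 use the same ingredients as the paper. Step 1 is Corollary~\ref{cor:control_subspace}, with an explicit linear rate ($c=1$ in your notation). Step 2 is Theorem~\ref{thm: Qbar algebraicity of formal special endomorphisms} combined with Theorem~\ref{thm:SubspaceEndo}(1). So, given those results, the real work in this lemma is the counting in Step 3, and there the argument fails.

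First, $l_P^{\geq\delta}(m)$ counts each $v$ with $Q(v)=m$ once for every $n\geq m^{\delta}$ with $v\in L_{P,n}$. Grouping the $n$ by $k$ reduces the number of distinct lattices to $O(\log m)$, but it does not remove this multiplicity. The group indexed by $k$ contributes up to $\asymp p^{k/c}\cdot\#\{v\in L\langle k\rangle:Q(v)=m\}$ with $p^{k/c}\gtrsim m^{\delta}$, where $L\langle k\rangle=L\cap(\Lambda+p^kL\otimes\bZ_p)$. So you need a count that decays in $k$ faster than $p^{-k/c}$, uniformly in $k$, with total $o(m^{b/2})$.

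The bound you quote cannot supply this. Its second term $m^{(\rk-1)/2+\epsilon}$ does not depend on $k$ and already exceeds $m^{b/2}$. Moreover $\rk L_{P,1}=b+2$ at a supersingular point; this is $d=b+2$ in the paper, and your claim $\rk L\le b+1$ contradicts your own justification. So even the trivial bound is $m^{b/2+\epsilon}$ per lattice. The term $m^{\rk/2-1}/\mathrm{covol}$ is only the genus (Eisenstein) heuristic. Once the discriminant $p^{O(k)}$ is a power of $m$, the cuspidal part is no longer controlled, which is exactly why Lemma~\ref{lm:head} is confined to $n<m^{\delta}$. A large covolume alone also does not stop a skewed lattice from having many vectors on the sphere $Q=m$.

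The missing idea is to use the subspace theorem not only to cut off $n$, but to bound the \emph{minimum} of every $L\langle k\rangle$ from below, as in Theorem~\ref{thm:SubspaceEndo}(2). Every nonzero $v\in L\langle k\rangle$ has $Q(v)\gtrsim p^{2k/(r+1)}$, where $r=\rk\Lambda$; your Step 2 already contains this, but you never use it. Then every diagonal entry $a_i$ of a Siegel-reduced Gram matrix of $L\langle k\rangle$ is $\gtrsim p^{2k/(r+1)}$. Fixing $x_3,\dots,x_d$ and using the $m^{\epsilon}$ bound for binary forms gives $\#\{v\in L\langle k\rangle:Q(v)=m\}\ll m^{\epsilon}\prod_{i\geq 3}(1+\sqrt{m/a_i})$. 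This decays in $k$ in all $d-2$ free directions and has no $k$-independent term.

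Summing this against the weight $Dp^{k}$ over $\frac{\delta}{2}\log_p m\le k\le(\frac{r+1}{2}+\epsilon)\log_p m$ is Theorem~\ref{thm:tailestimate}. The sum can only be closed by playing $d=b+2$ against an upper bound on $r$. That bound comes from Theorem~\ref{theorem: algebraic}: $C$ lies in a Shimura subvariety of codimension $r$, so $r\le b-1$. Your proposal never bounds $\rk\Lambda$ and leaves $c$ unspecified, and without both the conclusion does not follow.

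When you redo this bookkeeping, note that the reduced-basis argument actually gives $m^{\frac{d-2}{2}+\epsilon}p^{-\frac{k}{r+1}(d-2)}$, which is half the exponent printed in Theorem~\ref{thm:SubspaceEndo}(3). With that exponent the weighted sum has exponent $1-\frac{b}{r+1}$ per step. This is negative only for $r\le b-2$, so the extremal case $r=b-1$ needs a sharper lower bound on the minima.

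A smaller slip: if $\Lambda=0$ it is false that $L_{P,n}=0$ for large $n$, since the $L_{P,n}$ all have full rank. What Step 1 gives is $L_{P,n}\subseteq pL_{P,1}$ for large $n$, and this contains no vector of norm $m$ when $p\nmid m$.
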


Combining (\ref{eq:g_Pandglobal}), Theorem~\ref{thm: main version 2}(1) follows readily from the two lemmas.

In the following, we prove Lemma~\ref{lm:head} using the main technical results of \cite{MST22}. On the other hand, the proof of Lemma~\ref{lm:head} will be postponed to \S\ref{sec:pvofmain}. 

\begin{proof}[Proof of Lemma~\ref{lm:head}] 
For $n\geq 1$, we enlarge $L_{P,n}$ to a lattice $L_{P,n}\subseteq L'_{P,n} \subset L_{P,n}\otimes \bQ$ maximal away from $p$ and that the decay of $L'_{P,n}$ matches the description given by the decay lemmas (\cite[Theorem 5.2]{MST22} for the supersingular but not superspecial case, and \cite[Theorem 6.2]{MST22} for the superspecial case). For example, in the supersingular but not superspecial case, \cite[Theorem 5.2]{MST22} implies that there is a rank 2 saturated submodule $\Gamma\subseteq L_{P,1}\otimes \bZ_p$ that decays rapidly. Define $L_{P,n}'$ to be maximal away from $p$ and $L_{P,n}'\otimes \bZ_{p}=\Lambda_n\oplus p^{\nu_n}\Gamma$, where $\Lambda_n$ is a direct sum complement of $\Gamma$ and $\nu_n\geq 0$ is biggest integer that $[h_P(1+p+...+p^{\nu_n})p^{-1}]+1\leq n$. One need to carefully choose $\Lambda_n$ so that $L_{P,n}'\supseteq L_{P,n}$, which is always possible. We don't require that $L'_{P,n+1}\subseteq L'_{P,n}$.

Let $\theta_{n}$ denote the theta series attached to $L_{P,n}'$ and write its $q$-expansion as $\theta_{n}(q)=\sum_{m\geq 0}r_n(m)q^m$. Then $\#\{v\in L_{P,n}:Q(v)=m\}\leq r_n(m)$ by definition. Decompose $\theta_n(q)=E_{n}(q)+G_n(q)$, where $E_{n}$ is an Eisenstein series and $G_n$ is a cusp form. Let $q_{n}(m)$ and $g_n(m)$ be the $n$-th Fourier coefficient of $E_{n}$ and $G_n$, respectively. It follows from the computation in \cite[Proposition 7.17]{MST22} that 
$$\sum_{n\geq 1}\frac{q_{n}(m)}{|q_L(m)|} < \alpha\frac{h_P}{p-1}$$
for some constant $\alpha<1$ independent of $m$. In particular, $ \sum_{n\geq 1}q_{n}(m)<\alpha g_P(m)$. To conclude the theorem, it suffices to show that there exists a $\delta>0$ such that $\sum_{n<m^{\delta}}g_n(m)=o(m^{\frac{b}{2}})$. 

For this, one uses a similar argument as in \cite[Proposition 9.1.5]{MAT} to show that there are  constants $N_0,c_0$ only depending on $b$ such that $g_{n}(m)\leq c_0D_{n}^{N_0}m^{0.3b}$, where $D_{n}$ is the discriminant of $L_{P,n}'$ (the key is the uniform bounds on cusp forms: for $b$
 even, one uses Deligne's bound and for $b$ odd, one uses  \cite{waibel18}). On the other hand, decay lemmas imply that $D_n\leq c_1p^{4\log_p n}$ for some constant $c_1$ independent of $n$ (for example, in the supersingular but not superspecial case as above, $D_n=p^{4\nu_n}$). Therefore for any $\delta>0$ and $n<m^\delta$, we have $$g_{n}(m)\leq c_0c_1^{N_0} m^{4N_0\delta+0.3b}.$$
 Suffices to choose $\delta <\frac{b}{5(4N_0+1)}$. 
\end{proof}

\section{Punctual monodromy theorem}
In this section, we prove a local-global theorem in the context of special endomorphisms. Specifically, we prove the following result. 

\begin{theorem}\label{theorem: algebraic}
Let $(X,x)$ be a smooth pointed variety over $\Fpbar$ and let $f:(X,x)\rightarrow S^\Sigma$ be a morphism whose image lies generically in the ordinary stratum. If the pullback $p$-divisible group over $X$ admits a formal special endomorphism over $X^{/x}$, then the image of $X$ is contained in a Shimura subvariety. Further, let $\Lambda$ denote the module of formal special endomorphisms on this pullback. Then, the smallest Shimura subvariety containing $X$ has codimension equal to rank $\Lambda$. 
\end{theorem}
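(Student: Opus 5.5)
We plan to reduce the statement to the mod $p$ log Ax--Lindemann theorem and the monodromy theorem of \cite{Ruofan} (the theorem recalled in \S\ref{subsec: subshimura varieties}), by moving from the possibly non-ordinary point $x$ to an ordinary ``formal loop'' around it.

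\textbf{Step 1: reduction to a curve through $x$.} First cut $X$ by a general smooth curve through $x$ that meets the ordinary locus; its formal completion at $x$ is $\Spf \Fpbar[\![t]\!]$. The module $\Lambda$ restricts injectively to this curve, since the crystalline realization is determined on any formal curve. The minimal Shimura subvariety containing $X$ is detected by monodromy, and monodromy does not shrink for a general curve section (Lefschetz-type theorem for $\ell$-adic monodromy). So it suffices to treat the case where $X$ is a curve and $X^{/x}=\Spf \Fpbar[\![t]\!]$.

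\textbf{Step 2: passing to the punctual formal loop.} Over the generic point of $X^{/x}$ the $p$-divisible group is ordinary. We base change to $\Spec \mathbb{F}(\!(t)\!)$, with $\mathbb{F}$ a perfection or algebraic closure of the residue field, and then to a point $\bar\eta$ with perfect residue field. Following Chai's relative Serre--Tate theory \cite{Ch03}, the deformation of the ordinary $p$-divisible group along the loop has canonical coordinates: $\mathscr{A}[p^\infty]$ restricted to $\Spec\mathbb{F}(\!(t)\!)$ sits in an extension of étale by multiplicative pieces. An endomorphism $\lambda\in\Lambda$ then preserves this slope filtration, and the equations it imposes are linear in the relative Serre--Tate coordinates. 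Consequently, the orbit of the loop in the Serre--Tate formal torus at a nearby ordinary point lies in a formal subtorus of codimension at least $\rk\Lambda$ cut out by $\Lambda$. Spreading from $\mathbb{F}(\!(t)\!)$ to a genuine ordinary point $y$ of $X$ near $x$ is the delicate part. To do it, we would use that the crystalline realizations of $\Lambda$ are horizontal sections of $\bL_\cris$ over $X^{/x}$. These extend along the connection to the overconvergent isocrystal on an open of $X$, by the parabolicity theorem \cite{MD20} (horizontal sections over a formal disc of a Frobenius-invariant subobject extend). Doing so gives a rank $\rk\Lambda$ sub-$F$-isocrystal of $\bL_\cris|_X$ of slope zero. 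Then $X^{/y}$ lies in a formal subtorus of codimension $\rk\Lambda$.

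\textbf{Step 3: from formal linearity to a Shimura subvariety.} We apply the mod $p$ log Ax--Lindemann theorem of \cite{Ruofan}: if $X^{/y}$ lies in a formal subtorus at an ordinary point, then the Zariski closure of $X$ lies in a special subvariety whose formal germ is that subtorus. Equivalently, by the monodromy theorem of \S\ref{subsec: subshimura varieties}, the $p$-adic monodromy $G_B(f_0)\otimes\bQ_p$ fixes the sub-isocrystal from Step 2. This gives containment in a Shimura subvariety of codimension $\geq\rk\Lambda$. For the reverse inequality, the minimal special subvariety $S'$ containing $X$ has Noot germ $(S')^{/y}$, a union of subtori. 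Its codimension equals the rank of the sublattice of $\bL_\cris$ fixed by the monodromy, which is $\geq$ the codimension of $S'$. Those fixed sections are slope-zero horizontal sections of $\bL_\cris$ over $X$, hence restrict to formal special endomorphisms at $x$ by Dieudonné theory over $\Fpbar[\![t]\!]$. This gives $\rk\Lambda\geq \operatorname{codim}S'$, and equality follows.

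The main obstacle is Step 2 when $x$ is non-ordinary (for instance supersingular). There are no canonical coordinates on $S^{/x}$, so the argument relies entirely on propagating horizontal sections from the punctured loop back to an actual ordinary point. Making Chai's relative coordinates and the parabolicity extension compatible is where the work lies. If $x$ lies on the boundary, we would additionally need the toroidal description of \S\ref{sub:toroidalSpin}, replacing $\Fpbar[\![t]\!]$ by the boundary chart.
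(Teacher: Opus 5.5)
Your proposal has a genuine gap in Step 2, which is where the theorem is actually decided.

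\textbf{The extension mechanism in Step 2 does not work.} In the paper, \cite{MD20} enters only through semisimplicity of the $p$-adic monodromy of the étale part. It provides no principle by which Frobenius-invariant horizontal sections over a formal disc extend to an open of $X$, and no such principle can hold as stated. By Dwork's trick, every $v\in\cL=\bL_{\cris,x}^{\varphi=1}$ already gives a Frobenius-invariant horizontal section $\tilde v$ over the germ, with denominators. What singles out formal special endomorphisms is an integrality condition (Lemma~\ref{T:PDlemma}), which continuation along the connection does not see. Moreover, even genuine formal special endomorphisms need not extend as sections. When the minimal Shimura subvariety is of real-multiplication or unitary type, the $p$-adic monodromy acts non-trivially on the unit-root piece they span, so only the subobject can descend to $X$, never its individual sections.

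\textbf{The missing idea: Kummer-theoretic descent.} You need to descend from $L=\mathrm{Frac}\,\widehat{\cO}_{X,x}$ to the function field $K$ of $X$. You are right to bring in \cite{Ch03}, but what it supplies here is a classification over a field, not coordinates to continue.
\begin{enumerate}
\item Take the one-dimensional ordinary $p$-divisible group $\pdiv/K$ (the extended Brauer group) and twist so that $\pdiv^{\mult}\simeq\mu_{p^\infty}$.
\item Chai describes $\pdiv$ by a Galois-equivariant map $q(\pdiv)\colon T_p(\pdiv^{\et}_{K^{\sep}})\to\varprojlim K^{\sep,*}/(K^{\sep,*})^{p^n}$. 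An étale subgroup sits in $\pdiv$ compatibly with the extension exactly when its Tate module lies in $\ker q(\pdiv)$.
\item Since $L/K$ is separable, the maps $K^{\sep,*}/(K^{\sep,*})^{p^n}\to L^{\sep,*}/(L^{\sep,*})^{p^n}$ are injective. Hence $\ker q(\pdiv_L)=\ker q(\pdiv)$ is already defined over $K$.
\item Semisimplicity then splits it off: $\pdiv\simeq\pdiv_2^{\et}\times\pdiv'$ over $K$, with $\pdiv_2^{\et}$ of height $\rk\Lambda$.
\end{enumerate}
This gives the formal-subtorus condition of codimension $\rk\Lambda$ at ordinary points of $X$, and \cite{Ruofan} finishes. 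The argument works for $X$ of any dimension, so your Step 1 reduction to curves, with its Lefschetz argument through a fixed non-ordinary point, is unnecessary.

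\textbf{Step 3 conflates monodromy-stable with monodromy-fixed.} The codimension of the minimal Shimura subvariety $S'$ is not the rank of the monodromy-fixed part of $\bL_\cris$. That identification holds for intersections of special divisors. For $S'$ of real-multiplication or unitary type not contained in a special divisor (\S\ref{subsec: subshimura varieties}), the fixed part is $0$ while $\operatorname{codim}S'>0$. So your reverse inequality yields nothing in exactly the cases that matter. The correct invariant is the height of the maximal étale piece splitting off $\pdiv$ over $K$, equivalently the rank of the maximal unit-root sub-isocrystal splitting the slope filtration. This piece is stable under monodromy, not fixed by it. It produces formal special endomorphisms at $x$ only after restriction to the strictly henselian germ $X^{/x}$, where unit-root $F$-crystals become trivial.
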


We recall the definition of when a (possibly transcendental) field extension is separable. 
\begin{definition}\label{def:separable}
    We say that a (possibly transcendental) field extension $L/K$ is separable if every $x\in L$ that is algebraic over $K$ is separable over $K$.
\end{definition}

The main technical result that we will need is the following.

\begin{lemma}\label{lem: algebraicity over fields}

    Let $K$ be a field, and let $\pdiv/K$ be an ordinary one-dimensional $p$-divisible group. Let $\pdiv^{\mult}, \pdiv^{\et}$ be its connected sub and etale quotient respectively, and suppose that its $p$-adic monodromy is semisimple (equivalently, $\pdiv^{\et}$ is a semisimple etale $p$-divisible group over $K$). Suppose there exists a separable field extension $L/K$ and a saturated etale sub $p$-divisible group $\pdiv_1^\et \subset \pdiv_L^\et$ such that $\pdiv_1^\et \times \pdiv_L^{\mult} \subset \pdiv_L$. Then, there exists a saturated etale sub $p$-divisible group $\pdiv_2^\et \subset \pdiv^{\et}$ defined over $K$ that splits off as a factor of $\pdiv$ and such that ${\pdiv}_{2,L}^\et \supset \pdiv^\et_{1}$. 
\end{lemma}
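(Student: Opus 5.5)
The plan is to translate everything into extension classes and exploit the fact that, in characteristic $p$, there are no nonzero homomorphisms from an étale $p$-divisible group to a multiplicative one. Throughout, $K$ has characteristic $p$, as in our application over $\Fpbar$.

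Write the connected--étale sequence $0\to\pdiv^{\mult}\to\pdiv\to\pdiv^{\et}\to 0$ and let $e\in\Ext^1_K(\pdiv^{\et},\pdiv^{\mult})$ be its class. For an étale sub $H\subset\pdiv^{\et}$, the condition $H\times\pdiv^{\mult}\subset\pdiv$ (i.e.\ $H$ lifts) is exactly $e|_H=0$.

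\textbf{Step 1 (rigidity).} Over any reduced base of characteristic $p$, we have $\Hom(H,\pdiv^{\mult})=0$ for $H$ étale. Indeed, each $\pdiv^{\mult}[p^n]$ is infinitesimal, while $H[p^n]$ is étale, so every map between them is zero. I will use this in two ways.

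First, for any base change $K\subset K'\subset K^{\sep}$, the Hochschild--Serre type sequence
\[
0\to H^1\bigl(\Gal(K^{\sep}/K'),\Hom_{K^{\sep}}(H,\pdiv^{\mult})\bigr)\to \Ext^1_{K'}(H,\pdiv^{\mult})\to \Ext^1_{K^{\sep}}(H,\pdiv^{\mult})
\]
shows that $\Ext^1$ injects under separable base change. The same injectivity should hold for the separable (possibly transcendental) extension $L/K$ after passing to separable closures; I will check this by a limit argument, reducing to finitely generated subextensions.

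Second, for an exact sequence $0\to H'\to H\to H''\to 0$ of étale groups, the restriction map $\Ext^1(H'',\pdiv^{\mult})\to\Ext^1(H,\pdiv^{\mult})$ is injective.

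Finally, $\Ext^1_{K^{\sep}}(H,\pdiv^{\mult})$ is $p$-torsion free. For $H=(\bQ_p/\bZ_p)^r$ it is a product of copies of $\varprojlim_n (K^{\sep})^\times/(K^{\sep})^{\times p^n}$, and $(K^{\sep})^\times$ has no $p$-torsion.

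\textbf{Step 2 (Galois descent of the sub).} Étale $p$-divisible groups over a field correspond to $\bZ_p$-lattices with continuous Galois action. So $\pdiv_1^{\et}$ corresponds to a saturated $\Gal_L$-stable submodule $T_1\subset T=T_p(\pdiv^{\et})$, where $\Gal_L$ acts through its image in $\Gal_K$. Separability of $L/K$ means $K'=L\cap K^{\sep}$ controls this image, which is $\Gal_{K'}$. I then replace $L$ by a compositum with $K^{\sep}$, which is harmless by Step 1.

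Define $T_2\subset T$ to be the saturation of $\sum_{\sigma\in\Gal_K}\sigma T_1$. This is $\Gal_K$-stable and saturated, so it gives $\pdiv_2^{\et}\subset\pdiv^{\et}$ over $K$ with $\pdiv_{2,L}^{\et}\supset\pdiv_1^{\et}$. Since $T_2$ is already a finite sum $\sum_{i}\sigma_iT_1$ up to saturation (finite rank), only finitely many conjugates are needed.

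\textbf{Step 3 (the extension splits on $\pdiv_2^{\et}$).} Over $K^{\sep}$, each conjugate satisfies $e|_{\sigma\pdiv_1^{\et}}=\sigma(e|_{\pdiv_1^{\et}})=0$, because $e$ is defined over $K$. Hence $e$ vanishes on $\bigoplus_i\sigma_i\pdiv_1^{\et}$. The sum map $\bigoplus_i\sigma_i\pdiv_1^{\et}\to\pdiv_2^{\et}$ is surjective up to finite index, i.e.\ it is an epimorphism in the isogeny category. By the injectivity in Step 1 (applied to the kernel, which is étale), $e|_{\pdiv_2^{\et}}$ is killed by some power $p^N$ coming from the saturation index. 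By the torsion-freeness in Step 1, $e|_{\pdiv_2^{\et}}=0$ over $K^{\sep}$. By the injectivity of $\Ext^1_K\to\Ext^1_{K^{\sep}}$, it also vanishes over $K$, so $\pdiv_2^{\et}\times\pdiv^{\mult}\subset\pdiv$ over $K$.

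\textbf{Step 4 (splitting off as a factor).} By semisimplicity of the monodromy, $T_2\otimes\bQ_p$ has a $\Gal_K$-stable complement in $T\otimes\bQ_p$. Intersecting this complement with $T$ gives a saturated sub $\pdiv_3^{\et}$ with $\pdiv_2^{\et}\oplus\pdiv_3^{\et}\to\pdiv^{\et}$ an isogeny. Then $\pdiv\sim(\pdiv_2^{\et})\times\pi^{-1}(\pdiv_3^{\et})$, which realizes $\pdiv_2^{\et}$ as a factor at least up to isogeny. I expect this is what ``splits off'' means in the application; if an integral decomposition is required, an isogeny adjustment of the complement would be needed.

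The main obstacle I anticipate is Step 2/Step 1 for transcendental $L$: making precise that splitting over $L$ forces splitting over $K'^{\sep}$. The approach I would try first is to spread the splitting out over a smooth $K'$-variety with function field $L$, which is possible because $L/K'$ is separable, specialize to a $K'^{\sep}$-point, and then use rigidity (Step 1) to see that the specialized splitting still restricts correctly.
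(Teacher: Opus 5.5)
Your route is sound in outline but builds $\pdiv_2^{\et}$ differently from the paper, and the step you leave open is exactly the one fact the paper's proof rests on.

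\textbf{Comparison of the two constructions.} After twisting so that $\pdiv^{\mult}\simeq\mu_{p^\infty}$, the paper encodes the extension by Chai's Galois-equivariant Kummer map $q(\pdiv):T_p(\pdiv^{\et}_{K^{\sep}})\to\varprojlim_n K^{\sep,*}/(K^{\sep,*})^{p^n}$. It then takes $T_2:=\ker q(\pdiv)$, which is the \emph{largest} \'etale sub that lifts to $\pdiv$. This kernel is automatically Galois-stable and saturated, by equivariance of $q$ and torsion-freeness of the target. So your Steps 2--3 (conjugates, finite-index sums, the torsion argument) are not needed.

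Your $T_2$ is the \emph{smallest} Galois-stable saturated sub containing $T_1$, and it is contained in the paper's. Your check that the class vanishes on it is fine, once you know the class vanishes on $T_1$ over $K^{\sep}$.

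On Step 4, the paper invokes semisimplicity exactly as you do. So there too ``splits off'' is really only established up to isogeny, which is all that Theorem~\ref{theorem: algebraic} uses.

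\textbf{The open step.} What both routes need is $\ker q(\pdiv_L)=\ker q(\pdiv)$. Equivalently, the map $K^{\sep,*}/(K^{\sep,*})^{p^n}\to L^{\sep,*}/(L^{\sep,*})^{p^n}$ must be injective for every $n$, and one then passes to the limit. The paper gets this in one line, with no spreading out:
\begin{itemize}
\item Suppose $x\in K^{\sep,*}$ and $x=y^{p^n}$ with $y\in L^{\sep}$.
\item Every $L$-conjugate of $y$ has $p^n$-th power equal to a $K$-conjugate of $x$. Hence the coefficients of the minimal polynomial of $y$ over $L$ are elements of $L$ algebraic over $K$.
\item By separability of $L/K$, these coefficients lie in $K^{\sep}$. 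That minimal polynomial is separable, so $y$ is separable over $K^{\sep}$.
\item But $y$ is also purely inseparable over $K^{\sep}$, so $y\in K^{\sep}$.
\end{itemize}

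Your proposed alternative (take a smooth model of a finitely generated subextension, then specialize at a $K'^{\sep}$-point) can be made to work. However, it requires $L/K$ to be separable in the usual sense of admitting separating transcendence bases. That holds for $\operatorname{Frac}\widehat{\cO}_{X,x}$ over the function field of $X$ in the application. It is stronger than Definition~\ref{def:separable}, under which the lemma is stated, whereas the direct argument above goes through under that weaker definition.
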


\subsection{Canonical pairing and the proof of Lemma \ref{lem: algebraicity over fields}}
Before moving on to the proof of Lemma \ref{lem: algebraicity over fields}, we recall work of Chai \cite[Section 2]{Ch03}. Instead of working in full generality, we will recall Chai's work in our setting of one-dimensional ordinary $p$-divisible groups over a field. By twisting $\pdiv/K$ by an etale character, we may assume that $\pdiv^{\mult}\simeq \mu_{p^{\infty}}$ without losing generality. 

Chai in \cite[Section 2]{Ch03} defines an etale sheaf $\nu_{p^{\infty}} := \varprojlim_n \coker([p^n]:\bG_{m}\rightarrow \bG_{m})$. At the level of points, $\nu_{p^{\infty}}(K) = \varprojlim K^*/(K^*)^{p^n}$. The data of $\pdiv$, an ordinary one-dimension $p$-divisible group with multiplicative part isomorphic to $\mu_{p^{\infty}} $ and etale part isomorphic to $\pdiv^\et$, is equivalent to a $\Gal(K^{\sep}/K)$-equivariant morphism of $\bZ_p$-modules 
\begin{equation}\label{eq:qpair}
q(\pdiv): T_p(\pdiv_{K^{\sep}}^{\et}) \rightarrow \varprojlim K^{\sep, *}/(K^{\sep,*})^{p^n}.
\end{equation}

 Equation \eqref{eq:qpair} is functorial in the field $K$. This equivalence follows from Kummer theory and the fact that ordinary $p$-divisible groups are classified by extensions of etale $p$-divisible groups by connected $p$-divisible groups. For more details, we refer the reader to \cite[Section 2]{Ch03}. 

  We are now ready to prove Lemma \ref{lem: algebraicity over fields}. 
 \begin{proof}
        We first note that any $p$-divisible subgroup of $\pdiv^{\prime,\et} \subset \pdiv^{\et}$ defined over $K$ gives rise to a one-dimensional connected ordinary $p$-divisible subgroup $\pdiv' \subset \pdiv$. This is simply by pulling back the connected etale exact sequence 
        \[
0 \longrightarrow \pdiv^{\mult}
\longrightarrow \pdiv
\longrightarrow \pdiv^{\et}
\longrightarrow 0
\]
        via the map $\pdiv^{\prime,\et}\subset \pdiv^{\et}$. 

    Secondly, given any one-dimensional ordinary $p$-divisible group $\pdiv'$ with connected part $\mu_{p^{\infty}}$, we note that $q(\pdiv^{\prime,\et}) = 1$ if and only if $\pdiv' \simeq \pdiv^{\prime,\et} \times \mu_{p^{\infty}}$.

     Now, consider the kernel $T_2 := \ker q(\pdiv)$. This gives rise to a saturated $p$-divisible subgroup $\pdiv_2^{\et} \subset \pdiv^{\et}$ defined over $K$, as $q(\pdiv)$ is Galois-equivariant. We therefore have that $\pdiv_2^{\et}$ splits off as a direct factor of $\pdiv$ by the previous two observations, as well as the fact that the $p$-adic monodromy of $\pdiv^{\et}$ is semisimple. 
     
    Consider the map $q(\pdiv_{L})$. We claim that $\ker q(\pdiv_L) = \ker q(\pdiv)$. Indeed, we have that the extension $L/K$ and therefore $L^{\sep}/K^{\sep}$ are separable. Therefore, the map  $K^{\sep,*}/(K^{\sep,*})^{p^n}\to L^{\sep,*}/(L^{\sep,*})^{p^n}$ is injective for every $n$, whence the claim. We have that $T_p(\pdiv_1^{\et})\subset \ker q(\pdiv_L)$. It follows that $\pdiv^{\et}_{2,L} \supset \pdiv^{\et}_1$. The lemma now follows. 
    
     \end{proof}

\subsection{Proof of punctual monodromy.}
We now show that Lemma \ref{lem: algebraicity over fields} implies Theorem \ref{theorem: algebraic}:
\begin{proof}
We pick $K$ to be the function field of $X$, $L$ to be the fraction field of the {complete} local ring of $X$ at $x$ and $\pdiv/K$ to be the extended Brauer group. The existence of $\Lambda$  at $X^{/x}$ implies that the maximal etale subgroup $\pdiv^{\et}_1 \subset \pdiv_L^{\et}$ that splits off as a direct factor of $\pdiv_L$ has height equal to rank $\Lambda$. We have that $L/K$ is separable, and also that the $p$-adic monodromy of $\pdiv^{\et}/K$ is semisimple by \cite{MD20}, and we may hence apply Lemma \ref{lem: algebraicity over fields} to deduce that $\pdiv_1^{\et} \subset \pdiv^{\et}$ is defined over $K$ and that $\pdiv/K \simeq \pdiv_1^{\et} \times \pdiv'$. We immediately deduce that the image $X^{/x}$ is contained in a formal subtorus of $(S^\Sigma)^{/x}$ having codimension equal to rank $\Lambda$. The result now follows from \cite{Ruofan}.

\end{proof}





    
    


\section{The Basic Mumford-Tate conjecture and $\overline{\bQ}$-algebraicity of formal special endomorphisms}\label{sec: Mumford Tate}

We work in the following setup. Let $C \subset S$ be a smooth curve contained in the ordinary locus. Let $\cS'$ be the smallest sub-Shimura variety of $\cS$ containing $\cS$, and let $F$ be the number field associated to $\cS$ as in Section \ref{subsec: subshimura varieties} - recall that  $F$ is either a totally field of a CM extension of a totally field. The action of $F$ on $\bL_{\cris}/C$ induces a decomposition $\bL_{\cris}\otimes \bQ_p = \bigoplus_{v|p} \bL_{\cris,v}$ where $v$ ranges of the places of $F$ dividing $p$. Let $v_0$ denote the place corresponding to the embedding $S'\subset S$. We note that the isocrystal $\bL_{\cris}' :=\bigoplus_{v\neq v_0} \bL_{\cris,v}$ is a unit-root isocrystal.

\begin{theorem}\label{thm: Qbar algebraicity of formal special endomorphisms}
    Let $x\in C(\bF_q)$ be a point, let $L_x\subset \End(A_x)$ be the module of special endomorphisms at $x$. Then, the $L_x\otimes W(\bF_q) \cap \bL_x'$ descends to a finite extension of $\bQ$ with bounded degree. Consequently, if $\Lambda_x$ is the set of formal special endomorphisms at the formal neighbourhood of $C$ at $x$, then the subspace $\Lambda_x[1/p] \subset L_x \otimes \bQ_p$ descends to a finite extension of $\bQ$ with bounded degree. 
\end{theorem}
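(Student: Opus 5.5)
Here $x$ is a (possibly supersingular) point of the closure of $C$, and $\bL'_x$ denotes the fiber at $x$ of $\bL'_{\cris}$; throughout I read ``$\cS'$ is the smallest sub-Shimura variety containing $C$''. The plan is to realize $\bL'_{\cris}$ as the part of $\bL_{\cris}\otimes\bQ_p$ cut out by idempotents of $F\otimes\bQ_p$, show that at $x$ these idempotents act through an algebraic structure defined over a bounded number field, and then identify $\Lambda_x[1/p]$ with the part of $\bL'_x$ coming from special endomorphisms.

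\textbf{Step 1: an $F$-action on $L_x$.} Since the image of $C$ lies in $S'$, the Kuga--Satake abelian variety $A_x$ carries the extra endomorphisms defining $\cS'$ as a relatively PEL subvariety (Mayinski). These commute appropriately with $\bL$, so $F$ acts on $L_x\otimes\bQ$ compatibly with its action on $\bL_{\cris,x}\otimes\bQ_p$. The crystalline realization map $L_x\otimes W(\bF_q)[1/p]\to \bL_{\cris,x}[1/p]$ is then $F$-linear. The projector onto $\bL'_x=\bigoplus_{v\ne v_0}\bL_{\cris,v,x}$ is the idempotent $e'\in F\otimes\bQ_p$, and the decomposition $F\otimes\bQ_p=\prod_v F_v$ is compatible with any embedding.

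\textbf{Step 2: algebraicity of the intersection.} The naive guess $L_x\otimes W\cap\bL'_x = e'(L_x\otimes W)$ is wrong: $e'$ need not preserve the image of $L_x\otimes W$ in the crystal. I therefore plan to use the Basic Mumford--Tate theorem (Theorem~\ref{thm: MT GSpin} and its orthogonal variant). The subspace $\bL'_x$ is the part of the fiber on which $\pi_1$ and Frobenius act through the unit-root piece. By \cite{Ruofan}, the generic monodromy is $G_B(f_0)$, which is the Mumford--Tate group of $\cS'$ and so is defined over $\bQ$ and contains $\Res_{F/\bQ}$ of an orthogonal or unitary group. Basic Mumford--Tate says that the monodromy $H_p$ restricted to the $\bQ$-space $\End^0(A_x)\supset L_x\otimes\bQ$ comes from a $\bQ$-group $H$. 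Then $\bL'_x\cap (L_x\otimes\bQ_p)$ is an isotypic piece for $H\rtimes F$, namely the sum of $F$-eigencomponents on which $H$ acts through the compact (definite) factors. Such isotypic pieces are defined over the Galois closure of $F$, whose degree is bounded by $2\,\mathrm{rk}\,L$. This gives the first assertion.

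\textbf{Step 3: identifying $\Lambda_x$.} Next I show $\Lambda_x[1/p]=L_x\otimes\bQ_p\cap\bL'_x$ up to finite index. One inclusion holds because formal special endomorphisms over $C^{/x}$ are horizontal sections of $\bL_{\cris}$ over the formal neighborhood that are fixed by Frobenius. The generic ordinary piece $\bL_{\cris,v_0}$ has no such sections, since by Theorem~\ref{theorem: algebraic} any such section would put $C$ in a smaller Shimura subvariety, contradicting minimality. The reverse inclusion holds because the unit-root isocrystal $\bL'$ is generically étale-trivializable with finite monodromy after a finite extension. Its fiber sections at $x$ that are compatible with Frobenius therefore extend over $C^{/x}$ and yield formal special endomorphisms, using Serre--Tate/Dieudonné theory for endomorphisms of $p$-divisible groups over $\Fpbar[[t]]$ (de Jong's full faithfulness). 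Combining this with Step 2 gives the second assertion.

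\textbf{Main obstacle.} The hardest part is Step 2 at a supersingular $x$. There the Frobenius-fixed part of $\bL_{\cris,x}$ is all of $L_x\otimes\bQ_p$, so one really needs Basic Mumford--Tate to show that the monodromy-defined subspace $\bL'_x$ is cut out by $\bQ$-algebraic data rather than by a transcendental $p$-adic subspace. My first attempt will be to deduce this from the equality $H_{\Qpun}=H_p$ together with the fact that $H$ is defined over $\bQ$ as the smallest group containing all $H_\ell$ and $H_p$.
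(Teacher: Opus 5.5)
\textbf{There is a genuine gap: your argument does not cover non-supersingular points.}

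Your Steps 1 and 2 contradict each other, and resolving this exposes the gap. Suppose Step 1 holds as you state it: $F$ acts on $L_x\otimes\bQ$ and the crystalline realization is $F$-linear. Then $e'\in F\otimes\bQ_p$ preserves the image of $L_x\otimes W(\bF_q)[1/p]$, so the ``naive guess'' you reject is in fact true:
\[
(L_x\otimes W(\bF_q)[1/p])\cap\bL'_x=e'\bigl(L_x\otimes W(\bF_q)[1/p]\bigr).
\]
Moreover, $e'(L_x\otimes\overline{\bQ}_p)$ is the base change of the sum of the $\tau$-eigenspaces of $L_x\otimes_{\bQ}\tilde F$, taken over those embeddings $\tau\colon F\to\tilde F$ which, composed with a fixed $\tilde F\hookrightarrow\overline{\bQ}_p$, do not induce $v_0$. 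That proves the first assertion at every point with no Mumford--Tate input at all.

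So Step 1 carries the whole content, and it is exactly what you have not justified. Mayinski's relatively-PEL description gives you an algebra $E\subset\End^0(A_x)$ of extra endomorphisms. You would still have to show that the $F$-action on the realizations $\bL_?$ comes from a $\bQ$-rational operation on $\End^0(A_x)$ that preserves $L_x$. At a characteristic-$p$ point this is not the Hodge-theoretic fact recalled in \S\ref{subsec: subshimura varieties}. If you could prove it uniformly, including at supersingular points, you would have a shorter route than the paper's.

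If you drop Step 1, then Step 2 covers only supersingular $x$. Theorem~\ref{thm: MT GSpin}, like Conjectures~\ref{conj: ss MT} and~\ref{conj: Basic MT}, is a statement at supersingular/basic points, where $\End^0(A_x)\otimes\bQ_\ell=\End(V_{\ell,x})$ and $I_x$ is an inner form of $G$ \cite{MKmodp}. At a non-supersingular point the monodromy of $C$ does not act on $\End^0(A_x)\otimes\bQ_p$, since endomorphisms at $x$ do not propagate along $C$. So ``$H_p$ restricted to the $\bQ$-space $\End^0(A_x)$'' has no meaning there, and your proposal has no argument for such $x$.

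The paper treats this case separately:
\begin{enumerate}
\item By Lemma~\ref{lem:nonsslifting}, it lifts $x$ together with all of $L_x$ and the $\cO_F$-structure to characteristic $0$, using a splitting of the slope filtration and Lubin--Tate theory.
\item There $L_y\otimes\bQ$ is an $F$-subspace, and $\bL'_B\cap(L_y\otimes\bC)$ is defined over $\tilde F$ (Lemma~\ref{lem: complex algebraicity}).
\item It transports this back by the crystalline--de Rham and Betti--de Rham comparisons, choosing $K\hookrightarrow\bC$ so that $v_0$ matches $w_0$.
\end{enumerate}
This lifting is how the paper obtains the content of your Step 1. It is unavailable at supersingular points, where $L_x$ has rank $b+2$ and cannot lift, which is why Basic Mumford--Tate is needed there.

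At supersingular points your Step 2 is essentially the paper's argument, but your description of the relevant piece is wrong. $I_{H,x}$ is compact modulo centre at every real place, and $\bL'_x$ is cut out by the $p$-adic places $v\neq v_0$, not by ``compact factors''. What matters is only this: $\bL_x[1/p]$ has multiplicity-one $H_p$-isotypic pieces, so the $H_p$-stable subspace $\bL'_x[1/p]$ comes from an $I_{H,x}$-stable subspace over $\overline{\bQ}$.

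Finally, in Step 3 (which the paper leaves implicit), the reverse inclusion needs only that a unit-root $F$-crystal on $\Spf\Fpbar[[t]]$ is trivial. It does not need finiteness of the monodromy of $\bL'$ on $C$, which fails in general.
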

We will need to distinguish the supersingular case with the non-supersingular case.
\subsection{Non-supersingular points}

We start with the following lemmas. 
\begin{lemma}\label{lem:nonsslifting}
    Let $x$ be a non-supersingular point as above. Then, there is a lift $\tilde{x}$ to a finite extension of $W(\bF_q)$ such that 
    \begin{enumerate}
        \item All the special endomorphisms $L_x$ lift. 
        \item The real-multiplication / unitary structure also lifts to $\tilde{x}$. 
    \end{enumerate}
\end{lemma}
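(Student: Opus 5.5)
We will deduce the lemma from Grothendieck--Messing / Serre--Tate deformation theory applied to the $p$-divisible group $\cA_x[p^\infty]$, together with the slope decomposition at a non-supersingular point. The first step is to understand the $F$-crystal $\bL_{\cris,x}$. Since $x$ is non-supersingular, the K3-type crystal $\bL_{\cris,x}$ has slope decomposition
\[
\bL_{\cris,x}\otimes\bQ = \bL^{<1}\oplus \bL^{1}\oplus \bL^{>1},
\]
with $\bL^{<1}$ and $\bL^{>1}$ of rank one each, dual to each other. The special endomorphisms $L_x$ have crystalline realizations that are Frobenius-invariant, so they land in the slope-$1$ part $\bL^1$. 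The extra endomorphisms by $\cO_F$ (the real multiplication or unitary structure of Section \ref{subsec: subshimura varieties}) act on $\bL_{\cris,x}$ compatibly with Frobenius, so they preserve the slope decomposition.

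The second step is to construct the lift. We use the canonical (slope-filtration) lift: by the theory of the slope filtration on $\cA_x[p^\infty]$ (equivalently, the filtration $\bL^{>1}\subset \bL^{\geq 1}\subset \bL$ by sub-$F$-crystals), there is a lift $\tilde x$ over $W(\bF_q)$, or a finite extension, whose Hodge filtration on $\bL_{\cris,x}\otimes W$ is the line $\bL^{>1}$ after integral normalization. Since the filtration is by sub-$F$-crystals, this lift is the Moonen-type ``canonical lift'' in the generalized sense: every Frobenius-equivariant endomorphism of the crystal preserving the slope filtration preserves the Hodge filtration. One can construct it either via Grothendieck--Messing on the Kuga--Satake $p$-divisible group, choosing the Hodge filtration induced from the slope decomposition and checking it lies in the $G$-orbit (so $\tilde x$ lies on $\cS$ by the local structure of the integral canonical model), or by noting that the slope decomposition splits the $p$-divisible group over $W$ up to isogeny.

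The third step is the verification of (1) and (2). A special endomorphism $v\in L_x$ lifts to $\tilde x$ if and only if its crystalline realization is orthogonal to the Hodge line $\Fil^1\bL_{\dR,\tilde x}$, since the criterion in \cite{MP16} says $v$ deforms iff $v\perp \Fil^1$. Because $v\in\bL^1$ and $\Fil^1=\bL^{>1}$, orthogonality holds automatically: $\bL^1$ is orthogonal to $\bL^{>1}$ by slope reasons, since the pairing has slope $1$. Similarly, the $\cO_F$-endomorphisms commute with Frobenius and preserve $\bL^{>1}$, hence preserve the Hodge filtration of the Kuga--Satake $p$-divisible group, and so they lift by Grothendieck--Messing and Serre--Tate. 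Thus $\tilde x$ lies on the integral model of $\cS'$ as well.

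I expect the main obstacle to be integrality. The slope decomposition is only defined rationally, so $\bL^{>1}$ must be saturated in $\bL_{\cris,x}$ and shown to give an admissible Hodge filtration integrally on the Kuga--Satake side. Lifting the full lattice $L_x$, rather than a finite-index sublattice, also needs care. I would handle this by passing to a finite ramified extension if necessary and by using that $p\nmid$ the discriminant, so that the slope filtration is by direct summands (Newton and Hodge polygons of $\bL$ touch appropriately). If this fails, it suffices for the application to lift a finite-index sublattice of $L_x$, which has the same $\bQ$-span.
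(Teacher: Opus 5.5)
Your argument for part (1) is essentially the paper's, but only after you fix an error in your first step, and that same error breaks part (2).

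\textbf{The rank error.} At a non-supersingular point of height $h$, the non-unit-root pieces $\bL^{<1}$ and $\bL^{>1}$ have rank $h$, not rank one. This is why $\rk\cL=b+2-2h$. They are lines only at ordinary points. So for $h\geq 2$ the Hodge line cannot equal $\bL^{>1}$. A lift on which the slope decomposition splits as filtered isocrystals is one whose Hodge line is \emph{some} line inside $\bL^{>1}$ (after extending scalars); such a line is automatically isotropic. There is no canonical choice. These points are not $\mu$-ordinary, so no Moonen-type canonical lift exists. Moreover, a Frobenius-equivariant endomorphism preserving the slope filtration need not preserve the chosen line.

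\textbf{Part (1).} Here the error is harmless. $\bL^{1}$ is orthogonal to all of $\bL^{>1}$, so every $v\in L_x$ lies in $\Fil^0$ for any such lift. This is exactly what the paper uses. Integrality is also not the real difficulty: in the basis of \S\ref{sub:nons_setup}, the three slope pieces are spanned by subsets of $\{e_i,f_i\}$.

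\textbf{Part (2).} Your step ``the $\cO_F$-endomorphisms commute with Frobenius and preserve $\bL^{>1}$, hence preserve the Hodge filtration'' fails whenever $h\geq 2$ and $[F_{v_0}:\bQ_p]>1$. Preserving the rank-$h$ space $\bL^{>1}$ says nothing about a chosen line inside it.

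Since $\bL'_{\cris}$ is unit-root, $\bL^{>1}$ lies in $\bL_{\cris,v_0}$, is stable under $F_{v_0}$, and has rank $h$. To land on $\cS'$ you must do three things:
\begin{enumerate}
\item choose the Hodge line to be an $F_{v_0}$-eigenline for the particular embedding of $F_{v_0}$ into the coefficient field determined by $v_0$;
\item show that this line lifts the given Hodge line at $x$;
\item show that it is realized by an actual point.
\end{enumerate}
When $F_{v_0}/\bQ_p$ is ramified, this forces a ramified extension of $W(\bF_q)$. Over such an extension, Grothendieck--Messing no longer lets you prescribe the Hodge filtration inside the crystal once the ramification index reaches $p-1$.

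This is the missing idea, and it is what the paper supplies. After tensoring $\cO_F$ with $\bZ_p$, the problem becomes lifting the $F\otimes\bZ_p$-action on the one-dimensional height-$h$ formal group. Classical Lubin--Tate theory does this over a finite extension of $W(\bF_q)$. On such a lift the slope decomposition splits, so (1) follows as above. Your fallback about lifting only a finite-index sublattice does not address this issue.
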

\begin{proof}
    The first part follows from the observation that all the special endomorphisms in $L_x$ lift to any characterstic zero lift such that the slope-filtration of the filtered isocrystal admits a splitting in the category of filtered isocrystals. 

    For the second part, in order to preserve the extra structure that $\cO_F$ introduces, it suffices to preserve it upto tensoring $F$ by $\bZ_p$. Then, it just becomes a result about lifting $F\otimes \bZ_p$ action for a Lubin-Tate group. But this can be done by classical Lubin-Tate theory. 
\end{proof}

Similar to the decomposition of $\bL_{\cris}$, the Betti local system restricted to $\ShimKprime$ satisfies $\bL_{B} \otimes \bC = \bigoplus_{w} \bL_{B,w}$, where $w$ ranges over the archimedian places of $F$. The fact that $F$ is also the reflex field gives us an embedding $F \rightarrow \bC$, and therefore picks out an archimidean place $w_0$ of $F$.  Define $\bL'_{B} := \bigoplus_{w\neq w_0} \bL_{B,w}$. Note that each subspace $\bL_{B,w}$ descends to the normal closure of $F$.
\begin{lemma}\label{lem: complex algebraicity}
    Let $\ShimKprime \subset \ShimK$ be as above. Let $y\in \ShimKprime(\bC)$ be a point, and let $L_y$ be the $\bZ$-module of special endomorphisms at $y$. Then, the subspace $\bL'_{B}\cap (L_y\otimes \bC)$ descends to the normal closure $\tilde{F}$ of $F$.
\end{lemma}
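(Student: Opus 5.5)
The plan is to reduce the statement to a fact about a simultaneous eigenspace for $F$ acting on a $\bQ$-vector space. The setup is the following. On $\ShimKprime$ the $F$-action on $\bL_B\otimes\bQ$ comes from the canonical isomorphism $L\simeq \Res_{F/\bQ}W$. For each embedding $\sigma:F\to\bC$, the summand $\bL_{B,w}$ (with $w$ the archimedean place attached to $\sigma$, and both $\sigma,\bar\sigma$ when $w$ is complex) is the subspace of $\bL_B\otimes\bC$ on which $F$ acts through those embeddings. So $\bL'_B$ is the sum of the $\sigma$-eigenspaces of the $F$-action over all $\sigma$ not lying over $w_0$. Every $\sigma$ factors through $\tilde F$, so each eigenspace is already defined over $\tilde F$ inside $\bL_{B,y}\otimes\tilde F$.

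The key input is the structure of $L_y$ recalled in Section~\ref{subsec: subshimura varieties}. Since $\ShimKprime$ is not contained in a special divisor, $L_y\otimes\bQ\subset\bL_{B,y}\otimes\bQ$ is an $F$-stable subspace, of the form $\Res_{F/\bQ}W'$ for an $F$-subspace $W'\subset W$.

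\begin{enumerate}
\item Write $U:=L_y\otimes\bQ$. Because $U$ is an $F$-submodule, $U\otimes_\bQ\tilde F=\bigoplus_\sigma U_\sigma$, where $U_\sigma$ is the $\sigma$-eigenspace of the $F$-action. Each $U_\sigma$ is the image of the idempotent $e_\sigma\in F\otimes_\bQ\tilde F$, so it is defined over $\tilde F$.
\item The decomposition is compatible with that of the ambient space: $U_\sigma=U\otimes\tilde F\cap(\bL_{B,y}\otimes\tilde F)_\sigma$. Hence, after base change to $\bC$,
\[
\bL'_B\cap(L_y\otimes\bC)=\bigoplus_{\sigma\not\mid w_0}U_\sigma\otimes_{\tilde F}\bC .
\]
\item Conclude that this intersection is the base change of the $\tilde F$-subspace $\bigoplus_{\sigma\not\mid w_0}U_\sigma$, which proves the lemma.
\end{enumerate}

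The step needing care is the $F$-stability of $L_y\otimes\bQ$, i.e.\ that special endomorphisms commute with the $\cO_F$-action, up to the relevant identification of $L$ with $\Res_{F/\bQ}W$. If this is not taken directly from Section~\ref{subsec: subshimura varieties}, I would argue via Hodge theory. $L_y\otimes\bQ$ is the space of Hodge classes in $\bL_{B,y}\otimes\bQ$, namely the $(0,0)$ part orthogonal to the $(1,-1)$ line. The generic Mumford--Tate group of $\ShimKprime$ commutes with $F$, and $F$ acts by Hodge endomorphisms. Therefore $F$ preserves the Hodge classes, so $U$ is $F$-stable. In the unitary case one additionally checks that the complex conjugate pair of embeddings over $w_0$ is excluded together, which is consistent with $\bL_{B,w_0}$ being the full $w$-isotypic part.
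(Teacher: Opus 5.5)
Your argument is correct, but it takes a longer route than the paper. The paper's proof is a single observation. First, $\bL'_B=\bigoplus_{w\neq w_0}\bL_{B,w}$ is defined over $\tilde F$, because each $\bL_{B,w}$ is. Second, $L_y\otimes\bC$ is defined over $\bQ$. Third, the intersection of two subspaces defined over a field $k$ is again defined over $k$, since intersection commutes with (flat) extension of scalars: $(A\otimes_k K)\cap(B\otimes_k K)=(A\cap B)\otimes_k K$.

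In particular, the $F$-stability of $L_y\otimes\bQ$ is not needed for the lemma as stated. That is the step you single out as needing care. Your Hodge-theoretic justification of it is fine: $F$ commutes with the Mumford--Tate group at $y$, hence preserves the rational $(0,0)$-classes.

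What your eigenspace decomposition buys in exchange is a finer conclusion. The intersection is exactly $\bigoplus_\sigma U_\sigma\otimes_{\tilde F}\bC$, with $\sigma$ ranging over embeddings not above $w_0$. So it is stable under $F\otimes_\bQ\tilde F$, and its dimension is $\#\{\sigma \text{ not above } w_0\}\cdot\dim_F W'$. The paper's argument only yields the field of definition.
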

\begin{proof}
This follows directly from the fact that each of the $\bL_{B,w}$ descends to a bounded-degree extension of $\bQ$. 
\end{proof}

We are now ready to prove Theorem \ref{thm: Qbar algebraicity of formal special endomorphisms} in this case.
\begin{proof}[Non-supersingular case.]

Lift $x$ to $y$ defined over $K$, which we may do by Lemma \ref{lem:nonsslifting}. Now pick an embedding $K$ to $\bC$. Do this so that the associated complex embedding of $F$ is induced by $w_0$. 

The crystalline-deRham comparison gives you an isomorphism $\bL_{\dR,y} \simeq \bL_{\cris,x}\otimes K$. This isomorphism respects the $F$ vector space structure, and preserves special endomorphisms. So, it suffices to prove the theorem for $y$ but with $L_x = L_y$ and $\bL_{\dR}$ in place of $\bL_{\cris}$. Then Betti-deRham comparison over $\bC$ reduces algebraicity to the Betti question. But this is just Lemma \ref{lem: complex algebraicity}.
%


\end{proof}

\subsection{The supersingular case and the proof of Theorem \ref{thm: MT GSpin}}

\subsubsection{Basic points}
We first formulate our characteristic $p$ analogue of the Mumford-Tate conjecture where the ambient Shimura variety need not contain a supersingular point. Instead, we work with the basic locus. Our setting is now $x\in X\subset S$, where $S$ is a mod $p$ Shimura variety with good reduction and $x$ is a point in the Basic locus. We suppose that $X$ is geometrically irreducible. Let $G/\bQ$ denote the reductive group defining $S$. The Lie Algebra $\fG$ of $G$, when treated as a $G$-representation gives rise to $\ell$-adic local systems $\fG_{\et,\ell}$ for $\ell\neq p$ and also to an $F$-isocrystal $\fG_{\cris}$. We will use these objects in place of $\End(V_{\et,\ell}), \End(V_{\cris})$ to formulate our conjecture. Let $H_\ell \acts \fG_{\et,\ell,x}$ be defined identically as in the supersingular case, and define $H_p$ analogously. 

Let $I_x$ be as in \cite{MKmodp}. We have canonical embeddings $I_{x,\bQ_\ell} \subset G_{\bQ_\ell}$ and $I_{x,\Qpun} \subset G_{\Qpun}$. As $x$ is basic, \cite{MKmodp} shows that these embeddings are isomorphisms. Therefore, $\Lie I_x$ gives $\fG_{\et,\ell,x}$ and $\fG_{\cris,x}$ a $\bQ$-structure. Let $H$ denote the smallest $\bQ$-group (with respect to this structure) that satisfies $H_{\bQ_\ell} \supset H_\ell$ and $H_{\Qpun}\supset H_p$. 

\begin{conj}\label{conj: Basic MT}
    We have $H_{\bQ_\ell} = H_\ell$, and $H_{\Qpun} = H_p$.
\end{conj}

\begin{remark}
\begin{enumerate}
    \item In upcoming work of Madapusi-Lee, the authors define a $\bQ$-group of quasi-isogenies $I_x$ that generalizes Kisin's definition to the setting of all Shimura varieties mod $p$ with good reduction. Combined with the semisimplicity results of \cite{BST}, one deduces that $I_x$ is a $\bQ$-form of $G_{\bQ_\ell}, G_p$ as in the Hodge type case, and one may therefore formulate Conjecture \ref{conj: Basic MT} to the setting of all Shimura varieties with good reduction mod $p$.
    \item We note that Conjecture \ref{conj: Basic MT} immediately implies Conjecture \ref{conj: ss MT}.
\end{enumerate}
      
\end{remark}

\subsubsection{}

We will first prove our group-theoretic Mumford-Tate conjecture and use that to prove Theorem \ref{thm: Qbar algebraicity of formal special endomorphisms} in the supersingular case. We first start by observing that Conjecture \ref{conj: Basic MT} (and therefore Conjecture \ref{conj: ss MT}) is true when $X$ is a connected component of a Shimura variety with good reduction and $x\in X$ is a basic point. As we will see, this follows directly from \cite{MKmodp}.
\begin{proposition}[Kisin]\label{prop: MT Shimura}
Conjecture \ref{conj: Basic MT} is true when $X$ is a SV with good reduction.
\end{proposition}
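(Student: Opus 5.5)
We must show: if $X$ is a connected component of a mod $p$ Shimura variety $S$ with good reduction and $x \in X$ is basic, then $H_\ell = H_{\bQ_\ell}$ and $H_p = H_{\Qpun}$. Our strategy is to compute both sides explicitly.

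\emph{The monodromy side.} For a connected component of a Shimura variety with hyperspecial level at $p$, the $\ell$-adic monodromy of $\fG_{\et,\ell}$ is as large as possible, up to the center. The image of $\pi_{1,\et}(X,x)$ on $V_{\et,\ell,x}$ is open in the derived group, or in $G^{\mathrm{der}}(\bQ_\ell)$ up to the part through which the center acts. This follows from the Hecke-orbit/irreducibility results for integral canonical models: the prime-to-$p$ Hecke tower over a connected component is a pro-\'etale cover with group an open subgroup of $G^{\mathrm{der}}(\bA_f^p)$ modulo centre, by Kisin and by Madapusi's connected components results. Since $\fG$ is the adjoint representation, the center acts trivially. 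Hence $H_\ell$, the neutral component of the monodromy on $\fG_{\et,\ell,x}$, is $G^{\mathrm{ad}}_{\bQ_\ell}$; equivalently, it is the image of $G^{\mathrm{der}}_{\bQ_\ell}$ in $\GL(\fG_{\et,\ell,x})$.

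For $p$ we use the analogous statement for the $F$-isocrystal $\fG_{\cris}$. Its Tannakian monodromy group is contained in $G^{\mathrm{ad}}_{\Qpun}$ via the crystalline tensors. It contains the generic Mumford--Tate group of $\bH_{\cris}$, which is all of $G^{\mathrm{ad}}$ because $X$ is a whole component, so the generic point is Hodge-generic (generically ordinary with canonical lifting). I would invoke the theorem from \cite{Ruofan} recalled in Section \ref{subsec: subshimura varieties}, or D'Addezio's comparison of $\ell$-adic and $p$-adic monodromy, to get $H_p = G^{\mathrm{ad}}_{\Qpun}$.

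\emph{The $\bQ$-structure side.} By \cite{MKmodp}, for basic $x$ the group $I_x$ is an inner form of $G$ with $I_{x,\bQ_\ell} \cong G_{\bQ_\ell}$ and $I_{x,\Qpun} \cong G_{\Qpun}$. Under these isomorphisms, $\Lie I_x$ is a $\bQ$-form of $\fG_{\et,\ell,x}$ and of $\fG_{\cris,x}$. The image of $I_x$ in $\GL(\Lie I_x)$ is the $\bQ$-group $I_x^{\mathrm{ad}}$. It contains $H_\ell$ after base change to $\bQ_\ell$, since its base change is $G^{\mathrm{ad}}_{\bQ_\ell}$, and similarly at $p$. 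The minimal $H$ is therefore contained in $I_x^{\mathrm{ad}}$. Conversely, $H_{\bQ_\ell} \supset H_\ell = G^{\mathrm{ad}}_{\bQ_\ell} = I^{\mathrm{ad}}_{x,\bQ_\ell}$, so $H = I^{\mathrm{ad}}_x$. Then $H_{\bQ_\ell} = H_\ell$ and $H_{\Qpun} = H_p$.

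\emph{Main obstacle.} The delicate step is the exact identification of monodromy groups, namely that they are full and not merely containing $G^{\mathrm{ad}}$ up to isogeny. This requires matching Kisin's identification $I_{x,\bQ_\ell} \cong G_{\bQ_\ell}$ with the fiber-functor identification of $\fG_{\et,\ell,x}$, so that the two $G^{\mathrm{ad}}$'s coincide inside $\GL(\fG_{\et,\ell,x})$. I would check this via the compatibility of Kisin's construction with the tensors $s_{\alpha,\ell,x}$. The crystalline maximality I would reduce to the $\ell$-adic case by D'Addezio's independence theorem.
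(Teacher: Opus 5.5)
Your proposal is correct and follows the paper's argument in more detail: you identify $H_\ell$ and $H_p$ with $G^{\ad}_{\bQ_\ell}$ and $G^{\ad}_{\Qpun}$, use Kisin's result that for basic $x$ the group $I_x$ is a $\bQ$-form of $G$, and conclude that $I_x^{\ad}$ is the minimal $H$, whose base changes are exactly the monodromy groups. One caveat: a whole component need not be generically ordinary (the ordinary locus is empty unless the completed reflex field at $\fp$ is $\bQ_p$), so in general it is your D'Addezio independence argument, not \cite{Ruofan}, that gives $H_p = G^{\ad}_{\Qpun}$; in the paper's GSpin application (reflex field $\bQ$) either route works.
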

\begin{proof}
    This follows immediately using the results of \cite{MKmodp}. Let $G$ dnote the reductive group defining our Shimura variety. The group $H_\ell$ is simply isomorphic to $G^{\ad}_{\bQ_{\ell}}$, and $H_p$ is isomorphic to $G_{\Qpun}^{\ad}$. The group $I^{\ad}_x$ clearly satisfies $I_{x,\bQ_\ell}^{\ad} \subset H_{\ell} $ by definition (with the analogous $p$-adic statement also holding). As $x$ is basic, the reverse inclusion also holds, whence the theorem.
    
\end{proof}

We are now ready to prove Theorem \ref{thm: MT GSpin}. 
\begin{proof}
    We will work in the crystalline setting, as the $\ell$-adic argument is identical. 
    Without loss of generality, we will assume that $X\subset S \subset \Ag$ where $S$ is a $\GSpin$-Shimura variety with good reduction\footnote{We may do this at the cost of embedding a quadratic lattice into a larger-rank one.} associated to a quadratic lattice $L$ self-dual at $p$ and having signature $(b,2)$. The main theorem of \cite{Ruofan} together with Proposition \ref{prop: MT Shimura} gives us our result immediately if $S$ is the smallest Shimura variety containing $X$. 

    Suppose not. Let $S'$ denote the smallest Shimura variety containing $X$. By \cite{Ruofan}, the monodromy of $A/X$ equals that of $A/S'$. The sub-Shimura variety $S'\longrightarrow S$ is defined by endomorphisms. Specifically, there is a $\bQ$-algebra $E$ along with an action $E\acts \Cl(L)$ such that the reductive group defining $S'$ is the centralizer of $E$ in $\GSpin(L)$. This has the consequence that there is an embedding $E\subset \End(A_x)$ compatible with the embedding $E_{\cris}\subset \End(H_{\cris}^1(A_x))$.
    
    Let $H_p\subset \GSpin\acts \End(H^1_{\cris}(A_x))$ be the crystalline monodromy group associated to the family $A/X$. By \cite{Ruofan}, $H_p$ is the centralizer in $\GSpin$ of $E_{\cris}$. Consider the monodromy action of $H_p\subset \GSpin$ on the isocrystal $\End(H^1_{\cris}(A_x))$. We have a commutative diagram
\[
\begin{tikzcd}[column sep=tiny]
\GSpin & \acts & \End(H^1_{\cris}(A_x)) & \supset & E_{\cris} \\
I_x \arrow[u, hook] & \acts & \End(A_x) \arrow[u, hook] & \supset & E \arrow[u, hook]
\end{tikzcd}
\]
It follows that the subgroup $I_{H,x}\subset I_x$ that commutes with $E$ is a $\bQ$-form of $H_p$, compatible with the action above. This proves Theorem \ref{thm: MT GSpin} as required. 
\end{proof}

We are now ready to prove that the module of formal special endomorphisms descend to a $\overline{\bQ}$-vector space. 

\begin{proof}[Proof of Theorem \ref{thm: Qbar algebraicity of formal special endomorphisms} for supersingular points.]
This will be a relatively straightforward corollary to Theorem \ref{thm: MT GSpin}. We have a supersingular point $x\in S(\Fpbar)$. 

With notation as in the proof of Theorem \ref{thm: MT GSpin}, consider the following diagram:
\[
\begin{tikzcd}[column sep=tiny]
H_p & \acts & \End(H^1_{\cris}(A_x)) & \supset & \bL_{\cris} \\
I_{H,x} \arrow[u, hook] & \acts & \End(A_x) \arrow[u, hook] & \supset & L_x \arrow[u, hook]
\end{tikzcd}
\]

The vector space $\bL'_x[1/p]$ is simply a sub-representation of $H_p$. We also have that $\bL_x[1/p]$, as a representation of $H_p$, has the property that every irreducible that shows up has multiplicity one. The above diagram and Theorem \ref{thm: MT GSpin} implies that it must therefore be induced by a subrepresentation of $I_{H,x}$, and therefore descends to a subspace defined over $\overline{\bQ}$. The result follows.

\end{proof}

\section{Local structure of GSpin Shimura varieties} In Sections 6--8, let $k$ be an algebraically closed field of characteristic $p$, and set $W=W(k)$ and $K=W[1/p]$. In Theorem~\ref{thm:globalanalysis} and its proof, we assume that $k=\Fpbar$. We set up notation for local computation over GSpin Shimura varieties. As already explained in \cite{MST22}, we will assume that $b=2m$ is even and $L\otimes\bZ_p$ is not split.

Let $P\in S(k)$. Let us write the K3 crystal $\bL_{\cris,P}$ simply as $\bL_P$. Recall that $\bL_P \otimes_W k = \bL_{\dR, P}$ is equipped with a Hodge filtration. Let $\mu : \mathbb{G}_{m,W} \to \text{SO}(\bL_P)$ be a co-character whose reduction modulo $p$ splits the Hodge filtration. Define $U^{\mathrm{op}}$ to be the opposite unipotent subgroup in $\mathrm{SO}(\bL_P)$ relative to $\mu$, and let $\Spf(R)$ denote the completion of $U^{\mathrm{op}}$ at the identity section. Let $\sigma$ be a lift of the Frobenius endomorphism on $R \otimes_W k$, and let $u \in U^{\mathrm{op}}(R)$ be the tautological point. According to \cite[\S 1.4, 1.5]{KM09} (see also \cite[Proposition 4.7]{KLSS}), there exists an isomorphism $\cS^{/P} \simeq \Spf(R)$ through which $\bL_\cris(R)$ can be identified with $\bL_P \otimes_W R$ such that the Frobenius action is given by $\Frob = u \circ (\varphi \otimes \sigma)$.

Define $\cL:=\bL^{\varphi=1}_P$, the Frobenius invariant $\mathbb{Z}_p$-lattice in $\mathbb{L}_P$. Note that $\mathrm{rk}\cL=b+2$ when $P$ is supersingular, and equals $b+2-2h$ when $P$ is non-supersingular of height $h$. 

\subsection{Supersingular case}\label{superspecial 3} 
Suppose that $P$ is superspecial. Following \cite{MST22} (see also \cite[\S 4.2.1]{JSY26}), we can pick a basis $\cL=\Span_{\bZ_p}\{e', f', e_i, f_i \}_{i = 1, \cdots, m}$, pick coordinates $$R=W[\![x_1,...,x_m,y_1,...,y_m]\!]$$
with $\sigma(x_i)=x_i^p,\sigma(y_i)=y_i^p$, 
as well as an element $\lambda \in W(\bF_{p^2})^{\times}$ satisfying $\sigma(\lambda) = -\lambda$, 
such that \begin{enumerate}
   \item the intersection matrix of the chosen basis of $\cL$ is $$\left[
\begin{array}{cc|cc}
2p & & \\
&-2\lambda^2p\\
\hline
&&0&I_m\\
&&I_m&0
\end{array}\right].$$
\item a basis of $\bL_P$ is given by $e_i,f_i$, $1\leq i\leq m$, together with \begin{equation}\label{eq:changeof basis}
    w' = \frac{1}{2\lambda p}(\lambda e' + f'), v' = \frac{1}{2\lambda}(\lambda e' - f').
\end{equation} 
    \item  $\Frob=(1+F)\circ \sigma$, where
\begin{equation*}
\renewcommand{\arraystretch}{1.5}
    F = \left[
    \begin{array}{cc|cc}
         \frac{Q}{2p} & \frac{-\lambda Q}{2p} & \frac{\bfx}{2p} & \frac{\bfy}{2p} \\
         \frac{Q}{2p\lambda} & \frac{-Q}{2p} & \frac{\bfx}{2p \lambda} & \frac{\bfy}{2p \lambda} \\ \hline
         -\bfy^t & \lambda \bfy^t & & \\
         -\bfx^t & \lambda \bfx^t & & 
    \end{array}\right],
\end{equation*}
in which $\mathbf{x}=[x_1,...,x_m]$, $\mathbf{y}=[y_1,...,y_m]$ and $Q=-\sum_{i=1}^m x_iy_i$. 
\end{enumerate}
One can also write down similar explicit coordinates for general supersingular $P$, see \cite{MST22}.

\subsection{Non-supersingular case}\label{sub:nons_setup}
Let $P$ be non-supersingular with height $h\geq 1$. We can pick a basis $\bL_P=\Span_{W}\{e_0,f_0, e_1,f_1,...,e_{m},f_m\}$ such that 
\begin{enumerate}
    \item $\la e_i,f_j\ra=\delta_{ij}$,
    \item $\mu: t\rightarrow \mathrm{diag}[t^{-1},t,1,...,1]$,
    \item $\varphi$ acts as 
$$e_{h-1}\xrightarrow{\varphi}  e_{h-2}\xrightarrow{\varphi} \cdots\xrightarrow{\varphi}  e_1\xrightarrow{\varphi}   e_0\xrightarrow{\varphi} p^{-1}e_{h-1},$$  $$f_{h-1}\xrightarrow{\varphi}  f_{h-2}\xrightarrow{\varphi} \cdots\xrightarrow{\varphi} f_1\xrightarrow{\varphi}   f_0\xrightarrow{\varphi} p^{-1}f_{h-1},$$ and acts as identity on $e_i,f_i$ for $i\geq h$. 
\end{enumerate}  

Under above conventions, we have 
\begin{equation*}
    \varphi=
     \left[
    \begin{array}{cc|ccc|cccc}
          & & &&&&&&\\
          & &&I_{2(h-1)}&&&&& \\ 
          & &&&&&&&\\ 
          \hline
        p^{-1} & &&&&& \\
        &p&&&&\\\hline
        &&&& &&&&\\ 
         &&&& &&I_{2(m-h+1)}&\\ &&&&&&&&\end{array}\right]\sigma. 
\end{equation*}
We again choose coordinates 
\begin{equation}
    \label{eqn: coords of R} 
     R \simeq W[\![x_1, \cdots, x_m, y_1, \cdots, y_m]\!]
\end{equation}
with $\sigma$ sending $x_i$ to $x_i^p$ and $y_i$ to $y_i^p$, such that 
\begin{equation*}
   u = \left[
    \begin{array}{cc|ccccc}
         1&Q &x_1&y_1&\cdots &x_m&y_m\\
        & 1 \\
        \hline
       & -y_1\\
       &-x_1\\
       &\vdots &&&I_{2m}\\ 
       &-y_{m}\\
       &-x_m\\
    \end{array}\right],\,\,Q=\sum_{k=1}^mx_ky_k.
\end{equation*}
Then 
\begin{equation}\label{eq:Frob_parts}
    \Frob=u\circ(\varphi\otimes\sigma)= \begin{bmatrix}
        A&B\\
        D&I_{2(m-h+1)}
    \end{bmatrix}\sigma, 
\end{equation}
where $A$ is the top left $2h\times 2h$ square matrix such that:
\begin{equation*}
    \begin{aligned}
       & A=\left[
    \begin{array}{cc|cc|ccccc}
       p^{-1}x_{h-1}&py_{h-1}&  1&Q &x_{1}&y_{1}&\cdots &x_{h-2}&y_{h-2}\\
        & &&1 \\
         \hline
     &&  & -y_1\\
      && &-x_1\\
       &&&\vdots &&&I_{2(h-2)}\\
      && &-y_{h-2}\\
       && &-x_{h-2}\\\hline
 p^{-1}  &&&-y_{h-1}\\
       &p&&-x_{h-1}\\
    \end{array}\right],\;\;\;\text{when }h\geq 2\\
    &A=\begin{bmatrix}
    p^{-1} &pQ\\
    &p
\end{bmatrix},\;\;\;\text{when }h=1,
    \end{aligned}
\end{equation*}
$B$ is a $2h\times 2(m-h+1)$ matrix with top row $[x_{h},y_{h}
,\cdots
,x_{m},y_m]$ and 0 elsewhere; $D$ is a $2(m-h+1)\times 2h$ matrix with fourth (resp. second) column $[-y_{h},-x_{h}
,\cdots
,-y_{m} ,-x_m]^t$ and 0 elsewhere if $h\geq 2$ (resp. $h=1$).


\section{Eventually rapid decay in the interior}
Let $P\in S(k)$. Consider a formal curve $\cC:\Spf k[\![t]\!]\rightarrow S^{/P}$ whose (rigid) generic point maps to the ordinary stratum. Let $\cL=\bL^{\varphi=1}_P$.
\begin{defn}\label{def:eventualdecay}
 Let $v\in \cL$ be a vector, and let $\Gamma\subseteq \cL$ be a  saturated sublattice. \begin{enumerate}
     \item We say $v$  \textbf{eventually decays}, if no $p$-power multiple of $v$ deforms to $\cC$,
     \item  We say $v$ \textbf{eventually rapidly decays}, if there exists a positive integer $D$ such that for all $n\geq  0$, $p^{n}v$ at most lifts to $\mathrm{Spf}\,k[\![t]\!]/(t^{(1+p+p^2+...+p^{n})D})$. 
     \item We say $\Gamma$ eventually decays, if every primitive vector of $\Gamma$ eventually decays. 
     \item We say $\Gamma$ {eventually rapidly decays}, if there is a uniform positive integer $D$ such that for every primitive vector $v\in \Gamma$ and every $n\geq 0$, $p^{n}v$ at most lifts to $\mathrm{Spf}\,k[\![t]\!]/(t^{(1+p+p^2+...+p^{n})D})$. 
 \end{enumerate}
\end{defn}
The main goal of this section is the following:
\begin{theorem}\label{thm:eventually_rapid_decay} If $\Gamma\subseteq \cL$ eventually decays, then it eventually rapidly decays.
 \end{theorem}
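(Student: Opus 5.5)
We would follow the strategy of the decay lemmas of \cite{MST22}: analyse, via the explicit Frobenius of Section 6, how the obstruction to deforming a special endomorphism grows along the formal curve $\cC$. The goal is to show that once this obstruction is non-zero at one stage, it propagates with a $p$-power growth in the order of vanishing. This growth is exactly the bound $(1+p+\cdots+p^n)D$.

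Step 1 (deformation criterion). Pull back the local coordinates of Section 6 along $\cC$, so that $x_i(t),y_i(t)\in k[\![t]\!]$ are lifted to $W[\![t]\!]$ with $\sigma(t)=t^p$. By Grothendieck--Messing and Dieudonné theory, a vector $w\in \cL\otimes W$ extends to a horizontal, Frobenius-invariant section over $W[\![t]\!]/(t^{N})$ exactly when the unique solution $\tilde w(t)$ of $\Frob(\tilde w)=\tilde w$, obtained by iterating $\tilde w=u\,\varphi\sigma(\tilde w)$ from $\tilde w(0)=w$, stays in the Hodge filtration modulo $t^N$. Concretely, $w$ deforms to order $N$ if and only if a certain explicit power series $\beta_w(t)=\langle \tilde w(t),\text{Fil}^1\text{-generator}\rangle$ is divisible by $t^N$ modulo $p$. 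For $p^n v$ we use the standard trick: $p^n v$ deforms to order $N$ if and only if $\beta_v(t)\in p^{n}$-adically small terms plus $t^N$. Iterating Frobenius then expresses $\beta_{p^nv}$ through $\sigma^n$-twists of $\beta_v$, and $\sigma^n$ multiplies $t$-orders by $p^n$.

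Step 2 (the recursion). We would prove a recursion of the form $\beta_v=\ell_v(t)+\sigma(\beta_v)\cdot(\text{unit})+\dots$, where $\ell_v(t)$ is a linear form in the coordinates $x_i(t),y_i(t)$ and $Q(t)$. Unwinding it gives that the maximal order to which $p^nv$ lifts equals $\operatorname{ord}_t$ of $\ell_v+\sigma(\ell_v)/p+\cdots$ truncated at level $n$. This order is at most $(1+p+\cdots+p^n)\,d_v$, where $d_v=\operatorname{ord}_t$ of the first non-vanishing term. This is the computation already carried out in \cite[Theorem 5.2, 6.2]{MST22} in the supersingular case. The non-supersingular case, with the matrices $A,B,D$, is analogous after isolating the slope-zero block. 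Eventual decay of $v$ says precisely that not all these terms vanish identically, so $d_v<\infty$.

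Step 3 (uniformity). The map $v\mapsto$ (the first non-vanishing obstruction) is $\bZ_p$-linear in $v$. It is given by the finitely many power series $\ell_{e_j}(t)$ for a basis $e_j$ of $\Gamma$. Let $D$ be the maximal $t$-order needed for the $k$-span of these series, reduced modulo $p$, to detect every nonzero residue class. Then for primitive $v$ (whose image mod $p$ is nonzero), $d_v\le D$ uniformly. Here one has to check that eventual decay for \emph{all} primitive vectors forces the mod $p$ obstruction map on $\Gamma/p\Gamma$ to be injective up to finite $t$-order. I expect this to be the main obstacle. The danger is that a primitive vector $v$ has vanishing mod-$p$ leading obstruction while still eventually decaying through higher $p$-adic terms, so that $d_v$ is unbounded as $v$ varies $p$-adically.

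To handle this, we would first try a compactness argument. The set of primitive $v$ is compact, and $v\mapsto d_v$ is upper semicontinuous and finite everywhere by hypothesis, hence bounded. To make the upper semicontinuity honest, we would replace $D$ by $D+c$, where $c$ absorbs the bounded denominators, such as the $1/p$ in $F$, appearing in the recursion.
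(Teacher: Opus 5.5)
Your compactness argument in Step 3 is essentially the paper's. The gap is in Step 2, the step that must turn a bound for $v$ into a bound for $p^nv$, and the inequality you state there is false.

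In the paper, $d(v)=v_t(\overline{D}_1(v)(t))$ is the exact order to which $v$ lifts, and the recursion of Lemma~\ref{lm;recursive2}(b) reads
\[
D_1(pv)=\sum_{j_1\ \mathrm{even},\ i_1\geq 0} Q_{i_1}^{(j_1)}\,D_1(v)^{(i_1+j_1+1)}.
\]
The coefficient of the first Frobenius twist is therefore not a unit but $Q_0=Q|_{\cC}$. Its $t$-order $h_0$ is the intersection multiplicity of $\cC$ with the non-ordinary locus, which is finite because $\cC$ is generically ordinary.

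Reducing mod $p$, where twists become $p$-th powers, gives only the lower bound $d(pv)\geq p\,d(v)$. An upper bound requires excluding cancellation among the terms. The paper does this in the regime $d(v)\geq h_0$: there $Q_0D_1(v)^{(1)}$ has strictly smaller $t$-order than every other term, so $d(pv)=h_0+p\,d(v)$ exactly. This is where the shape $(1+p+\cdots+p^n)D$ comes from. Your sketch contains no such mechanism, and a unit coefficient would give $p^n$-growth instead.

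Your bound $d(p^nv)\leq(1+p+\cdots+p^n)d_v$ already fails for $n=1$. At a superspecial point $x_k(0)=y_k(0)=0$, so every $Q_i$ vanishes to order at least $2$ along $\cC$. Hence every term above has order at least $2+p\,d(v)$. Take $v$ with $d(v)=1$, for example a basis vector $e_i$ or $f_i$ whose matching coordinate has $t$-order $1$ along $\cC$. Then $d(pv)\geq p+2>(1+p)d(v)$. So a uniform bound on $d(v)$ over primitive $v$ does not propagate.

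The paper's fix has three steps:
\begin{enumerate}
\item Use the lower bound to pass to $p^Nv$ with $N=\lfloor\log_p h_0\rfloor+1$, so that $d(p^Nv)\geq p^N>h_0$.
\item Run your compactness argument on $v\mapsto d(p^Nv)$, which is finite by eventual decay, to get a uniform $D\geq h_0$.
\item Iterate $d(p^{N+n}v)=(1+\cdots+p^{n-1})h_0+p^nd(p^Nv)$.
\end{enumerate}
The non-supersingular case needs the same mechanism, with $D_0(pv)=\sum_{i=0}^{h-1}x_iD_0(v)^{(h-i)}$, dominant term $x_{h-1}D_0(v)^{(1)}$, and $h_0=v_t(x_{h-1})$.

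Separately, your Step 1 criterion (staying in the Hodge filtration mod $t^N$) is Grothendieck--Messing. That is not directly available for $k[\![t]\!]/(t^N)$ once $N>p$, since $(t)$ then carries no divided powers. Also, $\tilde w$ has unbounded $p$-denominators, so the criterion is not well posed as stated. The paper instead uses Lemma~\ref{T:PDlemma}, membership of the pulled-back horizontal section in $\bL_P\otimes W[\![t,t^{ps}/p]\!]$, to derive $x_{p^{n-1}v}=\overline{D}_n(v)$.
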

 This will be proved in the rest of this section. 

 \begin{corollary}\label{cor:control_subspace}
For $N\geq 1$, let $\cL_N\subseteq \cL$ be the sublattice of vectors that lift to $\mathrm{Spf}\,k[\![t]\!]/(t^N)$, and let $\Lambda\subseteq \cL$ be the saturated sublattice that consists of vectors that do not eventually decay. Then there is a positive integer $D$ such that for any $n\geq 0$ we have $\cL_{(1+p+p^2+...+p^{n})D+1}\subseteq \Lambda+p^{n+1}\cL$.
 \end{corollary}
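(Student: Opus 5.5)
Corollary~\ref{cor:control_subspace} follows from Theorem~\ref{thm:eventually_rapid_decay} applied to a complement of $\Lambda$, plus an elementary lattice argument. Since $\Lambda\subseteq\cL$ is saturated, I choose a saturated complement $\Gamma$ with $\cL=\Lambda\oplus\Gamma$. Every primitive vector $v$ of $\Gamma$ lies outside $\Lambda$, since $\Lambda\cap\Gamma=0$. Because $\Lambda$ is saturated and consists exactly of the vectors that do not eventually decay, $v$ eventually decays. So $\Gamma$ eventually decays, and Theorem~\ref{thm:eventually_rapid_decay} gives a uniform $D$ such that, for every primitive $v\in\Gamma$ and every $n\ge 0$, the vector $p^nv$ does not lift to $\Spf k[\![t]\!]/(t^{(1+p+\cdots+p^n)D+1})$. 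I take this $D$.

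Next, take $w\in\cL_{N}$ with $N=(1+p+\cdots+p^n)D+1$. Write $w=\lambda+\gamma$ with $\lambda\in\Lambda$ and $\gamma\in\Gamma$. The goal is $\gamma\in p^{n+1}\Gamma$, i.e.\ $\gamma\in p^{n+1}\cL$.

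\begin{itemize}
\item I first check that $\lambda$ lifts to every finite level. For each primitive $u\in\Lambda$, some $p$-power multiple $p^{a}u$ deforms to all of $\cC$. After scaling a basis of $\Lambda$, some $p^a\Lambda$ deforms to $\cC$, so $p^a\lambda$ lifts to every $\Spf k[\![t]\!]/(t^{M})$.
\item The obstacle is that $\lambda$ itself might not lift, so I cannot simply subtract it from $w$. To get around this, I multiply through: $p^a w$ and $p^a\lambda$ both lift to level $N$, hence so does $p^a\gamma$. Since the lifting loci $\cL_M$ are $\bZ_p$-submodules, differences of lifting vectors lift.
\item Now write $\gamma=p^{s}\gamma_0$ with $\gamma_0\in\Gamma$ primitive. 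Then $p^{a+s}\gamma_0$ lifts to level $N$, but this only tells me $a+s\ge n+1$, which is too weak by $a$.
\end{itemize}

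I therefore fix the argument by avoiding the multiplication by $p^a$. The cleanest route is to observe that $\Lambda$ itself deforms to $\cC$ integrally. Lifting of $\varphi$-invariant vectors to $k[\![t]\!]/(t^M)$ is governed by the horizontal extension $\tilde v$ of $v$ having $\Fil^1$-component vanishing mod $t^M$. If $p^a u$ deforms to all orders, this component of $\widetilde{p^au}=p^a\tilde u$ is identically zero in $R$-valued crystal coordinates, which are $p$-torsion free. Hence the component of $\tilde u$ is identically zero as well, so $u$ deforms. With $\Lambda\subseteq\cL_M$ for all $M$, the earlier step gives $\gamma\in\cL_N$ directly. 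Writing $\gamma=p^s\gamma_0$, the rapid-decay bound forces $s\ge n+1$, since otherwise $p^s\gamma_0$ would lift to level $N>(1+\cdots+p^s)D$. Therefore $w\in\Lambda+p^{n+1}\cL$.

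The main point needing care is this $p$-torsion-freeness step. If it fails in the paper's conventions, I would instead replace $D$ by a larger constant absorbing $a$, using $(1+\cdots+p^{n})D'\ge(1+\cdots+p^{n+a})D$ with $D'=p^aD$, which recovers the statement.
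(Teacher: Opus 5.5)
Your plan is the paper's: split $\cL=\Lambda\oplus\Gamma$, observe that $\Gamma$ eventually decays, and apply Theorem~\ref{thm:eventually_rapid_decay}. You are also right that the $\Lambda$-component $\lambda$ of $w\in\cL_N$ need not lie in $\cL_N$; the paper's one-line proof passes over this point. However, your ``cleanest route'' for disposing of it does not work.

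\textbf{Why the torsion-freeness step fails.} Deformation to $k[\![t]\!]/(t^M)$ is not detected by vanishing of a $\Fil^1$-component. That is a Grothendieck--Messing criterion, and $k[\![t]\!]/(t^M)\to k$ is not a PD thickening once $M>p$. The criterion actually available, Lemma~\ref{T:PDlemma}, is an integrality condition: $\tilde\iota^*\tilde v\in\bL_P\otimes W[\![t,\tfrac{t^{ps}}{p}]\!]$. From integrality of $p^a\tilde\iota^*\tilde u$ you only get $\tilde\iota^*\tilde u\in p^{-a}\bL_P\otimes W[\![t]\!]$, and $p$-torsion-freeness adds nothing.

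In fact the conclusion ``$\Lambda\subseteq\cL_M$ for all $M$'' is false in the generality of this section. Take the curve $t\mapsto(E_t,E_t^{(p^2)})$ in a product of modular curves, passing through a supersingular point with $E_0$ defined over $\bF_{p^2}$. The relative Frobenius $F^2\colon E\to E^{(p^2)}$ is a special endomorphism over all of $\Spf k[\![t]\!]$, while $F^2_{E_0}=p\epsilon$ with $\epsilon$ an isomorphism. So $\epsilon\in\Lambda$. But $\epsilon$ cannot deform, since otherwise $j(t)=j(t)^{p^2}$ and the family would be isotrivial. (This curve lies on $Z(p^2)$, but the corollary is stated for arbitrary generically ordinary formal curves.)

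\textbf{The fallback works, but the constant is wrong.} Choose $a$ with $p^a\Lambda\subseteq\cL_M$ for all $M$. For $w=\lambda+\gamma\in\cL_N$ you get $p^a\gamma\in\cL_N$, hence $p^{a+s}\gamma_0\in\cL_N$ where $\gamma=p^s\gamma_0$ with $\gamma_0$ primitive. With $D'=p^aD$ you have $(1+\cdots+p^n)D'=(p^a+\cdots+p^{n+a})D$, which falls short of $(1+\cdots+p^{n+a})D$ by $(1+\cdots+p^{a-1})D$. Take $D'=(1+p+\cdots+p^a)D$ instead. Expanding $(1+\cdots+p^n)(1+\cdots+p^a)$ produces every power $p^0,\dots,p^{n+a}$ at least once. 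So for $N=(1+\cdots+p^n)D'+1$, rapid decay forces $(1+\cdots+p^{a+s})D\geq N>(1+\cdots+p^{n+a})D$, i.e.\ $s\geq n+1$. With this change, and with the torsion-freeness argument dropped, the proof is complete.
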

\begin{proof}
Write $\cL=\Lambda\oplus \Gamma$. Then $\Gamma$ eventually decays. Apply Theorem~\ref{thm:eventually_rapid_decay} to conclude. 
\end{proof}

 \subsection{Deforming special endomorphisms} We first fix some notation:
\begin{enumerate}
    \item
Given a formal special endomorphism $v \in \cL$, there exists a single series $x_v\in R\otimes k$ whose vanishing is the locus to which $v$ extends; cf. \cite[Corollary 5.17]{MP16}. Such series is well defined up to a unit. 
\item By \cite[\S 1.4, 1.5]{KM09}, or simply by Dwork's trick, a $v\in \cL$ uniquely extends to a Frobenius invariant horizontal section $\tilde{v}\in\bL_P\otimes (R\widehat{\otimes} K)$. In fact, $\tilde{v}$ can be explicitly obtained from $v$ by iterating the Frobenius: 
$$\tilde{v}=\lim_{N\rightarrow \infty} \Frob^N v.$$
\end{enumerate}

The following liftability criterion will be useful throughout:
\begin{lemma}\label{T:PDlemma} Let $\iota:\Spf k[\![t]\!]\rightarrow \Spf R\otimes k$ be a map. Lift $\iota$ to a map $\tilde{\iota}:\Spf W[\![t]\!]\rightarrow \Spf R$. Let $0\leq s\leq \infty$ be an integer. An element $v\in \cL$ deforms to $\mathrm{Spf}\,k[\![t]\!]/(t^s)$ if and only if $\tilde{\iota}^*\tilde{v}\in\bL_P\otimes W[\![t,\frac{t^{ps}}{p}]\!]$.
\end{lemma}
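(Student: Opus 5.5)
This is the standard Grothendieck--Messing / Dwork-style criterion (cf. the analogous lemmas in \cite{MST22} and \cite{JSY26}), and I will prove it in three steps.

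\textbf{Step 1: reduce to a horizontal-section question over $\Spf k[\![t]\!]$.} Pull the universal crystal back along $\tilde\iota$. By the explicit description $\Frob=u\circ(\varphi\otimes\sigma)$, the pullback to $W[\![t]\!]$ (with Frobenius lift $t\mapsto t^p$, after adjusting $\tilde\iota$ so that it is Frobenius-compatible up to the usual crystalline change-of-lift isomorphism) is the Dieudonné crystal of $\cA[p^\infty]|_{k[\![t]\!]}$. The element $\tilde\iota^*\tilde v$ is then the unique Frobenius-invariant horizontal section over $W[\![t]\!]\otimes K$ extending $v$. It is independent of the chosen lift $\tilde\iota$, since horizontal sections are transported by the connection and the parallel-transport isomorphism preserves the integrality condition, which is stable under the divided-power $p$-adic topology.

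\textbf{Step 2: integrality in the divided-power envelope.} The ring $W[\![t,t^{ps}/p]\!]$ is closely related to the PD-envelope of the ideal $(p,t^s)$. More precisely, $k[\![t]\!]/(t^s)\to k[\![t]\!]/(t^{ps})$ is related by Frobenius, and Dwork's trick moves between the two: $\Frob$ maps the PD-thickening of $k[\![t]\!]/(t^s)$ into $W[\![t, t^{ps}/p]\!]$.

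By Grothendieck--Messing / Dieudonné theory (Kisin, Madapusi-Pera's deformation theory of special endomorphisms, \cite[\S5]{MP16}), an endomorphism $v$ of the $p$-divisible group over $k$ deforms to a PD-thickening $A\to k$ if and only if its crystalline realization, evaluated on $A$, is horizontal and integral and lies in $\Fil^0$. Equivalently, the horizontal section is integral over the crystalline site of the thickening and is Frobenius-invariant.

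Apply this with $A$ the $p$-adic completion of the PD-envelope of $k[\![t]\!]/(t^s)$ in $W[\![t]\!]$. The condition becomes: $\tilde\iota^*\tilde v$ is integral over that PD-envelope and $v$ lies in $\Fil^0$ modulo $t^s$.

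\textbf{Step 3: replace the PD-envelope by $W[\![t,t^{ps}/p]\!]$ using Frobenius invariance.} Since $\tilde v=\Frob(\tilde v)$, integrality over the PD-envelope $D_s$ implies integrality of $\Frob\tilde v$, and $\Frob$ carries $D_s$-coefficients into $W[\![t,t^{ps}/p]\!]$, because $\sigma(t^{s})=t^{ps}$ and the divided powers $(t^s)^{[n]}$ map under $\sigma$ into $W[\![t,t^{ps}/p]\!]$.

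Conversely, suppose $\tilde v$ has coefficients in $W[\![t,t^{ps}/p]\!]$. Writing $\tilde v=\Frob\tilde v$ and using that $\Frob$ has a pole of order at most $p^{-1}$ on one graded piece, I will show the Hodge-filtration condition mod $t^s$. Concretely, the $\Fil^1$-component of $\tilde v$ must vanish modulo $t^s$ because $p\cdot(\text{it})$ equals a $\sigma$-twist lying in $W[\![t,t^{ps}/p]\!]$. This is the standard argument: the $w'$- or $e_0$-coordinate is divisible exactly as needed, and this yields liftability mod $t^s$.

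The main obstacle is this converse direction in Step 3. It requires extracting the filtration condition from integrality, which needs the explicit form of $\Frob$ (the $p^{-1}$ entry in $\varphi$ and the pole $\frac{Q}{2p}$ in the supersingular matrix) to be carefully controlled. The case $s=\infty$ follows by taking the limit over $s$.
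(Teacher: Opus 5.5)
Your route differs from the paper's, and as written it has a genuine gap at its foundation. The deformation in question is along $\Spec k\hookrightarrow\Spec k[\![t]\!]/(t^s)$. For $s>p$ this is not a PD-thickening, because $t^p\neq 0$ and so $(t)$ carries no divided powers. Hence Grothendieck--Messing cannot be applied in one step. Applying it to ``$A\to k$'' with $A=D_s$, the completed PD envelope, does not help either: $D_s\to k$ is not a PD-thickening (again $t$ has no divided powers), and deforming to $A$ is a different question anyway. $D_s$ is a PD-thickening of $k[\![t]\!]/(t^s)$, so what you need is a statement about homomorphisms of $p$-divisible groups over that non-reduced characteristic-$p$ base.

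That statement is exactly what the paper uses: de Jong's full faithfulness of the crystalline Dieudonn\'e functor over $k[\![t]\!]/(t^s)$ (\cite{DJ95}, \cite{DJ99}). With it, $v$ extends if and only if its realization extends to an $F$-compatible horizontal endomorphism of the crystal evaluated on $D_s$. By uniqueness of horizontal extensions, this holds if and only if $\tilde\iota^*\tilde v\in\bL_P\otimes D_s$. No Hodge-filtration condition is needed for this, since de Jong's theorem concerns Dieudonn\'e crystals with $F$ and $V$ only. So the ``main obstacle'' in your converse direction comes from choosing Grothendieck--Messing, and in your write-up it is only gestured at, not proved. To stay with Grothendieck--Messing you would have to induct along square-zero steps $k[\![t]\!]/(t^a)\to k[\![t]\!]/(t^{\min(2a,s)})$ and check the filtration at each step; you do not do this.

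The forward half of your Step 3 is also wrong as stated. It is true that $\sigma(D_s)\subseteq W[\![t,\frac{t^{ps}}{p}]\!]$, because $\sigma(\gamma_n(t^s))=(t^{ps}/p)^n\cdot p^n/n!$. But $\Frob=u\circ(\varphi\otimes\sigma)$ is not integral. For example, $\varphi(e_0)=p^{-1}e_{h-1}$ at a non-supersingular point, and $\varphi(w')=p^{-1}v'$ at a superspecial one. So $\tilde v=\Frob\tilde v$ only gives $\tilde\iota^*\tilde v\in p^{-1}\bL_P\otimes W[\![t,\frac{t^{ps}}{p}]\!]$.

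The repair needs three ingredients:
\begin{enumerate}
\item the coordinate hit by the pole (the $e_0$-, resp.\ $w'$-coordinate) lies in the PD ideal $I_s\subset D_s$, which is precisely the statement that the realization lies in $\Fil^0$ modulo $I_s$;
\item the inclusion $\sigma(I_s)\subseteq p\,W[\![t,\frac{t^{ps}}{p}]\!]$;
\item an account of the fact that $\tilde\iota$ does not intertwine the Frobenius lifts on $R$ and $W[\![t]\!]$.
\end{enumerate}
You are right that some bridge between $D_s$ and $W[\![t,\frac{t^{ps}}{p}]\!]$ is needed. $D_s$ is strictly larger, since it contains $t^{p^2s}/p^{p+1}$, and the paper's sketch simply identifies the two rings. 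But the bridge has to be built this way, not by asserting that $\Frob$ preserves integrality.
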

\begin{proof}
    The proof is identical to \cite[Lemma 5.1.7]{JSY26}. For reader's convenience, we sketch the proof here. First, note that $ W[\![t,\frac{t^{ps}}{p}]\!]$ is the $p$-adic completion of the PD envelope of $R$ with respect to $(p,t^s)$. By \cite[\S 2.2, 2.3]{DJ95} and \cite[Theorem 4.6]{DJ99}, there is a natural fully faithful functor from the category of $p$-divisible groups over $k[\![t]\!]/(t^s)$ to Dieudonné modules over $ W[\![t,\frac{t^{ps}}{p}]\!]$. As a result, $v$ deforms to $\Spf(k[\![t]\!]/t^s)$ if and only if it extends to a horizontal section of $\bL_P\otimes W[\![t,\frac{t^{ps}}{p}]\!]$, if and only if $\tilde{\iota}^*\tilde{v}\in \bL_P\otimes W[\![t,\frac{t^{ps}}{p}]\!]$.\end{proof}

\subsection{Supersingular case} We will only work out the most difficult case, i.e., when $P$ is superspecial. The general supersingular case follows from a similar strategy and is omitted. We will use the explicit coordinates in  \S\ref{superspecial 3}. To compute $\tilde{v}$, the key point is to compute $$F_\infty := \lim_{N\rightarrow \infty}\Frob^N=\prod_{i\geq 0}(1+F^{(i)}).$$
In \cite[\S 5]{JSY26}, we explicitly computed $F_\infty$ with the further assumption that $Q=0$. Now we drop this assumption.
The treatment is parallel to that of \textit{loc.cit}.   
\subsubsection{Computing $F_\infty$}
For $i\geq 1$, let 
$$Q_{i}=-\sum_{k=1}^m\left( x_ky^{(i)}_k+x^{(i)}_ky_k \right).$$
Also let $Q_0=Q$. Then the upper left, upper right, and lower left blocks of $F$ can be written as \begin{equation}\label{eq:blocks}
 \frac{Q_0}{2p}\begin{bmatrix}
    1\\
   \lambda^{-1}
\end{bmatrix}\begin{bmatrix}
    1 & -\lambda
\end{bmatrix}  ,\,\,\, \frac{1}{2p}\begin{bmatrix}
    1\\
   \lambda^{-1}
\end{bmatrix}\begin{bmatrix}
    \mathbf{x} & \mathbf{y}
\end{bmatrix},\text{ and   }\begin{bmatrix}
    -\mathbf{y}^t\\
    -\mathbf{x}^t
\end{bmatrix}\begin{bmatrix}
    1 & -\lambda
\end{bmatrix},
\end{equation} respectively. The following lemma is elementary: 
\begin{lemma}\label{lm:elementarycomp}
  For all $i> 0$ and $j\geq 0$, we have \begin{enumerate}
      \item $\begin{bmatrix}
    1 & -\lambda
\end{bmatrix}^{(j)}\begin{bmatrix}
    1\\
   \lambda^{-1}
\end{bmatrix}^{(i+j)}=1-(-1)^{i},$.
\item $
    \begin{bmatrix}
    \mathbf{x} & \mathbf{y}
\end{bmatrix}^{(j)}\begin{bmatrix}
    -\mathbf{y}^t\\
    -\mathbf{x}^t
\end{bmatrix}^{(i+j)} = Q_{i}^{(j)}$.
  \end{enumerate}
  
\end{lemma}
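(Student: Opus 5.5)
Both identities follow by direct expansion. The only inputs are that $\sigma$ is a ring endomorphism of $R$ extending the Witt vector Frobenius on $W$, and that the superscript $(j)$ means applying $\sigma^j$ entrywise. We use $\sigma(\lambda)=-\lambda$, so $\lambda^{(j)}=(-1)^j\lambda$. We also use that $\lambda$ is a unit in $W(\bF_{p^2})$, so $(\lambda^{-1})^{(j)}=(\lambda^{(j)})^{-1}=(-1)^j\lambda^{-1}$. Finally we use $\sigma(x_k)=x_k^p$ and $\sigma(y_k)=y_k^p$, so that $x_k^{(i)}=\sigma^i(x_k)$ and $y_k^{(i)}=\sigma^i(y_k)$ are the coordinates appearing in the definition of $Q_i$.

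For (1), we compute the row vector $\begin{bmatrix}1 & -\lambda\end{bmatrix}^{(j)}=\begin{bmatrix}1 & -(-1)^j\lambda\end{bmatrix}$ and the column vector $\begin{bmatrix}1\\ \lambda^{-1}\end{bmatrix}^{(i+j)}=\begin{bmatrix}1\\ (-1)^{i+j}\lambda^{-1}\end{bmatrix}$. Their product is
\[
1-(-1)^j(-1)^{i+j}\lambda\lambda^{-1}=1-(-1)^{i+2j}=1-(-1)^i,
\]
which is the claim. In particular the product vanishes for $i$ even and equals $2$ for $i$ odd, independently of $j$.

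For (2), we expand the product of the $1\times 2m$ row and the $2m\times 1$ column:
\[
\begin{bmatrix}\bfx & \bfy\end{bmatrix}^{(j)}\begin{bmatrix}-\bfy^t\\ -\bfx^t\end{bmatrix}^{(i+j)}=-\sum_{k=1}^m\left(x_k^{(j)}y_k^{(i+j)}+y_k^{(j)}x_k^{(i+j)}\right).
\]
Since $\sigma^j$ is a ring endomorphism, $z^{(i+j)}=\sigma^j(z^{(i)})$. Hence the right-hand side equals
\[
\sigma^j\Big(-\sum_{k=1}^m\big(x_ky_k^{(i)}+y_kx_k^{(i)}\big)\Big)=Q_i^{(j)},
\]
matching the definition of $Q_i$ for $i\geq 1$.

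There is no real obstacle here. The only points needing care are bookkeeping ones: the sign convention $\sigma(\lambda)=-\lambda$ must be applied consistently to both $\lambda$ and $\lambda^{-1}$, and $\sigma$ must be used as a ring homomorphism on all of $R$, including its action on the constants in $W(\bF_{p^2})$, and not merely on the variables.
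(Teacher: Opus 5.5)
Your proposal is correct. It is essentially the direct entrywise computation the paper has in mind: the paper calls the lemma elementary and omits the proof, and your use of $\lambda^{(j)}=(-1)^j\lambda$ together with the fact that $\sigma^j$ is a ring endomorphism, so that $z^{(i+j)}=\sigma^j(z^{(i)})$, is exactly what that computation needs.
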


\begin{notation}\label{not:admissible}
An admissible index system  of length $n$ is the following matrix $\binom{j_1 \,j_2\,...\,j_n}{i_1\,i_2\,...\,i_n}$
where all entries are non-negative integers, and $j_{k+1}>i_{k}+j_k$ for $k=1,...,n-1$. We will define  $$Q_{\binom{j_1 \,j_2\,...\,j_n}{i_1\,i_2\,...\,i_n}}= \prod_{k=1}^n Q_{i_k}^{(j_k)},\;\;\;\;\;\delta_{\binom{j_1 \,j_2\,...\,j_n}{i_1\,i_2\,...\,i_n}}= \prod_{k=1}^{n-1} \frac{1}{2}[1-(-1)^{j_{k+1}-i_k-j_k}].$$
When $n=0$, we also define by default that $Q_{\varnothing}=1, \delta_{\varnothing}=1$. 
\end{notation}
\begin{lemma}\label{lm:Finftyexplicit} 
Notation as above. We have
 \begin{align*}
    F_\infty= I+\sum_{n\geq 1}p^{-n}F_n,
\end{align*}
where $F_n=\begin{bmatrix}
X_n & Y_n\\
Z_n & W_n
\end{bmatrix}$, and $X_n$ is the upper left $2\times 2$ block, $Y_n$ is the upper right $2\times 2m$ block, $Z_n$ is the lower left $2m\times 2$ block, and $W_n$ is the lower left $2m\times 2m$ block, with explicit description as follows (all matrices in the summation are admissible in the sense of Notation~\ref{not:admissible}):
\begin{align*}
X_n&=\sum_{\binom{j_1 \,j_2\,...\,j_n}{i_1\,i_2\,...\,i_n}} \frac{1}{2} \delta_{\binom{j_1 \,j_2\,...\,j_n}{i_1\,i_2\,...\,i_n}}Q_{\binom{j_1 \,j_2\,...\,j_n}{i_1\,i_2\,...\,i_n}}\begin{bmatrix}
    1\\
   (-1)^{j_1}\lambda^{-1}
\end{bmatrix}\begin{bmatrix}
    1 & (-1)^{i_n+j_{n}+1}\lambda
\end{bmatrix},\\
   Y_n&= \sum_{\binom{j_1 \,j_2\,...\,j_{n-1}\,j_n}{i_1\,i_2\,...\,i_{n-1}\,0}} \frac{1}{2} \delta_{\binom{j_1 \,j_2\,...\,j_{n-1}\,j_n}{i_1\,i_2\,...\,i_{n-1}\,0}}Q_{\binom{j_1 \,j_2\,...\,j_{n-1}}{i_1\,i_2\,...\,i_{n-1}}}\begin{bmatrix}
    1\\
   (-1)^{j_1}\lambda^{-1}\end{bmatrix}\begin{bmatrix}
   \mathbf{x} & \mathbf{y}
\end{bmatrix}^{(j_n)},\\
  Z_n&= \sum_{\binom{j_0\,j_1 \,j_2\,...\,j_n}{0\,i_1\,i_2\,...\,i_n}} \delta_{\binom{j_0\,j_1 \,j_2\,...\,j_n}{0\,i_1\,i_2\,...\,i_n}}Q_{\binom{j_1 \,j_2\,...\,j_n}{i_1\,i_2\,...\,i_n}}\begin{bmatrix}
    -\mathbf{y}^t\\
    -\mathbf{x}^t
\end{bmatrix}^{(j_0)}\begin{bmatrix}
   1 & (-1)^{i_{n}+j_n+1}\lambda
\end{bmatrix},\\
  W_n&= \sum_{\binom{j_0\,j_1 \,j_2\,...\,j_{n-1}\,j_n}{0\,i_1\,i_2\,...\,i_{n-1}\,0}} \delta_{\binom{j_0\,j_1 \,j_2\,...\,j_{n-1}\,j_n}{0\,i_1\,i_2\,...\,i_{n-1}\,0}}Q_{\binom{j_1 \,j_2\,...\,j_{n-1}}{i_1\,i_2\,...\,i_{n-1}}}\begin{bmatrix}
    -\mathbf{y}^t\\
    -\mathbf{x}^t
\end{bmatrix}^{(j_0)}\begin{bmatrix}
   \mathbf{x} & \mathbf{y}
\end{bmatrix}^{(j_{n})}.
\end{align*}
\end{lemma}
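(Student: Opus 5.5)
We will prove Lemma~\ref{lm:Finftyexplicit} by expanding the infinite product $F_\infty=\prod_{i\geq 0}(1+F^{(i)})$ (ordered as $(1+F)(1+F^{(1)})(1+F^{(2)})\cdots$, where $F^{(i)}$ denotes $\sigma^i$ applied to the entries) into a sum over finite increasing tuples of indices:
\[
F_\infty=\sum_{r\geq 0}\;\sum_{0\leq a_1<a_2<\cdots<a_r}F^{(a_1)}F^{(a_2)}\cdots F^{(a_r)}.
\]
We then group the terms according to the block structure. Write $F=\begin{bmatrix}\alpha & \beta\\ \gamma & 0\end{bmatrix}$, with $\alpha,\beta,\gamma$ the three rank-one blocks displayed in \eqref{eq:blocks}. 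Each product $F^{(a_1)}\cdots F^{(a_r)}$ is a sum of words in the blocks, obtained by tracking a path through the index set $\{\text{top},\text{bottom}\}$. Since the bottom-right block is zero, two consecutive bottom-to-bottom steps never occur. Hence every nonzero word is built from the following pieces: a factor $\alpha^{(a)}$ (top to top); a pair $\beta^{(a)}\gamma^{(a')}$ with $a<a'$ (top to bottom and back to top); and possibly an initial $\gamma^{(a)}$ and/or a final $\beta^{(a)}$.

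Next we collapse each word using the rank-one factorizations. Write $\alpha=\frac{Q_0}{2p}uv$, $\beta=\frac1{2p}u[\mathbf x\;\mathbf y]$ and $\gamma=-[\mathbf y^t;\mathbf x^t]\,v$, where $u=\begin{bmatrix}1\\ \lambda^{-1}\end{bmatrix}$ and $v=\begin{bmatrix}1&-\lambda\end{bmatrix}$. Adjacent factors contract to scalars by Lemma~\ref{lm:elementarycomp}. A junction $v^{(s)}u^{(t)}$ with $t>s$ contributes $1-(-1)^{t-s}$, which is $2$ or $0$. A pair $[\mathbf x\;\mathbf y]^{(j)}[-\mathbf y^t;-\mathbf x^t]^{(j+i)}$ contributes $Q_i^{(j)}$.

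A factor $\alpha^{(j)}$ corresponds to the index pair $\binom{j}{0}$ with weight $Q_0^{(j)}$. A pair $\beta^{(j)}\gamma^{(j+i)}$ corresponds to $\binom{j}{i}$ with weight $Q_i^{(j)}$, counted with a factor $1/(2p)$. So each ``top-level'' unit is a column $\binom{j_k}{i_k}$ carrying one factor of $p^{-1}$ and $\frac12 Q_{i_k}^{(j_k)}$. The strict increase of the indices $a$ translates precisely into $j_{k+1}>i_k+j_k$, i.e.\ admissibility. The junction between consecutive units yields $1-(-1)^{j_{k+1}-i_k-j_k}$, which together with a $\frac12$ gives the $\delta$ factor. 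The leftover outer vectors are $u^{(j_1)}=\begin{bmatrix}1\\(-1)^{j_1}\lambda^{-1}\end{bmatrix}$ (using $\sigma(\lambda)=-\lambda$) and $v^{(i_n+j_n)}$. This produces $X_n$.

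The same bookkeeping handles the other blocks. $Y_n$ arises when the word ends in $\beta^{(j_n)}$, recorded as a final column $\binom{j_n}{0}$ whose weight is replaced by the vector $[\mathbf x\;\mathbf y]^{(j_n)}$. $Z_n$ arises when the word begins with $\gamma^{(j_0)}$, which has no $1/(2p)$ and so accounts for the missing $\frac12$. $W_n$ arises when the word both begins with $\gamma$ and ends with $\beta$. Powers of $p^{-1}$ then count the number of $\alpha$- and $\beta$-factors, which equals $n$. Finally, we collect terms and check that the correspondence between words and admissible index systems is a bijection.

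The main obstacle is the bookkeeping itself. The constants $\frac12$ versus $1$ and the signs $(-1)^{j}$ must land exactly as stated, and the bijection must be verified, including the degenerate cases $n=1$ and the boundary column with $i=0$. The rest is routine convergence of the product in $R\widehat\otimes K$, as in \cite[\S5]{JSY26}, whose computation this generalizes by dropping $Q=0$.
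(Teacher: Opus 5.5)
Your approach is the paper's. Its proof is just the direct expansion of $\prod_{i\ge0}(1+F^{(i)})$, contracted with Lemma~\ref{lm:elementarycomp}. Your bookkeeping is correct for every word containing at least one $\alpha$ or $\beta$: the powers of $p$, the factors $\frac12$ (one per $\alpha$ or $\beta$, with exactly one left over when the word does not begin with $\gamma$), the signs coming from $\sigma(\lambda)=-\lambda$, and the admissibility condition $j_{k+1}>i_k+j_k$ all land as stated.

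The one place your final bijection check would actually fail is the one-letter words $\gamma^{(a)}$, i.e.\ the lower-left block of the linear terms $F^{(a)}$. These carry no factor $p^{-1}$, so they correspond to no admissible system in any $Z_n$ with $n\ge1$. Nothing else cancels them either: every term of $Z_n$, $n\geq1$, has total degree at least $3$ in $\underline{x},\underline{y}$, while $\gamma^{(0)}$ is linear. So the expansion really gives
$$F_\infty=I+\left[\begin{smallmatrix}0&0\\ \sum_{a\ge0}\gamma^{(a)}&0\end{smallmatrix}\right]+\sum_{n\ge1}p^{-n}F_n .$$
The extra term is exactly the $n=0$ case of the $Z_n$ formula. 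The printed statement has the same omission. It is harmless downstream, since the lemma is only used modulo $O(1)$ (Lemma~\ref{lm;recursive2}(c)). Still, your write-up should record this term rather than assert a bijection onto admissible systems with $n\ge1$.
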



\begin{proof}
This follows easily from a direct computation using Lemma~\ref{lm:elementarycomp}.    
\end{proof}
\begin{lemma}[Recursion formula]\label{lm:Finftyrecursive}
     Notation as above. Let $X_n^+$ (resp. $Y_n^+$) be the sub-sum of $X_n$ (resp. $Y_n$) with $j_1$ even. For $n\geq 1$, define $\mathbf{U}_n$ to be the first row of $\begin{bmatrix}
     X_n^+& Y_n^+
     \end{bmatrix}$. 
 Then for $r>0$, we have
\begin{equation}
\label{eq: split U}
    \mathbf{U}_{n+r}= \sum_{\substack{\binom{j_1 \,j_2\,...\,j_r}{i_1\,i_2\,...\,i_r},\,2|j_1}}\delta_{\binom{j_1 \,j_2\,...\,j_r}{i_1\,i_2\,...\,i_r}}Q_{\binom{j_1 \,j_2\,...\,j_r}{i_1\,i_2\,...\,i_r}}\mathbf{U}^{(i_{r}+j_r+1)}_n.
\end{equation}
Moreover, the matrix $F_{n}$ can be recovered from $\mathbf{U}_n$ as follows: 
\begin{align}
    \begin{bmatrix}
    X_{n}&Y_{n}
\end{bmatrix}&= \begin{bmatrix}
    1\\
    \lambda^{-1}
\end{bmatrix}\mathbf{U}_n+ \begin{bmatrix}
    1\\
    -\lambda^{-1}
\end{bmatrix}\mathbf{U}_n^{(1)}, \label{eq: X+ Y+} \\
\begin{bmatrix}
    Z_{n}&W_{n}
\end{bmatrix}&=\sum_{j_0\geq 0}2 
\begin{bmatrix}
    -\mathbf{y}^t\\
    -\mathbf{x}^t
\end{bmatrix}^{(j_0)}
    \mathbf{U}^{(j_{0}+1)}_n. \label{eq: Z+ W+}
\end{align}

\end{lemma}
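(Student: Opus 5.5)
The plan is to prove the recursion \eqref{eq: split U} and the reconstruction formulas \eqref{eq: X+ Y+}, \eqref{eq: Z+ W+} directly from the closed forms in Lemma~\ref{lm:Finftyexplicit}. The basic device is to cut each admissible index system of length $n+r$ after its $r$-th column and to track how the Frobenius twists and the sign factors $\delta$ split across the cut.

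\textbf{Step 1: the first row of $[X_n^+\ Y_n^+]$.} For $j_1$ even we have $(-1)^{j_1}\lambda^{-1}=\lambda^{-1}$, so every term of $X_n^+$ and $Y_n^+$ has left factor $\begin{bmatrix}1\\ \lambda^{-1}\end{bmatrix}$. This gives $[X_n^+\ Y_n^+]=\begin{bmatrix}1\\ \lambda^{-1}\end{bmatrix}\mathbf{U}_n$, where $\mathbf{U}_n$ is the sum over admissible systems with $2\mid j_1$ of $\tfrac12\delta\,Q\,[1\ \ (-1)^{i_n+j_n+1}\lambda]$, plus the $Y$-type terms $\tfrac12\delta\,Q\,[\mathbf{x}\ \mathbf{y}]^{(j_n)}$.

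\textbf{Step 2: the recursion \eqref{eq: split U}.} Take a system of length $n+r$ with $j_1$ even and split it as $\binom{j_1\cdots j_r}{i_1\cdots i_r}$ followed by $\binom{j_{r+1}\cdots}{i_{r+1}\cdots}$. Reindex the tail by setting $j'_k=j_{r+k}-(i_r+j_r+1)$. Then:
\begin{enumerate}
\item Admissibility of the whole system is equivalent to admissibility of both pieces together with $j'_1\ge 0$.
\item The $Q$-product factors as $Q_{\text{head}}\cdot(Q_{\text{tail}'})^{(i_r+j_r+1)}$.
\item The $\delta$-product factors as $\delta_{\text{head}}\cdot\delta_{\text{tail}'}\cdot\tfrac12[1-(-1)^{j_{r+1}-i_r-j_r}]$.
\end{enumerate}
The last factor equals $\tfrac12[1-(-1)^{j'_1+1}]$, which is $1$ exactly when $j'_1$ is even and $0$ otherwise. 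So the tail sum is precisely the sum defining $\mathbf{U}_n$, twisted by $(i_r+j_r+1)$.

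The sign $(-1)^{i+j+1}$ in the last entry must also shift consistently. Since $\lambda^{(k)}=(-1)^k\lambda$ because $\sigma(\lambda)=-\lambda$, twisting $[1\ \ (-1)^{i'+j'+1}\lambda]$ by $i_r+j_r+1$ produces exactly the sign $(-1)^{i+j+1}$ in the original indices. The $Y$-type terms behave the same way, with the twist landing on $[\mathbf{x}\ \mathbf{y}]^{(j_n)}$.

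\textbf{Step 3: the reconstruction formulas.} For \eqref{eq: X+ Y+}, the odd-$j_1$ part of $[X_n\ Y_n]$ should be the Frobenius twist by $1$ of the even part. Shifting $j_k\mapsto j_k+1$ for all $k$ preserves admissibility and $\delta$, and it sends $\lambda^{-1}$ to $-\lambda^{-1}$ and the right factor to its twist. This gives the second term $\begin{bmatrix}1\\ -\lambda^{-1}\end{bmatrix}\mathbf{U}_n^{(1)}$.

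For \eqref{eq: Z+ W+}, fix $j_0$ and factor out $\begin{bmatrix}-\mathbf{y}^t\\ -\mathbf{x}^t\end{bmatrix}^{(j_0)}$. Reindex the remaining system by $j'_k=j_k-j_0-1$. The extra $\delta$-factor $\tfrac12[1-(-1)^{j_1-j_0}]$ then forces $j'_1$ even. Because $Z_n,W_n$ carry no $\tfrac12$ while $\mathbf{U}_n$ does, the factor $2$ appears, and the remainder is $\mathbf{U}_n^{(j_0+1)}$.

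\textbf{Main obstacle.} The main difficulty is the bookkeeping of the signs $(-1)^{i+j+1}\lambda$ together with the $\tfrac12$ normalizations. I would check these first against the $n=1$ case computed from \eqref{eq:blocks} and Lemma~\ref{lm:elementarycomp}, and then run the general reindexing argument.
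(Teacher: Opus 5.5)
Your proposal is correct and is essentially the paper's argument. The paper only says the lemma follows from Lemma~\ref{lm:Finftyexplicit} by direct computation, and your cutting and reindexing of admissible index systems is that computation written out: the $\delta$-factor at the cut forces the parity condition, and $\sigma(\lambda)=-\lambda$ absorbs the Frobenius shift.
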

\begin{proof}
This follows from Lemma~\ref{lm:Finftyexplicit} and a direct computation. 
\end{proof}

\subsubsection{Deformation loci}
 For an element $v\in \cL=\mathrm{Span}_{\bZ_p}\{e', f', e_i, f_i\}$, we have $ \tilde{v}=F_\infty v\in \bL_P\otimes K[\![\underline{x},\underline{y}]\!]$.
There exists a rank $2m+2$ column vector $\mathbf{c}$ (resp. $\tilde{\mathbf{c}}$) valued in $\bZ_p$ (resp. $K[\![\underline{x},\underline{y}]\!]$) such that
\begin{equation}
\label{eqn: define c and c tilde}
v=(e',f';e_i,f_i)\mathbf{c},\;\;\tilde{v}=(e',f';e_i,f_i)\tilde{\mathbf{c}}.
\end{equation} 
Express $\mathbf{c}$ as $\sum_{k\geq 0} \mathbf{c}_kp^k$, where each entry of $\mathbf{c}_k$ is a Teichmüller lift of some element in $\mathbb{F}_p$. Let $\mathbf{U}_n$ be as in Lemma~\ref{lm:Finftyrecursive}. Define
\begin{equation}
\label{eqn: defined D_n's}
\begin{aligned}
     D_n(v)&:= 2\sum_{k\geq 0}\mathbf{U}_{n+k}\mathbf{c}_k \in W[\![\underline{x},\underline{y}]\!].\\
    \overline{D}_n(v)&:={D}_n(v)\bmod p\in k[\![\underline{x},\underline{y}]\!].
\end{aligned}  
\end{equation} 
\begin{remark}\label{rmk:D_np^m}
    Note that $D_n(p^rv)= D_{n+r}(v)$ by definition.
\end{remark}
\begin{proposition}\label{def locus sspecial}
For $n\geq 1$,  $x_{p^{n-1}v}=\overline{D}_{n}(v)$. 
\end{proposition}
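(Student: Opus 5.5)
We prove the statement via the liftability criterion, Lemma~\ref{T:PDlemma}, applied to the universal situation rather than to a curve. Write $w=p^{n-1}v$, so that $\tilde w=F_\infty w=p^{n-1}F_\infty v$. Then $w$ deforms exactly to the closed subscheme cut out by $x_w$, and we must identify this locus with $\overline{D}_n(v)=0$. By Lemma~\ref{T:PDlemma} (run over an arbitrary test map $\iota$, or equivalently over the divided power envelope of $R$ along $(p,g)$ for a candidate equation $g$), $w$ deforms to $V(g)$ if and only if $\tilde w$ has coefficients in the $p$-adically completed PD-envelope $R[\![g^p/p]\!]$. The plan is therefore to compute the coordinates $\tilde{\mathbf c}$ of $p^{n-1}F_\infty v$ modulo this integrality condition and show that the only obstruction is $D_n(v)$.

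First I expand using Lemma~\ref{lm:Finftyexplicit}:
\[
p^{n-1}F_\infty v=p^{n-1}v+\sum_{r\ge1}p^{n-1-r}F_r v .
\]
Substituting $\mathbf c=\sum_k \mathbf c_k p^k$, every term $p^{n-1-r+k}F_r\mathbf c_k$ with $r\le n-1+k$ is integral, so the polar part consists of the terms with $r+ \ell = n+k$ for $\ell\ge1$. I group these by $s=r-k\ge n$. Next I use the recursion formula, Lemma~\ref{lm:Finftyrecursive}, through \eqref{eq: X+ Y+} and \eqref{eq: Z+ W+}, which express every block of $F_r$ through $\mathbf U_r$ and its Frobenius twists. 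The polar part of $\tilde{\mathbf c}$ then becomes a combination of $p^{-j}\bigl(\sum_k \mathbf U_{n-1+j+k}\mathbf c_k\bigr)^{(\cdot)}$ with universal coefficients in $\mathbf x,\mathbf y,\lambda$. Applying \eqref{eq: split U} with $r=j-1$ to $\mathbf U_{n-1+j+k}$ expresses these in terms of Frobenius twists $D_n(v)^{(a)}$, with $a\ge1$ whenever the power of $p$ in the denominator exceeds $1$. The key bookkeeping identity is therefore:
\[
\tilde{\mathbf c}(w)\equiv \text{(integral)}+\sum_{j\ge1}p^{-j}\,\Phi_j\bigl(D_n(v),D_n(v)^{(1)},\dots\bigr),
\]
where the $p^{-j}$ term involves only $D_n(v)^{(a)}$ with $a\ge j-1$, and the leading $p^{-1}$ term contains $D_n(v)$ itself with unit coefficient via the $\begin{bmatrix}1\\ \lambda^{-1}\end{bmatrix}$ column.

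With this structure in place I argue both directions. If $g=\overline{D}_n(v)$ (lifted as $D_n(v)$), then $D_n(v)^{(a)}=D_n(v)^{p^a}+p(\cdots)$ lies in $p^{j}\cdot R[\![g^p/p]\!]$ for $a\ge j-1$, since $g^{p^a}/p^{(p^a-1)/(p-1)}$ is a PD-monomial and $(p^a-1)/(p-1)\ge a$. Hence $\tilde w$ is integral and $w$ deforms to $V(\overline D_n(v))$. Conversely, over $\Spf k[\![t]\!]$ or any PD-thickening where $w$ lifts, the coefficient of $p^{-1}$, namely $D_n(v)$ modulo higher-order terms, must vanish modulo $p$ on the base, so $V(x_w)\subseteq V(\overline D_n(v))$. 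Since both are Cartier divisors and $x_w$ is well defined up to a unit, I conclude by comparing the orders of vanishing along arbitrary formal curves via Lemma~\ref{T:PDlemma}, which gives $x_w=\overline D_n(v)$ up to a unit.

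The main obstacle is the bookkeeping identity: checking that the recursion genuinely pushes all higher poles onto sufficiently high Frobenius twists of the single series $D_n(v)$, now that $Q\ne0$. This is exactly where the admissible-index sums and the parity factors $\delta$ enter. I would first verify the identity for $n=1$ and for $\mathbf c=\mathbf c_0$, mirroring \cite[\S5]{JSY26}, and then carry out an induction on $j$ using \eqref{eq: split U}.
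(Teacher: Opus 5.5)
Your framework (the PD-integrality criterion of Lemma~\ref{T:PDlemma}, the expansion of $F_\infty$ via Lemma~\ref{lm:Finftyrecursive}, and testing along all formal curves) matches the paper's. The bookkeeping step fails, however, because you test integrality in the wrong lattice.

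Lemma~\ref{T:PDlemma} asks whether $\tilde\iota^*\tilde w$ lies in $\bL_P\otimes W[\![t,t^{ps}/p]\!]$. At a superspecial point $\cL\otimes W\subsetneq\bL_P$: a $W$-basis of $\bL_P$ is $v',w',e_i,f_i$ with $w'=\frac{1}{2\lambda p}(\lambda e'+f')$. Your coordinates $\tilde{\mathbf c}$ are taken with respect to $e',f',e_i,f_i$. In those coordinates the $p^{-1}$ term does contain the untwisted series $D_n(v)$, as you say.

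Your claim that $D_n(v)^{(a)}\in p^jR[\![g^p/p]\!]$ whenever $a\ge j-1$ is then false. For $(j,a)=(1,0)$ it says $g\in pR[\![g^p/p]\!]$. For $(j,a)=(2,1)$, neither $g^p/p^2$ nor the error $p(\cdots)/p^2$ in $D^{(1)}=D^p+p(\cdots)$ is integral. The inequality $(p^a-1)/(p-1)\ge a$ only gives exponent $\ge j-1$, one short of the needed $j$.

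So the forward direction does not follow. The converse, read literally (a term $D_n(v)/p$ must be PD-integral), would predict deformation only to order roughly $\alpha/p$ instead of $\alpha=v_t(\iota^*\overline D_n(v))$. This is because integrality of $f/p$ in $W[\![t,t^{ps}/p]\!]$ means $\bar f\in(t^{ps})$, not $\bar f\in(t^s)$.

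The missing idea is the change of basis in Lemma~\ref{lm;recursive2}(d). Write $ae'+bf'=(a+\lambda b)p\,w'+(a-\lambda b)v'$. Then the untwisted contribution is multiplied by $p$ and becomes integral. The polar coordinates become $\sum_{n\ge1}p^{-n}D_n(v)^{(1)}$ and $\sum_{n\ge1}p^{-n}D_{n+1}(v)$; the $e_i,f_i$ coordinates already involve only twisted $D_n(v)^{(\ge1)}$.

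By Lemma~\ref{lm;recursive2}(a),(b), the $p^{-n}$ term then involves only $D_1(v)^{(a)}$ with $a\ge n$, which equals $D_1(v)^{p^a}+O(p^a)$. This exact match between Frobenius twist and power of $p$ is what makes both directions work:
\begin{itemize}
\item if $\overline D_1(v)\in(t^\alpha)$, every entry lies in $W[\![t,t^{\alpha p}/p]\!]$;
\item $p^{-1}D_1(v)^{(1)}=p^{-1}(ct^{\alpha p}+O(p))$ with $c\neq0$ prevents membership in $W[\![t,t^{(\alpha+1)p}/p]\!]$.
\end{itemize}
Working with $w=p^{n-1}v$, as you do, is equivalent to the paper's reduction to $n=1$ via $D_1(p^{n-1}v)=D_n(v)$.
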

We begin by a lemma: 
\begin{lemma}\label{lm;recursive2}
    The following are true: 
    \begin{enumerate}[label=\upshape{(\alph*)}]
    \item For $l\geq 0$, $D_n(v)^{(l)}-D_n(v)^{p^l}\in p^lW[\![\underline{x},\underline{y}]\!]$. 
        \item For $r>0$, we have: $$D_{a+r}(v)=\sum_{\substack{\binom{j_1 \,j_2\,...\,j_r}{i_1\,i_2\,...\,i_r},\,2|j_1}}\delta_{\binom{j_1 \,j_2\,...\,j_r}{i_1\,i_2\,...\,i_r}}Q_{\binom{j_1 \,j_2\,...\,j_r}{i_1\,i_2\,...\,i_r}}D_a(v)^{(i_{r}+j_r+1)}.$$ 
\item         The first two entries of $\tilde{\mathbf{c}}$ are 
$$\frac{1}{2}\sum_{n\geq 1}p^{-n}\left(\begin{bmatrix}
    1\\
    \lambda^{-1}
\end{bmatrix}D_{n}(v)+ \begin{bmatrix}
    1\\
    -\lambda^{-1}
\end{bmatrix}D_n(v)^{(1)}\right)+ O(1).$$
Here by $O(1)$ we mean that the entries have coefficients in $W$. The last $2m$ entries of $\tilde{\mathbf{c}}$ are 
$$\sum_{n\geq 1}p^{-n}\left(\sum_{j_0\geq 0} 
\begin{bmatrix}
    -\mathbf{y}^t\\
    -\mathbf{x}^t
\end{bmatrix}^{(j_0)}
   D_n(v)^{(j_{0}+1)}\right)+O(1).$$
   \item Let $v',w'$ be as in (\ref{eq:changeof basis}). If we write $\tilde{v}=(v',w'; e_i,f_i)\tilde{\mathbf{c}}'$, then the first two entries of $\tilde{\mathbf{c}}'$ are $ \sum_{n\geq 1}p^{-n} D_n(v)^{(1)}+O(1)$ and $ \sum_{n\geq 1}p^{-n} D_{n+1}(v)+O(1)$, respectively. The last $2m$ entries of $\tilde{\mathbf{c}}'$ coincide with that of $\tilde{\mathbf{c}}$. 
    \end{enumerate}
\end{lemma}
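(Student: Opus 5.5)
We prove the four parts in the order (a), (b), (c), (d). Each part follows from the definition $D_n(v)=2\sum_{k\ge0}\mathbf{U}_{n+k}\mathbf{c}_k$ together with Lemma~\ref{lm:Finftyrecursive} and the formula $F_\infty=I+\sum_{n\ge1}p^{-n}F_n$ of Lemma~\ref{lm:Finftyexplicit}.

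For (a), the plan is to show that every entry of $\mathbf{U}_n$ is a $W$-linear combination of products of elements $Q_{i}^{(j)}$ and $x_k^{(j)},y_k^{(j)}$, with coefficients in $\bZ_p[\lambda^{\pm1}]$. Since $\sigma(x_k)=x_k^p$ and $\sigma(y_k)=y_k^p$, each $Q_i^{(j)}$ is a polynomial in $\underline{x},\underline{y}$ with $\bZ$-coefficients, so $D_n(v)\in W[\![\underline{x},\underline{y}]\!]$. The key fact is standard: for any $f\in W[\![\underline{x},\underline{y}]\!]$ and a Frobenius lift $\sigma$ with $\sigma(f)\equiv f^p \bmod p$, we have $\sigma^l(f)\equiv f^{p^l}\bmod p^l$. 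One proves this by induction on $l$, using that $a\equiv b \bmod p^{l}$ implies $a^p\equiv b^p\bmod p^{l+1}$. Applying it to $\sigma^{l-1}(f)$ gives $f^{(l)}=\sigma(f^{(l-1)})$. Hence $f^{(l)}\equiv\sigma(f^{p^{l-1}})+O(p^{l})=\sigma(f)^{p^{l-1}}+O(p^l)\equiv f^{p^l}\bmod p^l$. This handles (a). The coefficients $\lambda\in W(\bF_{p^2})$ need no special care, because the Teichmüller coefficients $\mathbf{c}_k$ are $\sigma$-fixed. The issue is whether $\sigma(\lambda)=-\lambda$ spoils $\sigma(f)\equiv f^p$. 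I would check that $D_n$ uses only $\mathbf{U}_n$, the first row of the $j_1$-even part, in which the $\lambda$-dependence cancels against $\lambda^{-1}$ factors. If it does not cancel, I would instead work with $f\in W(\bF_{p^2})[\![\underline{x},\underline{y}]\!]$ and note that $\sigma^2$-semilinearity is compatible with the claim up to the sign, which is harmless modulo $p^l$ only after this check. This bookkeeping is where I expect friction.

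For (b), I apply $\mathbf{c}_k$ to the recursion \eqref{eq: split U} with $n$ replaced by $a+k$. Since each $\mathbf{c}_k$ has Teichmüller entries in $\bZ_p$, it is fixed by $\sigma$, so $(\mathbf{U}_{a+k}\mathbf{c}_k)^{(s)}=\mathbf{U}_{a+k}^{(s)}\mathbf{c}_k$. Summing over $k$ with the factor $2$ gives (b) directly.

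For (c), write $\tilde{\mathbf{c}}=F_\infty\mathbf{c}=\mathbf{c}+\sum_n p^{-n}F_n\sum_k p^k\mathbf{c}_k$. We regroup by the total exponent $N=n-k$. Terms with $N\le0$ lie in $W[\![\underline{x},\underline{y}]\!]$ and are absorbed into $O(1)$. For $N\ge1$ we collect $\sum_k F_{N+k}\mathbf{c}_k$. By \eqref{eq: X+ Y+}, the top two rows equal $\begin{bmatrix}1\\ \lambda^{-1}\end{bmatrix}\sum_k\mathbf{U}_{N+k}\mathbf{c}_k+\begin{bmatrix}1\\-\lambda^{-1}\end{bmatrix}\sum_k\mathbf{U}^{(1)}_{N+k}\mathbf{c}_k$, which is $\tfrac12$ times the stated expression in terms of $D_N(v)$ and $D_N(v)^{(1)}$. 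Likewise \eqref{eq: Z+ W+} gives the bottom rows. The only point to verify is convergence: the regrouped sum must converge in the relevant completion, so that rearranging is legitimate. This holds because $\mathbf{U}_n$ has entries of growing $(\underline{x},\underline{y})$-adic order.

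For (d), I invert the change of basis \eqref{eq:changeof basis}: $e'=v'+\lambda w'$ and $f'=\lambda(p w'-v')$. Substituting this into the coefficients from (c) gives the $v'$-coefficient as $c_1-\lambda c_2$. Here the $D_n$ terms cancel and the $D_n^{(1)}$ terms survive, giving $\sum p^{-n}D_n^{(1)}$. The $w'$-coefficient is $\lambda c_1+p\lambda^2 c_2$ up to the normalization in \eqref{eq:changeof basis}. Here the factor $p$ shifts the index by one, producing $\sum p^{-n}D_{n+1}(v)$. The term $p\cdot p^{-1}D_1$ and the other leftover pieces are $O(1)$. The main thing to get right is the constants and signs in this linear algebra. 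The last $2m$ entries are unchanged, since the basis change only affects $e',f'$.
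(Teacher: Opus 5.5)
\textbf{Verdict: there is a genuine gap in (a).} Your treatment of (b) and (c) matches the paper's: you pair the recursion of Lemma~\ref{lm:Finftyrecursive} with the $\sigma$-fixed Teichm\"uller vectors $\mathbf{c}_k$, and you regroup $F_\infty\mathbf{c}$ by the net power $p^{-(n-k)}$. Part (d) has a smaller slip, noted at the end.

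\textbf{The gap in (a).} The ``standard fact'' that $\sigma^l(f)\equiv f^{p^l}\pmod{p^l}$ for every $f\in W[\![\underline{x},\underline{y}]\!]$ is false, and your induction does not prove it. The hypothesis $f^{(l-1)}\equiv f^{p^{l-1}}$ holds only modulo $p^{l-1}$. Applying $\sigma$ preserves that precision but cannot raise it, so the step $f^{(l)}\equiv\sigma(f^{p^{l-1}})+O(p^l)$ is unjustified. Already $f=x_1+y_1$ fails for $l=2$: $\sigma^2(f)=x_1^{p^2}+y_1^{p^2}$, while $f^{p^2}\equiv(x_1^p+y_1^p)^p\pmod{p^2}$ contains the cross terms $\binom{p}{k}x_1^{pk}y_1^{p(p-k)}$, $0<k<p$, which are nonzero mod $p^2$. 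For arbitrary $f$ only two things hold: $f^{(l)}\equiv f^{p^l}\pmod p$, and the Dwork-type congruence $\sigma(f)^{p^{l-1}}\equiv f^{p^l}\pmod{p^l}$. Equality $f^{(l)}=f^{p^l}$ holds for monomials with Teichm\"uller coefficients, not for sums. Incidentally, the $\lambda$-dependence does not cancel either, since the second entry of $\mathbf{U}_n$ carries $(-1)^{i_n+j_n+1}\lambda$.

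\textbf{Why it cannot be patched.} Using the special shape of $D_n(v)$ does not help, because (a) as literally stated fails for $l\ge 2$. The paper's one-line argument (reduce to entries of $2\mathbf{U}_n$ and invoke the explicit formulae) does not avoid this. Such congruences are not additive, and the entries are themselves infinite sums of the $Q_i^{(j)}$.

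Concretely, set $x_k=y_k=0$ for $k\ge 2$ (this commutes with $\sigma$), and take $v=e'$, $n=1$. Then $D_1(e')=\sum_{j\ \mathrm{even}}\sum_{i\ge 0}Q_i^{(j)}=A+B$, where $A=-x_1y_1$ and $B=-(x_1y_1^p+x_1^py_1)+(\text{terms of degree}\ge p^2+1)$.
\begin{itemize}
\item Modulo $p^2$, $D_1(e')^{p^2}\equiv(A^p+B^p)^p$.
\item The term $pA^{p^2-p}B^p$ of this expansion contributes $-p\,x_1^{p^2}y_1^{2p^2-p}$, and no other term of $(A^p+B^p)^p$ contains this monomial.
\item Every monomial of $D_1(e')^{(2)}$ has both exponents divisible by $p^2$.
\end{itemize}
Hence $D_1(e')^{(2)}-D_1(e')^{p^2}\notin p^2W[\![\underline{x},\underline{y}]\!]$. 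A correct version of (a) must be of the weaker type above, and its use in \eqref{eq:3434} then needs to be rechecked.

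\textbf{The slip in (d).} You inverted the change of basis incorrectly. From $w'=\frac{1}{2\lambda p}(\lambda e'+f')$ and $v'=\frac{1}{2\lambda}(\lambda e'-f')$ one gets $e'=v'+p\,w'$ (not $v'+\lambda w'$) and $f'=\lambda(p\,w'-v')$. With $c_1,c_2$ the first two entries of $\tilde{\mathbf{c}}$ from (c), the two coordinates are:
\begin{itemize}
\item $v'$-coordinate: $c_1-\lambda c_2=\sum_{n\ge1}p^{-n}D_n(v)^{(1)}+O(1)$;
\item $w'$-coordinate: $p(c_1+\lambda c_2)=\sum_{n\ge1}p^{1-n}D_n(v)+O(1)=\sum_{n\ge1}p^{-n}D_{n+1}(v)+O(1)$.
\end{itemize}
This is exactly what (d) asserts, with the index shift coming from the factor $p$ as you anticipated.
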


\begin{proof}
Note that $a^{(l)}=a^{p^l}$ for a Teichmüller lift. For part (a),  we can reduce to showing that if $u$ is an entry of $2\mathbf{U}_{n}$, then $u^{(l)}\equiv u^{p^l}(\bmod p^l)$. This follows from the explicit formulae in Lemma~\ref{lm:Finftyexplicit}. Part (b) and part (c) are direct consequences of Lemma~\ref{lm:Finftyrecursive}. Part (d) follows from (c) and the change of basis $w' = \frac{1}{2\lambda p}(\lambda e' + f'), v' = \frac{1}{2\lambda}(\lambda e' - f')$.
\end{proof} 

\begin{proof}[Proof of Proposition~\ref{def locus sspecial}] Let $v$ be any formal special endomorphism, it suffices to show that  $x_v=\overline{D}_1(v)$. It suffices to show that for any map $\iota: \Spf k[\![t]\!]\rightarrow \Spf k[\![\underline{x},\underline{y}]\!]$, $v$ deforms to $\Spf(k[\![t]\!]/t^{\alpha})$ but not $\Spf(k[\![t]\!]/t^{\alpha+1})$, where $\alpha$ is the degree of the series $\iota^*\overline{D}_1(v)\in k[\![t]\!]$. By Lemma~\ref{T:PDlemma}, it suffices to check that  $\tilde{\iota}^* \tilde{\mathbf{c}}'\in  W[\![t,\frac{t^{\alpha p}}{p}]\!]^{\oplus (2m+2)}$ but not $ W[\![t,\frac{t^{(\alpha+1) p}}{p}]\!]^{\oplus (2m+2)}$, where $\tilde{\mathbf{c}}'$ is in Lemma~\ref{lm;recursive2}(d), and $\tilde{\iota}:\Spf W[\![t]\!]\rightarrow \Spf W[\![\underline{x},\underline{y}]\!]$ is a lift of $\iota$.  

Let $n\geq 2$. Taking $a=1$ and $r=n-1$ in Lemma~\ref{lm;recursive2} (b), combining Lemma~\ref{lm;recursive2} (a), and noting that $i_{n-1}\geq 0$ and $j_{n-1}\geq n-2$, we have: 
\begin{equation}\label{eq:3434}
D_{n}(v)^{(1)}=\sum_{\substack{\binom{j_1 \,j_2\,...\,j_{n-1}}{i_1\,i_2\,...\,i_{n-1}},\,2|j_1}}\delta_{\binom{j_1 \,j_2\,...\,j_{n-1}}{i_1\,i_2\,...\,i_{n-1}}}Q^{(1)}_{\binom{j_1 \,j_2\,...\,j_{n-1}}{i_1\,i_2\,...\,i_{n-1}}}D_1(v)^{p^{n}}+O(p^{n}).   
\end{equation}
If $\overline{D}_1(v)\in (t^{\alpha})$, then $\tilde{\iota}^*D_1(v)\in (p,t^\alpha)$. Taking $\tilde{\iota}^*$ on both sides of (\ref{eq:3434}), and plugging in Lemma~\ref{lm;recursive2}(d), we find that the first entries of $\tilde{\iota}^*\tilde{\mathbf{c}}'$ lies in $ W[\![t,\frac{t^{\alpha p}}{p}]\!]$. A similar argument shows that other entries of $\tilde{\iota}^*\tilde{\mathbf{c}}'$ also lie in $ W[\![t,\frac{t^{\alpha p}}{p}]\!]$. Furthermore, taking $n=1$ in (\ref{eq:3434}) gives $D_1(v)^{(1)}=ct^{\alpha p}+O(p)$ for some constant $c$, one easily see that the first entries of $\tilde{\iota}^*\tilde{\mathbf{c}}'$  lies out side of $W[\![t,\frac{t^{(\alpha+1) p}}{p}]\!]$.  
\end{proof}

\subsubsection{Proof of Theorem~\ref{thm:eventually_rapid_decay} for supersingular points} We will only work out the most difficult case where $P$ is superspecial. Suppose that the equation of the formal curve is given by \begin{equation}\label{eq:localequation1}
    \cC:t\rightarrow (x_1(t),...,x_m(t);y_1(t),...,y_m(t)),\;\;x_i(t),y_i(t)\in k[\![t]\!].
\end{equation}
\begin{notation}\label{not:tech_curve}
We will also use  $x_i(t), y_i(t)$ to denote the power series in  $W[\![t]\!]$ where all the coefficients are Teichmuller lifts of the coefficients of $x_i(t), y_i(t)$ in (\ref{eq:localequation1}). This abuse of notation won't cause confusion if one keeps track of which base ring they work in. To ease notation, we often just write $x_i,y_i$ for the corresponding power series in $t$.
\end{notation}
Let $h_0$ be the intersection number of $\cC$ with the non-ordinary locus, and note that $h_0=v_t(Q_0(t))$ by \cite[Lemma 4.9]{MST22}. This is a positive integer. We also set $h_a:= v_t( Q_a(t))$ for $a>0$. Write ${D}_n(v)(t)$ for ${D}_n(v)$ evaluated at $x_i(t),y_i(t)$. Write $d(v):=v_t\left( \overline{D}_1(v)(t)\right)$ for any $v\in \cL$. Proposition~\ref{def locus sspecial} implies that $v$ lifts to $\mathrm{Spf}\,k[\![t]\!]/(t^{d(v)})$ but not beyond.  

\begin{claim}
 Let $v\in \cL$ be such that $d(v)<\infty$, then $d(pv)\geq pd(v)$. If furthermore $d(v)\geq  h_0$, then 
 $d(pv)= h_0+pd(v)$.
\end{claim}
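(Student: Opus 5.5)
By Remark~\ref{rmk:D_np^m} we have $D_1(pv)=D_2(v)$, so $d(pv)=v_t(\overline{D}_2(v)(t))$. The plan is therefore to express $D_2(v)$ in terms of $D_1(v)$ using the recursion of Lemma~\ref{lm;recursive2}(b) with $a=1$, $r=1$, reduce mod $p$, and read off $t$-adic valuations.

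For $r=1$ an admissible index system is a single column $\binom{j_1}{i_1}$ with $j_1$ even, and $\delta=1$ since there are no consecutive pairs. The recursion becomes
\[
D_2(v)=\sum_{j_1\ \mathrm{even},\, i_1\geq 0} Q_{i_1}^{(j_1)}\, D_1(v)^{(i_1+j_1+1)} .
\]
Mod $p$ we use Lemma~\ref{lm;recursive2}(a): $\overline{D_1(v)^{(l)}}=\overline{D}_1(v)^{p^l}$, and $\overline{Q_{i}^{(j)}}=\overline{Q}_i^{p^j}$ because $\sigma$ is the Frobenius on coordinates. After pulling back along $\cC$ we get
\[
\overline{D}_2(v)(t)=\sum_{j\ \mathrm{even},\, i\geq0} \overline{Q}_i(t)^{p^j}\,\overline{D}_1(v)(t)^{p^{i+j+1}} .
\]
Every term is divisible by $\overline{D}_1(v)(t)^p$, which gives $d(pv)\geq p\,d(v)$ at once. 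If $d(v)=\infty$ the statement is vacuous, so assume $d=d(v)<\infty$.

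Write $h_i=v_t(\overline{Q}_i(t))\ge 0$; from the definition of $Q_i$ and of $Q_0$ one has $h_0\ge 0$, and in fact $h_0\ge 1$. The term with $(i,j)=(0,0)$ has valuation $h_0+pd$. Every other term has valuation $p^jh_i+p^{i+j+1}d$. If $i+j\ge1$, this is at least $p^{2}d\ge pd+(p^2-p)d$. When $d\ge h_0$, we get $(p^2-p)d>h_0$ (since $p>2$ and $d\ge 1$), so this valuation exceeds $h_0+pd$. Hence for $d(v)\ge h_0$ the $(0,0)$ term is the unique term of minimal valuation, and $d(pv)=h_0+pd(v)$. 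Convergence of the infinite sum in $k[\![t]\!]$ is clear, because the valuations grow with $i+j$.

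The main obstacle is the reduction step. Mod $p$ one must replace the Frobenius twists $(\cdot)^{(l)}$ by $p^l$-th powers both for $D_1(v)$ (Lemma~\ref{lm;recursive2}(a)) and for $Q_i^{(j)}$. In addition, Teichmüller lifts are needed so that pulling back along $\tilde\iota$ commutes with these operations mod $p$ (Notation~\ref{not:tech_curve}). Once that is settled, the valuation comparison above is routine.
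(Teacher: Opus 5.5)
Your proof is correct and follows the paper's argument: both use $D_1(pv)=D_2(v)$ and Lemma~\ref{lm;recursive2}(b) with $r=1$, note that every term carries a factor $D_1(v)^{(l)}$ with $l\geq 1$, and then show that when $d(v)\geq h_0$ the $(i_1,j_1)=(0,0)$ term $Q_0D_1(v)^{(1)}$ is the unique term of smallest $t$-adic valuation. Your bound for all $(i,j)\neq(0,0)$, including $j\geq 2$, is slightly more explicit than the paper's parenthetical; your concern about Teichm\"uller lifts is unnecessary, because $f^{(l)}\equiv f^{p^l} \bmod p$ for every $f\in R$ and reduction mod $p$ commutes with pullback along $\iota$.
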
  It follows from 
  Remark~\ref{rmk:D_np^m} and   Lemma~\ref{lm;recursive2}(b) for  $n=r=1$ that 
  $$D_{1}(pv)=\sum_{\substack{\binom{j_1 }{i_1},\,2|j_1}}\delta_{\binom{j_1 }{i_1}}Q_{\binom{j_1 }{i_1}}D_1(v)^{(i_{1}+j_1+1)}.$$
Since $i_1+j_1+1\geq 1$, we find that $d(pv)\geq pd(v)$. If furthermore $d(v)\geq  h_0$, then the lowest $v_t$-value among the set $\left\{\delta_{\binom{j_1 }{i_1}}Q_{\binom{j_1 }{i_1}}D_1(v)^{(i_{1}+j_1+1)}\bmod p\right\}$ is achieved when $i_1=j_1=0$ (say, since $d(v)\geq h_0$, we must have $h_0+pd(v)<h_i+p^{i+1}d(v)$ for all $i\geq 1$). This proves the \textit{Claim}. 

Let $\Gamma\subseteq \cL$ be a saturated sublattice that eventually decays. Let $N:=\lfloor\log_p h_0\rfloor+1$. By the first assertion in the \textit{Claim}, we have $d(p^Nv)\geq h_0$ for every primitive $v\in \Gamma$. Now we claim that the set $$\cP=\{d(p^Nv)|v\in \Gamma \text{ is primitive}\}$$  
is bounded. If this was not the case, then there is a sequence of primitive vectors $\{v_k\}_{k\in\bN}\subseteq \Gamma$ such that $d(p^Nv_k)\geq k$. Let $v\in \Gamma$ be a limit point of $\{v_k\}_{k\in\bN}$ in the $p$-adic metric, which exists because of the compactness of $\Gamma$. Then $v$ is primitive and $p^Nv$ lifts to $\cC$. This contradicts the assumption that $\Gamma$ eventually decays. 

Let $D$ be an integer upper bound for $\cP$. Then for every primitive $v\in \cL$, we have $D\geq d(p^Nv)\geq h_0$. Iterating the second assertion of \textit{Claim}, we see that for all $n\geq 0$, $$d(p^{N+n}v)=(1+p+...+p^{n-1})h_0+p^nd(p^Nv)\leq (1+p+...+p^{n-1})h_0+p^nD.$$
It is then easy to see that for all $n\geq 0$, 
$$d(p^{n}v)\leq (1+p+...+p^{n})D.$$
Therefore $p^nv$ lifts at most to $\Spf k[\![t]\!]/(t^{(1+p+...+p^n)D})$. This establishes the theorem in the supersingular case. $\hfill\square$
 
\subsection{Non-supersingular case}
Suppose that $P$ is non-supersingular of height $h$. The case where $P$ is ordinary admits a very easy proof by canonical coordinates. Indeed, $\cS_W^{/P}$ canonically admits a structure of formal torus. Let $v\in \cL$ be a primitive element, then $x_v$ cuts out a formal subtorus $T_v \subseteq S^{/P}$, and $x_{p^nv}=x_v^{p^n}$. Together with the compactness of $\Gamma$, this immediately establishes   Theorem~\ref{thm:eventually_rapid_decay} in the case where $P$ is ordinary. 

When $P$ is not ordinary, we don't have canonical coordinates, so we will resort to a strategy similar to the supersingular case. However, forthcoming work of Apoorva Aggarwal and Dan Townsend \cite{AT} establishes an analogue of Serre-Tate coordinates on non-ordinary non-supersingular strata of GSpin Shimura varieties, and this could lead to a more conceptual proof. 

In the following, we use the explicit coordinates deduced from \S\ref{sub:nons_setup}.

\subsubsection{Explicit computation for $R_\infty$}\label{sub:R_infty}
Let $v\in \cL=\mathrm{Span}_{\bZ_p}\{e_i, f_i\}_{i\geq h}$. 

Then $\tilde{v}$ can be computed explicitly as follows: Denote by $R_n$ the right  $2(m+1)\times 2(m-h+1)$ block of the iterated Frobenius 
 $\Frob^n$. Then the action of $\lim_{n\rightarrow\infty}\Frob^n$ on $\cL$ is presented by the matrix $R_\infty=\lim_{n\rightarrow\infty}R_n$, which has entries in $K[\![\underline{x},\underline{y}]\!]$.

\begin{proposition}\label{prop:Shape of R_infty} 
 Write $R^\blacktriangle_\infty$ (resp. $R^\blacktriangledown_\infty$) for the block of top $2h$ (resp. bottom $2(m-h+1)$) rows of $R_\infty$.  Then
\begin{enumerate}
    \item The even rows of $R_\infty^\blacktriangle$ are $\mathbf{0}$.
    \item $R_\infty^\blacktriangledown= I_{2(m-h+1)}$.
    \item Define a $\bZ_p$-linear operator $\theta$ on $R$ by $\theta(w)=\sum_{j=0}^{h-1} x_jw^{(h-j)}$ (where $x_0=1$ by convention), and let $\tau_{\theta}(w)=\sum_{n\geq 0}\frac{\theta^n(w)}{p^n}$. Then the first row of $R_\infty^\blacktriangle$ is $$\mathbf{r} = [\tau_\theta(x_1),\tau_\theta(y_1),...,\tau_\theta(x_m),\tau_\theta(y_m)].$$
For $j=2,3,...,h$, the $(2j-1)$-th row of $R_\infty^\blacktriangle$ is $p^{-1}\mathbf{r}^{(h-j+1)}$.

\end{enumerate}
    
\end{proposition}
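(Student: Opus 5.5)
We will compute $R_\infty$ by studying the recursion satisfied by $R_n$. Since $\Frob^{n+1}=\Frob^n\circ\Frob$ and $\Frob$ has the block form \eqref{eq:Frob_parts}, the right block satisfies $R_{n+1}=\Frob^n$ applied to $\left[\begin{smallmatrix}B\\ I\end{smallmatrix}\right]$. It is more convenient to use $R_{n+1}=\Frob\circ(\sigma$-twist of $R_n)$, i.e.
\[
R_{n+1}=\begin{bmatrix}A&B\\ D&I\end{bmatrix}R_n^{(1)},
\]
with $R_0=\left[\begin{smallmatrix}0\\ I\end{smallmatrix}\right]$. Writing $R_n=\left[\begin{smallmatrix}R_n^\blacktriangle\\ R_n^\blacktriangledown\end{smallmatrix}\right]$, we get the coupled recursion
\[
R^\blacktriangle_{n+1}=A(R_n^\blacktriangle)^{(1)}+B(R_n^\blacktriangledown)^{(1)},\qquad R^\blacktriangledown_{n+1}=D(R_n^\blacktriangle)^{(1)}+(R_n^\blacktriangledown)^{(1)}.
\]

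First we prove (1) and (2) together by induction: assume the even rows of $R_n^\blacktriangle$ vanish and $R_n^\blacktriangledown=I$. Then $D(R_n^\blacktriangle)^{(1)}=0$, because $D$ is supported only on the second (for $h=1$) or fourth (for $h\ge2$) column, and these correspond to even rows. Hence $R^\blacktriangledown_{n+1}=I$. Next we inspect the even rows of $A$. Row $2$ of $A$ is $e$-supported in the $f$-slot $(2,4)$ entry $1$ (for $h\ge 2$), or row 2 is $[0,p]$ (for $h=1$). Rows $4,6,\dots$ only have entries in column $4$, and row $2h$ has entries in columns $2$ and $4$. So every even row of $A$ only involves even columns, which by induction are multiplied against zero rows. $B$ has only its top row nonzero. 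Thus the even rows of $R^\blacktriangle_{n+1}$ vanish. Passing to the limit gives (1) and (2).

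For (3), we restrict to the odd rows. Let $\mathbf{r}_n^{(k)}$ denote row $2k-1$ of $R^\blacktriangle_n$, for $k=1,\dots,h$. Reading off the odd rows of $A$:
\begin{itemize}
\item Row $1$ has odd-column entries $p^{-1}x_{h-1},1,x_1,\dots,x_{h-2}$ in columns $1,3,5,\dots,2h-3$.
\item Row $2k-1$, for $2\le k\le h-1$, is the identity shift onto column $2k+1$, with the wrap for row $2h-1$ being $p^{-1}$ in column $1$.
\end{itemize}
This gives $\mathbf{r}^{(k)}_{n+1}=(\mathbf{r}^{(k+1)}_n)^{(1)}$ for $k=2,\dots,h-1$ and $\mathbf{r}^{(h)}_{n+1}=p^{-1}(\mathbf{r}^{(1)}_n)^{(1)}$, together with
\[
\mathbf{r}^{(1)}_{n+1}=p^{-1}x_{h-1}(\mathbf{r}^{(1)}_n)^{(1)}+\sum_{j=0}^{h-2}x_j(\mathbf{r}^{(j+2)}_n)^{(1)}+\mathbf{b},
\]
where $\mathbf{b}=[x_h,y_h,\dots,x_m,y_m]$ is the top row of $B$ (the problem's indexing of the $x_i$ should match this). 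Passing to the limit $n\to\infty$ (convergence is given, as $R_\infty$ exists) yields $\mathbf{r}^{(k)}=p^{-1}\mathbf{r}^{(h-k+1)}$, where $\mathbf{r}^{(1)}$ is written $\mathbf{r}$ and we iterate the shifts. This is the second claim of (3). Substituting back gives the fixed-point equation
\[
\mathbf{r}=\mathbf{b}+p^{-1}\theta(\mathbf{r}),
\]
with $\theta$ as defined. I expect the main obstacle to be the index bookkeeping that matches $x_j$ with the Frobenius twist $h-j$; this is where I would check carefully against the matrix $A$. The fixed-point equation is solved formally by $\mathbf{r}=\sum_n p^{-n}\theta^n(\mathbf{b})=\tau_\theta(\mathbf{b})$. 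Uniqueness holds because $\mathbf{r}$ arises as the limit of the iterates $\mathbf{r}_n$ starting at $0$, and these iterates are exactly the partial sums of $\tau_\theta$. So no separate convergence argument is needed beyond the existence of $R_\infty$.
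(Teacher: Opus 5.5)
Your strategy matches the paper's. The paper's relation $\widetilde{\mathbf{w}}=\mathbf{w}+A\widetilde{\mathbf{w}}^{(1)}$ is the $n\to\infty$ limit of your recursion $R^\blacktriangle_{n+1}=A(R^\blacktriangle_n)^{(1)}+B$. Your limiting row relations and your fixed-point equation $\mathbf{r}=\mathbf{b}+p^{-1}\theta(\mathbf{r})$ are exactly the paper's relations for $f_1,\dots,f_h$.

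Your finite-level simultaneous induction for (1) and (2) is sound. It also makes explicit that $R^\blacktriangledown_n=I$ is what justifies the paper's closed formula for $R^\blacktriangle_\infty$. The slips in your description of $A$ are harmless:
\begin{itemize}
\item rows $2k$ of $A$ with $2\le k\le h-1$ also carry a $1$ in column $2k+2$, but that column is still even;
\item the odd-column entries of row $1$ end at column $2h-1$, not $2h-3$; your displayed formula is nevertheless correct.
\end{itemize}
You are right that the columns are indexed by $x_h,y_h,\dots,x_m,y_m$; the paper's own proof uses $z=[x_h,y_h,\dots,x_m,y_m]\mathbf{c}$.

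The gap is in the last step. For $h\ge 2$ the iterates $\mathbf{r}^{(1)}_n$ are \emph{not} the partial sums of $\tau_\theta(\mathbf{b})$. Your recursion has delays: row $2k-1$ at step $n$ only sees row $1$ at step $n-(h-k)-1$. Concretely, for $h=2$ one gets
\[
\mathbf{r}^{(1)}_2=\mathbf{b}+p^{-1}x_1\mathbf{b}^{(1)}, \qquad \mathbf{b}+p^{-1}\theta(\mathbf{b})=\mathbf{b}+p^{-1}(\mathbf{b}^{(2)}+x_1\mathbf{b}^{(1)}).
\]
The iterates pick up the terms of $\tau_\theta(\mathbf{b})$ in a different order. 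So ``the iterates from $0$ are the partial sums'' does not justify that the limit equals $\tau_\theta(\mathbf{b})$. The paper leaves this implicit too, with ``whose solution is''.

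What you need is uniqueness of solutions of the fixed-point equation in $K[\![\underline{x},\underline{y}]\!]$. Suppose $\mathbf{s}=p^{-1}\theta(\mathbf{s})$.
\begin{itemize}
\item In degree $0$, only the $j=0$ term contributes, giving $s_0=p^{-1}\sigma^h(s_0)$. Since $\sigma$ preserves $p$-adic valuation, this forces $s_0=0$.
\item For $d\ge 1$, the degree-$d$ part of $\theta(\mathbf{s})$ involves only parts of $\mathbf{s}$ of degree $<d$. This is because $\sigma^{h-j}$ multiplies degrees by $p^{h-j}$ and $x_j$ with $j\ge 1$ adds one.
\end{itemize}
Induction on degree then gives $\mathbf{s}=0$.

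The same degree count shows that $\theta$ strictly raises the order of any series without constant term. Since $\mathbf{b}$ has order $1$, $\tau_\theta(\mathbf{b})$ converges coefficientwise and visibly solves the equation. Applying uniqueness to $\mathbf{r}-\tau_\theta(\mathbf{b})$, where $\mathbf{r}\in K[\![\underline{x},\underline{y}]\!]$ because $R_\infty$ exists, completes part (3).
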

\begin{proof}
Write $\Frob$ as  (\ref{eq:Frob_parts}). Then elementary computation shows that \begin{align}
\label{eq:R_inf 1} R^\blacktriangle_\infty&=\sum_{i\geq 0}AA^{(1)}...A^{(i-1)}B^{(i)},\\ \label{eq:R_inf 2} 
 R^\blacktriangledown_\infty&=I_{2(m-h+1)}+\sum_{i\geq 0} D^{(i)} (R^\blacktriangle_\infty)^{(i+1)}.\end{align}
Part (1) follows directly from (\ref{eq:R_inf 1}) and part (2) follows from (\ref{eq:R_inf 2}). Part (3) can also be obtained from computation: 

Let $\mathbf{c}:=[a_h,b_h,\cdots,a_m,b_m]^t$ be a column vector of $p$-adic integers. Define $\mathbf{w}:=B\mathbf{c}$, $\widetilde{\mathbf{w}}:= R_\infty^\blacktriangle\mathbf{c}$. Then $R^\blacktriangle_\infty$ can be recovered from $\widetilde{\mathbf{w}}$ by taking $\mathbf{c}$ to be various unit vectors. Observe the following recursive relation \begin{equation}\label{eq:recurve_w}\widetilde{\mathbf{w}}=\mathbf{w}+A\widetilde{\mathbf{w}}^{(1)}.\end{equation}
Now (1) implies that $\widetilde{\mathbf{w}}=[f_1,0,...,f_h,0]^t$ with $f_i\in K[\![x_1,...,x_m,y_1,...,y_m]\!]$. Take the convention that $x_0=1$ and let $z=[x_h,y_h,...,x_m,y_m]\mathbf{c}$. One reads from (\ref{eq:recurve_w}) that, for all $h\geq 1$, 
\begin{align*}
    f_1&=p^{-1}x_{h-1}f_1^{(1)}+x_0f_2^{(1)}+x_1f_3^{(1)}+x_2f_4^{(1)}+...+x_{h-2}f_h^{(1)}+z,\\
    f_j&=f_{j+1}^{(1)},\;\;j=2,3,...,h-1,\\
f_h&=p^{-1}f_1^{(1)}.
\end{align*}
It follows from the second and third formulae that \begin{equation}\label{eq:f_j in f_1}
f_j=p^{-1}f_1^{(h-j+1)}
\end{equation} for all $2\leq j\leq h$. Substituting this to the first formula, we get an equation
\begin{equation}\label{eq:f_1_iterate}
f_1=p^{-1}\sum_{j=0}^{h-1} x_jf_1^{(h-j)}+ z,
\end{equation}
whose solution is 
$$f_1=\sum_{n\geq 0} \frac{\theta^n(z)}{p^n}.$$
Letting $\mathbf{c}$ be various unit vectors to get the first row of $R_\infty^\blacktriangle$. Plug $f_1$ into (\ref{eq:f_j in f_1}) to get other odd rows. 
\end{proof}

\subsubsection{Deformation loci}\label{subsub:deflocinonss} 
For an element $v\in \cL$, there exists a column vector  
$\mathbf{c}=[a_h,b_h,\cdots,a_m,b_m]^t$ of $p$-adic integers with $v=[e_h,f_h,...,e_m,f_m]\mathbf{c}$. Let  $\tilde{\mathbf{c}}=R_{\infty}\mathbf{c}$, then $\tilde{v}=[e_1,f_1,...,e_m,f_m]\tilde{\mathbf{c}}$. Take Teichmüller expansion $\mathbf{c}=\sum_{k\geq 0} \mathbf{c}_kp^k$. Let $\theta$ be as per Proposition~\ref{prop:Shape of R_infty}. For $n\geq 0$, define 
\begin{equation}
\label{eqn: defined D_n's2}
\begin{aligned}
  D_n(v)&= \sum_{k\geq 0} \theta^{n+k}([x_h,y_h,...,x_m,y_m]\mathbf{c}_k) 
     \in W[\![\underline{x},\underline{y}]\!].\\
    \overline{D}_n(v)&:={D}_n(v)\bmod p\in k[\![\underline{x},\underline{y}]\!].
\end{aligned} 
\end{equation}
Note that $D_n(p^rv)=D_{n+r}(v)$. The main result of this section is 
\begin{proposition}\label{def locus nonss}
For $n\geq 0$,  $x_{p^{n}v}=\overline{D}_{n}(v)$. 
\end{proposition}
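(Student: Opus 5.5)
Following the proof of Proposition~\ref{def locus sspecial}, it suffices (by Remark-type identity $D_n(p^rv)=D_{n+r}(v)$) to prove $x_v=\overline{D}_0(v)$ for every $v\in\cL$. Applying this to $p^nv$ then gives $x_{p^nv}=\overline{D}_0(p^nv)=\overline{D}_n(v)$. To show $x_v=\overline{D}_0(v)$, I take an arbitrary formal curve $\iota:\Spf k[\![t]\!]\to\Spf R\otimes k$ with a lift $\tilde\iota$ built from Teichmüller coefficients. Let $\alpha=v_t(\iota^*\overline{D}_0(v))$. By Lemma~\ref{T:PDlemma}, I must show two things: first, that $\tilde\iota^*\tilde{\mathbf c}\in W[\![t,t^{\alpha p}/p]\!]^{\oplus 2(m+1)}$; second, that this fails with $\alpha+1$ in place of $\alpha$.

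\textbf{Step 1 (reduce to the first entry).} By Proposition~\ref{prop:Shape of R_infty}, the bottom block of $\tilde{\mathbf c}$ equals $\mathbf c$, which is integral, and the even rows of the top block vanish. The first entry is $f_1=\tau_\theta(z)$ with $z=[x_h,y_h,\dots,x_m,y_m]\mathbf c$. The remaining odd entries are $p^{-1}f_1^{(h-j+1)}$ for $2\le j\le h$. So everything is controlled by $f_1$ and its Frobenius twists. Expanding $\mathbf c=\sum_k\mathbf c_kp^k$, I regroup using $\bZ_p$-linearity of $\theta$:
\[
f_1=\sum_{n\ge0}p^{-n}D_n(v),
\]
because $\sum_n p^{-n}\sum_k p^k\theta^n(z_k)$ reindexes as $\sum_{N}p^{-N}\sum_k\theta^{N+k}(z_k)$. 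Here $z_k:=[x_h,\dots]\mathbf c_k$.

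\textbf{Step 2 (congruences).} Next I prove the analogue of Lemma~\ref{lm;recursive2}(a). This says $D_n(v)^{(l)}\equiv D_n(v)^{p^l}\pmod{p^l}$, and also
\[
D_{n+1}(v)=\theta(D_n(v))=\sum_{j=0}^{h-1}x_jD_n(v)^{(h-j)}.
\]
The recursion is immediate from the definition. The congruence follows because $\theta$ is built from Frobenius twists and Teichmüller-coefficient series, using $a^{(l)}=a^{p^l}$ for Teichmüller lifts together with the standard estimate $(u+pw)^{p^l}\equiv u^{p^l}\pmod{p^{l+1}}$. Combining the two, I get
\[
D_n(v)=\Theta_n\cdot D_0(v)^{p^{h}}+O(p),
\]
where $\Theta_n$ is a polynomial in Frobenius twists of the $x_j$. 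Iterating gives $D_n(v)\in(p,t^{\alpha p^{n}})$ after pulling back by $\tilde\iota$, and more precisely $D_n(v)^{(l)}\in\bigl(p^{l}, t^{\alpha p^{n+l}}\bigr)$-type bounds. From these, $p^{-n}\tilde\iota^*D_n(v)$ lies in $W[\![t,t^{\alpha p}/p]\!]$. The reason is that a term $p^{a}t^{b}$ with $b\ge \alpha p\cdot(n-a)$ gives $p^{-n}p^at^b=p^{-(n-a)}t^b$, which lies in that ring. The twists $p^{-1}f_1^{(h-j+1)}$ are handled the same way, since the twist multiplies $t$-exponents by $p$ and compensates the extra $p^{-1}$.

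\textbf{Step 3 (sharpness), the main obstacle.} I need to show that some entry does not lie in $W[\![t,t^{(\alpha+1)p}/p]\!]$. The natural candidate is the entry $p^{-1}f_1^{(1)}$, which is the $j=h$ row. Its leading part is $p^{-1}D_0(v)^{(1)}=p^{-1}(ct^{\alpha p}+\dots)+O(1)$ with $c\ne0$. The difficulty is excluding cancellation from the terms $p^{-1-n}D_n(v)^{(1)}$ for $n\ge1$. I will handle this by showing, via Step 2, that modulo $W[\![t,t^{(\alpha+1)p}/p]\!]$ these terms contribute only $t$-exponents at least $(\alpha+1)p$ at the $p^{-1}$ level, or else are absorbed. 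This requires tracking $t$-adic valuations of $\Theta_n$, using $x_0=1$, and the case $h=1$ needs separate attention.

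If the bookkeeping is too messy, I will fall back on the ordinary-curve trick. Namely, I will compare with the case where $\iota$ is a generic curve, where $x_v$ and $\overline D_0(v)$ must both cut out the deformation divisor, and deduce equality up to a unit.
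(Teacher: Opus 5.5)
Your overall route is exactly the paper's: reduce to $n=0$ via $D_n(p^rv)=D_{n+r}(v)$, test against arbitrary formal curves with Teichmüller lifts, and combine Lemma~\ref{T:PDlemma} with $f_1=\sum_{n\ge0}p^{-n}D_n(v)$. The gap is in Step~2. The congruence $D_n(v)^{(l)}\equiv D_n(v)^{p^l}\pmod{p^l}$ is false for $l\ge2$ as soon as $D_n(v)$ is a sum of more than one Teichmüller monomial.

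Your justification gives $\sigma(u)\equiv u^p\pmod p$, and hence $\sigma(u)^{p^{l-1}}\equiv u^{p^l}\pmod{p^l}$. But $\sigma^l(u)\neq\sigma(u)^{p^{l-1}}$. For instance, for $v=e_h+f_h$ one has $D_0(v)=x_h+y_h$. Then $(x_h+y_h)^{p^2}-x_h^{p^2}-y_h^{p^2}$ contains $\binom{p^2}{p}x_h^py_h^{p^2-p}$, whose coefficient has $p$-adic valuation $1$.

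There is also a separate error. Your formula $D_n(v)=\Theta_nD_0(v)^{p^h}+O(p)$ has the wrong exponent, since the term $x_{h-1}D_0(v)^{(1)}$ of $\theta$ only produces $D_0(v)^{p}$. In any case, a statement modulo $p$ cannot absorb the denominators $p^{-n}$.

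This is not a bookkeeping issue: the membership $p^{-n}\tilde\iota^*D_n(v)\in W[\![t,t^{\alpha p}/p]\!]$ that you derive fails, and so does the identity itself. Take $h=1$ (so $\theta(w)=w^{(1)}$), $p=3$, $m\ge3$, and $v=e_1+e_2+e_3$. Take the curve $x_1=x_2=x_3=t$ with all other coordinates $0$; it is its own Teichmüller lift. Then $\overline D_0(v)=x_1+x_2+x_3$ pulls back to $3t=0$ in $k[\![t]\!]$, so $\alpha=\infty$. On the other hand, the first entry of $\tilde\iota^*\tilde{\mathbf c}$ is
\[
\sum_{n\ge0}3^{-n}\cdot 3t^{3^n}=3t+t^3+3^{-1}t^9+3^{-2}t^{27}+\cdots .
\]
The coefficient $3^{-1}$ of $t^9$ rules out membership in $W[\![t,t^{3s}/3]\!]$, and even in the PD envelope of $(3,t^s)$, for every $s\ge4$.

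The ``only if'' direction of Lemma~\ref{T:PDlemma} is robust. The crystalline realization of a deformation is an integral horizontal section over the PD thickening, so by uniqueness of horizontal sections in $K[\![t]\!]$ it equals $\tilde\iota^*\tilde v$. Hence $v$ does not deform to $k[\![t]\!]/(t^4)$, and $x_v\neq\overline D_0(v)$. An analogous example, with $x_1=\dots=x_{h-1}=0$ and $x_h=x_{h+1}=x_{h+2}=t$, works for $h\ge2$.

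The paper's sketch rests on Lemma~\ref{lm:descDn_nonss}(a), which asserts the same false congruence, as does Lemma~\ref{lm;recursive2}(a). So you have faithfully reproduced the paper's argument, but neither version can be completed. The true deformation equation is a nonlinear correction of $\overline D_0(v)$; in the ordinary case it is the subtorus equation in Serre--Tate coordinates, not a linear form in the $x_i$.

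Your fallback in Step~3, comparing with a generic curve, is circular: it presupposes that $\overline D_0(v)$ cuts out the deformation divisor, which is what is to be proved.
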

We begin by a lemma in the same style of Lemma~\ref{lm;recursive2}.
\begin{lemma}\label{lm:descDn_nonss} The following are true:
    \begin{enumerate}[label=\upshape{(\alph*)}]
        \item For $l\geq 0$, $D_n(v)^{(l)}-D_n(v)^{p^l}\in p^l W[\![\underline{x},\underline{y}]\!]$.
    \item For $r>0$, we have $$D_{n+r}(v)= \sum_{i_1,i_2,...,i_r\in \{0,1,...,h-1\}}x_{i_1}x_{i_2}^{(h-i_1)}x_{i_3}^{(2h-i_1-i_2)}...x_{i_{r}}^{((r-1)h-i_1-i_2-...-i_{r-1})}D_n(v)^{(rh-i_1-i_2-...-i_r)}.$$ 
    \item  The first entry of $\tilde{\mathbf{c}}$ is 
    $\sum_{n\geq 0}p^{-n}D_n(v)$. For $j=2,3,...,h$, the $(2j-1)$-th entry of $\tilde{\mathbf{c}}$ is  $\sum_{n\geq 1}p^{-n}D_{n-1}(v)^{(h-j+1)}$. All other entries of $\tilde{\mathbf{c}}$ lie in $\bZ_p$. 
    \end{enumerate}
\end{lemma}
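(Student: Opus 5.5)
The plan is to mirror the proof of Lemma~\ref{lm;recursive2}, using Proposition~\ref{prop:Shape of R_infty} in place of Lemma~\ref{lm:Finftyrecursive}. The central object is the $\bZ_p$-linear operator $\theta(w)=\sum_{j=0}^{h-1}x_jw^{(h-j)}$, with $x_0=1$.

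Part (b) comes first, since it is purely formal. Because $\theta$ is additive and the $\mathbf{c}_k$ are fixed, we have $D_{n+r}(v)=\theta^r(D_n(v))$ directly from \eqref{eqn: defined D_n's2}. It therefore suffices to expand $\theta^r(w)$ by induction on $r$.

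For $r=1$ the expansion is the definition of $\theta$. For the inductive step, write $\theta^{r}(w)=\theta(\theta^{r-1}(w))$ and use that $\sigma$ is a ring endomorphism. Applying $\theta$ to a term $x_{i_2}^{(\cdots)}\cdots w^{((r-1)h-i_2-\cdots-i_r)}$ produces $x_{i_1}$ times the $(h-i_1)$-fold Frobenius twist of that term. This shifts every exponent by $h-i_1$, which gives exactly the stated exponents $(k-1)h-i_1-\cdots-i_{k-1}$ on $x_{i_k}$ and $rh-\sum i_k$ on $D_n(v)$. The only care needed is to insert the new index on the left rather than the right, so that the formula in (b) matches.

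Part (c) then follows from Proposition~\ref{prop:Shape of R_infty}(3) by linearity. Decompose $\mathbf{c}=\sum_k p^k\mathbf{c}_k$ and set $z_k=[x_h,y_h,\dots,x_m,y_m]\mathbf{c}_k$. The first entry of $\tilde{\mathbf{c}}$ is $\mathbf{r}\mathbf{c}=\sum_k p^k\tau_\theta(z_k)$, which equals $\sum_k\sum_{n'} p^{k-n'}\theta^{n'}(z_k)$. Reindexing with $n'=n+k$ gives $\sum_{n\ge0}p^{-n}\sum_k\theta^{n+k}(z_k)=\sum_n p^{-n}D_n(v)$.

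This regrouping is the step that requires justification. I will argue that everything converges in $K[\![\underline{x},\underline{y}]\!]$ coefficientwise: $\theta$ strictly raises the $(\underline{x},\underline{y})$-adic order, because $w^{(h-j)}$ raises degrees by a factor of $p^{h-j}\ge p$ and no term of $\theta(w)$ involves $w$ itself untwisted. Hence only finitely many terms contribute to each monomial. The other odd entries are handled the same way, using that the $(2j-1)$-th row is $p^{-1}\mathbf{r}^{(h-j+1)}$ and that $\sigma$ commutes with the reindexing. The even entries of the top block vanish by Proposition~\ref{prop:Shape of R_infty}(1). The bottom block is $I$ by Proposition~\ref{prop:Shape of R_infty}(2), so those entries are just $\mathbf{c}\in\bZ_p$.

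Part (a) is the congruence $D^{(l)}\equiv D^{p^l}\pmod{p^l}$, and I expect it to be the main point requiring care, though still standard. The plan is to show that the set of $w\in W[\![\underline{x},\underline{y}]\!]$ satisfying $w^{(l)}\equiv w^{p^l}\pmod{p^l}$ for all $l$ contains the following:

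\begin{itemize}
\item Teichmüller-coefficient monomials, and in particular the $z_k$, since for these $\sigma$ is literally the $p$-power map. Here $\sigma(x_i)=x_i^p$ and the coefficients of $\mathbf{c}_k$ are Teichmüller lifts.
\item Products, since congruences multiply.
\item Sums, by the standard lemma: if $a\equiv b\pmod{p^j}$ then $a^{p^l}\equiv b^{p^l}\pmod{p^{j+l}}$, together with the expansion $(a+b)^{p^l}$. This is the usual Dwork/Witt-vector argument. Alternatively, one can use that the statement is exactly the ghost-component property of the Witt vector whose components are $x$-monomials with Teichmüller coefficients.
\end{itemize}

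The cleanest route is to observe that each term of $D_n(v)$, once (b) is used to express it as a sum over index strings, is a monomial $\prod x_{i}^{p^{a}}\cdot$ (monomial in $z_k$), raised to Frobenius powers. Such a term is itself a Teichmüller-type monomial, so $\sigma^l$ agrees with the $p^l$-power on it. What remains is to control the passage from a sum of such monomials to the $p^l$-power of the sum. That is the classical congruence $\sigma^l(\sum m_i)-(\sum m_i)^{p^l}\in p^l$ for Frobenius-lift-compatible monomials, proved by induction on $l$ via $(u+p^{l}w)^{p}\equiv u^{p}\pmod{p^{l+1}}$.
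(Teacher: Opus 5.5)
Your treatment of (b) is correct and matches the paper: $D_{n+r}(v)=\theta^r(D_n(v))$, followed by expanding $\theta^r$ inductively with the new index added on the left. For (c) you also follow the paper and read it off from Proposition~\ref{prop:Shape of R_infty}.

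\textbf{Part (a).} This is the genuine gap. The property $w^{(l)}\equiv w^{p^l}\pmod{p^l}$ holds, with equality, for a single monomial with Teichm\"uller coefficient. It is not stable under sums, and the ``classical congruence'' $\sigma^l(\sum m_i)\equiv(\sum m_i)^{p^l}\pmod{p^l}$ you invoke is false:
\[
(x_h+y_h)^{(2)}-(x_h+y_h)^{p^2}=-\sum_{0<j<p^2}\binom{p^2}{j}x_h^{j}y_h^{p^2-j}
\]
contains $\binom{p^2}{p}x_h^py_h^{p^2-p}$, and $v_p\binom{p^2}{p}=1$.

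Your induction breaks at exactly this point. From $\sigma^l(f)=f^{p^l}+p^lw$ you only get $\sigma^{l+1}(f)\equiv f^{p^{l+1}}+p^l\sigma(w)\pmod{p^{l+1}}$, which would need $p\mid w$. Dwork's lemma and ghost components control $\sigma(f)^{p^{l-1}}-f^{p^l}$ (or $w_l-\sigma(w_{l-1})$), not $\sigma^l(f)-f^{p^l}$.

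This cannot be repaired, because the same example contradicts (a) itself. For $v=e_h+f_h$ one has $\mathbf c=\mathbf c_0$ and $D_0(v)=x_h+y_h$. More generally, $D_n(v)=\theta^n(x_h+y_h)$ consists of $x_h^{p^{nh}}+y_h^{p^{nh}}$ plus monomials involving $x_1,\dots,x_{h-1}$. The same cross term therefore gives $D_n(v)^{(2)}-D_n(v)^{p^2}\notin p^2W[\![\underline x,\underline y]\!]$ for every $n$.

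The paper's one-line justification (``similar to Lemma~\ref{lm;recursive2}'', which reduces to Teichm\"uller monomials) runs into the same obstruction. What does hold for arbitrary $f$ is $\sigma^l(f)\equiv f^{p^l}\pmod p$ and $\sigma(f)^{p^{l-1}}\equiv f^{p^l}\pmod{p^l}$, together with $\sigma^l(m)=m^{p^l}$ term by term. Statement (a) has to be weakened to something of this kind, and its use in the proof of Proposition~\ref{def locus nonss} re-examined.

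\textbf{Part (c).} There is also a slip in your proof: the reindexing $n'=n+k$ drops the terms with $0\le n'<k$. The correct identity is
\[
\mathbf r\mathbf c=\sum_{n\ge 0}p^{-n}D_n(v)+\sum_{k\ge 1}\sum_{0\le n'<k}p^{k-n'}\theta^{n'}(z_k).
\]
The last sum lies in $pW[\![\underline x,\underline y]\!]$ but is in general nonzero; for $v=pe_h$ it equals $px_h$. So the first entry of $\tilde{\mathbf c}$ equals $\sum_{n\ge0}p^{-n}D_n(v)$ only modulo $W[\![\underline x,\underline y]\!]$, that is, up to an $O(1)$ term as in Lemma~\ref{lm;recursive2}(c).

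The same holds for the $(2j-1)$-th entries: applying $p^{-1}\sigma^{h-j+1}$ to the discrepancy keeps it integral, since $k-n'\ge 1$. This weaker form is all the PD-integrality argument needs. The exact equality you derive (and the lemma states) is not correct.
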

\begin{proof}
   Part (a) is similar to Lemma~\ref{lm;recursive2}. Part (b) follows from an explicit expansion of $\theta^n$: $$\theta^n(f)=\sum_{i_1,i_2,...,i_n\in \{0,1,...,h-1\}}x_{i_1}x_{i_2}^{(h-i_1)}x_{i_3}^{(2h-i_1-i_2)}...x_{i_{n}}^{((n-1)h-i_1-i_2-...-i_{n-1})}f^{(nh-i_1-i_2-...-i_n)},$$
   which can be easily shown by induction. Part (c) follows from Proposition~\ref{prop:Shape of R_infty}.
\end{proof}
\begin{proof}[Proof of Proposition~\ref{def locus nonss}] The proof is similar to Proposition~\ref{def locus sspecial}. Let $v$ be any formal special endomorphism, it suffices to show the deformation locus of $v$ is cut out by $\overline{D}_0(v)$. This reduces to checking that for any map $\iota: \Spf k[\![t]\!]\rightarrow \Spf k[\![\underline{x},\underline{y}]\!]$, $v$ deforms to $\Spf(k[\![t]\!]/t^{\alpha})$ but not $\Spf(k[\![t]\!]/t^{\alpha+1})$, where $\alpha$ is the degree of the series $\iota^*\overline{D}_0(v)\in k[\![t]\!]$. By Lemma~\ref{T:PDlemma}, it suffices to check that $\tilde{\iota}^* \tilde{\mathbf{c}}\in  W[\![t,\frac{t^{\alpha p}}{p}]\!]^{\oplus (2m+2)}$ but not $ W[\![t,\frac{t^{(\alpha+1) p}}{p}]\!]^{\oplus (2m+2)}$. For this, we use Lemma~\ref{lm:descDn_nonss}. The detail is left to the reader.
\end{proof}
\subsubsection{Proof of Theorem~\ref{thm:eventually_rapid_decay} for non-supersingular points} \label{sub:non-sscase}  Suppose that the equation of the formal curve is given by \begin{equation}\label{eqq:localequation1}
    \cC:t\rightarrow (x_1(t),y_1(t),...,x_m(t),y_m(t)),\;\;x_i(t),y_i(t)\in k[\![t]\!].
\end{equation}
Let the convention be the same as Notation~\ref{not:tech_curve}.

Let $h_0$ be the intersection number of $\cC$ with the non-ordinary locus. Using a similar argument as \cite[Lemma 4.9]{MST22}, we have $h_0=0$ when $h=1$, and $h_0=v_t(x_{h-1})>0$ when $h\geq 2$. Write ${D}_n(v)(t)$ for ${D}_n(v)$ evaluated at $x_i(t),y_i(t)$. For  $v\in \cL$, we write $d(v):=v_t\left( \overline{D}_0(v)(t)\right)$. Proposition~\ref{def locus nonss} implies that $v$ lifts to $\mathrm{Spf}\,k[\![t]\!]/(t^{d(v)})$ but not beyond.  

It follows from 
   Lemma~\ref{lm:descDn_nonss}(b) for  $n=0,r=1$ that 
  $$D_{0}(pv)=x_0D_0(v)^{(h)}+x_1D_0(v)^{(h-1)}+...+x_{h-1}D_0(v)^{(1)}.$$
When $h=1$, this simplifies to $D_0(pv)=D_0(v)^{(1)}$, and the theorem is easy. Now we suppose that $h\geq 2$. 
\begin{claim}
    Let $v\in \cL$ be such that $d(v)<\infty$, then $d(pv)\geq pd(v)$. If furthermore $ d(v)\geq h_0$, then $d(pv)= h_0+pd(v)$.
\end{claim}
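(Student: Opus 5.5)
We argue exactly as in the supersingular \emph{Claim}, using the one-step recursion just derived from Lemma~\ref{lm:descDn_nonss}(b) with $n=0$, $r=1$:
\[
D_{0}(pv)=\sum_{j=0}^{h-1} x_j\, D_0(v)^{(h-j)},\qquad x_0=1 .
\]
Evaluate this along $\cC$ using the Teichmüller convention of Notation~\ref{not:tech_curve}, and reduce modulo $p$. By Lemma~\ref{lm:descDn_nonss}(a), $D_0(v)^{(l)}\equiv D_0(v)^{p^l}\pmod p$. Hence
\[
\overline{D}_0(pv)(t)=\sum_{j=0}^{h-1} x_j(t)\,\overline{D}_0(v)(t)^{p^{h-j}} .
\]
Taking $t$-adic valuations gives the $j$-th term valuation
\[
\nu_j:=v_t(x_j)+p^{h-j}d(v),
\]
with the conventions $v_t(x_0)=0$ and $v_t(x_{h-1})=h_0$.

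For the first assertion, observe that every exponent $p^{h-j}$ with $0\le j\le h-1$ is at least $p$. Each $v_t(x_j)$ is nonnegative, so every $\nu_j\ge p\,d(v)$. Valuations of sums are at least the minimum, so $d(pv)\ge p\,d(v)$.

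For the second assertion, assume $d(v)\ge h_0$. Isolate the term $j=h-1$, whose valuation is $\nu_{h-1}=h_0+p\,d(v)$. For every $j\le h-2$ we get
\[
\nu_j\ge p^{2}d(v)=p\,d(v)+(p-1)p\,d(v)\ge p\,d(v)+p\,h_0>h_0+p\,d(v),
\]
using $p\ge 3$ and $h_0\ge 1$. The last inequality is strict, so the minimum is attained uniquely at $j=h-1$. Therefore $d(pv)=\nu_{h-1}=h_0+p\,d(v)$.

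The only point needing care is the reduction step. One must check that the reduction mod $p$ of $\iota^*$ of the Frobenius twist $D_0(v)^{(l)}$ is indeed $\overline{D}_0(v)(t)^{p^l}$. This holds because the Teichmüller lift of $\cC$ is compatible with $\sigma$ on $W[\![t]\!]$ sending $t\mapsto t^p$, together with Lemma~\ref{lm:descDn_nonss}(a). We also need $x_0=1$ to have valuation $0$, which is the convention. The case $d(v)=\infty$ is excluded by hypothesis.

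Once the Claim holds, the rest of the proof of Theorem~\ref{thm:eventually_rapid_decay} follows the supersingular case verbatim. Set $N=\lfloor\log_p h_0\rfloor+1$, so that $d(p^N v)\ge h_0$ for every primitive $v$. A compactness argument shows that $d(p^Nv)$ is bounded on the primitive vectors of $\Gamma$. Iterating the second assertion then yields the bound $(1+p+\dots+p^n)D$.
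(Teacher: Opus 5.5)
Your proof is correct and is essentially the paper's argument. You reduce $D_0(pv)=\sum_{j=0}^{h-1}x_jD_0(v)^{(h-j)}$ modulo $p$ via Lemma~\ref{lm:descDn_nonss}(a), bound every term's $t$-valuation below by $p\,d(v)$, and when $d(v)\ge h_0$ check that the $j=h-1$ term uniquely attains the minimum $h_0+p\,d(v)$, which you verify more explicitly than the paper does; the appeal to $\sigma$-compatibility of the Teichm\"uller lift is unnecessary (and holds only modulo $p$), since Lemma~\ref{lm:descDn_nonss}(a) plus the fact that evaluation along $\cC$ is a ring map commuting with reduction mod $p$ already gives the reduction step.
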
  
Again the first assertion is easy by the above formula for $D_0(pv)$. If $ d(v)\geq h_0$, then it is easy to see that the lowest $v_t$-value among the set $\left\{x_iD_0(v)^{(h-i)}\bmod p\right\}$ is achieved when $i=h-1$. This proves the \textit{Claim}. 

The rest is similar to the supersingular case.$\hfill\square$
\section{Uniform decay at the boundary} 
In this section we study the decay behavior of special endomorphisms of log 1-motives on the boundary. 
Let $\mathscr{S}^{\Sigma}$ be a toroidal compactification of $\mathscr{S}$ as discussed in \S\ref{sub:toroidalSpin}, viewed as a log scheme with log structure coming from the boundary. \begin{setup}\label{setp:logmorphi}
Let $C$ be a proper smooth connected  curve with a non-constant map $\iota:C\rightarrow \mathscr{S}_{k}^\Sigma$ whose image lies generically in the ordinary locus of $\mathscr{S}_{k}$. Let $P\in  C(k)$ be a boundary point that maps to a stratum labeled by $(\Phi,\sigma)$ such that $\sigma$ is an one dimensional inner ray\footnote{Note that all type II boundary points have this property.}, we will use $o$ to denote its image in $\mathscr{S}^\Sigma(k)$. Let $t$ be a uniformizer at $P$.  Then we get a map of formal schemes 
$\hat{\iota}:\Spf k[\![t]\!]\rightarrow \mathscr{S}^{\Sigma,/o}_{k}$. Endow $\Spf k[\![t]\!]$ with the log structure coming from the closed point, then $\hat{\iota}$ upgrades to a morphism of log formal schemes. 
\end{setup}


Let $\cQ_o$ be the log 1-motive over $o$, with log structure restricted from the embedding $o\hookrightarrow \mathscr{S}^{\Sigma}$; cf. \S\ref{sub:locstructureatboundarypt}. There is a notion of special endomorphisms for $\cQ_o$, see Definition~\ref{def:specialendoboundary}, that behaves very much similar to the classical special endomorphisms for abelian varieties. The set  $L(\cQ_o)$ of special endomorphisms is a  lattice equipped with a positive definite quadratic pairing $\la -,- \ra$ induced by self-composition; this is a consequence of Corollary~\ref{prop:spcialendolatticeboundary}.  

The main results of this section is: 
\begin{theorem}\label{thm:globalanalysis}
In Setup~\ref{setp:logmorphi}, assume that $k=\Fpbar$. Then there is a uniform bound $N$ such that $v$ does not lift to $\Spf k[\![t]\!]/(t^N)$ for any vector $v\in L(\cQ_o)$ with $p\nmid \la v, v\ra$. 
\end{theorem}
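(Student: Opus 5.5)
We will argue through the local structure of $\mathscr{S}^{\Sigma}$ at the boundary point $o$. By the description of boundary strata in \S\ref{sub:toroidalSpin}, the completion $\mathscr{S}^{\Sigma,/o}$ is, up to the finite group $\Delta_\Phi(\sigma)$, the completion of $\cM_\Phi(\sigma)$ along $\cZ_\Phi(\sigma)$. Since $\sigma$ is a one dimensional inner ray, this completion has the form $\Spf$ of a power series ring in which one coordinate $q$ cuts out the boundary divisor. The remaining coordinates are, for type III, toric coordinates $q_\alpha$ with $\la\alpha,\sigma\ra=0$; for type II, coordinates on the modular curve and on the Kuga--Sato torsor.

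The log 1-motive $\cQ$ over this completion degenerates the Kuga--Satake abelian scheme. Its lattice part is governed by the isotropic subspace ($I$ or $J$), and its extension data are the toric coordinates. We will use the description of special endomorphisms of $\cQ_o$ in Definition~\ref{def:specialendoboundary}. Under this description, $v\in L(\cQ_o)$ deforms over a log thickening exactly when a compatibility condition holds, namely that a certain pairing of $v$ against the period $q$-coordinates vanishes. Concretely, we expect the deformation locus of $v$ to be cut out by an equation of the form
\[
q^{\la v,\cdot\ra}\text{-type monomial} - (\text{unit}) = 0,
\]
or equivalently by the vanishing of a character $\chi_v(q)-1$, perhaps after a Frobenius twist. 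This is analogous to Serre--Tate coordinates at ordinary points, and to the proof of Theorem~\ref{thm:eventually_rapid_decay} at ordinary points.

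\textbf{Step 1.} Identify the character $\chi_v$ attached to $v$. The hypothesis $p\nmid\la v,v\ra$ should mean that the component of $v$ pairing with the degenerating direction $\sigma$ is nonzero mod $p$. Hence the restriction of $\chi_v$ to the $q$-direction is $q^{a}$ with $p\nmid a$ (or with $a$ controlled), rather than a $p$-th power.

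\textbf{Step 2.} Pull back along $\hat\iota$. Since $P$ maps to the boundary with log structure, $q\circ\hat\iota = u\,t^{e}$ with $e\ge1$ fixed and $u$ a unit, while the other coordinates pull back to power series. Then $\chi_v\circ\hat\iota - 1$ has $t$-adic order bounded in terms of $e$, the degree $a$, and finitely many data of the curve. We then need a bound uniform in $v$. Since $v$ ranges over a lattice of unbounded size, we will reduce modulo the finitely many classes of $v$ modulo $p$ (or modulo a fixed power of $p$): the leading term of $\chi_v\circ\hat\iota-1$ depends only on such a class together with a nonvanishing derivative/linear term ensured by $p\nmid a$.

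\textbf{Step 3 (main obstacle).} Exclude the case where the leading terms cancel for infinitely many $v$, which is the boundary analogue of formal special endomorphisms. Here we will use that $k=\Fpbar$ and that $C$ is a global curve. The pulled-back log 1-motive is then defined over a finitely generated field. A $v$ lifting to arbitrarily high order would, by compactness of $L(\cQ_o)\otimes\bZ_p$, produce a limiting formal endomorphism over $\Spf k[\![t]\!]$. By a log version of the punctual monodromy argument (Lemma~\ref{lem: algebraicity over fields} applied over the formal loop $k(\!(t)\!)$, with Chai's pairing $q(\pdiv)$), this limit would force the ordinary part to split globally, putting $C$ in a special divisor or a smaller Shimura subvariety. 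We will then need to check that this contradicts $p\nmid\la v,v\ra$, using the fact that the $p$-adically split directions are isotropic or have $p$-divisible norm. Making this last compatibility precise is where I expect the real difficulty.
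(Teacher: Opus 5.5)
There is a genuine gap. Your Step~3 is essentially the paper's entire proof, but it stops at the decisive point.

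\textbf{What is already fine.} The compactness reduction is correct. Suppose no uniform $N$ existed. Then a $p$-adic limit of vectors $v_j$ lifting to orders tending to infinity, with $p\nmid\la v_j,v_j\ra$, would be a non-decaying formal special endomorphism $v_\infty\in\Lambda\subseteq L(\cQ_o)\otimes\bZ_p$ with $p\nmid \la v_\infty,v_\infty\ra$, since this condition depends only on $v \bmod p$. The paper phrases the same step as: for some $N$, special endomorphisms mod $t^N$ lie in $\Lambda+pL(\cQ_o)\otimes\bZ_p$.

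\textbf{What is missing.} Everything therefore reduces to showing $p\mid\la\lambda,\lambda\ra$ for all $\lambda\in\Lambda$. You flag this as ``the real difficulty'' but do not prove it. The missing argument is global and group-theoretic:
\begin{enumerate}
\item Algebraize via Lemma~\ref{eq:existencetheorem}, restrict to the generic point of $\Spec k[\![t]\!]$, which lies in the interior, and apply Theorem~\ref{theorem: algebraic}. This puts $C$ in a Shimura subvariety determined by $\Lambda$.
\item By the classification recalled in \S\ref{subsec: subshimura varieties}, these subvarieties come from $\Res$ of orthogonal or unitary groups over a totally real field, compact at all but one real place. They can reach the boundary only if that totally real field is $\bQ$.
\item The orthogonal case over $\bQ$ means $C$ lies in a special divisor. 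This is exactly where the standing hypothesis that $C$ lies in no $Z(m)$ is needed, and you never invoke it. Your phrase ``putting $C$ in a special divisor \ldots{} contradicts $p\nmid\la v,v\ra$'' is not an argument.
\item The only remaining case is a $\mathrm{U}(1,n)$ subvariety attached to an imaginary quadratic field. There $\Lambda$ is totally isotropic: generic ordinarity forces $p$ to split, and $\Lambda\otimes\bQ_p$ lies in one of the two eigenspaces of $F\otimes\bQ_p\cong\bQ_p\times\bQ_p$ on $L\otimes\bQ_p$. These eigenspaces are isotropic because $\la ax,y\ra=\la x,\bar a y\ra$.
\end{enumerate}
Isotropy then gives $\la w,w\ra\equiv 0 \bmod p$ for all $w\in\Lambda+pL(\cQ_o)\otimes\bZ_p$, which finishes the proof.

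\textbf{Steps~1--2 should be dropped.} They are unnecessary once the compactness argument is in place, and the local model they rest on is wrong.
\begin{itemize}
\item The boundary coordinate $q$ of the chart $\bN\to R_G$ only enters the toric part of $\cQ$, which always extends by Lemma~\ref{lm:extlogII}. Whether $v$ deforms depends only on the classical part $v^{\mathrm{cl}}$ (Proposition~\ref{prop:deforv}), i.e.\ on coordinates along the stratum. So there is no ``$q$-exponent $a$'', and no mechanism by which $p\nmid\la v,v\ra$ would make it prime to $p$.
\item At type II points the abelian part of $\cQ_o$ comes from an elliptic curve that may be supersingular. There the deformation locus has no character/Serre--Tate description at all.
\item Even in type III, where such a description exists, cancellation in the leading term of $\chi_v\circ\hat\iota-1$ can persist through arbitrarily many $p$-adic digits of $v$. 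Uniformity in $v$ cannot come from finitely many residue classes unless you already know that no non-decaying direction has norm prime to $p$ --- which is precisely the missing Step~3.
\end{itemize}
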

This will be proved at the end of this section. 

\subsection{Log 1-motives} This section is a brief review of log 1-motives. We assume that the reader is familiar with basic notions in log geometry; cf. \cite{Kk89}. To set up the notation, we will use triple $(S,\mathrm{M}_S,\alpha)$ to denote a log scheme (or log formal scheme), where $S$ is a scheme (formal scheme), $\mathrm{M}_S$ is a sheaf of monoids over $S_{\et}$, and a morphism $\alpha: \mathrm{M}\rightarrow \mathcal{O}_S$. We will usually denote $(S,\mathrm{M}_S,\alpha)$ simply by $(S,\mathrm{M}_S)$ or even $S$ if the log structure is clear from the context. If $S$ is a regular scheme with a reduced normal crossing divisor $D$, then one can assign $S$ a log structure that comes from $D$; cf. \cite[\S 1.5(1)]{Kk89}.

\begin{ass}
    If not otherwise specified, all log schemes in this paper will be assumed to be noetherian with fs log structure. 
\end{ass}
\begin{notation}
A monoid $\cP$ is called sharp, if $  1$ is the unique invertible element. For any sharp and fs monoid $\cP$, and any ring $R$, we can associate the log ring $R_{\cP}$, with log structure $\mathrm{M}_S=\bG_m\times \cP$ and $\alpha: \mathrm{M}_S\rightarrow R$ is the map taking $\cP\setminus\{1\}\rightarrow 0$.  For example, $W_{\bN^r}$ is the ring $W$ with log structure $\alpha:W^*\times \bN^r\rightarrow W$ taking all non-identity elements of $\bN^r$ to 0. 
\end{notation}

A \textbf{log 1-motive} (\cite[Definition 2.2]{LogAV} of type $(Y,X)$ over $(S,\mathrm{M}_S)$ is a complex $Q=[Y\xrightarrow{u} J^{\log}]$ in degrees -1 and 0, where $Y$ is an isotrivial $\bZ$-lattice (i.e., locally constant in $S_\et$), $J$ is a semi-abelian scheme over $S$ which is an extension of an abelian scheme $B$ by an isotrivial torus $T$ with character lattice $X$, and $J^{\log}$ is a certain enlargement of $J$ in the Kummer log flat topos, defined as follows: 
First, the Kato's log torus $\bG_m^{\log}$, which is a sheaf in Kummer log flat topos, is defined by the functor
$ (S',N)\rightarrow \Gamma(S',N^{\mathrm{gp}})$. 
There is a natural embedding $\bG_m\hookrightarrow\bG_m^{\log}$. We define $J^{\log}$ be the pushout of $J$ along the embedding $T\hookrightarrow T^{\log}=\Hom(X,\bG_m^{\log})$.

\subsubsection{Morphisms}
Let $G,H$ be semi-abelian schemes over $S$. For $\beta\in \Hom(G,H)$, there is a natural induced map $\beta^{\log}\in\Hom(G^{\log},H^{\log})$. 
\begin{lemma}[{\cite[Proposition 2.5]{LogAV}}]\label{lm:logidenticaliso}
Notation as above. The natural map $\Hom(G,H)\rightarrow \Hom(G^{\log},H^{\log})$ is an isomorphism.    
\end{lemma}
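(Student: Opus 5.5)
The plan is to prove injectivity and surjectivity separately. Both reduce to statements about the quotient sheaf $G^{\log}/G\simeq T_G^{\log}/T_G=\Hom(X_G,\bG_m^{\log}/\bG_m)$ in the Kummer log flat topos. Write $\mathcal{Q}:=\bG_m^{\log}/\bG_m$; on points it is $\overline{\mathrm{M}}^{\mathrm{gp}}$ up to Kummer roots, so it is a sheaf of torsion-free groups of a ``discrete'' nature. All statements are local on $S$, so throughout we may pass to an étale (or Kummer étale) cover. On such a cover $X_G$ and $X_H$ become constant, $T_G$ and $T_H$ are split, and $\overline{\mathrm{M}}_S$ is a constant fs monoid near a point.

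For injectivity, note that $\bG_m\to\bG_m^{\log}$ is a monomorphism, hence so is $T_G\to T_G^{\log}$. A pushout along a monomorphism of abelian sheaves stays a monomorphism, so $G\hookrightarrow G^{\log}$. If $\beta^{\log}=0$, then its restriction $\beta$ to $G$ is zero.

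For surjectivity, fix $\gamma\colon G^{\log}\to H^{\log}$.
\begin{enumerate}
\item First show that $\gamma(G)\subset H$. Consider the composite $G\to H^{\log}\to H^{\log}/H\simeq\Hom(X_H,\mathcal{Q})$. Since $G$ is smooth over $S$ with geometrically connected fibres, any morphism from $G$ to a sheaf like $\mathcal{Q}$ is locally constant on $G$. Such a morphism is determined by its value along the unit section, where it is $0$. So the composite vanishes, and $\gamma|_G=\beta$ for a unique $\beta\in\Hom(G,H)$.
\item Since $G^{\log}$ is the pushout of $G$ and $T_G^{\log}$ over $T_G$, it remains to check that $\gamma|_{T_G^{\log}}=\beta^{\log}|_{T_G^{\log}}$. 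The difference $\delta$ vanishes on $T_G$, so it factors through a homomorphism
\[
\bar\delta\colon \Hom(X_G,\mathcal{Q})\to H^{\log}.
\]
We must show $\bar\delta=0$.
\end{enumerate}

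The second step is where I expect the real difficulty. First, $\beta$ maps $T_G$ into $T_H$, because there are no nonzero maps from a torus to an abelian scheme. Next, compose $\bar\delta$ with $H^{\log}\to H^{\log}/T_H^{\log}=B_H$. This gives a map from a locally (Kummer-)constant torsion-free sheaf to an abelian scheme, and it vanishes because it is killed after passing to the étale-locally constant sections. So $\bar\delta$ lands in $T_H^{\log}$.

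After splitting $T_H$, it then suffices to show $\Hom(\mathcal{Q},\bG_m^{\log})=0$ for homomorphisms arising this way. I would attack this by restricting to strict points $s\in S$, where $\overline{\mathrm{M}}_s\simeq\bN^r$. There $\mathcal{Q}$ is the constant sheaf $\bZ^r$, enlarged by its Kummer roots. A homomorphism $\bZ^r\to\bG_m^{\log}$ compatible with all Kummer roots gives an element of $\mathrm{M}^{\mathrm{gp}}$ divisible by every integer prime to the characteristic. Using $\mathrm{M}^{\mathrm{gp}}=\mathcal{O}^*\times\bZ^r$ locally, I would argue that such elements are forced into $\mathcal{O}^*$. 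Then the homomorphism factors through $\bG_m$, and any homomorphism from $\mathcal{Q}$ to $\bG_m$ (a finite-type group scheme) is trivial by a further divisibility/torsion argument.

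If this pointwise argument is awkward, my fallback is to use the explicit description of $\mathrm{Hom}$ of log tori from \cite{LogAV}, namely $\Hom(T_G^{\log},T_H^{\log})=\Hom(X_H,X_G)$. That identification would give $\bar\delta=0$ directly.
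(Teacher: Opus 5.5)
The paper gives no argument here; it only cites \cite[Proposition 2.5]{LogAV}. Your skeleton is sound. Injectivity follows from $G\hookrightarrow G^{\log}$ and $H\hookrightarrow H^{\log}$. Step 1 shows $\gamma(G)\subset H$ because $\Hom(G,\Hom(X_H,\mathcal{Q}))=0$. Step 2 reduces to $\Hom(\mathcal{Q},H^{\log})=0$. There, once the $B_H$-component is killed, your divisibility observation pushes the image into $T_H$, provided you run it on arbitrary test objects $U$: $\mathcal{Q}(U)$ is a $\bQ$-vector space, and $\Gamma(U,\overline{\mathrm{M}}^{\mathrm{gp}}_U)$ has no nonzero divisible elements.

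\textbf{The gap.} Everything rests on the two vanishings $\Hom(\mathcal{Q},B_H)=0$ and $\Hom(\mathcal{Q},\bG_m)=0$, and neither is proved. ``Killed after passing to \'etale-locally constant sections'' is not an argument. A divisibility/torsion argument at a strict point $s=(\Spec k,\bN^r)$ cannot work. For $k=\Fpbar$, the case of this paper, $\mathcal{Q}(s)\supseteq\bQ^r$, while $\bG_m(s)=k^\times$ and $B_H(k)$ are divisible torsion groups, so $\Hom(\bQ^r,k^\times)\neq 0$. Worse, on objects Kummer over $s$, $\mathcal{Q}$ is, as you say yourself, the constant sheaf $\bQ^r$. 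Composing with any nonzero homomorphism $\bQ^r\to k^\times\subset\bG_m$ then gives a nonzero sheaf map. So the vanishing is a big-site phenomenon and must come from test objects on which the log coordinate genuinely varies.

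\textbf{The missing idea.} Use the universal log line $Y=S\times\bA^1_{\log}$: its underlying scheme is $\bA^1_S$ and it carries the universal section $t$ of $\mathrm{M}_Y$.
\begin{enumerate}
\item Since $\bG_m^{\log}\to\mathcal{Q}$ is an epimorphism, a map $\mathcal{Q}\to F$ is the same as a morphism $\psi:\bG_m^{\log}\to F$ killing $\bG_m$.
\item Every local section $m$ of $\mathrm{M}_U$ equals $f_m^*t$ for a unique $f_m:U\to Y$ over $S$. Sections of $\mathrm{M}^{\mathrm{gp}}$ are \'etale-locally quotients of such $m$. Hence $\psi$ is determined by $\psi(t)\in F(\bA^1_S)$.
\item On the open $\bG_{m,S}\subset\bA^1_S$ the section $t$ is a unit, so $\psi(t)$ restricts to the trivial section there.
\item $\bG_{m,S}$ is schematically dense in $\bA^1_S$, because $t$ is a nonzerodivisor in $\mathcal{O}_S[t]$. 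If $F$ is separated, it follows that $\psi(t)$ is trivial.
\end{enumerate}
This gives $\Hom(\mathcal{Q},F)=0$ for every separated $S$-group scheme $F$, in particular for $F=B_H$ and $F=T_H$, and that closes your Step 2.

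Your fallback does not fill the gap. Quoting $\Hom(T_G^{\log},T_H^{\log})=\Hom(X_H,X_G)$ from \cite{LogAV} is just the torus case of the statement being proved. You would also still need $\Hom(\mathcal{Q},B_H)=0$ before reducing to the toric part.
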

Now let $Q_1=[Y_1\rightarrow J_1^{\log}]$ and $Q_2=[Y_2\rightarrow J_2^{\log}]$ be two log 1-motives over $S$.  
A morphism $Q_1\rightarrow Q_2$ is a commutative square 
$$
\begin{tikzcd}
Y_1 \arrow[r] \arrow[d, "A"'] & J_1^{\log} \arrow[d, "B"] \\
Y_2 \arrow[r]                      & J_2^{\log}                         
\end{tikzcd}$$
By Lemma~\ref{lm:logidenticaliso}, $B=\beta^{\log}$ for some $\beta\in\Hom(J_1,J_2)$. We will denote a morphism as a pair $(A,\beta^{\log})$.

\subsubsection{Dual and polarization} See \cite[\S 2]{LogAV}. We just note that if $Q=[Y\rightarrow J^{\log}]$ is of type  $(Y,X)$, then $Q^\vee=[X\rightarrow (J^\vee)^{\log}]$ is of type $(X,Y)$. And a polarization $\lambda: Q\rightarrow Q^\vee$ is a morphism $(\lambda^\et, (\lambda^{\mathrm{sab}})^{\log})$ where $\lambda^{\et}:Y\rightarrow X$ is injective with finite cokernel, and $\lambda^{\mathrm{sab}}:J\rightarrow J^\vee$ is a certain isogeny that satisfies some extra properties that we don't specify.

\subsubsection{Log $p$-divisible groups}Following \cite{logDiu}, we have a category $(\mathrm{fin}/S)_d$ of log finite groups schemes stable under the Cartier {d}uality. By \cite[Proposition 1.4]{logDiu}, log finite group schemes are (classically) finite, log flat, fs and Kummer type log schemes over $S$. Following \cite[\S 4.1]{logDiu}, we can define the category $(p\mathrm{-div}/S)_{d}$ as the full subcategory of the category of sheaves of
abelian groups on $S$ consisting of objects $\mathscr{G}$ satisfying (a) $\mathscr{G}=\bigcup_{n} \mathscr{G}([p^n])$, (b)  $p:\mathscr{G}\rightarrow \mathscr{G}$ is surjective, and (c) For each $n$, $\mathscr{G}([p^n])$ belongs to $(\mathrm{fin}/S)_{d}$.
We will call an object in $(\mathrm{fin}/S)_{d}$ a \textbf{log $p$-divisible group}. 

One can functorially associate a log $p$-divisible group $Q[p^{\infty}]\in (p\mathrm{-div}/S)_{d}$ to a log 1-motive $Q=[Y\xrightarrow{u} J^{\log}]$; see \cite[Proposition 3.5, Definition 3.6]{Sheer}.

\subsubsection{Crystalline realization} Equip $W$ with trivial log structure. Let $(S,\mathrm{M})$ be a log scheme over $W$. There is a notion of \textbf{log crystalline site} $(S/W)_{\cris}^{\log}$ and \textbf{log crystals} introduced by Kato in \cite[\S5,\S6]{Kk89}. 
When $(S,\mathrm{M})$ is sufficiently nice, a log crystal can also be described as a filtered module with log connection; cf. \cite[Theorem 6.2]{Kk89}. This language is used in \cite[\S 1.3]{MP11}. Following \cite[\S  1.3.3]{MP19}, a \textbf{log Dieudonné crystal} on an fs and sharp log scheme $(S,\mathrm{M})$ is a four tuple $(\mathbb{M},F,V,\Fil^\bullet)$, where $\mathbb{M}$ is a crystal on $(S/W)_{\cris}^{\log}$, $\varphi$ is  Frobenius, $V$ is Verschiebung and $\Fil^\bullet$ is a two step Hodge filtration. For a log 1-motive $Q$ over a locally $p$-nilpotent fs sharp log scheme, one can functorially attach a Dieudonné crystal $\bD(Q)$.

\subsubsection{Algebraization}
Let $(R,\mathfrak{m})$ be a complete local Noetherian ring with fs log structure. Note that this implies the existence of a finite integral chart $\cP\rightarrow R$.  

\begin{lemma}\label{eq:existencetheorem}Notation as above. 
\begin{enumerate}
    \item  The category of log 1-motives over $\Spec R$ is equivalent to the category of log 1-motives over $\Spf R$.
\item The category $(p-\mathrm{div}/\Spec R)_d$ is equivalent to $(p-\mathrm{div}/\Spf R)_d=
\lim(p-\mathrm{div}/\Spec R/\mathfrak{m}^k)_d$.
\end{enumerate}    \end{lemma}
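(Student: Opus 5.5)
We reduce both parts to the classical Grothendieck existence theorem, using the fact that a log 1-motive is built from classical objects glued by data that is already algebraic. Fix a finite fs chart $\cP\to R$. Both $\Spec R$ and $\Spf R$ (and each $\Spec R/\mathfrak{m}^k$) then carry the log structure induced by $\cP$. The plan is to prove part (1) directly and deduce part (2) from finite-level statements plus (1)-type arguments.

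For part (1), write a log 1-motive over $\Spf R$ as a compatible system $Q_k=[Y_k\xrightarrow{u_k} J_k^{\log}]$ over $R/\mathfrak{m}^k$. The proof proceeds in three steps.

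First, the lattice $Y_k$ and the torus $T_k$, with character group $X_k$, are isotrivial. They therefore correspond to continuous representations of the étale fundamental group of $R/\mathfrak{m}^k$, which is the fundamental group of $R/\mathfrak{m}$ and hence of $R$, since $R$ is henselian. So they algebraize uniquely.

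Second, the semi-abelian scheme $J_k$ is an extension of an abelian scheme $B_k$ by $T_k$. The abelian schemes $B_k$ carry compatible ample line bundles, obtained from a polarization (or by restricting to the polarizable objects in our setting). Grothendieck existence therefore algebraizes $B$, fully faithfully. Extensions of $B$ by $T$ are classified by $\Hom(X,B^\vee)$, and this group is also insensitive to passing from $\Spf$ to $\Spec$ by the same theorem, so $J$ algebraizes.

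Third, the map $u:Y\to J^{\log}$ must be handled. Following Kajiwara--Kato--Nakayama, giving $u$ is equivalent to giving $Y\to B$ together with a compatible pairing-type datum $Y\times X\to \bG_m^{\log}/\bG_m=(\mathrm{M}^{\mathrm{gp}}/\cO^*)$ and a lift. The quotient $\mathrm{M}^{\mathrm{gp}}/\cO^*$ is the constant sheaf $\cP^{\mathrm{gp}}$ over both $\Spec R$ and every thickening, so the combinatorial part is the same on every thickening. The remaining data are sections of classical torsors (a $\bG_m$-biextension-type torsor over $B$), and these algebraize by formal GAGA. Full faithfulness on morphisms follows the same way: morphisms are pairs $(A,\beta^{\log})$, and Lemma~\ref{lm:logidenticaliso} reduces $\beta^{\log}$ to a classical $\beta\in\Hom(J_1,J_2)$. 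Such a $\beta$ algebraizes because $\Hom$ of semi-abelian schemes over complete local rings is the limit of $\Hom$ over the thickenings.

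For part (2), by \cite[Proposition 1.4]{logDiu} each $\mathscr{G}[p^n]$ is a classically finite, log flat group scheme of Kummer type. A finite $R$-scheme is the same as a compatible system of finite $R/\mathfrak{m}^k$-schemes, since finite modules over a complete ring are $\mathfrak{m}$-adically complete. The log structure is Kummer over the fixed chart $\cP$, so it is described by a finite chart that also passes to the limit. Hence $(\mathrm{fin}/\Spec R)_d\simeq\lim_k(\mathrm{fin}/\Spec R/\mathfrak{m}^k)_d$. Conditions (a)--(c) defining $(p\text{-}\mathrm{div})_d$ are levelwise, so the equivalence for log $p$-divisible groups follows by taking the union over $n$.

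The main obstacle is the third step of part (1): making precise that the log part of $u$, which takes values in $\bG_m^{\log}$, is determined by classical data plus the rigid monoid $\cP^{\mathrm{gp}}$, so that formal GAGA applies. I would first reduce to the case $B=0$, where $J=T$ and $u$ is just a pairing $Y\times X\to \mathrm{M}^{\mathrm{gp}}$. Here the statement is that $\Gamma(\Spec R,\mathrm{M}^{\mathrm{gp}})=\varprojlim_k\Gamma(R/\mathfrak{m}^k,\mathrm{M}^{\mathrm{gp}})$, which holds because $R^*=\varprojlim(R/\mathfrak{m}^k)^*$. I would then handle general $B$ by dévissage along $0\to T\to J\to B\to0$.
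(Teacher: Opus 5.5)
Your proposal is correct and takes essentially the same route as the paper. It uses Grothendieck existence for the semi-abelian part. It uses the finite chart $\cP\to R$ together with $R^*=\varprojlim_k (R/\mathfrak{m}^k)^*$ to see that the log structure, and hence $J^{\log}$ and $u$, is the limit of its truncations. For part (2) it uses formal GAGA for the underlying finite schemes plus the chart for the log structure. You are in fact more careful than the paper, which invokes Grothendieck existence for $J$ without noting that algebraizing $B$ needs a polarization (harmless there, since the log 1-motives in its application are polarized) and which does not spell out full faithfulness.
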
\begin{proof}\begin{enumerate}
    \item Suppose there is an compatible system of log 1-motives $Q_n=[Y\rightarrow J_n^{\log}]$ over $\Spec R/\mathfrak{m}^n$. By Grothendieck's    existence theorem, there is a $J$ over $\Spec R$ reducing to $J_n$ modulo $\mathfrak{m}^n$. To show $J^{\log}$ reduces to $J_n^{\log}$ modulo $\mathfrak{m}^n$, it suffices to show that the log structure on $R$ is equal to the inverse limit of log structures on $R/\mathrm{m}^n$ for $n\in \bN$. This follows from the existence of a finite integral chart $\cP\rightarrow R$. 
    \item 
    By definition, an object $\mathscr{G}\in(p\mathrm{-div}/S)_{d}$ is a union of log finite group schemes $\mathscr{G}([p^n])$. It suffices to show that if $H$ is a log finite group scheme over $\Spf R$, then we can algebraize it to a log finite group scheme over $\Spec R$. The underlying finite scheme structure of $H$ algebraizes by formal GAGA. The log structure also algebraizes by the  existence of a finite integral chart $\cP\rightarrow R$. 
\end{enumerate} \end{proof}

\subsubsection{The classical part}\label{subsub:Tp} Let $(R,\mathfrak{m})$ be a complete local Noetherian ring with log structure associated to a finite integral chart $\cP\rightarrow \mathfrak{m}$. We fix a splitting $\mathrm{M}^{\mathrm{gp}}_{R}=R^*\times \cP^{\mathrm{gp}}$. 

\begin{defn}
  Consider a log 1-motive $Q=[Y\xrightarrow{u} J^{\log}]$ of type $(Y,X)$ over $R$. The \textbf{classical part} of $Q$  is a classical 1-motive $Q^{\mathrm{cl}}=[Y\xrightarrow{u^{\mathrm{cl}}} J]$ defined by 
    $$u^{\mathrm{cl}}:Y\xrightarrow{u } J^{\log}(R)=J(R)\times (X^\vee\otimes \cP^{\mathrm{gp}})\xrightarrow{\mathrm{proj}}J(R).$$
 Here the splitting of $J^{\log}(R)$ is induced from the fixed splitting of $\mathrm{M}^{\mathrm{gp}}_{R}$. Subtracting $u^{\mathrm{cl}}$ from $u$, we also get a (toric) log 1-motive $Q^{\mathrm{tor}}=[Y\xrightarrow{u^{\mathrm{tor}}} T^{\log}]$ with the property that $u=u^{\mathrm{cl}}+u^{\mathrm{tor}}$.   
\end{defn}


\begin{lemma}\label{lm:vtocl}Let $v=(A,\beta^{\log})\in \End(Q)$. Then $v^{\mathrm{cl}}:=(A,\beta)\in \End(Q^{\mathrm{cl}})$ and  $v^{\mathrm{tor}}:=(A,\beta^{\log})\in \End(Q^{\tor})$. As a result, $\End(Q)=\{(A,\beta)\in \End(Q^{\mathrm{cl}}): (A,\beta^{\log})\in \End(Q^{\mathrm{tor}})\}$.
\end{lemma}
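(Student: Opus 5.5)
Fix $v=(A,\beta^{\log})\in\End(Q)$, with $A\colon Y\to Y$ and $\beta\colon J\to J$, so that $\beta^{\log}\circ u=u\circ A$ as maps $Y\to J^{\log}$. The plan is to project this identity onto the two factors of the fixed splitting
\[
J^{\log}(R)=J(R)\times (X^\vee\otimes \cP^{\mathrm{gp}}).
\]
The $J(R)$-component of the identity should give $\beta\circ u^{\mathrm{cl}}=u^{\mathrm{cl}}\circ A$. The $X^\vee\otimes\cP^{\mathrm{gp}}$-component should give $\beta^{\log}\circ u^{\mathrm{tor}}=u^{\mathrm{tor}}\circ A$. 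The key structural input is therefore that $\beta^{\log}$ is compatible with the splitting.

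First, since $T$ is the maximal torus of $J$, $\beta$ restricts to a map $\beta_T\colon T\to T$. This map corresponds to an endomorphism $\beta_X$ of the character lattice $X$. By construction of $J^{\log}$ as the pushout of $J$ along $T\hookrightarrow T^{\log}=\Hom(X,\bG_m^{\log})$, the map $\beta^{\log}$ is the map induced on the pushout by $\beta$ and $\beta_T^{\log}=\Hom(\beta_X,\bG_m^{\log})$.

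Next I check that the splitting is functorial in this data. Choose the splitting $\mathrm{M}^{\mathrm{gp}}_R=R^*\times\cP^{\mathrm{gp}}$. This gives $\bG_m^{\log}(R)=R^*\times\cP^{\mathrm{gp}}$, hence
\[
T^{\log}(R)=T(R)\times(X^\vee\otimes\cP^{\mathrm{gp}}),
\]
and on $T^{\log}(R)$ the map $\beta_T^{\log}$ acts as $\beta_T\times(\beta_X^\vee\otimes 1)$. Pushing out along $T(R)\to J(R)$, which is legitimate on $R$-points because $R$ is strictly henselian local and the relevant torsors are trivial, gives $J^{\log}(R)=J(R)\times(X^\vee\otimes\cP^{\mathrm{gp}})$. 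On this decomposition $\beta^{\log}$ acts as $\beta\times(\beta_X^\vee\otimes1)$. So $\beta^{\log}$ preserves the decomposition and commutes with $\mathrm{proj}$.

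With this in hand the identities follow directly. Applying $\mathrm{proj}$ to $\beta^{\log}u=uA$ gives
\[
\beta u^{\mathrm{cl}}=u^{\mathrm{cl}}A,
\]
so $(A,\beta)$ is an endomorphism of $Q^{\mathrm{cl}}$. Subtracting this from the original identity gives $\beta^{\log}u^{\mathrm{tor}}=u^{\mathrm{tor}}A$. Here $u^{\mathrm{tor}}$ lands in $T^{\log}$ and $\beta^{\log}$ restricts on it to $\beta_T^{\log}$, so $(A,\beta^{\log})\in\End(Q^{\mathrm{tor}})$.

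For the final equality, take $(A,\beta)\in\End(Q^{\mathrm{cl}})$ such that $(A,\beta^{\log})\in\End(Q^{\mathrm{tor}})$. Adding the two identities and using $u=u^{\mathrm{cl}}+u^{\mathrm{tor}}$ together with additivity of $\beta^{\log}$ gives $\beta^{\log}u=uA$. The identification of a morphism with the pair $(A,\beta^{\log})$ is Lemma~\ref{lm:logidenticaliso}.

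The main obstacle is the compatibility step: making precise that the splitting of $J^{\log}(R)$ is functorial for $\beta$. Two points need care there: that the pushout description commutes with taking $R$-points, and that the torus parts are isotrivial, and in fact split over the strictly henselian $R$.
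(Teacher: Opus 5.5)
Your proof is correct, but it takes a different route from the paper's.

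\textbf{The paper's argument.} It uses a one-line graph argument. The graph $\Gamma_v\subseteq Q\times Q$ is itself a log 1-motive. Its classical part $\Gamma_v^{\mathrm{cl}}\subseteq Q^{\mathrm{cl}}\times Q^{\mathrm{cl}}$ is the graph of an endomorphism, and that endomorphism is $v^{\mathrm{cl}}$. The toric statement and the description of $\End(Q)$ then follow by the same subtraction and addition you perform.

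\textbf{Your argument.} You verify by hand that $\beta^{\log}$ acts as $\beta\times(\beta_X^\vee\otimes 1)$ on $J^{\log}(R)=J(R)\times(X^\vee\otimes\cP^{\mathrm{gp}})$. Hence $\mathrm{proj}$ intertwines $\beta^{\log}$ with $\beta$, and $\beta^{\log}$ restricts to $\beta_T^{\log}$ on the toric factor.

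\textbf{Comparison.} Both routes rest on the same fact. The paper's claim that the classical part of $\Gamma_v$ is the graph of $v^{\mathrm{cl}}$ silently uses that taking classical parts is compatible with the inclusion $\Gamma_v\hookrightarrow Q\times Q$. That is exactly the functoriality of the chosen splitting which you check explicitly:
\begin{itemize}
\item $\beta$ preserves the maximal torus $T$;
\item $\beta_T^{\log}$ acts on $\Hom(X,\bG_m^{\log}(R))$ by precomposition with $\beta_X$;
\item the splitting of $\bG_m^{\log}(R)=R^*\times\cP^{\mathrm{gp}}$ therefore propagates to $J^{\log}(R)$.
\end{itemize}
The graph argument buys brevity and a conceptual packaging. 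Yours makes visible the step the paper leaves to the reader.

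\textbf{On the point you flagged.} The identification $J^{\log}(R)=J(R)\times^{T(R)}T^{\log}(R)$ follows cleanly from the exact sequence $0\to T^{\log}\to J^{\log}\to B\to 0$. Combine it with the surjectivity of $J(R)\to B(R)$, which holds because $H^1(R,T)=0$ for the split torus over the local ring $R$. This gives $J^{\log}(R)=J(R)+T^{\log}(R)$ with $J(R)\cap T^{\log}(R)=T(R)$, which is exactly what you need.
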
 
\begin{proof}
Suffices to show that $v^{\mathrm{cl}}\in \End(Q^{\mathrm{cl}})$. The graph $\Gamma_v\subseteq Q\times Q$ is a log 1-motive, whose classical part $\Gamma^{\mathrm{cl}}_v\subseteq Q^{\mathrm{cl}}\times Q^{\mathrm{cl}}$ is the graph of an endomorphism which coincides with $v^{\mathrm{cl}}$. 
\end{proof}

Let $o$ be the closed point of $\Spec R$, with the induced log structure. Note that Lemma~\ref{lm:vtocl} applies to $o$ as well. 

\begin{lemma}\label{lm:extlogII} Let $j:\Spec R'\rightarrow \Spec R$ be a morphism of local log schemes. Let $v_o=(A,\beta^{\log}_o)\in \End(Q_o)$ and $v_o^{\mathrm{cl}}=(A,\beta_o)\in \End(Q_o^{\mathrm{cl}})$. Then $v_o$ extends to an endomorphism $(A,\beta^{\log})\in \End(j^*Q)$\footnote{This means that the image of $v_o$ in $\End(Q_{o'})$ lies in the image of $\End(j^*Q)\rightarrow \End(Q_{o'})$. Here $o'$ is the closed point of $\Spec R'$ with the induced log structure.} if and only if $v_o^{\mathrm{cl}}$ extends to an endomorphism $(A,\beta)\in \End(j^*Q^{\mathrm{cl}})$. 
\end{lemma}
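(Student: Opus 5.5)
The plan is to reduce the extension problem for $v_o$ to two separate extension problems, one for the classical part and one for the toric part. The toric part will turn out to be automatic once the lattice map $A$ is fixed. The starting point is Lemma~\ref{lm:vtocl}, applied over both $o$ and $\Spec R'$. It says that an endomorphism of $j^*Q$ is the same as a pair $(A,\beta)$ satisfying two conditions: $(A,\beta)\in\End(j^*Q^{\mathrm{cl}})$, and $(A,\beta^{\log})\in\End(j^*Q^{\mathrm{tor}})$. Here the splitting of $\mathrm{M}^{\mathrm{gp}}_{R'}$ is chosen compatibly with the one on $R$ via $j$, so that the formation of $Q^{\mathrm{cl}}$ and $Q^{\mathrm{tor}}$ commutes with $j^*$ and with restriction to the closed points. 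Note that $Y$ is isotrivial, so over a local base $A$ is just a map of constant lattices, and it extends uniquely. For the direction ``only if'': if $(A,\beta^{\log})\in\End(j^*Q)$ restricts to $v_o$, then Lemma~\ref{lm:vtocl} gives $(A,\beta)\in \End(j^*Q^{\mathrm{cl}})$, and this restricts to $v_o^{\mathrm{cl}}$. So this direction is formal.

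For the converse, suppose $(A,\beta)\in\End(j^*Q^{\mathrm{cl}})$ extends $v_o^{\mathrm{cl}}$. Then $\beta$ is an endomorphism of the semi-abelian scheme $j^*J$. It preserves the toric part, so it induces a map $\beta|_T$ on $T$, and hence a map $\beta^*: X\to X$ on characters. We already know $\beta^{\log}$ is defined by Lemma~\ref{lm:logidenticaliso}. It remains to verify the toric compatibility
\[
u^{\mathrm{tor}}\circ A=\beta^{\log}\circ u^{\mathrm{tor}} \quad\text{as maps } Y\to T^{\log}(R')=T(R')\times(X^\vee\otimes\cP'^{\mathrm{gp}}).
\]
By construction $u^{\mathrm{tor}}$ lands in the discrete factor $X^\vee\otimes\cP'^{\mathrm{gp}}$ (the classical part having been subtracted off), and $\beta^{\log}$ acts on this factor through $(\beta^*)^\vee\otimes\mathrm{id}$. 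Both sides are therefore homomorphisms from $Y$ into a constant group $X^\vee\otimes\cP'^{\mathrm{gp}}$. The key point is that this target is rigid: $X$ is isotrivial, and endomorphisms of tori over a connected (henselian local) base are determined at the closed point. So both $\beta^*$ and $u^{\mathrm{tor}}$ are determined by their values at $o'$. Now $j^*v_o\in \End(Q_{o'})$, so the identity already holds at $o'$ by Lemma~\ref{lm:vtocl} applied at $o'$, and therefore it holds over $R'$.

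Two points remain to check. First, I must confirm that $\cP'^{\mathrm{gp}}$-components pull back correctly along $j$, that is, that $u^{\mathrm{tor}}$ for $j^*Q$ over $R'$ and over $o'$ use the same chart. I would arrange this by choosing the splitting at $o'$ to be the one induced from $R'$. Second, I must confirm that the extended $\beta$ agrees with $\beta_o$ at $o'$; this is given by hypothesis.

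I expect the main obstacle to be justifying that $u^{\mathrm{tor}}$ really takes values in the discrete part and is locally constant. Concretely, this means showing that the decomposition $J^{\log}(R)=J(R)\times(X^\vee\otimes\cP^{\mathrm{gp}})$ is functorial for $\beta^{\log}$, i.e.\ that $\beta^{\log}$ respects the product decomposition. The first thing I would try is to use the description of $J^{\log}$ as the pushout of $J$ along $T\hookrightarrow T^{\log}=\Hom(X,\bG_m^{\log})$, together with the splitting $\bG_m^{\log}(R)=R^*\times\cP^{\mathrm{gp}}$. Any $\beta$ restricts to $T$ via $\beta^*$ on $X$, and this map commutes with the splitting. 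That gives the functoriality needed.
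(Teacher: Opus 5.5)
Your proposal is essentially the paper's proof. Both use Lemma~\ref{lm:vtocl} to separate the classical and toric conditions, and both observe that the toric condition $u^{\mathrm{tor}}\circ A=\beta^{\log}\circ u^{\mathrm{tor}}$ is automatic, because $u^{\mathrm{tor}}$ lands in the discrete factor $\{1\}\times(X^\vee\otimes\cP^{\mathrm{gp}})$ and $\beta|_T$ is rigid.

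The one difference is that the paper checks the toric condition over $R$, where the fixed splitting makes restriction to $o$ injective on that factor, and then pulls back along $j$. You should do the same instead of choosing a splitting of $\mathrm{M}^{\mathrm{gp}}_{R'}$ compatible with the one on $R$, because such a splitting need not exist. For example, take $R'=k[\![t]\!]$ with the generator of $\cP=\bN$ mapping to $t^p(1+t)$: a compatible splitting would need a $p$-th root of $1+t$, which does not exist. With this change both of your directions go through.
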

\begin{proof}
Since $u^{\tor}(Y)\subseteq \{1\}\times (X^\vee\otimes \cP^{\mathrm{gp}})\subseteq J^{\log}(R)$, we see that $v_o^{\tor}\in \End(Q^{\tor}_o)$ extends to an endomorphism $v_{R}^{\tor}\in \End(Q^{\tor})$, hence an endomorphism $v^{\tor}_{R'}\in \End(j^*Q^{\tor})$. The rest is easy given the above observation.\end{proof}

\subsection{Local structure at a boundary point}\label{sub:locstructureatboundarypt} Let $\iota:\mathscr{S}^\Sigma\rightarrow \mathscr{A}_g^{\Sigma^\ddagger}$ be as in \S\ref{sub:toroidalSpin}.
 Possibly refining $\Sigma^\ddagger$, we can assume that a polyhedral cone $\sigma$ for $\Sigma$ is also a polyhedral cone for $\Sigma^\ddagger$. Fix a point $o\in \mathscr{S}^\Sigma(k)$ that lies in a stratum labeled by $(\Phi,\sigma)$.  Then $\iota(o)\in\mathscr{A}_g^{\Sigma^\ddagger}(k)$ lies in the strata labeled by $(\iota_*\Phi,\sigma)$. Let $\mathscr{A}_{g,W}^{\Sigma^\ddagger,/\iota(o)}=\Spf R$ and  $\mathscr{S}^{\Sigma,/o}_W=\Spf R_G$. The ring $R$ is equipped with a log structures coming from the boundary divisor, similar for $R_G$. And we have a continuous map of log rings $\iota^\sharp:R\rightarrow R_G$ which is the normalization of a surjection; cf. \cite{MP11}. There is an $r\in \bN$ such that the log structure on $R$ (resp. $R_G$) admits a chart $\bN^r\rightarrow R$ (resp. $\bN^r\rightarrow R_G$). Note that the log point $o\in \Spec R_G$ and $\iota(o)\in \Spec R$ are both $\Spec k_{\bN^r}$. Therefore we can identify them. In the following, we will simply denote $\iota(o)$ also by $o$.  

The degenerating abelian scheme over $\Spec R$ corresponds to a log 1-motive $\cQ$ over $\Spec R$ with principal polarization $\lambda$; cf. \cite[Proposition 1.2.4.2]{MP11}. Let $\mathcal{Q}_o$ be the special fiber of $\mathcal{Q}$ at the log point $o$. Then $\Spf R$ is precisely the universal deformation space of $(\mathcal{Q}_o,\lambda_o)$ (cf. \cite[Theorem 4.2.1.3(3)]{MP11}), and $(\mathcal{Q},\lambda)\otimes \Spf R$ is its universal deformation. 

 Associated to $\mathcal{Q}$ is a polarized log Dieudonné crystal $\mathbb{D}(\mathcal{Q})$. One can use Tate tensors over $\mathbb{D}(\mathcal{Q}_o)$ to obtain an explicit model for the map $\iota^\sharp:R\rightarrow R_G$, which we now describe. Let $M_o$ be the evaluation of $\bD(\mathcal{Q}_o)$ over the formal log PD thickening $o\hookrightarrow \Spf W_{\bN^r}$ 
 . Through semi-stable comparison, one obtains a $\varphi$-invariant and monodromy stable ``CSpin Tate cycle'' $\bm{\pi}_{o}\in M_o^{\otimes (2,2)}$ which cuts out a reductive subgroup $G\subseteq \GSp(M_0,\lambda_o)$; cf. \cite[\S 3.2.2]{MP11}. More precisely, $\bm{\pi}_{o}$ is the image under semi-stable comparison of the $p$-adic étale tensor $\bm{\pi}_{p}\in \bH_{p,x}^{\otimes (2,2)}$ as per \S\ref{subsec: set up SV}, and $G$ identifies with $\GSpin(L_{W})$\footnote{Note that $G$ is \textit{a priori} only an inner form of $\GSpin(L_{W})$; cf. \cite[Corollary 2.2.4.3(4)]{MP11}. In our case, $W$ is strictly henselian and $G$ is smooth, so the $G$-torsor whose $S$-points are $\{\phi:\GSpin(L_{W})\otimes S\xrightarrow{\sim} G\otimes S| \phi(\bm{\pi}_{p})=\bm{\pi}_{o}\}$ admits a $W$-point. See also \cite[Corollary 1.3.6(3)]{KM09}.}. Following \cite[\S 3.2.3-3.2.5, \S3.3.1]{MP11}, one can use group theory to explicitly construct log $W$-algebras $R^+,R_{\sigma}, R^+_{G}, R_{\sigma_G}$, such that we have the following identifications: 
\begin{equation} 
\begin{tikzcd}
R \arrow[r, "\iota^\sharp"] \arrow[d,  Rightarrow,no head]                                      & R_G \arrow[d, Rightarrow,no head]     \\
R^+\widehat{\otimes}R_{\sigma} \arrow[r, "\iota^\sharp_+\widehat{\otimes}\iota^\sharp_{\sigma}"] &  R^+_{G}\widehat{\otimes} R_{\sigma_G}
\end{tikzcd}
\end{equation}
The vertical identifications follows essentially from the proof of \cite[Theorem 4.3.2.1]{MP11}. The map $\iota^\sharp_+:R^+\rightarrow R^+_G$ is a surjection, whereas $\iota^\sharp_{\sigma}:R_{\sigma} \rightarrow R_{\sigma_G}$ is the normalization of a surjection; cf. \cite[\S3.3.1]{MP11}. The ring $R^+$ carries a classical 1-motive $\mathcal{Q}^+$, while $R_{\sigma}$ carries a log 1-motive $\mathcal{Q}_{\sigma}$ with toroidal degree 0 piece. We can decompose $\cQ$ as the sum of the pullbacks of $\mathcal{Q}^+$ and $\mathcal{Q}_{\sigma}$, see \cite[\S 3.2.5]{MP11}. 
We can write down the Frobenius, filtrations, and log connection on $\bD(\mathcal{Q})$ using the explicit coordinates on $R$; cf. \cite[\S 3.2.6]{MP11}.  

Let $\cQ^+_G$ (resp. $\mathcal{Q}_{\sigma_G}$) be the base change of $\mathcal{Q}^+$ (resp. $\mathcal{Q}_\sigma$) to $R^+_G$ (resp. $R_{\sigma_G}$). Let $\cQ^{\mathrm{cl}}$ be the classical part of $\cQ$, and let $\cQ_G$ (resp. $\cQ_G^{\mathrm{cl}}$) be the base change of $\cQ$ (resp. $\cQ^{\mathrm{cl}}$) to $R_G$.   

\subsection{Special endomorphisms on the boundary}
To simplify the discussion, we will only consider the  special case where $o$ lies on a boundary stratum with $\sigma$ an one dimensional inner ray. Note that $r=1$ in this case. 

\subsubsection{Log K3-crystals} Note that $\bD(\cQ^{\mathrm{cl}})$ is a classical Dieudonné crystal over $R$. It can be obtained from $\bD(\cQ)$ by forgetting the log connection corresponding to $\bN^r\rightarrow R$.  In particular, if $\bm{s}$ is a tensor of $\bD(\cQ)^{\otimes}$, it canoincally gives rise to a tensor $\bm{s}^{\mathrm{cl}}$ of $\bD(\cQ^{\mathrm{cl}})^{\otimes}$.


\begin{defn}
Let $\bL\subseteq \End(\bD(\mathcal{Q}_G))$ be the image of the idempotent operator $\bm{\pi}_{G}$. Note that $\bL$ carries a Frobenius, a three step Hodge filtration $\Fil^\bullet \bL$ concentrated in degree $[-1,1]$, a five step weight filtration $W_\bullet \bL$ concentrated in degree $[-2,2]$ \footnote{The weight filtration is essentially 3 step: when $o$ is of type II then $\gr_{-2}=\gr_{2}=0$, and when $o$ is of type III, then $\gr_{-1}=\gr_{1}=0$.}, and a quadratic paring $Q_G$. We call $\bL$ a \textbf{log K3-crystal} over $R_G$. Similarly, by the process discussed above, the tensor $\bm{\pi}_{G}$ gives rise to a $\bm{\pi}_{G}^{\mathrm{cl}}\in \bD(\cQ_G^{\mathrm{cl}})^{\otimes (2,2)}$ whose image is a classical K3-crystal $\mathbb{L}^{\mathrm{cl}}\subseteq \End(\bD(\cQ_G^{\mathrm{cl}}))$. We will call $\mathbb{L}^{\mathrm{cl}}$ the \textbf{classical part} of $\mathbb{L}$.
\end{defn}
\subsubsection{Definition of special endomorphisms}
\begin{defn}\label{def:specialendoboundary} An element 
$s\in \End(\mathcal{Q}_o)$ is called a \textbf{special endomorphism}, if its crystalline realization lies in $W_0\mathbb{L}_o$ (hence further lies in  $(W_0\bL_o\cap \Fil^0\bL_o)^{\varphi=1})$. Let $T$ be a local log scheme, and let $ T\rightarrow \Spec R_G$ be a morphism of local log schemes. An element $s\in \End(\mathcal{Q}_T)$ is called a special endomorphism, if its restriction to the closed log point $o'\in T$ is the base change of a special endomorphism over $o$. The lattice of all special  endomorphisms of $\mathcal{Q}_T$ is denoted by $L(\mathcal{Q}_T)$.
\end{defn}
\begin{remark}
   We have $L(\mathcal{Q}_T)\subseteq L(\cQ_{o'})= L(\cQ_o)$. The proof of this fact relies on the assumption that the log structure on $R$ has chart $\bN\rightarrow R$, which guarantees that $\mathcal{Q}_{o'}$ does not admit more endomorphisms than $\cQ_o$ that come from relaxation of log structures. 
\end{remark}
\subsubsection{Deformation loci}
 Let $v\in L(\cQ_o)$. 
\begin{proposition}\label{prop:deforv}
There exists a $D_0(v)\in R_G$ not divisible by $p$, such that if $R_G\rightarrow R'$ is a map of complete local log rings, then $v$ extends to $\cQ\otimes R'$ if and only if $f(D_0(v))= 0 $. 
\end{proposition}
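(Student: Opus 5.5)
Here is the plan. We reduce the statement to the classical deformation theory of endomorphisms of the classical 1-motive $\cQ^{\mathrm{cl}}$, and then to the special-endomorphism deformation theory already used in the interior.

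\textbf{Reduction to the classical part.} Write $v=(A,\beta_o^{\log})\in L(\cQ_o)$. By Lemma~\ref{lm:extlogII}, applied to $j:\Spec R'\to\Spec R_G$, the endomorphism $v$ extends to $\cQ\otimes R'$ if and only if $v^{\mathrm{cl}}=(A,\beta_o)$ extends to an endomorphism of $\cQ^{\mathrm{cl}}_G\otimes R'$. The toric part $v^{\mathrm{tor}}$ always extends, because $u^{\mathrm{tor}}$ lands in the constant part $X^\vee\otimes\cP^{\mathrm{gp}}$. Since $\cQ^{\mathrm{cl}}_G$ is a classical 1-motive over the complete local ring $R_G$, its deformation theory is governed by a classical Dieudonné crystal $\bD(\cQ^{\mathrm{cl}}_G)$. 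This crystal is obtained from $\bD(\cQ_G)$ by forgetting the log connection, and it carries the classical K3-crystal $\bL^{\mathrm{cl}}$. The realization of $v$ lies in $W_0\bL_o\cap\Fil^0$ and is $\varphi$-invariant, so $v^{\mathrm{cl}}$ is a formal special endomorphism of $\cQ^{\mathrm{cl}}_o$ with respect to $\bL^{\mathrm{cl}}$.

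\textbf{Grothendieck--Messing and Cartier divisor property.} For the classical 1-motive, we use Grothendieck--Messing/Dieudonné theory for 1-motives, or equivalently the associated $p$-divisible group together with the rigidity of the lattice part $Y$. A morphism extends over a square-zero PD thickening if and only if its horizontal extension preserves the Hodge filtration. For $v$ with realization in $\bL^{\mathrm{cl}}$, the condition is that the horizontal extension $\tilde v$ lies in $\Fil^0\bL^{\mathrm{cl}}$. Because $\Fil^1\bL^{\mathrm{cl}}$ is isotropic of rank one, this is the vanishing of the single function $\langle\tilde v,\Fil^1\rangle$. Iterating over $R_G/\mathfrak m^n$ and applying Lemma~\ref{eq:existencetheorem}, this yields a principal ideal $(D_0(v))\subseteq R_G$ cutting out the deformation locus. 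The argument runs exactly as in \cite[Corollary 5.17]{MP16} (which gave $x_v$ in the interior), with the log structure entering only through the decomposition $R_G=R^+_G\widehat\otimes R_{\sigma_G}$. Universality of the locus under base change $f:R_G\to R'$ follows because the deformation condition is functorial in $R'$.

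\textbf{Nonvanishing mod $p$ (main obstacle).} The hard step is showing $D_0(v)$ can be chosen not divisible by $p$, i.e.\ that $v$ does not extend over the whole special fiber $\Spf R_G\otimes k$. We plan to get this from the characteristic-zero picture. If $p\mid D_0(v)$, then $v$ would extend to the generic fiber of $\Spf R_G$, where the log 1-motive is generically an abelian scheme with its Kuga--Satake structure. This would give a special endomorphism over the formal neighbourhood of $o$ in the generic fiber, and hence, by the flatness of $\cZ(Q(v))$ over $\bZ_{(p)}$ \cite[Prop.~4.5.8]{AGHMP18} extended to the toroidal boundary, force the universal family over an open part of $\cS$ to carry a special endomorphism. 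That is impossible, since the generic Kuga--Satake variety has no special endomorphisms.

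An alternative route is to compute in the explicit coordinates of \cite[\S3.2.6]{MP11}. There, the contribution of $R_{\sigma_G}$ is trivial, so $D_0(v)$ reduces to an interior-type function on $R^+_G$, which is a GSpin deformation space of smaller rank. One then checks, using the $\varphi$-iteration formulas of Lemma~\ref{lm:Finftyexplicit} or Proposition~\ref{prop:Shape of R_infty}, that its reduction mod $p$ has a nonzero linear term in some coordinate.
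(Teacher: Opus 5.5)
Your first two steps (reducing to $v^{\mathrm{cl}}$ via Lemma~\ref{lm:extlogII}, then a Grothendieck--Messing argument as in \cite[Corollary 5.17]{MP16} giving a single equation $D_0(v)$) are what the paper does. The gap is in the step you yourself flag as the main obstacle. You correctly state the goal: $v$ must not extend over all of $\Spf R_G\otimes k$. But you then argue that if $p\mid D_0(v)$, ``$v$ would extend to the generic fiber of $\Spf R_G$''. This is backwards. $D_0(v)\in pR_G$ means $D_0(v)$ vanishes identically on $V(p)=\Spf R_G\otimes k$, so $v$ extends over the whole \emph{special} fiber. On the characteristic-zero generic fiber $p$ is a unit, and the locus is still cut out by $D_0(v)/p$, so nothing follows there. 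Your subsequent chain therefore rests on a false premise: a special endomorphism over a characteristic-zero neighbourhood of $o$, then flatness of $\cZ(Q(v))$ ``extended to the toroidal boundary'' to spread it over an open of $\cS$. That chain also invokes a flatness statement for the closure of $\cZ(m)$ across the boundary, which is not available.

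The fix is to stay in characteristic $p$, and this is the paper's argument. $R_G\otimes k$ is a regular local domain and the boundary near $o$ is cut out by a nonzero element. So the generic point $\eta$ of $\Spec R_G\otimes k$ lies in the interior and maps to the generic point of the relevant component of $S$. Suppose $v$ extended over $\Spf R_G\otimes k$. After algebraizing (Lemma~\ref{eq:existencetheorem}) and restricting to $\eta$, the Kuga--Satake abelian variety over $\eta$ would carry a special endomorphism of norm $\la v,v\ra$, placing $\eta$ on $Z(\la v,v\ra)$. This is absurd because $Z(m)$ is an effective Cartier divisor of $S$. That is the only place where \cite[Prop.~4.5.8]{AGHMP18} and flatness of $\cZ(m)$ are needed.

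Your alternative route cannot substitute for this. The claim that $D_0(v)\bmod p$ has a nonzero linear term fails whenever $v$ is a $p$-multiple, and the interior formulas you cite already show this: $x_{p^nv}=x_v^{p^n}$ at ordinary points, and $\overline{D}_0(pv)=\sum_i x_i\overline{D}_0(v)^{p^{h-i}}$ in the non-supersingular case. The proposition, however, is needed for all $v\in L(\cQ_o)$.
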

\begin{proof}
    Let $v^{\mathrm{cl}}\in \End(\cQ^{\mathrm{cl}}_o)$. By Lemma~\ref{lm:extlogII}, deforming $v$ is the same as deforming $v^{\mathrm{cl}}$, which lies in the realm of the deformation theory of classical 1-motives, and can be tackled by Serre--Tate theory; cf. \cite[Proposition 1.1.3.1]{MP11}. Let $\mathscr{G}=\cQ^{\mathrm{cl}}[p^\infty]$ be the associated $p$-divisible group. Then  $v^{\mathrm{cl}}$ deforms to $\cQ^{\mathrm{cl}}\otimes R'$ if and only if  $v^{\mathrm{cl}}[p^\infty]\in \End(\mathscr{G}_o)$ extends to $\mathscr{G}_{R'}$. Now the crystalline realization of $v^{\mathrm{cl}}$ lies in $\bL^{\mathrm{cl}}_o$. A similar argument as in \cite[Corollary 5.17]{MP15} shows that deformation locus of $v^{\mathrm{cl}}$ is cut out by a single element $D_0(v)\in R_G$. It remains to show that $p\nmid D_0(v)$. If it was not the case, then $v$ extends to all of $R_G\otimes k$. Let $\eta$ be the generic point of $R_G\otimes k$. Then the abelian variety over $\eta$ admits a special endomorphism, meaning that $\eta$ lies on a special divisor. This is absurd.    
\end{proof}

\begin{corollary}
    \label{prop:spcialendolatticeboundary}
Let $v\in \End(\cQ_o)$. Then 
$v\in L(\cQ_o)$ if and only if it deforms to a special endomorphism in the interior, i.e., there exists a DVR $D$ (with log structure given by the maximal ideal) with a map $\Spec D \rightarrow \mathscr{S}_W^\Sigma$ whose generic point $\eta$ maps to $\mathscr{S}_W$ while the special point $o'$ maps to $o$, such that the base change of $v$ to $o'$ is the specialization of a special endomorphism over $\eta$. In particular, $L(\cQ_o)$ is equipped with a positive definite pairing $\la -,-\ra$ characterized by $v\circ v=[\la v, v\ra]$.
\end{corollary}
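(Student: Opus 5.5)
We prove the two directions separately and then deduce the pairing.

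For the forward direction, take $v\in L(\cQ_o)$. By Proposition~\ref{prop:deforv} its deformation locus in $\Spf R_G$ is cut out by one element $D_0(v)\in R_G$ with $p\nmid D_0(v)$. Hence $V(D_0(v))\subset \Spec R_G$ is a divisor that is not contained in the special fiber. Since the log structure has chart $\bN\to R_G$, the boundary divisor is cut out by the image $q$ of the generator of $\bN$. We first try to show that $V(D_0(v))$ is not contained in $V(q)$. Suppose it were. Then $v$ would deform over a component of the boundary divisor of $\Spec R_G\otimes k$, and this component passes through $o$ and has dimension $b-1$. Our plan is to rule this out using Lemma~\ref{lm:extlogII} and the explicit product decomposition $R_G=R_G^+\widehat\otimes R_{\sigma_G}$. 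Deformation of $v$ only involves the classical part, so $D_0(v)$ can be taken in $R_G^+$, whose coordinates are transverse to $q$. Thus $V(D_0(v))$ is a divisor not contained in $\{q=0\}$ and not contained in $\{p=0\}$. Choose a point of $V(D_0(v))$ lying outside both $V(q)$ and $V(p)$. Normalizing the closure of a suitable curve through it gives a DVR $D$ (possibly after finite extension) with a map $\Spec D\to \Spec R_G\to \mathscr S^\Sigma_W$. This map sends the generic point $\eta$ into the interior $\mathscr S_W$ and the closed point $o'$ to $o$. We choose the curve so that $q$ maps to a nonzero element of the maximal ideal; then the log structure pulled back from $R_G$ agrees with the one given by the maximal ideal. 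Since $D_0(v)$ vanishes on $D$, Proposition~\ref{prop:deforv} lets $v$ extend to $\cQ\otimes D$. Over $\eta$ the log structure is trivial, so this extension is an honest endomorphism of the abelian variety $\cA_\eta$. Its crystalline/étale realization lies in $\bL$, because $W_0\bL_o\subseteq \bL$ and being special can be checked at one point (the remark after Definition~\ref{def: special end}, together with the compatibility of $\bm\pi$ under the semistable comparison). So it is special, and its specialization to $o'$ is $v$.

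For the converse, suppose $v$ is the specialization of a special endomorphism $w$ of $\cA_\eta$. Then the crystalline realization of $w$ is a horizontal section of $\bL$ over $D$. By the semistable comparison and the fact that $\bm\pi_o$ is the image of $\bm\pi_p$, this realization specializes into $\bL_o$. It is fixed by $\varphi$ and by the monodromy operator $N$. The key point is that monodromy-invariant $\varphi$-fixed vectors in $\Fil^0$ lie in $W_0$. This holds because $N$ induces isomorphisms $\gr_{i}\to\gr_{i-2}$, so the kernel of $N$ meets the top weight graded pieces trivially; combined with the Hodge-filtration bound, this lands $v$ in $W_0\bL_o$. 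So $v\in L(\cQ_o)$.

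Finally, for the pairing, take $v\in L(\cQ_o)$ and its lift $w$ over $\eta$. By \cite[Lem.~5.2]{MP16}, $w\circ w=[Q(w)]$ with $Q$ positive definite. Specialization is compatible with composition, so $v\circ v=[Q(w)]$, and this shows that $\la v,v\ra:=Q(w)$ is independent of the choice of $w$. Positivity is immediate. For additivity, take $v_1,v_2\in L(\cQ_o)$; the sum $v_1+v_2$ lies in $L(\cQ_o)$ by definition, so $\la-,-\ra$ is a quadratic form characterized intrinsically by $v\circ v=[\la v,v\ra]$.

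The main obstacle is the forward step: showing that $V(D_0(v))$ meets the interior, i.e.\ excluding the case $q\mid D_0(v)$. We will try the product-decomposition argument first. If that fails, we will instead use that boundary endomorphisms of type II or III strata propagate to the generic point of the stratum, which would force the whole boundary stratum to carry the extra structure, and argue that this is impossible.
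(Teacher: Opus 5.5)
Your proposal is the paper's argument: Proposition~\ref{prop:deforv} cuts out the deformation locus of $v$ by $D_0(v)$, and one passes a DVR through $V(D_0(v))$ with closed point $o$ and generic point in the interior. The paper takes $D=k[\![t]\!]$ in the special fiber, while your mixed-characteristic DVR, which is where you use $p\nmid D_0(v)$, works just as well; your extra details (that $V(D_0(v))\not\subseteq\{q=0\}$ because the classical part, hence $D_0(v)$, can be taken independent of the boundary coordinate $q$, and the ``if'' direction) are exactly what the paper leaves implicit.

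One small slip in the converse: $N$ does not induce isomorphisms $\gr_i\to\gr_{i-2}$ (e.g.\ $\gr_0\to\gr_{-2}$ at a type III point, where the ranks differ). You do not need this claim, because the realization of any endomorphism of $\cQ_o$ preserves $W_\bullet$ and so lies in $W_0$ automatically. That leaves only the fact that it lands in $\bL_o$, which your comparison argument addresses.
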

\begin{proof}
Apply Proposition~\ref{prop:deforv}. Let $v\in L(\cQ_o)$, then there exists a $D=k[\![t]\!]$ with a map $\Spec D\rightarrow \Spec R_G/(D_0(v))$ whose generic point $\eta$ maps to the interior and whose closed point maps to $o$. This proves the only if part. The if part is easy. 
\end{proof}
\subsubsection{Weak moduli property of $\overline{Z(m)}$}
Suppose that $p\nmid m$. The completion $\overline{Z(m)}^{/o}$ is a union of irreducible formal divisors of $\Spf R_G\otimes k$. We denote the set of irreducible components by $\Pi$. Any $\mathscr{Z}\in \Pi$ is equipped with the obvious log structure from the boundary, and a special endomorphism (of abelian schemes) on the generic fiber. Thanks to \cite[Proposition 1.2.4.2]{MP11}, we get a special endomorphism (of log 1-motives) over the normalization $\widetilde{\mathscr{Z}}$. Specializing this endomorphism to $o$, we get an element $\theta_m(\mathscr{Z})\in L(\cQ_o)_m:=\{v\in L(\cQ_o)|\la v,v\ra =m\}$. This gives rise to a surjective map $\theta_m: \Pi\rightarrow L(\cQ_o)_m$.


\begin{proposition}\label{prop:moduliofM(m)}
For $v\in L(\cQ_o)_m$, let $\mathscr{D}_v\subseteq \Spf R_G\otimes k$ be the deformation locus. Then~\begin{enumerate}
    \item $\theta_m$ is a bijection, and  $\mathscr{Z}=\mathscr{D}_{\theta_m(\mathscr{Z})}$.
    \item Let $A$ be an Artinian local log ring with a map $\Spf A\rightarrow \Spf R_G\otimes k$. Then $v$ deforms to $\Spf A$ if and only if the map factors through $\mathscr{Z}=\theta_m^{-1}(v)$.   
\end{enumerate} 
\end{proposition}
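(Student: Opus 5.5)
We prove (2) first and then deduce (1) from it. Throughout, $\mathscr{Z}\in\Pi$ is an irreducible component of $\overline{Z(m)}^{/o}$ and $v=\theta_m(\mathscr{Z})\in L(\cQ_o)_m$.

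For (2), Proposition~\ref{prop:deforv} says the deformation locus $\mathscr{D}_v$ is the closed formal subscheme cut out by a single element $D_0(v)\in R_G$, and $p\nmid D_0(v)$. Hence $\mathscr{D}_v$ is a formal divisor in $\Spf R_G\otimes k$, and a map $\Spf A\to \Spf R_G\otimes k$ lets $v$ deform exactly when it factors through $\mathscr{D}_v$. So (2) follows once we know $\mathscr{D}_v$ equals the component $\theta_m^{-1}(v)$, which is the content of (1). We will therefore prove (1) by showing that $\mathscr{D}_v$ is irreducible, reduced, and equal to $\mathscr{Z}$.

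For the inclusion $\mathscr{Z}\subseteq\mathscr{D}_v$, we pass to the normalization $\widetilde{\mathscr{Z}}$. It carries a special endomorphism of log 1-motives specializing to $v$ at $o$, so $v$ deforms over $\widetilde{\mathscr{Z}}$. The map $\widetilde{\mathscr{Z}}\to\Spf R_G\otimes k$ then factors through $\mathscr{D}_v$, hence $\mathscr{Z}_{\mathrm{red}}\subseteq \mathscr{D}_v$.

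For the reverse inclusion, take an irreducible component $\mathscr{D}'$ of $\mathscr{D}_v$. Since $\mathscr{D}_v$ is a divisor and the boundary is also a divisor, the generic point of $\mathscr{D}'$ is not contained in the boundary. Indeed, the log structure with chart $\bN\to R_G$ identifies the boundary with the vanishing of a single coordinate $q$, and $v$ cannot deform over all of $\{q=0\}$: there its classical part would give a special endomorphism on the whole stratum, which would force the generic point of the boundary stratum to lie in $\overline{Z(m)}$. This is not automatic, and we rule it out by the same argument as in Proposition~\ref{prop:deforv}, applied to a generic interior curve through $o$.

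It follows that on the generic point of $\mathscr{D}'$, which is an interior point, $v$ deforms to a special endomorphism of self-intersection $m$. Thus $\mathscr{D}'\subseteq \overline{Z(m)}^{/o}$, so $\mathscr{D}'$ is some component $\mathscr{Z}'$ with $\theta_m(\mathscr{Z}')=v$.

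To obtain injectivity of $\theta_m$ and reducedness of $\mathscr{D}_v$, we compare multiplicities. In the interior, the moduli description $\cZ(m)(T)=\{v\in L(\cA_T): Q(v)=m\}$ together with $p\nmid m$ shows that distinct $v$ with $Q(v)=m$ give distinct components. It also shows, by the Grothendieck--Messing deformation argument of \cite[Cor.~5.17]{MP16}, that the deformation locus is smooth at generic points when $p\nmid m$. Then $\mathscr{D}_v$ is generically reduced and generically equal to a union of components of $\overline{Z(m)}^{/o}$, each labelled by $v$.

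Two distinct components $\mathscr{Z}_1,\mathscr{Z}_2$ with the same label $v$ would both lie in $\mathscr{D}_v$. However, $\mathscr{D}_v$ is cut out by one equation $D_0(v)$, and we expect to show it is irreducible. To do this we would reduce to the classical part via Lemma~\ref{lm:extlogII} and the decomposition $R_G=R^+_G\widehat{\otimes}R_{\sigma_G}$. On the $R_{\sigma_G}$ factor, the toric part of $v$ always extends (proof of Lemma~\ref{lm:extlogII}). Hence $\mathscr{D}_v$ is the pullback of a locus in $\Spf R_G^+\otimes k$ cut out by deforming an endomorphism of a classical 1-motive. By Serre--Tate theory, this locus is smooth when $p\nmid m$: the obstruction is linear in the tangent space and nonzero, because $\la v,v\ra$ is a unit. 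A smooth divisor is irreducible, which gives $\mathscr{Z}=\mathscr{D}_v$ and bijectivity of $\theta_m$.

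The main obstacle is this smoothness claim in the boundary setting. The obstruction map must be computed on the classical K3-crystal $\bL^{\mathrm{cl}}$, where $v$ lies in $W_0$, and we have to check that its pairing with $\Fil^1$ is nondegenerate at $o$.
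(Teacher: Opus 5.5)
\textbf{There is a genuine gap.} Your skeleton matches the paper's. You show $\mathscr{Z}\subseteq\mathscr{D}_v$ via the normalization $\widetilde{\mathscr{Z}}$, use irreducibility of the divisor $\mathscr{D}_v=\{D_0(v)=0\}$ to force equality, and read off (2) from (1) through Proposition~\ref{prop:deforv}. The paper takes irreducibility of $\mathscr{D}_v$ as given. It proves injectivity of $\theta_m$ by a separate rigidity argument: a DVR mapping into $\mathscr{D}_v$ with interior generic point would carry two distinct special endomorphisms specializing to $v$, contradicting uniqueness of extensions.

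You instead try to derive everything from smoothness of $\mathscr{D}_v$ at $o$. That would be stronger, since it also gives the reducedness that the scheme-theoretic statement (2) needs. But it is exactly the step you leave open, and the reduction you propose for it is wrong. You conflate two different decompositions of $\cQ$:
\begin{itemize}
\item $\cQ=\cQ^{\mathrm{cl}}+\cQ^{\mathrm{tor}}$, the unit part versus the $\cP^{\mathrm{gp}}$-part of the period $u$, which is what Lemma~\ref{lm:extlogII} is about;
\item $\cQ=\cQ^{+}+\cQ_{\sigma}$, coming from $R_G=R_G^+\widehat{\otimes}R_{\sigma_G}$.
\end{itemize}
Lemma~\ref{lm:extlogII} only says that the $q$-power part of $u$ never obstructs. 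The torus coordinates $q_\alpha$ with $\la\alpha,\sigma\ra=0$ are units, enter $u^{\mathrm{cl}}$, and vary over $R_{\sigma_G}$. In type III, $\overline{\cM}_\Xi$ is zero-dimensional, so $R_G^+$ carries no moduli. Your claim would then force $\mathscr{D}_v$ to be all of $\Spf R_G\otimes k$, contradicting Proposition~\ref{prop:deforv}.

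What survives is that the log direction is harmless. The residue $N$ of the log connection kills the realization of $v$ (an endomorphism of the log 1-motive) and is skew. Hence $\la\bar v,N\bar\omega\ra=0$ for $\bar\omega$ spanning $\Fil^1\bL_o\otimes k$.

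One way to close the gap:
\begin{enumerate}
\item Since $\Fil^1$ is isotropic and $p\nmid m$, we have $\bar v\notin\Fil^1$. So $x\mapsto\la\bar v,x\ra$ is a nonzero form on $\Fil^0/\Fil^1$, and it kills $N\bar\omega$.
\item The log Kodaira--Spencer map identifies the log tangent space at $o$ with $\Hom(\Fil^1,\Fil^0/\Fil^1)\cong\Fil^0/\Fil^1$ and sends $q\,\partial_q$ to $N\bar\omega$. So the form is nonzero on the image of some direction $\xi$ tangent to the boundary stratum.
\item Grothendieck--Messing for $v^{\mathrm{cl}}$ then shows that $v$ is obstructed along $\xi$. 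Hence $D_0(v)\notin\mathfrak{m}^2$, with $\mathfrak{m}$ the maximal ideal of $R_G\otimes k$.
\item Therefore $(R_G\otimes k)/(D_0(v))$ is regular, hence integral.
\end{enumerate}
This gives irreducibility and reducedness at once, and makes your reverse-inclusion paragraph unnecessary. That paragraph does not work as written: a component of $\mathscr{D}_v$ could a priori be the boundary divisor $\{q=0\}$, and an interior curve through $o$ says nothing about whether $q\mid D_0(v)$.

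Finally, if $\Pi$ is meant to count branches of the stack $\cZ(m)$, as the paper's injectivity argument suggests, irreducibility of $\mathscr{D}_v$ alone does not separate two sheets with the same label. You would still need the paper's rigidity argument there.
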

\begin{proof}
    \begin{enumerate}
        \item Let $v=\theta_m(\mathscr{Z})$. The map $\widetilde{\mathscr{Z}}\rightarrow \Spf R_G$ factor through $\mathscr{D}_v$, because $v$ deforms to $ \widetilde{\mathscr{Z}}$. This shows that $\mathscr{Z}\subseteq \mathscr{D}_v$. Since both are divisors and $\mathscr{D}_v$ is irreducible, $\mathscr{Z}= \mathscr{D}_v$. To prove the injectivity of $\theta_m$, suppose that $\mathscr{Z}_1$ and $\mathscr{Z}_2$ are two different elements in $\theta_m^{-1}(v)$. Consider a DVR $D/k$ with a map $\Spf D\rightarrow \mathscr{D}_v$ 
        whose (rigid) generic point lies in the interior. Then it gives rise to two maps $\Spf D\rightarrow \mathscr{Z}_i$, $i=1,2$, whose (rigid) generic point maps to two different points of $Z(m)$. As a result, the (schematic) generic point of $D$ has two different special endomorphisms specializing to $v$; cf. Lemma~\ref{eq:existencetheorem}, contradicting the unique lifting property.
        \item This follows from the previous part.  \end{enumerate}
\end{proof}
\begin{proposition}
    \label{corL}
In Setup~\ref{setp:logmorphi}, we have, for $p\nmid m$,$$l_P(m)=\sum_{n\geq 1}\#\{v\in L(\cQ_{k[\![t]\!]/(t^n)})|\la v,v \ra=m\}.$$ 
\end{proposition}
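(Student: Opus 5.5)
The plan is to compute the local intersection multiplicity $l_P(m)=i_P(C\cdot\overline{Z(m)})$ through the formal completion $\overline{Z(m)}^{/o}$ and then apply Proposition~\ref{prop:moduliofM(m)}. Since $p\nmid m$, the completion $\overline{Z(m)}^{/o}\subseteq \Spf R_G\otimes k$ is a formal divisor whose irreducible components form the set $\Pi$. The local intersection number of the curve germ $\hat\iota:\Spf k[\![t]\!]\to \Spf R_G\otimes k$ with a Cartier divisor cut out by $f$ is $v_t(\hat\iota^*f)$. If $\overline{Z(m)}^{/o}$ is reduced and equals the union of the $\mathscr{Z}\in\Pi$ with multiplicity one, this gives
\[
l_P(m)=\sum_{\mathscr{Z}\in\Pi} v_t\bigl(\hat\iota^*f_{\mathscr{Z}}\bigr),
\]
where $f_{\mathscr{Z}}$ is a local equation for $\mathscr{Z}$. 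This sum is finite because $C$ is not contained in any special divisor, so no $\mathscr{Z}$ contains the germ.

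Next I rewrite each summand as a count of thickenings. For a fixed $\mathscr{Z}$ the identity $v_t(\hat\iota^*f_{\mathscr{Z}})=\#\{n\geq 1: \hat\iota|_{k[\![t]\!]/(t^n)}\text{ factors through }\mathscr{Z}\}$ holds, since the map from $k[\![t]\!]/(t^n)$ factors through $\mathscr{Z}$ exactly when $t^n$ divides $\hat\iota^*f_{\mathscr{Z}}$. By Proposition~\ref{prop:moduliofM(m)}(1), $\theta_m$ is a bijection $\Pi\to L(\cQ_o)_m$ with $\mathscr{Z}=\mathscr{D}_{\theta_m(\mathscr{Z})}$. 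By part (2), applied to the Artinian log ring $A=k[\![t]\!]/(t^n)$ with log structure pulled back from $\Spf k[\![t]\!]$, the map factors through $\mathscr{Z}$ if and only if $v=\theta_m(\mathscr{Z})$ deforms to $\cQ_{k[\![t]\!]/(t^n)}$. By Definition~\ref{def:specialendoboundary} and the subsequent remark, this means $v\in L(\cQ_{k[\![t]\!]/(t^n)})\subseteq L(\cQ_o)$. Interchanging the two sums then gives
\[
l_P(m)=\sum_{n\geq1}\#\{v\in L(\cQ_{k[\![t]\!]/(t^n)}):\la v,v\ra=m\},
\]
which is the claimed formula.

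The main obstacle is the multiplicity statement: that $\overline{Z(m)}$, as a Cartier divisor, equals the reduced sum of the $\mathscr{Z}$ near $o$, with no component appearing with multiplicity and no embedded nonreduced structure at the boundary. For this I would use that $\overline{Z(m)}$ is the closure of $Z(m)$, which is flat over $\bZ_{(p)}$ by \cite{AGHMP18}. On the interior each component of $\overline{Z(m)}^{/o}$ generically agrees with the moduli stack $\cZ(m)$. By Proposition~\ref{prop:deforv} together with the interior analogue of Proposition~\ref{def locus sspecial}, the deformation locus of a single endomorphism with $p\nmid\la v,v\ra$ is cut out by one equation $D_0(v)$ that is generically reduced, because $m$ prime to $p$ makes the obstruction to deforming $v$ appear to first order. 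If generic reducedness is hard to obtain directly, the fallback is to define $\overline{Z(m)}$ near the boundary as $\sum_{v}\mathscr{D}_v$ using the Cartier equations $D_0(v)$, and to check that this agrees with the closure on the interior, where both represent the moduli problem $\cZ(m)$. Two Cartier divisors on a normal formal scheme that agree off a codimension-one boundary divisor not contained in either must then agree.
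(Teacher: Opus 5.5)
Your route is the paper's (its proof is the one line ``follows from Proposition~\ref{prop:moduliofM(m)}''). The bookkeeping $v_t(g)=\#\{n\geq 1: t^n\mid g\}$ followed by interchanging the sums is fine. The problem is the step you single out as the main obstacle: the statement you propose to prove there is false, so that step would fail.

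In the paper's normalization $\overline{Z(m)}$ is never reduced. The elements $v$ and $-v$ of $L(\cQ_o)_m$ are distinct but have the same deformation locus, since $D_0(-v)$ and $D_0(v)$ differ by a unit. Both are counted in the formula you are proving, exactly as in the interior formula \eqref{eq:local intersection defn}. There $Z(m)$ is the pushforward of the stack $\cZ(m)$, and $v\mapsto -v$ is a free involution of $\cZ(m)$ over $S$. So $Z(m)$, and hence its closure, is twice an effective divisor, and every component of $\overline{Z(m)}^{/o}$ has multiplicity at least $2$.

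Your two inputs are therefore incompatible. Suppose $\overline{Z(m)}^{/o}$ were reduced with $\Pi$ its set of geometric components. Then $\theta_m$ could not be a bijection onto $L(\cQ_o)_m$, because it would have to send $\mathscr{Z}=\mathscr{D}_v=\mathscr{D}_{-v}$ to both $v$ and $-v$. The right-hand side of the proposition would also be off from $l_P(m)$ by a factor of $2$. You reach the correct formula only because these two errors cancel. Your first-order argument (for $p\nmid\la v,v\ra$, $v$ is not in the isotropic line $\Fil^1$ mod $p$) shows that each \emph{individual} $\mathscr{D}_v$ is reduced, not that the total divisor is.

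The fix is to drop reducedness and prove the Cartier-divisor identity
\[
\overline{Z(m)}^{/o}=\sum_{v\in L(\cQ_o)_m}\mathrm{div}\,D_0(v),
\]
which is essentially your fallback. Then $l_P(m)=\sum_{v}v_t(\hat\iota^*D_0(v))$. Proposition~\ref{prop:deforv} with $R'=k[\![t]\!]/(t^n)$ gives $v_t(\hat\iota^*D_0(v))=\#\{n\geq 1: v\in L(\cQ_{k[\![t]\!]/(t^n)})\}$ directly. You need neither irreducibility of $\mathscr{D}_v$ nor a bijection $\theta_m$ on geometric components.

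To run the fallback you still have to check two things you passed over:
\begin{enumerate}
\item No $D_0(v)$ is divisible by the local equation of the boundary divisor. Otherwise your ``agree off the boundary'' comparison breaks, since the closure has no boundary component.
\item Over the punctured neighbourhood, the norm-$m$ special endomorphisms along each component are exactly the extensions of the $v\in L(\cQ_o)_m$ deforming there, each counted once. This is what makes the multiplicities match.
\end{enumerate}
Both are what Proposition~\ref{prop:moduliofM(m)} is meant to supply, via the extension of special endomorphisms to log $1$-motives used in its proof.
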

\begin{proof}
  It follows from Proposition~\ref{prop:moduliofM(m)}.
\end{proof}

\subsection{Proof of uniform decay} We prove Theorem~\ref{thm:globalanalysis}, assuming that $k=\Fpbar$.
Denote by $\eta$  the generic point of $\Spec k[\![t]\!]$. 
    Let $\Lambda \subset L(\cQ_o) \otimes \bZ_p$ be the module of formal special endomorphisms that do not decay. Let $\mathscr{G}$ be the log $p$-divisible group over $\Spec k[\![t]\!]$ associated to the pullback of $\cQ$. By Lemma~\ref{eq:existencetheorem}, each element  of $\Lambda$ extends to an endomorphism of $\mathscr{G}$ and, by restricting to the generic fiber, gives rise to a formal special endomorphism of the $p$-divisible group over $\eta$. By Theorem \ref{theorem: algebraic}, we have that $\Lambda$ is induced by a special subvariety of our ambient Shimura variety. All special subvarieties are associated to groups of the form $\Res_{F/\bQ} H$, where $F$ is a totally real field, and $H$ is an orthogonal group (resp. a unitary group) having signature $(a,2)$ at one real place and compact at all other places (resp. having signature $(a,1)$ at one real place and compact at all other real places). Such a Shimura variety can only meet the boundary if $F = \bQ$. By assumption, $C$ is not contained in any special divisor. Hence, the only possibility is that $C$ is contained in a $\mathrm{U}(1,n)$ Shimura subvariety associated to an imaginary quadratic field. In this case, global considerations yield that $\Lambda$ is a totally isotropic subspace of $L(\cQ_o)\otimes \bZ_p$. 

    Note that there is some $N$ such that the module of special endomorphisms mod $t^N$ is contained in $\Lambda + pL(\cQ_o)\otimes \bZ_p$ (otherwise, for each $n$ the module of special endomorphisms mod $t^n$ contains an element $a_n\notin \Lambda + pL(\cQ_o)\otimes \bZ_p$, then any limit point of $\{a_n\}$ needs to lie in $\Lambda$, while staying away from $\Lambda + pL(\cQ_o)\otimes \bZ_p$, which is a contradiction). 
  On the other hand, every $w\in \Lambda + pL(\cQ_o)\otimes \bZ_p$ has the property $p\mid \la w,w\ra$ -- this follows directly from the fact that $\Lambda$ is an isotropic subspace. The result now follows. 
  
 $\hfill\square$


\section{Diophantine approximation for special endomorphisms}\label{sec:dioapp}
In this section we estimate the tail $l_P^{\geq \delta}(m)$. The key input is Theorem~\ref{thm:SubspaceEndo}, which asserts that integral points in a quadratic lattice can not $p$-adically well approximate a strongly irrational subspace. The proof of Theorem~\ref{thm:SubspaceEndo} relies on a non-archimedean generalization of Schmidt's subspace theorem, as well as classical results on quadratic lattices. We do not assume that $p\nmid m$.

\begin{theorem}\label{thm:tailestimate}
Setup be as in \ref{subsub:maintech}. Let $P$ be an interior point on $C$ and let $\Lambda\subseteq L_{P,1}\otimes \bZ_p$ be the lattice of vectors that do not decay. Let $d=\rk L_{P,1}\geq 2$, $r=\rk \Lambda$ and $\mu=1-\frac{2(d-2)}{r+1}$. Let $\delta\geq 0$ be a real number. Then for any $\epsilon>0$, we have  $$l_P^{\geq \delta}(m)=O\left(m^{\frac{d}{2}-1+\max\{\frac{\delta}{2}\mu,\frac{r+1}{2}\mu\}+\epsilon}\right),$$
     where the implied constant does not depend on $\epsilon$.
\end{theorem}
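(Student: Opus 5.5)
We bound the tail by combining the decay result Corollary~\ref{cor:control_subspace}, the algebraicity Theorem~\ref{thm: Qbar algebraicity of formal special endomorphisms}, and a counting estimate for lattice vectors near an algebraic subspace (Theorem~\ref{thm:SubspaceEndo}, to be proven from the $p$-adic subspace theorem).

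\textbf{Step 1: reduce to vectors near $\Lambda$.} By Corollary~\ref{cor:control_subspace}, applied to the formal germ of $C$ at $P$, there is a $D$ such that
\[
L_{P,(1+p+\dots+p^{k})D+1}\subseteq \Lambda+p^{k+1}(L_{P,1}\otimes\bZ_p)
\]
for every $k\geq 0$. Hence a vector $v\in L_{P,n}$ satisfies $\operatorname{dist}_p(v,\Lambda)\ll n^{-1}$, with the distance taken in the $p$-adic sup norm relative to a basis of $L_{P,1}$. This uses $(1+\dots+p^k)D\asymp p^k$. Consequently
\[
l_P^{\geq\delta}(m)=\sum_{n\geq m^\delta}\#\{v\in L_{P,n}:Q(v)=m\}\le\sum_{v\in L_{P,1},\,Q(v)=m} N(v),
\]
where $N(v)=\#\{n\ge m^\delta: v\in L_{P,n}\}\ll \max(0,\|v-\Lambda\|_p^{-1}-m^\delta)$. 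For each $v$ we therefore record the scale $p^{-k}$ with $v\in\Lambda+p^k$. The vector contributes only if $p^k\gg m^{\delta}$, and it then contributes $O(p^k)$. We split dyadically in $k$.

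\textbf{Step 2: count vectors of norm $m$ lying $p^{-k}$-close to $\Lambda$.} Vectors with $Q(v)=m$ have archimedean size $\|v\|\asymp m^{1/2}$. Those lying $p^{-k}$-close to $\Lambda$ form the intersection of $L_{P,1}$ with a convex-ish region: a ball of radius $m^{1/2}$ intersected with the $p$-adic tube $\Lambda+p^kL$. This region has covolume considerations giving the count
\[
\approx m^{d/2-1+\epsilon}\, p^{-k(d-r)}
\]
on the quadric, as long as the region is ``large''. The content of Theorem~\ref{thm:SubspaceEndo} is that this heuristic survives with a loss encoded by $\mu$.

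The danger is that the integral points concentrate on rational subspaces that approximate $\Lambda$ abnormally well. By Theorem~\ref{thm: Qbar algebraicity of formal special endomorphisms}, $\Lambda\otimes\bQ_p$ is defined over a number field. We also know $\Lambda$ contains no rational vector, since $\bigcap_n L_{P,n}=0$ and $C$ is not in a special divisor. The $p$-adic Schmidt subspace theorem (Schlickewei) then says that integral points $v$ with $\prod$ of the linear forms cutting out $\Lambda$ being smaller than $\|v\|^{-(d)-\epsilon}$-type bounds lie in finitely many proper rational subspaces. Applying this to the $d-r$ forms defining $\Lambda$, and to exterior powers to control $r'$-dimensional rational subspaces, bounds the density of good approximants.

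\textbf{Step 3: combine.} Count points on the quadric in the lattice $\{v\in L_{P,1}: v\in\Lambda+p^kL\}$ using the successive minima of this lattice. Schmidt's theorem lower-bounds the minima and so prevents degenerate thin lattices. Combined with a uniform bound for representation numbers of $m$ by a positive definite form, this gives the count per scale
\[
\ll m^{d/2-1+\epsilon}\,p^{k(\mu\text{-dependent loss})}.
\]
Multiplying by $p^k$ and summing over $p^k\ge m^\delta$ yields the two regimes in $\max\{\frac{\delta}{2}\mu,\frac{r+1}{2}\mu\}$. The first regime comes from the threshold $p^k\sim m^\delta$. The second comes from $p^k$ up to roughly $m^{(r+1)/2}$, beyond which Schmidt forces the set to be empty or to lie in finitely many subspaces, each contributing negligibly.

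\textbf{Main obstacle.} The main obstacle is Step 2/3: converting the subspace theorem, which is qualitative up to finitely many exceptional subspaces, into a uniform count with the explicit exponent $\mu=1-\frac{2(d-2)}{r+1}$. It must also handle the exceptional rational subspaces, which are fixed, contain no vector of $\Lambda$, and hence only approximate $\Lambda$ to bounded order, contributing boundedly.
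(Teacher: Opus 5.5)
Your skeleton matches the paper's. Corollary~\ref{cor:control_subspace} turns $n\ge m^\delta$ into membership in $L\langle k\rangle=L_{P,1}\cap(\Lambda+p^kL_{P,1}\otimes\bZ_p)$ with $p^k\gg m^\delta$ and weight $O(p^k)$. Algebraicity plus the absence of rational rays then lets Theorem~\ref{thm:SubspaceEndo}(1) cut the sum off at $p^k\approx m^{(r+1)/2}$; your justification of the no-rational-ray condition via $C$ not lying on a special divisor is correct, and the paper leaves it implicit. The proof ends with a per-level count and a geometric sum.

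The gap is the per-level count. You leave it as $p^{k(\mu\text{-dependent loss})}$, although it is the entire quantitative content, and the mechanism you describe does not give the stated exponent. Schmidt gives $\min_{L\langle k\rangle}Q\gg p^{2k/(r+1)}$. Hence every diagonal entry $a_i$ of a Siegel-reduced Gram matrix of $L\langle k\rangle$ is $\gg p^{2k/(r+1)}$, and each of the $d-2$ free coordinates takes $\ll\sqrt{m}\,p^{-k/(r+1)}$ values. This gives $\#\{v\in L\langle k\rangle:Q(v)=m\}\ll m^{\frac{d-2}{2}+\epsilon}p^{-k\frac{d-2}{r+1}}$. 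Multiplying by $p^k$ and summing proves the estimate with $\mu'=1-\frac{d-2}{r+1}$, not with the stronger $\mu=1-\frac{2(d-2)}{r+1}$. (Your first slot of the max becomes $\delta\mu'$, since your threshold $p^k\gg m^\delta$ is sharper than the paper's $m^{\delta/2}$.)

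The paper reaches $\mu$ only through Theorem~\ref{thm:SubspaceEndo}(3) with the factor $p^{-\frac{n}{c}(d-2)}$. Its proof drops the same square root: $a_i\ge c_0p^{n/c}$ only gives $\sqrt{m/a_i}\ll\sqrt{m}\,p^{-n/(2c)}$. In fact (3) as stated is false. For $r=0$ one has $c=\tfrac12$ and $L\langle n\rangle=p^nL$, so the count at $m=p^{2n}m_0$ equals $\#\{w\in L:Q(w)=m_0\}$ for every $n$. Meanwhile the claimed bound decays like $p^{-n(d-2)+2n\epsilon}$.

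The same example shows the statement itself cannot hold in the stated generality, since this section explicitly allows $p\mid m$. If $v$ lifts modulo $t^a$, then $pv$ lifts modulo $t^{pa}$: write $p=V\circ F$ and note that Frobenius of $k[\![t]\!]/(t^{pa})$ factors through $k[\![t]\!]/(t^a)$ (compare the Claim in the proof of Theorem~\ref{thm:eventually_rapid_decay}). Hence $p^nL_{P,1}\subseteq L_{P,p^n}$. Suppose $r=0$ and $d\ge3$, for example $C$ in no proper Shimura subvariety (Theorem~\ref{theorem: algebraic}) and $P$ supersingular. Fix $m_0$ represented by $L_{P,1}$ and put $m=p^{2n}m_0$. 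For $\delta<\tfrac12$ the tail is then at least $(p^n-m^\delta)\,\#\{w\in L_{P,1}:Q(w)=m_0\}\gg m^{1/2}$. The claimed exponent is $\frac{d-2}{2}+\frac{\delta}{2}(5-2d)+\epsilon$, which tends to $\tfrac14+\epsilon$ as $\delta\to\tfrac12$.

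So no completion of your Step~3 can yield the stated $\mu$. What your route proves, as does the paper's once (3) is corrected, is the $\mu'$ version. This matters downstream: \S\ref{sec:pvofmain} needs a negative exponent, and $\mu'<0$ requires $d\ge r+4$. Only $d\ge r+3$ is available there, with equality when $C$ lies on a Shimura curve.
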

The proof of this theorem relies on the following result which we establish first.

\begin{theorem}\label{thm:SubspaceEndo}Let $(L,Q)$ be a positive definite quadratic $\bZ$-lattice of rank $d\geq 2$. Let $p$ be a prime. Suppose that 
$\mathfrak{V}\subsetneq L_{\overline{\bQ}}$ is a $\overline{\bQ}$-subspace of dimension $r\geq 0$ that contains no rational ray. For $n\in \bN$, let $L\la n \ra:=L\cap (\mathfrak{V}+p^nL_{\overline{\bQ}})$. Let $c=\frac{r+1}{2}$. Then the following are true: 
\begin{enumerate}
    \item 
$\forall \epsilon\in \bR^+\forall m\in \bN^+$, if $n>(c+\epsilon)\log_p m$, then $L\la n \ra$ contains no element with $Q(v)=m$. 
\item $\min_{v\in L\la n \ra} Q(v)\geq p^{\frac{n}{c}}$.
\item there is a constant $c'$ depending only on $d$, such that $\forall n\in \bN \forall m\in \bN^+\forall \epsilon\in \bR^+$,
$$\#\{v\in L\la n \ra:Q(v)= m\}\leq c'm^{\frac{1}{2}(d-2)+\epsilon}p^{-\frac{n}{c}(d-2)}.$$
\end{enumerate}
\end{theorem}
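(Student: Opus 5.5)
The approach is to use a $p$-adic Schmidt subspace theorem (Schlickewei's version) applied to linear forms defining $\mathfrak{V}$, together with positive definiteness of $Q$ to convert heights into values of $Q$. First I would choose linear forms. Since $\mathfrak{V}$ has dimension $r$ and is defined over a number field $K\subset\overline{\bQ}_p$, fix $d-r$ linearly independent linear forms $\ell_1,\dots,\ell_{d-r}$ with algebraic coefficients whose common kernel is $\mathfrak{V}$. Complete them with $r$ generic rational forms $\ell_{d-r+1},\dots,\ell_d$ (coordinate forms on a basis of $L$) so that all $d$ forms are independent. A vector $v\in L\la n\ra$ satisfies $|\ell_i(v)|_p\le C p^{-n}$ for $i\le d-r$, after fixing a $p$-adic embedding and normalizing. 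The rational forms satisfy $|\ell_j(v)|_p\le 1$ and $|\ell_j(v)|_\infty\ll H(v)$, and the archimedean absolute values of all forms are $\ll H(v)\asymp Q(v)^{1/2}$.

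The second step is to apply the subspace theorem with places $\{\infty,p\}$:
\[
\prod_{i}|\ell_i(v)|_\infty\prod_i|\ell_i(v)|_p \ll H(v)^{-\epsilon}
\]
holds only for $v$ in finitely many proper rational subspaces. Taking the $p$-adic forms to be the $\ell_i$ and the archimedean forms to be a suitable basis gives the estimate
\[
\prod\ll H^{d}\cdot p^{-n(d-r)}\cdot(\text{normalizing})\;.
\]
I would rather arrange it, by choosing which forms enter at which place, to read $H^{r+1}p^{-2n}\ll H^{-\epsilon}$, using the fact that $v\in\bZ^d$ has product formula size $\ge 1$. The intended output is the inequality $p^{n}\ll H(v)^{c+\epsilon}=Q(v)^{(c+\epsilon)/2\cdot 2}$, i.e. $p^n\ll m^{c+\epsilon}$, for all $v$ outside finitely many proper rational subspaces. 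The exceptional subspaces are then handled by induction on $d$. Intersect $\mathfrak{V}$ with each exceptional rational subspace $U$. Since $\mathfrak{V}$ contains no rational ray, $\mathfrak{V}\cap U_{\overline{\bQ}}$ again contains no rational ray and has dimension at most $r$, so the exponent $c$ only improves. The case $\dim U=1$ is excluded because $\mathfrak{V}$ contains no rational ray, so $v$ far from $\mathfrak{V}$ has $n$ bounded. This proves (1). Part (2) follows from (1), with the constants absorbed by passing to $p^{n/c}$ after adjusting for the finitely many small $n$, possibly weakening $\epsilon$ to $0$ by a Liouville-type argument at the finitely many bad levels. I expect the exponent bookkeeping to get exactly $c=\frac{r+1}{2}$ to be the main obstacle, and I would first try the form choice with $r+1$ archimedean forms and $d-r$ $p$-adic forms.

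For (3), $L\la n\ra$ is a sublattice of $L$ of full rank $d$ whose index is roughly $p^{n(d-r)}$, and whose successive minima are each at least $p^{n/c}$ by (2). The count of vectors of norm $m$ in a positive definite rank-$d$ lattice with first minimum $\lambda_1$ is bounded by $c'm^{(d-2)/2+\epsilon}\lambda_1^{-(d-2)}$. This is a standard bound: slice by the last coordinate in a reduced basis and count points on ellipses in the rank-2 pieces, where the representation number is $\ll m^{\epsilon}$. The bound is uniform with $c'$ depending on $d$. Substituting $\lambda_1^2\ge p^{n/c}$, or rather $\lambda_1=\min Q\ge p^{n/c}$ in the normalization of (2), gives (3).
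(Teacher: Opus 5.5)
Your route is the paper's: Schlickewei's $p$-adic subspace theorem at $\{\infty,p\}$ with the $d-r$ forms cutting out $\mathfrak{V}$ plus rational completions, descending induction through the exceptional rational subspaces, and for (3) a reduced basis, slicing off $d-2$ coordinates, and an $m^{\epsilon}$ bound for the leftover binary problem. But the exponent bookkeeping you flag as the main obstacle is left open in (1), and the fix you propose is wrong.

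The estimate you first wrote, $\prod\ll H^{d}p^{-n(d-r)}$, is exactly the paper's, and it already suffices. Since $H\asymp m^{1/2}$, the hypothesis $p^{n}>m^{c+\epsilon}$ reads $p^{n}\gg H^{r+1+2\epsilon}$. Hence $H^{d}p^{-n(d-r)}\ll H^{d-(r+1)(d-r)-2\epsilon(d-r)}\le H^{-2\epsilon}$, because $(r+1)(d-r)-d=r(d-r-1)\ge 0$.

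What you are missing is that this same inequality drives the induction (Lemma~\ref{lm:easyone}). On a rational subspace $\Pi$ with $e=\dim\Pi$ and $f=\dim(\mathfrak{V}\cap\Pi_{\overline{\bQ}})<e$, the natural exponent $\frac{e}{2(e-f)}$ can exceed $\frac{d}{2(d-r)}$; take $d=4$ and $\mathfrak{V}$ a line in a rational plane. It is, however, always $\le\frac{f+1}{2}\le\frac{r+1}{2}$. That is why $c=\frac{r+1}{2}$ is the uniform exponent, and your claim that ``$c$ only improves'' needs exactly this.

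The rearrangement $H^{r+1}p^{-2n}\ll H^{-\epsilon}$ is not available: only $d-r$ forms are $p$-adically small, and the archimedean product is only $\ll H^{d}$. It is also false. Carried through your induction, it would give $\min_{L\la n\ra}Q\gg p^{2n/(c+\epsilon)}$. Now take $L=\bZ^{d}$ and $\mathfrak{V}=\ker(x_1+\beta x_2+\cdots+\beta^{d-1}x_d)$ with $\beta\in\bZ_p$ algebraic of degree $\ge d$. Then $r=d-1$, $c=\frac d2$, there is no rational ray, and $L\la n\ra$ has index $p^{n}$. Minkowski gives $\min Q\ll p^{2n/d}=p^{n/c}$, a contradiction. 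Separately, $H^{c+\epsilon}\asymp m^{(c+\epsilon)/2}$, not $m^{c+\epsilon}$.

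At the bottom of the induction, $n$ is not bounded on a rational line $\bQ w$, since $p^{j}w\in L\la j\ra$. What holds is $p^{n}\ll_{w}m^{1/2}$, which suffices because $c\ge\frac12$, for $m$ large. Likewise (2) only comes out as $\min Q\gg_{\epsilon}p^{n/(c+\epsilon)}$, as in the paper. No Liouville-type argument removes the $\epsilon$: for $d=2$, $r=1$ that would be $p$-adic bad approximability of an algebraic number of degree $\ge 3$, which is open. The loss is harmless downstream.

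In (3), the last step, ``or rather $\lambda_1=\min Q$'', is not legitimate. The slicing bound $\ll m^{\frac{d-2}{2}+\epsilon}\lambda_1^{-(d-2)}$ holds with $\lambda_1$ the minimal \emph{length}, so $\lambda_1^{2}\asymp\min Q$. Scaling $\bZ^{d}$ by an integer $s$ shows $\lambda_1$ cannot be replaced by $\min Q$: then $\min Q=s^{2}$, while $m$ has on average $\asymp(m/s^{2})^{\frac{d-2}{2}}$ representations. So what your argument proves is $c'm^{\frac{d-2}{2}+\epsilon}p^{-\frac{n}{2c}(d-2)}$. This is also all the paper's own computation gives: inserting $a_i\ge c_0p^{n/c}$ into the factors $2\sqrt{m/a_i}+1$ yields $(mp^{-n/c})^{\frac{d-2}{2}}$.

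The displayed exponent $\frac{n}{c}(d-2)$ cannot be reached by any argument. In the example above with $d\ge 5$, all successive minima of $L\la n\ra$ are $p^{n/d+o(n)}$, by (2) and covolume $p^{n}$. So there are $\asymp X^{d/2}p^{-n}$ vectors with $Q\le X=p^{n}$. Summing the displayed bound over $m\le X$ gives only $\ll X^{d/2+\epsilon}p^{-2n(d-2)/d}$, which is smaller once $\epsilon<\frac{d-4}{d}$ and $n$ is large. So (3) should be proved, and used, with $\frac{n}{2c}$ in place of $\frac{n}{c}$. Note that the exponent $\mu$ in Theorem~\ref{thm:tailestimate} is computed from the displayed version.
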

The theorem relies on a nonarchimedean generalization of the subspace theorem. 

\begin{lemma}[$p$-adic subspace theorem]\label{lm:subspacethm}
Let $d>0$, and let $l_1,l_2,...,l_d\in \overline{\bQ}[X_1,X_2,...,X_d]$ be linearly independent linear forms. Let $Q$ be a positive definite quadratic pairing on $\bZ^d$. Let $p$ be a prime. Then for any $\epsilon>0$, the solutions $\mathbf{x}=(x_1,x_2,...,x_d)\in \bZ^d$ of the inequality 
$$\prod_{i=1}^d |l_{i}(\mathbf{x})|_p\leq Q(\mathbf{x})^{-\frac{d}{2}-\epsilon} $$
 is contained in finitely many hyperplanes of $\bQ^d$. Here $|\cdot|_p$ is the standard $p$-adic absolute value on $\bQ_p$ extended to $\overline{\bQ}_p$. 
\end{lemma}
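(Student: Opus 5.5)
The plan is to deduce the statement from the Schmidt--Schlickewei subspace theorem for the set of places $S=\{\infty,p\}$ of $\bQ$, in its version for $S$-integral (here simply integral) points.

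Recall the version we invoke. Let $K$ be a number field and $S_K$ a finite set of places of $K$. For each $w\in S_K$, let $L_{w,1},\dots,L_{w,d}$ be linearly independent linear forms with coefficients in $K$, and let $\|\cdot\|_w$ be the normalized absolute values. Then for every $\epsilon'>0$, the solutions $\mathbf{x}\in\cO_{K,S_K}^d$ of
\[
\prod_{w\in S_K}\prod_{i=1}^d\|L_{w,i}(\mathbf{x})\|_w\le H(\mathbf{x})^{-\epsilon'}
\]
lie in finitely many proper subspaces. When $\mathbf{x}\in\bZ^d$, we may take $H(\mathbf{x})=\max_i|x_i|$.

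We set up the data as follows. Let $K$ be a number field containing all coefficients of $l_1,\dots,l_d$. The embedding $\overline{\bQ}\into\overline{\bQ}_p$ used to define $|\cdot|_p$ determines a place $w_0\mid p$ of $K$. On $K$ we have $|\cdot|_p=\|\cdot\|_{w_0}^{1/e}$ with $e=[K_{w_0}:\bQ_p]$. Take $S_K$ to be all archimedean places together with $w_0$. At $w_0$ use the forms $l_1,\dots,l_d$. At every archimedean place use the coordinate forms $X_1,\dots,X_d$. These are trivially independent, and the $l_i$ are independent by hypothesis. (If one prefers to remain over $\bQ$, one can instead use the standard trick of taking the $\Gal$-conjugates; but passing to $K$ is cleaner.)

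The key estimate is as follows. Since $Q$ is positive definite, there is a constant $\kappa>0$ with $Q(\mathbf{x})\ge\kappa\max_i|x_i|^2$ on $\bZ^d$. Hence the archimedean contribution satisfies
\[
\prod_{w\mid\infty}\prod_i\|x_i\|_w=\prod_i|x_i|^{[K:\bQ]}\le \bigl(\kappa^{-1}Q(\mathbf{x})\bigr)^{d[K:\bQ]/2}.
\]
For a solution of the given inequality, the $p$-adic contribution is
\[
\prod_i\|l_i(\mathbf{x})\|_{w_0}=\Bigl(\prod_i|l_i(\mathbf{x})|_p\Bigr)^{e}\le Q(\mathbf{x})^{-e(\frac d2+\epsilon)}.
\]
We therefore need to reconcile the exponent $e$ with $[K:\bQ]$. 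The way to do this is to raise the original inequality to the appropriate power. Equivalently, we choose $K$ so that the $p$-adic contribution is computed at a single place whose local degree equals $[K:\bQ]$: replace $K$ by $\bQ(l)$ and note that only the ratio matters. Concretely, one compares with the normalized product formula, or one applies the subspace theorem over $\bQ$ after writing each $l_i$ in a $\bQ_p$-basis of $K_{w_0}$, which gives $de$ forms over $\bQ_p$ of which a suitable $d$ are independent.

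The cleanest route, which I would carry out, is the following. Apply the theorem with $K=\bQ$, $S=\{\infty,p\}$, taking the forms at $p$ to be the $l_i$ over $\overline{\bQ}_p$ with coefficients algebraic over $\bQ$. Schlickewei's theorem allows algebraic coefficients, with the absolute value extended to $\overline{\bQ}_p$. Then the total product is at most
\[
\kappa^{-d/2}Q(\mathbf{x})^{d/2}\,Q(\mathbf{x})^{-d/2-\epsilon}\ll Q(\mathbf{x})^{-\epsilon}\ll H(\mathbf{x})^{-2\epsilon},
\]
and the conclusion follows.

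Two degenerate points need checking. First, if some $x_i=0$, the archimedean product vanishes, and such $\mathbf{x}$ already lie in the coordinate hyperplanes $x_i=0$. Second, the constant $\kappa^{-d/2}$ is absorbed by shrinking $\epsilon$, since $H(\mathbf{x})\to\infty$ on all but finitely many points, and finitely many points lie in finitely many hyperplanes.

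The main obstacle is bookkeeping rather than substance: quoting a version of the $p$-adic subspace theorem that accepts forms with coefficients in $\overline{\bQ}_p$ algebraic over $\bQ$, with the absolute value normalized as in the statement. Equivalently, one must verify that passing to a number field $K$ and its normalized absolute values only rescales all exponents uniformly. In that case the inequality $\prod|l_i|_p\le Q^{-d/2-\epsilon}$ still yields a product that is $\le H^{-\epsilon''}$ for some $\epsilon''>0$.
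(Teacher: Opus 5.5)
Your proposal is correct and is essentially the paper's argument: apply Schlickewei's $p$-adic subspace theorem over $\bQ$ with $S=\{\infty,p\}$, using the coordinate forms $X_i$ at $\infty$ and the algebraic forms $l_i$ at $p$. Then use $Q(\mathbf{x})\geq\kappa\max_i|x_i|^2$ to turn the hypothesis into $\prod_i|x_i|\,|l_i(\mathbf{x})|_p\ll\|\mathbf{x}\|^{-2\epsilon}$. The detour through a number field $K$ with the single place $w_0$ is unnecessary, and as you noticed it does not balance the exponents $e$ versus $[K:\bQ]$ unless you place conjugate forms at every place above $p$; your final route is exactly the paper's.
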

\begin{proof}
    Let $|\!|\mathbf{x}|\!|=\max_{1\leq i\leq d}\{|x_i|\}$, here $|\cdot| $ is the archimedean absolute value. By Schlickewei's nonarchimedean generalization of subspace theorem; cf. \cite{Schlickewei},  
    the solution to the inequality 
    $$\prod_{i=1}^d |x_i|\cdot |l_{i}(\mathbf{x})|_p\leq |\!|\mathbf{x}|\!|^{-\epsilon}$$
  is contained in finitely many hyperplanes of $\bQ^d$. The lemma then follows by the existence of $\alpha_2>\alpha_1>0$ such that $\alpha_1 |\!|\mathbf{x}|\!|\leq Q(\mathbf{x})^{\frac{1}{2}}\leq \alpha_2 |\!|\mathbf{x}|\!|$. 
\end{proof}

\begin{proof}[Proof of Theorem~\ref{thm:SubspaceEndo}] 
 
\begin{enumerate}
    \item Let $\epsilon>0$. 
 Fix an identification $L=\bZ^d$. We pick $l_1\sim l_d$ in Lemma~\ref{lm:subspacethm} such that $l_{r+1},...,l_{d}$ generates the ideal that cuts out $\mathfrak{V}$. Suppose that $n>(c+\epsilon)\log_p m$ and $\mathbf{x}\in L\cap (\mathfrak{V}+p^nL_{\overline{\bQ}})$ with $Q(\mathbf{x})=m$, then 
$$\prod_{i=1}^d |l_{i}(\mathbf{x})|_p\leq p^{-n(d-r)}<m^{-(c+\epsilon)(d-r)}\leq Q(\mathbf{x})^{-\frac{d}{2}-(d-r)\epsilon},$$
where the last inequality follows from the fact that $c\geq \frac{d}{2(d-r)}$ by Lemma~\ref{lm:easyone}. Therefore the lemma implies that such $\mathbf{x}$ need to lie in finitely many hyperplanes of $L_{\bQ}$. Let $L'_{\bQ}$ be such a hyperplane, and let $\mathfrak{V}'=\mathfrak{V}\cap L'_{\overline{\bQ}}$. We run the same argument to show that all such $\mathbf{x}$ lying in $L'_{{\bQ}}$ need to furthermore lie in finite many hyperplanes in $L'_{{\bQ}}$ (note that by Lemma~\ref{lm:easyone} we again have $c\geq \frac{d'}{2(d'-r')}$, where  $d'= \dim L'_{\bQ}$ and $r'=\dim \mathfrak{V}'$). We then do downward induction on the dimension and use Lemma~\ref{lm:easyone} at each step. It finally turns out that all such $\mathbf{x}$ are contained in finitely many lines, from which it is clear that no $\mathbf{x}$ exists.
\item Follows directly from (1). 
\item Write $Q$ as a $d\times d$ symmetric matrix $M$ under an integral basis of $L\la n \ra$. From \cite[\S 3]{Waibel21}, we can choose a suitable basis so that $M$ lies in the Siegel domain $\cS(\frac{4}{3},\frac{1}{2})$; this means that there exists a decomposition $M=V^tDV$ over $\bR$ such that $V$ is an upper-triangular matrix with diagonal entries $1$ and $|v_{ij}|\leq \frac{1}{2}$, and $D=\diag(a_1,a_2,...,a_d)$ with $0<a_i\leq \frac{4}{3}a_{i+1}$. Let
$\mu_1\leq \mu_2\leq ...\leq \mu_d$ be the successive minima for $L\la n \ra$; cf. \cite[Definition 2.2]{Eskin}. Then there is a constant $c_0$ only depending on $d$ such that $a_i\geq c_0\mu_i^2$ for $1\leq i\leq d$. We further notice that $\mu_1^2\geq p^{\frac{n}{c}}$ by (2). So $a_i\geq c_0p^{\frac{n}{c}}$ for all $i$. Mimicking the proof of \cite[Lemma 9]{Waibel21}, we can write 
$$Q(\mathbf{x})=\frac{1}{2}\mathbf{x}^{t}M\mathbf{x}=\frac{a_1}{2}(x_1+v_{12}x_2+\cdots+v_{1d}x_d)^2+\cdots+\frac{a_d}{2}x_d^2.$$
To count the solutions for $Q(\mathbf{x})=m$, for $x_3,x_4,...,x_m$ there are at most 
$$c_1\left(2\sqrt{\frac{m}{a_3}}+1 \right)\left(2\sqrt{\frac{m}{a_4}}+1 \right)\cdots\left(2\sqrt{\frac{m}{a_d}}+1 \right)$$
choices for some constant $c_1$ only depending on $d$. After that, we are left with a quadratic form in $x_1,x_2$ that has no more than $c_3 n^{\epsilon}$ solutions for an absolute constant $c_3$, and any $\epsilon$. We obtain the desired bound by combining above estimates.
\end{enumerate}

\end{proof}
\begin{lemma}\label{lm:easyone}
    Let the setup be as in Theorem~\ref{thm:SubspaceEndo}. Let $\Pi\subseteq L_{\bQ}$ be a $\bQ$-subspace. Then  $$\frac{\dim \Pi}{2(\dim \Pi - \dim \mathfrak{V}\cap \Pi_{\overline{\bQ}})}\leq c.$$
\end{lemma}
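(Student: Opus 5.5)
The plan is to turn the inequality into an elementary integer inequality, using only two facts: $\mathfrak{V}\cap\Pi_{\overline{\bQ}}\subseteq\mathfrak{V}$, and $\mathfrak{V}$ contains no rational ray. Set $a=\dim\Pi$ and $s=\dim(\mathfrak{V}\cap\Pi_{\overline{\bQ}})$. Since $c=\frac{r+1}{2}$, the claim is equivalent to
\[
a\leq (r+1)(a-s),
\]
provided $a-s>0$. The degenerate case $a=0$ is not used in the inductive argument of Theorem~\ref{thm:SubspaceEndo}, since there we only ever apply the lemma to nonzero subspaces; we therefore assume $a\geq 1$.

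First I check that $a-s\geq 1$. If $s=a$, then $\Pi_{\overline{\bQ}}\subseteq\mathfrak{V}$. Any nonzero vector of $\Pi$ would then span a rational ray contained in $\mathfrak{V}$, contradicting the hypothesis of Theorem~\ref{thm:SubspaceEndo}. Hence $0\leq s\leq a-1$, and the denominator in the lemma is positive. Also, $s\leq r$ because $\mathfrak{V}\cap\Pi_{\overline{\bQ}}\subseteq\mathfrak{V}$.

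Next I split into two cases.
\begin{enumerate}
\item If $s=0$, the required inequality $a\leq (r+1)a$ holds trivially, since $r\geq 0$.
\item If $1\leq s\leq a-1$, then using $r\geq s$ and $a-s\geq 1$ we get $(r+1)(a-s)\geq (s+1)(a-s)$. It therefore suffices to show $(s+1)(a-s)\geq a$. Expanding,
\[
(s+1)(a-s)-a=s(a-s-1),
\]
and this is nonnegative because $s\geq 1$ and $a-s\geq 1$.
\end{enumerate}
Combining the cases gives $\frac{a}{2(a-s)}\leq\frac{r+1}{2}=c$.

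There is no real obstacle here. The only point that needs care is the non-vanishing of $a-s$, which is exactly where the hypothesis that $\mathfrak{V}$ contains no rational ray enters. This is also why that hypothesis has to be preserved along the downward induction in the proof of Theorem~\ref{thm:SubspaceEndo}. It is preserved because each $\mathfrak{V}'=\mathfrak{V}\cap L'_{\overline{\bQ}}$ is contained in $\mathfrak{V}$. Moreover, $\mathfrak{V}\cap\Pi_{\overline{\bQ}}$ is determined by $\Pi$ itself, so applying the lemma with $\Pi=L'_{\bQ}$, or with any smaller subspace arising at later stages, yields the inequality $c\geq\frac{d'}{2(d'-r')}$ used in that proof.
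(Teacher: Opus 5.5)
Your proposal is correct and is essentially the paper's argument: both derive $(r+1)(a-s)\geq a$ from $a-s\geq 1$ (no rational ray in $\mathfrak{V}$) and $s\leq r$. The paper does this in one line, $r(a-s)\geq r\geq s$, so your case split is unnecessary but harmless, and you rightly make explicit the implicit assumption $\Pi\neq 0$.
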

\begin{proof}
Let $e=\dim \Pi$ and $f=\dim \mathfrak{V}\cap \Pi_{\overline{\bQ}}$. Note that $e>f$ since $\mathfrak{V}$ does not contain any rational ray. It follows that $r(e-f) \geq r\geq f$. This implies that $r+1\geq \frac{e}{e-f}$, and we are done. 
\end{proof}

\begin{proof}[Proof of Theorem~\ref{thm:tailestimate}]
By Theorem~\ref{thm: Qbar algebraicity of formal special endomorphisms}, there is a $\overline{\bQ}$-subspace $\mathfrak{V}\subseteq L_{P,1}\otimes \overline{\bQ}$ containing no rational ray  
such that $\mathfrak{V}\otimes \overline{\bQ}_p=\Lambda\otimes \overline{\bQ}_p$. We  apply Theorem~\ref{thm:SubspaceEndo} to the lattice $(L_{P,1},Q)$. Let $L_{P}\la n \ra:=L_{P,1}\cap (\mathfrak{V}+p^nL_{P,1}\otimes \overline{\bQ})$. By Corollary~\ref{cor:control_subspace}, there is a positive integer $D$ such that for all $n\geq 0$, we have \begin{equation}\label{eq:inclusion1}
    L_{P,(1+p+p^2+...+p^{n})D+1}\subseteq L_{P}\la n+1\ra.
\end{equation} 
 Let  $\delta\geq 0$, then for $m\gg 0$, we have $(1+p+p^2+...+p^{[\frac{\delta}{2}\log_p m]-1})D+1\leq m^\delta$. So (\ref{eq:inclusion1}) implies that for $m\gg 0$,  
\begin{align*}
   l_P^{\geq\delta}(m) 
\leq \sum_{n\geq [\frac{\delta}{2}\log_p m]}Dp^{n}\#\{v\in L_{P}\la n \ra:Q(v)=m\}.
\end{align*}
    Applying Theorem~\ref{thm:SubspaceEndo}(1)(3), we see that for any $\epsilon >0$, 
$$l_P^{\geq\delta}(m) \leq D c'm^{\frac{d-2}{2}+\frac{\epsilon}{2}}\sum^{(c+\frac{\epsilon}{2})\log_p m}_{n= \lfloor\frac{\delta}{2}\log_p m\rfloor}p^{n(1-\frac{d-2}{c})}=O\left(m^{\frac{d}{2}-1+\max\{\frac{\delta}{2}\mu,c\mu\}+\epsilon}\right).$$
\end{proof}

\section{Proof of Theorem~\ref{thm: main version 2}}\label{sec:pvofmain}

\subsection*{Supersingular case} As noted in \S\ref{sub: intersection numbers}, to prove Theorem~\ref{thm: main version 2}(1) it suffices to show Lemma~\ref{tail}. Let $\delta>0$, since  $l_P^{\geq\delta}(m)$ only gets smaller when $\delta$ gets larger, we can assume that $\delta\in (0,{r+1})$. 
Apply Theorem~\ref{thm:tailestimate}, and note that we have $d=b+2$ and $d\geq r+3$. This implies that $\mu<0$, so $\max\{\frac{\delta}{2}\mu,\frac{r+1}{2}\mu\}=\frac{\delta}{2}\mu$. Taking $\epsilon=-\frac{\delta}{4}\mu$, we find that $l_P^{\geq\delta}(m)=O(m^{\frac{b}{2}+\frac{\delta}{2}\mu-\frac{\delta}{4}\mu})=o(m^{\frac{b}{2}})$. This proves Lemma~\ref{tail}, hence Theorem~\ref{thm: main version 2}(1).

\subsection*{Nonsupersingular case} We will leave to the reader the  trivial case where $\rk L_{P,1}=1$. Now we assume $d\geq 2$ and apply Theorem~\ref{thm: main version 2}(1) to $\delta=0$ and note that we have $b\geq d$. If $\mu<0$, then we $l_P(m)=l_P^{\geq 0}(m)=O(m^{\frac{d}{2}-1+\epsilon})=o(m^{\frac{b}{2}})$. If $\mu\geq 0$, then $\frac{d}{2}-1+\max\{\frac{\delta}{2}\mu,\frac{r+1}{2}\mu\}= \frac{r+1}{2}-\frac{d}{2}+1$. If $ r+1<2(d-1)$, then  $l_P(m)=o(m^{\frac{b}{2}})$ by easy computation. If $ r+1\geq 2(d-1)$, we must have $(r,d)=(1,2)$. Then the lattice $\Lambda$ is of rank 1, meaning that $C$ lies on special divisor by Theorem~\ref{theorem: algebraic}, which can not happen. This concludes Theorem~\ref{thm: main version 2}(2). 

\subsection*{Boundary case} 
To show that $C\cdot \overline{Z(m)}=o(m^{\frac{b}{2}})$, we apply Proposition~\ref{corL}. By the uniform decay Theorem~\ref{thm:globalanalysis},
it suffices to show that $\#\{v\in L(\cQ_o):\la v,v\ra=m\}=o(m^{\frac{b}{2}})$. This follows simply from the well-known bound on the coefficients of the theta series associated to the quadratic lattice $L(\cQ_o)$, as well as the fact that $\rk L(\cQ_o)\leq \rk (W_0\bL_o\cap \Fil^0\bL_o)^{\varphi=1} \leq b-1$. This concludes Theorem~\ref{thm: main version 2}(3). $\hfill\square$
\bibliographystyle{alpha}
\bibliography{ref}

\noindent Ruofan Jiang {\footnotesize\,\,\, University of California, Berkeley, Department of Mathematics, Evans Hall,
Berkeley, CA 94720-3840, USA \,\,\,  Email: \url{ruofanjiang@berkeley.edu}}\\

\noindent Ananth Shankar {\footnotesize\,\,\, Northwestern University, Department of Mathematics, Lunt Hall,
Evanston, Il 60208, USA \,\,\,  Email: \url{ananth@northwestern.edu}}
\end{document}